\newtheorem{lemma}{Lemma}[section]
\newtheorem{theorem}{Theorem}
\newtheorem{proposition}{Proposition}
\newtheorem{corollary}{Corollary}
\newtheorem{assump}{Assumption}
\newtheorem{example}{Example}
\newtheorem{remark}{Remark}
\newcommand{\Sml}{\mathcal{S}_{\mu}^{L}(\mathbb{R}^d)} 
\newcommand{\bbrho}{\bar{\bar \rho}^{(\varphi)}}
\newcommand{\htv}{\hat{t}^{(\varphi)}}
\newcommand{\hhrho}{\hat{\hat \rho}^{(\varphi)}}
\newcommand{\mtl}{l_{\theta}}
\newcommand{\mtv}{\mathtt{v}}
\newcommand{\mtk}{k_{\theta}}
\newcommand{\mtc}{\mathtt{c}}
\newcommand{\mE}{\mathbb{E}}
\newcommand{\mF}{\mathcal{F}}
\newcommand{\ttheta}{\tilde{\theta}}
\newcommand{\tmp}{\tilde{p}}
\newcommand{\veps}{w}
\newcommand{\mV}{\mathcal{V}}
\newcommand{\mr}{\mathbf{r}}
\newcommand{\tn}{\tilde{\nabla}}
\newcommand{\beq}{\begin{equation}}
\newcommand{\eeq}{\end{equation}}
\newcommand{\bc}[1]{{\color{blue} #1}}
\newcommand{\bcred}[1]{{\color{blue} #1}}
\newcommand{\mg}[1]{{\color{purple} #1}}
\newcommand{\bcgreen}[1]{{\color{orange} #1}}
\renewcommand{\bc}[1]{{\color{black} #1}}
\renewcommand{\bcred}[1]{{\color{black} #1}}
\renewcommand{\mg}[1]{{\color{black} #1}}
\renewcommand{\bcgreen}[1]{\color{black} #1}
\title{Entropic Risk-Averse Generalized Momentum Methods}
\author{Bugra Can \\
    Department of Management Science and Information Systems\\ Rutgers University\\
    Piscataway, NJ-08854, USA. \\
     \texttt{bc600@scarletmail.rutgers.edu} 
	\And
	Mert G\"urb\"uzbalaban\\
  Department of Management Science Information Systems and\\ Department of Electrical and Computer Engineering\\
  Rutgers University\\
  Piscataway, NJ-08854, USA.\\
  \texttt{mert.gurbuzbalaban@rutgers.edu}   
}
\begin{document}
\maketitle

\begin{abstract}
In the context of first-order algorithms subject to random gradient noise, we study the trade-offs between the convergence rate (which quantifies how fast the initial conditions are forgotten) and the ``risk” of suboptimality, i.e. deviations from the expected suboptimality. We focus on a general class of momentum methods (GMM) which recover popular methods such as gradient descent (GD), accelerated gradient descent (AGD), and the heavy-ball (HB) method as special cases depending on the choice of GMM parameters. We use well-known risk measures ``entropic risk" and ``entropic value at risk" to quantify the risk of suboptimality. For strongly convex smooth minimization, we first obtain new convergence rate results for GMM with a unified theory that is also applicable to both AGD and HB, improving some of the existing results for HB. We then provide explicit bounds on the entropic risk and entropic value at risk of suboptimality at a given iterate which also provides direct bounds on the probability that the suboptimality exceeds a given threshold based on Chernoff’s inequality. Our results unveil fundamental trade-offs between the convergence rate and the risk of suboptimality. We then plug the entropic risk and convergence rate estimates we obtained in a computationally tractable optimization framework and propose entropic risk-averse GMM (RA-GMM) and entropic risk-averse AGD (RA-AGD) methods which can select the GMM parameters to systematically trade-off the entropic value at risk with the convergence rate. We show that RA-AGD and RA-GMM lead to improved performance on quadratic optimization and logistic regression problems compared to the standard choice of parameters. To our knowledge, our work is the first to resort to coherent risk measures to design the parameters of momentum methods in a systematic manner.
\end{abstract}

\keywords{momentum-based first-order methods \and accelerated gradient methods \and heavy-ball method \and strongly convex smooth minimization \and stochastic gradient methods \and entropic risk \and entropic value at risk \and tail bounds}

\section{Introduction}\label{sec: introduction}

First-order methods based on gradient information are frequently used for solving large-scale convex optimization and machine learning problems due to their favorable scalability properties to large datasets and to high dimensions \cite{bottou2018optimization}. Classic analysis of first-order methods such as gradient descent or its accelerated versions based on momentum averaging for convex optimization assumes access to the \bcgreen{actual} gradients of the objective and provides convergence rate results to a global optimum. However, in many applications, it is often the case that we do not have access to the gradients but rather we possess a noisy (stochastic) unbiased estimate of the gradient. Examples would include but are not limited to stochastic gradient methods that estimate the gradient from randomly sampled subset of data points \cite{robbins1951stochastic} for large-scale optimization or machine learning problems or privacy-preserving first-order optimization algorithms that add independent identically distributed (i.i.d.) Gaussian (or Laplacian) noise to the gradients for preserving the privacy of user data \cite{kuru2020differentially,bassily2014private}. In such applications,  because the gradients are subject to persistent stochastic noise, the iterates \bcgreen{can} oscillate around the global optimum without converging to it. Consequently, suboptimality of the iterates could often fluctuate around a limiting non-zero mean value \cite{aybat2019robust,dieuleveut2020bridging} or could even diverge \cite{flammarion2015averaging}. Therefore, controlling the statistical properties of suboptimality $S_k$ at a given iterate $k$, such as the deviations of the suboptimality from its mean value or the probability that the suboptimality $S_k$ exceeds a certain given threshold $t$, becomes key performance metrics to consider in addition to the convergence rates traditionally considered in deterministic optimization \cite{pmlr-v99-jain19a,harvey2019tight}. 


When gradients are exactly known without any noise, it is well-known that momentum methods such as Nesterov's accelerated gradient descent (AGD) method and Polyak's heavy-ball (HB) method can significantly improve the convergence rate in terms of its dependency to the condition number \bcgreen{of the objective function} \cite{nesterov2003introductory,polyak1987introduction}. In fact, AGD method achieves the lower bounds for strongly convex smooth objectives in terms of its dependence to the condition number \cite{nesterov2003introductory}. However, in the presence of stochastic gradient noise, the situation is different. It has been empirically observed that momentum-based methods amplify the noise in the gradients \cite{Hardt-blog} and are less robust to noise in the sense that they need more accurate gradient information than gradient descent (GD) to achieve the same expected accuracy with the standard choice of parameters 
\cite{devolder2014first,aspremontSmooth08,flammarion2015averaging,Schmidt11InexactProx,Hardt-blog,devolder2013thesis}. In fact, for momentum-methods, it is known that there are fundamental trade-offs between the convergence rate and the asymptotic suboptimality \cite{aybat2019robust} \bcgreen{and the performance of momentum methods is quite sensitive to the choice of parameters}. If we view the suboptimality $S_k$ at step $k$ of a first-order algorithm as a random variable, then the existing guarantees for AGD and HB provides bounds on the expected suboptimality $\mathbb{E}\bcgreen{[S_k]}$ for every $k$ as a function of the stepsize and the momentum parameter (see e.g \cite{aybat2019robust,can2019accelerated,aybat2019universally,gadat-stoc-heavy-ball}) and provide conditions under which a limiting distribution exists for the iterates and the suboptimality \cite{can2019accelerated}. However, the expectation of a random variable is a point estimate and contains limited information about the underlying distribution (in the sense that two random variables can have the same expectation but can have arbitrarily different standard deviations or can have very different tail behavior). Therefore, existing approaches do not allow to optimize the parameters of momentum methods to control the tail probabilities $\mathbb{P}\bcgreen{\{S_k \geq t\}}$ for some \bcgreen{given} threshold level $t$ or to control the ``risk" of suboptimality, i.e. deviations from the expected suboptimality. 
%


In this paper, our purpose is to investigate the trade-offs between the convergence rate and ``risk" of suboptimality in designing the parameters of a first-order algorithm and devise a computationally tractable strategy to achieve a particular trade-off in a systematic manner. 
For this purpose, we focus on the \bcgreen{following} general class of \bcgreen{generalized} momentum methods (\bcgreen{GMM}) for minimizing a strongly convex smooth function $f:\mathbb{R}^d\to\mathbb{R}$, i.e. for solving the unconstrained optimization problem 
\bc{
\begin{equation}\label{opt_problem}
\min_{x\in\mathbb{R}^d} f(x),
\end{equation}
}
when noisy estimates of the gradients are available\footnote{\bcgreen{These methods were also called modified \textit{modified accelerated methods} in \cite{hu2017dissipativity}; however, to emphasize that they generalize AGD, HB; we use the name GMM.}}. The iterations consist of
\begin{subequations}
\begin{eqnarray}
x_{k+1}&=&(1+\beta)x_{k}-\beta x_{k-1}-\alpha \tilde \nabla f( y_k ), \label{TMM: Iter11}\\
y_{k}&=&(1+\gamma)x_k-\gamma x_{k-1} \label{TMM: Iter21}
\end{eqnarray}
\end{subequations} 
starting from the initialization $x_0, x_{-1}\in \mathbb{R}^d$, where $\tilde \nabla f( y_k )$ is an unbiased stochastic estimate of the actual gradient $\nabla f(y_k)$ \bcgreen{(see Sections \ref{sec: Prelim-and-background} and \ref{sec: Subgaussian} for further details)}. \bcgreen{GMM} has three parameters where $\alpha>0$ is the stepsize and $\beta, \gamma \geq 0$ are called momentum parameters. Depending on how the parameters are chosen, \bcgreen{GMM} recovers
AGD, GD, HB, \bcgreen{triple momentum methods \cite{gannot2021frequency,scoy-tmm}}\footnote{The original version of TMM from \cite{scoy-tmm} has four parameters in it but \cite{gannot2021frequency} considers the version with three parameters as in our setting.} and \mg{robust momentum methods \cite{vanscoy2021speedrobustness}} as special cases. 
To quantify the risk of suboptimality $S_k$; we use \emph{entropic risk} which is a well-known convex risk measure that depends on the risk-aversion of the user (measured by the risk-averseness parameter $\theta> 0$) through the exponential utility function \bc{ \cite{ruszczynski2013advances,Javid}}. Our choice of entropic risk measure is motivated by several facts. First, entropic risk of suboptimality $S_k$ is relevant to optimization practice because through Chernoff inequality, it implies bounds on the tail probability $\mathbb{P}\bcgreen{\{S_k \geq t\}}$ for any threshold $t$ (as will be discussed further in Section \ref{subsec: EVAR}). Second, we found that it was relatively easier to compute explicit upper bounds for it as opposed to other risk measures such as conditional value at risk (CV@R) which often require estimating the quantiles numerically \bc{\cite{Javid}}. 

We start with asking the following questions: $(i)$ ``What is the effect of parameters on the convergence rate and on  ``the entropic risk of suboptimality?", $(ii)$ ``Can we design the parameters of \bcgreen{GMM} to achieve particular trade-offs between the convergence rate and the entropic risk?". In this work, we provide answers to these questions in the context of strongly convex smooth minimization problems. As a high-level summary, our first contribution is to obtain explicit non-asymptotic characterizations of the entropic risk of suboptimality $S_k$ (Proposition \ref{prop: quad-risk-meas-gauss-noise}, Proposition \ref{prop: risk-meas-bound-gaussian}, Proposition \ref{prop: entr-risk-meas-str-cnvx-subgaus}) as well as to obtain new non-asymptotic convergence results for 
\bcgreen{GMM} as a function of the parameters (Lemma \ref{lem: non_asym_conv_quad_obj}, Theorem \ref{thm: TMM-MI-solution}). 
This allows us to get performance bounds depending on the desired level of risk-averseness \bcgreen{measured by the parameter $\theta>0$}. Our second contribution is that by optimizing our entropic risk bounds over the choice of the risk-averseness parameter $\theta$; we obtain bounds on the entropic value at risk (EV@R) of suboptimality (Theorem \ref{thm: quad-evar-bound}, Theorem \ref{thm: Evar-TMM-str-cnvx-bound}, Theorem \ref{thm: evar-bound-str-cnvx-subgaus}) where EV@R is a coherent risk measure with desirable properties. In particular, EV@R of suboptimality $S_k$ at a confidence level $\zeta \in (0,1)$ provides the tightest threshold $t_*$ (that can be obtained through the Chernoff inequality applied to suboptimality) such that $\mathbb{P}\bcgreen{\{S_k \geq t_* \}} \leq \zeta$ \cite{Javid}. Therefore, our results provide tail bounds as a function of the \bcgreen{GMM} parameters. Furthermore, our results demonstrate that for constant choice of stepsize and momentum parameters, EV@R \bcgreen{converges to an interval around zero} (as the iterations grow) with a linear rate that we can explicitly characterize (Theorems \ref{thm: quad-evar-bound}, \ref{thm: Evar-TMM-str-cnvx-bound}, and \ref{thm: evar-bound-str-cnvx-subgaus}). 
\bcgreen{In addition, our results show that there are fundamental trade-offs between the convergence rate (which determines the decay of the bias due to intialization) and the risk of suboptimality measured in entropic risk and EV@R.}
Our third contribution is that we insert our EV@R and convergence rate bounds into a computationally tractable optimization framework which allows us to choose the parameters to obtain systematic trade-offs between the convergence rate and EV@R in an (approximately) Pareto optimal fashion. Consequently, we can design the parameters of \bcgreen{GMM} to minimize the asymptotic EV@R of suboptimality subject to rate constraints. This allows us to control the tail probabilities  $\mathbb{P}\bcgreen{\{S_k \geq t\}}$ as a function of the parameters, where we show that our proposed design methodology for the parameters leads to an improved performance in practice compared to previously considered standard choice of parameters for \bcgreen{GMM} (Section \ref{sec: num-exp}). \mg{Having provided a high-level summary of our results, in the following few paragraphs, we expand upon the technical details of our contributions.}

\mg{First, for illustrative purposes, we focus on the special case when the objective is a strongly convex quadratic. In this case, existing guarantees for deterministic \bcgreen{GMM} \cite{gitman2019understanding} are asymptotic where the asymptotic rate is given by the spectral radius $\rho(A_Q)$ of the transition matrix $A_Q$ which is a quantity that depends on the Hessian $Q$ of the objective. We obtain non-asymptotic convergence guarantees (Lemma \ref{lem: non_asym_conv_quad_obj}); this is achieved by a careful analysis of the Jordan decomposition of the (typically) non-symmetric matrix $A_Q$. Then, we characterize the set of parameters for which the entropic risk is finite explicitly when the gradient noise is an isotropic Gaussian. This set is decreasing in the risk-averseness  parameter $\theta$, we show that as $\theta\to 0$ it recovers the set of parameters that leads to global linear convergence which we can also explicitly characterize (\bcgreen{see equation \eqref{eq: lim_F_sq} and Lemma \ref{lem: feas-set-stab-set}}). We show that limit of the entropic risk of suboptimality at step $k$ converges to a value (linearly with the convergence rate $\rho^2(A_Q)$) which we call infinite-horizon entropic risk \bcgreen{(Proposition \ref{prop: quad_qisk_meas_convergence})}. This result shows that convergence rate and the entropic risk are both
relevant for designing the parameters of GMM subject to random gradient noise and provides explicit characterizations of the risk of suboptimality at any iterate. Using these explicit characterizations, we show that there are fundamental trade-offs between the rate and the infinite-horizon entropic risk. Furthermore, we can show that a stationary distribution exists. We also characterize the entropic value at risk at stationarity for a given confidence level $\zeta$.\looseness=-1 
}

\mg{Second, we consider general strongly convex smooth objectives (that are not necessarily quadratics). In this setting, for GMM methods subject to \emph{random gradient noise}, the existing results on the bounds for expected suboptimality  are only local (except for particular choice of parameters which correspond to the special cases of GD, HB and AGD) \cite{gitman2019understanding}, i.e. holds only if the initial point is sufficiently close to the optimum and the existing analyses do not have all the constants explicitly available.\footnote{More specifically, Gitman \emph{et al.} \cite[Thm. 3]{gitman2019understanding} obtain asymptotic convergence rate results for quasi-hyperbolic momentum (QHM) methods for quadratic functions where the constants $\epsilon_k$ in this theorem are not explicitly available. This result can be used to provide local convergence results (but not global) for strongly convex functions and provides local convergence results for GMM as well if QHM parameters are appropriately scaled.} In the noiseless case, to our knowledge, explicit convergence rate results are available for only robust momentum methods which use particular choice of parameters (where all the parameters are expressed as a particular fixed function of the convergence rate) \cite[Theorem 21]{vanscoy2021speedrobustness} and also for TMM which corresponds to a particular (unique) choice of the GMM parameters \cite{scoy-tmm,vanscoy2021speedrobustness}. To our knowledge, rate estimates for more general GMM parameters can only be obtained numerically. More specifically, \cite[Lemma 5]{hu2017dissipativity} shows that if for any given $\rho^2$, if there exists a positive semi-definite matrix \bcgreen{$\tilde{P}$} satisfying a particular $3 \times 3$ matrix inequality (that depends on $\rho^2$) then GMM admits linear convergence with rate $\rho^2$. That being said, both the rate $\rho^2$ and the matrix $\tilde{P}$ are unfortunately not explicitly available for general choice of parameters but rather estimated numerically by solving this particular matrix inequality. 
Furthermore, existing results do not provide explicit \bcgreen{suboptimality} bounds at a given iterate $k$ \bcgreen{in the presence of gradient noise}. 
Another issue is that characterizing the trade-offs between the convergence rate and entropic risk explicitly and accurately as a function of the parameters would require developing new convergence rate guarantees for a wider choice of parameters than those available in the literature. In order to address these points, we identify a family of parameters $(\alpha,\beta,\gamma)$ that are parametrized by two free variables $(\vartheta,\psi)$. We provide explicit convergence rate results for \bcgreen{GMM} and performance bound for the noisy \bcgreen{GMM} iterations provided that $\vartheta$ and $ \psi$ satisfy an inequality we precise. The proof is based on extending the deterministic suboptimality analysis of \bcgreen{GMM} from \cite{hu2017dissipativity} to the noisy case and constructing an explicit choice of the Lyapunov matrix $\tilde{P}$ that serves as a solution to the matrix inequality provided in \cite[Lemma 5]{hu2017dissipativity}. These allow us to obtain explicit convergence rate results for a wider choice of parameters (Theorem \ref{thm: TMM-MI-solution}) that covers AGD and HB as a special cases. In particular, our convergence rate results for HB scales like $1 - \Theta(\sqrt{\alpha \mu)}$ for $\alpha$ small enough which is comparable to the convergence rate of AGD. When $\alpha$ is sufficiently small, our result shows an accelerated behavior, i.e. square root dependency to $\alpha$, and improves existing results from \cite{liu2020improved,ghadimi-heavy-ball} 
which obtained $1 - \Theta(\alpha \mu)$ rate for HB (see Remark \ref{remark-hb}). We then obtain explicit bounds on the finite-horizon entropic risk and infinite-horizon entropic risk depending on the parameters (Proposition \ref{prop: risk-meas-bound-gaussian}). Optimizing these bounds over $\theta$, we obtain characterization of the EVAR of $f(x_k) - f(x_*)$ at any $k$ which highlights the trade-offs between the choice of parameters. These results allow us to obtain bounds on the tail probability $\mathbb{P}\{f(x_k) - f(x_*) \geq t\}$ as a function of $t$. Moreover, by inserting the explicit bounds on EV@R and convergence rate in a computationally tractable optimization framework, we are able to select the stepsize and momentum parameters of \bcgreen{GMM} to achieve a desired trade-off between the convergence rate and the entropic value at risk. Since EV@R provides a bound on the tail probabilities, our framework allows us to optimize a bound on the tail probabilities as well.}

\mg{Finally, in Section \ref{sec: num-exp}, we provide numerical experiments that illustrate our results and practical benefits of our framework on quadratic optimization and logistic regression problems. While for simplicity of the discussion, we assumed that the gradient noise is isotropic Gaussian until the end of Section \ref{sec: num-exp}; we discuss in Section \ref{sec: Subgaussian} and Remark \ref{remark: general-params} that similar results hold for more general noise with (sub-Gaussian) light tails and for more general choice of parameters.}
\mg{
\subsection{Other Related Work}
Michalowsky and Ebenbauer \cite{michalowsky2014multidimensional} formulated the design of deterministic gradient algorithms as a state feedback problem and used robust control techniques and linear matrix inequalities to study their properties. Design and analysis of deterministic first-order optimization algorithms building on integral quadratic constraints from robust control theory have been studied in \cite{lessard2016analysis}. More recently, there has been a growing literature which uses dynamical system representations and control-theoretic techniques to study the convergence and robustness of optimization algorithms including \cite{hu2017approxSG,fazlyab2017analysis,fallah2019robust,can2019accelerated}. Design of the parameters of momentum-methods to achieve trade-offs in the bias and variance of the iterates or expected suboptimality have also been studied in \cite{aybat2019robust,vanscoy2021speedrobustness,mohammadi2020robustness,zhang2021robust,flammarion2015averaging,michalowsky2021robust,mohammadi2018variance}. However, all these research works do neither study the ``risk" of suboptimality nor the tail probabilities associated to suboptimality. Furthermore, the performance guarantees apply mainly to AGD, HB (and sometimes to their proximal variants) but not they do not apply to GMM with the more general choice of parameters we study in this paper. Scoy \emph{et al.} \cite{scoy-tmm} introduced the triple momentum method \bcgreen{(TMM)} and showed that it improves upon the convergence rate of AGD for deterministic optimization. Gannot \cite{gannot2021frequency} studied robustness of TMM and gradient descent when subject to deterministic relative gradient error. TMM corresponds to a particular choice of the stepsize and momentum parameters in GMM (see \cite{scoy-tmm,gannot2021frequency}). 
As opposed to these papers, we consider stochastic noise in the iterations and we consider an alternative general family of parameters for which we can develop convergence rate and risk estimates. 

Lan \cite{lan2012optimal} introduced the AC-SA method for stochastic composite optimization, where the objective function is given as the sum of general nonsmooth and smooth stochastic components. It is shown that the AC-SA algorithm is an optimal method for smooth, nonsmooth, and stochastic convex optimization. Ghadimi and Lan~\cite{ghadimi2012optimal,ghadimi-lan} studied AC-SA further for solving strongly convex composite optimization problems, developed tail bounds for it and showed that it can achieve a performance in expected suboptimality matching the lower complexity bounds in stochastic optimization. Another multi-stage AGD method with optimal complexity for expected suboptimality is developed in \cite{aybat2019universally}. Here, we focus on a more general class of methods (GMM) than AGD and focus on the design of parameters to achieve systematic trade-offs between the risk of suboptimality and the convergence rate instead.\looseness=-1 

For HB methods subject to random noise, there are many references in the literature that are not possible to list all. Among these most related to our paper, other than the references we mentioned above, Can \emph{et al.} \cite{can2019accelerated} \bcgreen{develops} tight accelerated convergence results for quadratic problems for optimization and Kuru \emph{et al.} \cite{kuru2020differentially} provides privacy guarantees for the user data. For quadratic objectives, Loizou \emph{et al.} \cite{Loizou-shb} shows accelerated linear rate in expected iterates and also linear rate in expected function values. Gadat \emph{et al.} \cite{gadat-stoc-heavy-ball} describe some almost sure convergence results for non-convex coercive functions and derive non-asymptotic results for strongly convex and convex problems with decaying stepsize. Li and Orabona \cite{orabona} studied adaptive SGD with momentum and showed the gradient norm is small with high-probability in the non-convex setting. Sebbouh \emph{et al.} \cite{gower-shb} obtains almost sure convergence rates for stochastic GD and stochastic HB. Yang \emph{et al.} \bcgreen{\cite{yang2016unified}} provides a unified convergence theory for stochastic AGD and HB with constant stepsize under bounded gradient assumption. In our setting, due to strong convexity, bounded gradient assumption does not hold and by exploiting strong convexity, we can obtain stronger guarantees than the convex case handled in \cite{yang2016unified} and in the other references listed above when the stepsize is smaller than a threshold we will specify.
}

\subsubsection*{Notations and Preliminaries} Throughout the paper, we use the notations $I_d$ and $0_d$ for $d\times d$ identity and zero matrices, respectively. \bc{We denote non-negative real numbers by $\mathbb{R}_{+}$ and positive real numbers by $\mathbb{R}_{++}$. }The Kronecker product of two matrices $A$ and $B$ is \bcgreen{denoted by} $A \otimes B$. The set $\Sml$ is the set of all $\mu$-strongly convex and $L$-smooth functions $f:\mathbb{R}^d \rightarrow \mathbb{R}$. We assume $\mu\neq L$, otherwise the class $\Sml$ is trivial. Notice the for all $f\in\Sml$, the solution of the optimization problem \eqref{opt_problem} is unique which we will denote by $x_*$. 
We say $f(n)=\mathcal{O}(g(n))$ if $f$ is bounded above by $g$ asymptotically, i.e. there exist constants $k_0>0$ and $n_0\geq 0$ such that $f(n)\leq k_0 g(n)$ for all $n>n_0$. \mg{For functions $f:\mathbb{R}_{+} \to \mathbb{R}_{+}$ and $g:\mathbb{R}_{+} \to \mathbb{R}_{+}$, we say $f(\theta)=o(g(\theta))$ as $\theta\to 0$ if for any constant $c_0>0$, there exists $\theta_0>0$ such that $f(\theta) \leq c_0 g(\theta)$ for all $\theta$ with $\theta < \theta_0$}. \mg{We say $f = \Theta(g(\theta))$ as $\theta\to 0$ if there exists positive constants $c_0, c_1, \theta_0 >0$ such that $c_1 g(\theta) \leq f(\theta) \leq c_0 g(\theta)$ for all $\theta$ with $\theta < \theta_0$}. We denote the multivariate Gaussian distribution with mean $\mu \in \mathbb{R}^{d}$ and covariance matrix $\Sigma\in\mathbb{R}^{d\times d}$ as $\mathcal{N}(\mu,\Sigma)$. \mg{For any real scalar $x>0$,  $\log(x)$ denotes the natural logarithm of $x$}. \mg{For an eigenvalue $\lambda_i(Q)$ of a $d\times d$ matrix $Q$, we will sometimes drop the dependency to $Q$ if it is clear from the context and use the notation $\lambda_i$ interchangeably.}\looseness=-1
\subsubsection*{Organization}
In Section \ref{subsec: dynam_sys_rep}, we provide the dynamical system representation of \bcgreen{GMM} which simplifies the presentation of our proofs. In Section \ref{subsec: EVAR}, we introduce the notions of ``entropic risk" and ``entropic value at risk" and discuss their relevant properties.
Then, in Section \ref{sec: quad-obj}, we study the infinite-horizon entropic risk and EV@R of \bcgreen{GMM} on strongly convex quadratic objectives where we explicitly calculate the infinite-horizon risk and provide an upper bound on the EV@R when the noise is an isotropic Gaussian.
In Section \ref{sec: strongly-conv-obj}, 
we extend our analysis to strongly convex objectives beyond quadratics where we present convergence results for \bcgreen{GMM} subject to Gaussian gradient noise as well as an upper bound on the EV@R. \bcgreen{Based on these results, we propose the ``entropic risk-averse \bcgreen{GMM}" which design the \bcgreen{GMM} parameters in a systematic way to trade-off the convergence rate with \bcgreen{EV@R}.}
\mg{In Section \ref{sec: num-exp}, we provide numerical experiments that provide the efficiency of the proposed ``entropic risk-averse \bcgreen{GMM}" methods.}  
\bcred{Finally, in Section \ref{sec: Subgaussian}, we discuss how our Gaussian noise assumption can be relaxed to more general light-tailed (sub-Gaussian) noise structure.
}
\section{Preliminaries and Background}\label{sec: Prelim-and-background}
\mg{In the following, we consider \bcgreen{GMM} iterations subject to gradient noise. We will sometimes be referring to the resulting iterations by ``noisy \bcgreen{GMM}" in the rest of the paper. We first discuss how the iterations with constant stepsize and momentum parameters can be formulated as a discrete-time dynamical system which will be helpful to simplify the presentation of our proof techniques. Then, we introduce the concepts of \emph{entropic risk} and \emph{entropic value at risk}.}
\subsection{Dynamical system representation of noisy \bcgreen{GMM}}\label{subsec: dynam_sys_rep}
Dynamical system representation of momentum algorithms have been considered in the literature which allows us to use tools from control theory for analysis purposes \cite{lessard2016analysis,hu2017dissipativity,vanscoy2021speedrobustness}. In our case, noisy \bcgreen{GMM} iterations can be represented as the
discrete-time dynamical system 
\begin{subequations}
\label{Sys: TMM}
\begin{eqnarray} 
\bcred{z}_{k+1}&=&A \bcred{z}_{k}+B\tn f(y_k), \quad \tn f(y_k) := \bcred{\nabla f(y_k)+\bc{w_{k+1}}},\\
y_k &=& C \bcred{z}_k,
\end{eqnarray}
\end{subequations}
starting from (deterministic) initialization $\bcred{z}_0 \in \mathbb{R}^d$ where $\tn f(y_k)$ is the noisy gradient, $\bc{\veps_{k+1}}=\tn f(y_k)-\nabla f(y_k)$ is the gradient noise at step $k$, $\bcred{z}_k = [ x_{k}^\top , x_{k-1}^{\top}]^\top$ is the state vector (that concatenates the last two iterates at time $k$) and the system matrices \mg{$(A, B, C)$ are given by}
\begin{align}\label{def-tilde-ABC} 
A = \tilde{A}\otimes I_d, \quad 
B = \tilde{B}\otimes I_d, \quad
C = \tilde{C}\otimes I_d,
\end{align}
where
\begin{align*}
\tilde{A}=\begin{bmatrix} 
(1+\beta) & -\beta \\ 
1 & 0
\end{bmatrix} , \;\; \tilde{B}=\begin{bmatrix} 
-\alpha \\ 
0
\end{bmatrix},\;\; \tilde{C}=\begin{bmatrix}
(1+\gamma) & -\gamma 
\end{bmatrix}.
\end{align*}
Regarding the noise sequence $\{w_k\}_{k\geq 1}$, we first consider a simplified setting \bcgreen{where} the gradient noise is i.i.d. \bc{Gaussian and $w_{k+1}$ is independent from the filtration generated by the iterates $\{x_j\}_{j=0}^{k}$ of noisy \bcgreen{GMM}.} Such noise type has been previously considered in the literature in a number of papers to study convergence rate and robustness to gradient noise for momentum methods \cite{aybat2019robust,fallah2019robust,can2019accelerated} and in privacy-preserving optimization algorithms \cite{abadi2016deep,ganesh2022langevin}; it will allow us to obtain stronger guarantees for the ``entropic risk" of the suboptimality we will introduce in the next section. However, our results extend to more general noise that can be light-tailed (sub-Gaussian \bcgreen{when conditioned on the iterates}) in a straightforward manner which will be discussed in 
Section \ref{sec: Subgaussian}.
\begin{assump}\label{Assump: Noise}
For each $k\in \mathbb{N}$, the gradient noise $\bc{\veps_{k+1}}=\tn f(\bcred{y}_k) - \nabla f(\bcred{y}_k) $ is distributed according to \bcred{$\bc{\veps_{k+1}}\sim \mathcal{N}(0,\sigma^2 I_d)$} for some $\sigma>0$. Furthermore, \bcred{$\bc{w_{k+1}}$ is independent from the filtration $\mathcal{F}_k$ generated by $\{x_j\}_{j=0}^{k}$}. 
\end{assump}

We note that in the noiseless setting (when $\veps_k=0$ for all $k$), \bcgreen{GMM} reduces to GD, AGD and HB methods for specific choice of the parameters $(\alpha,\beta,\gamma)$. For example, setting $\beta=0=\gamma$ corresponds to the GD algorithm which consists of the iterations
$$
x_{k+1}=x_k-\alpha \nabla f(x_k),
$$
whereas if we set $\gamma=0$; we recover the HB method with iterations.
$$
x_{k+1}=(1+\beta)x_k-\beta x_{k-1} -\alpha \nabla f(x_k).
$$
Similarly, in the special case $\beta=\gamma$ and $\bc{w_{k+1}}=0$, we obtain AGD method with iterations
\begin{align*}
x_{k+1}&=(1+\beta)x_k -\beta x_{k-1} -\alpha \nabla f(y_k ),\\
y_k &= (1+\beta)x_k-\beta x_{k-1}.
\end{align*}
\subsection{Entropic risk and entropic value at risk (EV@R)}\label{subsec: EVAR}

Before introducing the notion of EV@R, we first provide the definition of the \textit{finite-horizon entropic risk} $\mr_{k,\sigma^2}(\theta)$ of the random variable (suboptimality at step $k$) $f(x_k) - f(x_*)$: 
\begin{equation}
\mr_{k,\sigma^2}(\theta)= \frac{2\sigma^2}{\theta} \log \mathbb{E}[e^{\frac{\theta}{2\sigma^2} \left(f(x_k)-f(x_*)\right)}],
\label{def-finite-horizon-entropic-risk}
\end{equation}
\mg{where the expectation is taken with respect to all the randomness encountered (gradient noise) in the iterations,} $\theta > 0$ is called the \emph{risk-\bcgreen{averseness} parameter}, $\sigma^2$ is \mg{the variance of the noise in each coordinate (see Assumption \ref{Assump: Noise})}. 
Entropic risk defined in \eqref{def-finite-horizon-entropic-risk} of the random variable $f(x_k) - f(x_*)$ is a measure of the deviations from the mean of suboptimality $f(x_k) - f(x_*)$. It is a fundamental risk measure in statistics and arises naturally if we want to provide a bound on the tail \bc{probability} $\mathbb{P}\{ f(x_k)-f(x_*)\geq a\}$ at a threshold level $a$ \cite{Javid,ahmadi2019portfolio}. To see this, note that the Chernoff inequality applied to the non-negative random variable $f(x_k)-f(x_*)$ yields the following inequality for any $\theta>0$ and \bcred{${a}_k>0$}:  
\begin{align}
\mathbb{P}\{ f(x_k)-f(x_*)\geq \bcred{{a}_k}\}\leq e^{-\frac{\theta}{2\sigma^2} \bcred{{a}_k}}\mathbb{E}\left[e^{\frac{\theta}{2\sigma^2} (f(x_k)-f(x_*))} \right].\label{ineq: Chernoff-2}
\end{align}
If we define the variable $\zeta:=e^{-\frac{\theta}{2\sigma^2} \bcred{{a}_k} }\mathbb{E}[e^{\frac{\theta}{2\sigma^2} \left( f(x_k)-f(x_*)\right)}]$ which is equal to the right-hand side and write \bcred{${a}_k$} as a function of $\zeta$ and $\theta$, this inequality is equivalent to
\begin{equation}
\mathbb{P}\left\{f(x_k)-f(x_*)\geq \bcred{{a}_k}(\theta, \zeta) \right\}\leq \zeta, \quad \mbox{where} \quad \bcred{{a}_k}(\theta, \zeta) := \mr_{k,\sigma^2}(\theta)+\frac{2\sigma^2}{\theta}\log(\frac{1}{\zeta}).
\label{eq-tail-proba}
\end{equation}
Entropic risk takes deviations from the mean of $f(x_k)-f(x_*)$ into account. In fact, when $\mr_{k,\sigma^2}(\theta)$ is finite for $\theta$ around zero, if we apply a first-order Taylor expansion in $\theta$, we obtain
\begin{equation}\label{def: risk_meas_infty}
\mr_{k, \sigma^2}(\theta)=\frac{2\sigma^2}{\theta}\log \mathbb{E}[e^{\frac{\theta}{2\sigma^2} (f(x_k)-f(x_*))}]= \mathbb{E}[f(x_{k})-f(x_*)]+\frac{\theta}{4\sigma^2}\mathbb{E}[|f(x_k)-f(x_*)|^2]+o(\theta),
\end{equation}
which implies that $\bcgreen{\mr_{k,\sigma^2}(\theta)}$ accounts for both the mean and the variance of the suboptimality $\bcgreen{f(x_k)-f(x_*)}$ (see also \cite[Example 2.1]{fleming2006controlled}). We observe that if the \mg{risk} parameter \mg{$\theta$} gets larger; the deviations from the mean of suboptimality are penalized more. 

By taking the number of iterations (horizon) $k$ to infinity, one can also consider the following limit
\begin{equation}\label{def: inf-horizon-risk-measure}
 \mr_{\sigma^2}(\theta):= \bcgreen{\underset{k\rightarrow \infty}{\lim\sup}}\; \mr_{k,\sigma^2}(\theta)=\frac{2\sigma^2}{\theta} \bcgreen{\underset{k\rightarrow \infty}{\lim\sup}}\;\log \mathbb{E}\left[ e^{\frac{\theta}{2\sigma^2} \left( f(x_k)-f(x_*) \right)}\right],
\end{equation}
which is called the (infinite-horizon) \textit{entropic risk}. If the distribution of the iterates $x_k$ converges to a stationary distribution $\pi$, then under some further assumptions\footnote{such as the uniform integrability of the family of random variables $\big\{e^{\frac{\theta}{2\sigma^2} \left( f(x_k)-f(x_*) \right)}\big\}_{k\geq 0}$.}, we can replace the limit superior and the expectation to obtain
\begin{equation}\label{def: inf-horizon-risk-measure-2}
 \mr_{\sigma^2}(\theta)= \frac{2\sigma^2}{\theta} \log \mathbb{E}\left[ e^{\frac{\theta}{2\sigma^2} \left( f(x_\infty)-f(x_*) \right)}\right],
\end{equation}
where $x_\infty$ is a random variable whose distribution is the stationary distribution $\pi$.
Similar to the bounds \eqref{eq-tail-proba} and \eqref{ineq: Chernoff-2} for the finite-horizon risk, 
for any $\theta>0$, ${a}>0$, we can write 
\begin{align*}
\mathbb{P}\{ f(x_\infty)-f(x_*)\geq {a}\}\leq e^{-\frac{\theta}{2\sigma^2} {a}}\mathbb{E}\left[e^{\frac{\theta}{2\sigma^2} (f(x_\infty)-f(x_*))} \right],
\end{align*}
\bcred{
and, if we set $\zeta=e^{-\frac{\theta}{2\sigma^2} a }\mathbb{E}[e^{\frac{\theta}{2\sigma^2} \left( f(x_\infty)-f(x_*)\right)}]$ and write ${a}$ as a function of $\zeta$ and $\theta$, this inequality is equivalent to
\begin{equation}\label{ineq: tail-prob-bound}
\mathbb{P}\left\{f(x_\infty)-f(x_*)\geq a(\theta, \zeta) \right\}\leq \zeta, \quad \mbox{where} \quad a(\theta, \zeta) := \mr_{\sigma^2}(\theta)+\frac{2\sigma^2}{\theta}\log(\frac{1}{\zeta}).
\end{equation}}
We can see from the inequality above that the accuracy of the algorithm (measured in terms of asymptotic suboptimality) under confidence level $\zeta$ is related to the \bcgreen{entropic} risk $\mr_{\sigma^2}(\theta)$. As a matter of fact, the \textit{entropic value at risk} of $f(x_\infty)-f(x_*)$ with confidence level $\zeta$, which will be denoted by $EV@R_{1-\zeta}[f(x_\infty)-f(x_*)]$, is defined as the smallest value of $a(\theta,\zeta)$ over the positive choices of $\theta$; i.e.  
\begin{equation}\label{def: EVaR}
\text{EV@R}_{1-\zeta}\left[f(x_\infty)-f(x_*)\right]:= \underset{\theta>0}{\inf} a(\theta,\zeta) =  \underset{\theta>0}{\inf}\left\{\mr_{\sigma^2}(\theta)+\frac{2\sigma^2}{\theta}\log(\frac{1}{\zeta})\right\}.
\end{equation}
The relation between EV@R and the entropy can be seen through the lens of duality. More specifically, the dual representation of the EV@R of a random variable $X$ with an associated probability measure $P$ is given by
\begin{equation} {\displaystyle {\text{EV@R}}_{1-\zeta }\bcred{[X]}=\sup _{Q\in \Im_P }(E_{Q}(X))},
\label{eq-dual-representation}
\end{equation}
where $\Im_P$ is a set of probability measures with $\Im_P=\{Q\ll P:D_{KL}(Q||P)\leq \ln (\frac{1}{\zeta}) \}$ and ${\displaystyle D_{KL}(Q||P):=\int {\frac {dQ}{dP}}}$ $\left(\ln {\frac {dQ}{dP}}\right)dP$ is the relative entropy of $Q$ with respect to $P$ (a.k.a. the Kullback-Leibler (KL) distance between $Q$ and $P$) and $Q\ll P$ means that $Q$ is absolutely continuous with respect to $P$ \cite{Javid}. This representation and connection to relative entropy is the reasoning behind the name ``entropic value at risk". \mg{Thus, from \eqref{eq-dual-representation}, EV@R can be interpreted as a robust version of the expectation, and it quantifies the worst-case expectation of a random variable when its underlying distribution is perturbed up to an error term of $\log(1/\zeta)$ in the KL distance.}\looseness=-1

\mg{In addition to its connection to entropy, EV@R also defines a coherent risk measure which is commonly studied in the mathematical finance and statistics \cite{ahmadi2019portfolio,cajas2021entropic,shi2021coherent}. Here, coherency of a risk measure is a desirable property which means that it satisfies the natural monotonicity, sub-additivity, homogeneity, and translational invariance properties \cite{Ruszczynski2006,riedel}. Some of the other alternative well-known risk measures can be less desirable as they violate some of these assumptions. For example, \textit{Value at risk} (V@R) is an alternative risk measure but is not coherent as it does not respect the sub-additivity property \cite{Javid}.} 
\textit{Conditional value at risk} (CV@R) is an alternative coherent risk measure which is commonly used in statistics. However, one can argue that a disadvantage of CV@R is that it is not strongly monotone\footnote{This means that even if the suboptimality improves over one iteration satisfying $f(x_{k+1})-f(x_*) \leq f(x_{k})-f(x_*)$ pointwise and there is a positive probability that the suboptimality is strictly improved (i.e. $\mathbb{P}\left(f(x_{k+1)}- f(x_*) < f(x_k) - f(x_*)\right) >0$), it is possible that CV@R of the suboptimality is not strictly improved at iteration $k+1$ compared to iteration $k$ (see  \cite[Sec. 2]{ahmadi2019portfolio}\bcgreen{).}}; whereas EV@R, is a strongly monotone risk measure \cite{ahmadi2019portfolio}. 
We finally note that
EV@R provides the tightest possible upper bound (obtained from the Chernoff inequality) for both V@R and CV@R \cite{Javid}. Therefore, our results on entropic value at risk also provides immediate bounds on CV@R.\looseness=-1 

Despite entropic risk measures being frequently studied in statistics and mathematical finance; their use in the study and design of optimization algorithms is relatively \bcgreen{an understudied subject}. To our knowledge, our work is the first to use entropic risk to design the parameters of momentum-based optimization algorithms for achieving robustness to the gradient noise and improving their statistical properties. Our approach also yields novel tail bounds for the suboptimality associated to \bcgreen{GMM} iterations. 


\section{Main results for strongly convex quadratic objectives}\label{sec: quad-obj}

Let $f\in\Sml$ be a quadratic function of the form 
\begin{equation}\label{def: quad-func}
f(x)=\frac{1}{2} x^\top Q x + p^\top x+ r,
\end{equation} 
where the eigenvalues $\lambda_i(Q)$ of the symmetric matrix $Q$ satisfy 
$0<\mu = \lambda_1(Q) \leq ... \leq \lambda_d(Q)=L.$ 
Noticing that the global minimum $x_*$ satisfies $0=\nabla f(x_*)=Qx_*+p $, we can write $f(x)=\frac{1}{2}(x-x_*)^\top Q (x-x_*)$ and $\nabla f(x)=Q(x-x_*)$. Therefore, for such quadratic function $f$, the dynamical system \eqref{Sys: TMM} becomes  
\begin{align}\label{sys: TMM_quad}
\bcred{z}_{k+1}-\bcred{z}_*=A_Q (\bcred{z}_{k}-\bcred{z}_*) + B\bc{\veps_{k+1}},
\end{align}
where $\bcred{z}_{*}:=[x_*^\top,x_*^\top]^\top$, \bcred{ $z_0=[x_0^\top, x_{-1}^\top]^\top $}\bcgreen{is the initialization} and
\begin{equation}\label{def: A_Q}
A_Q :=\begin{bmatrix} (1+\beta)I_d - \alpha (1+\gamma )Q  & -(\beta I_d-\alpha\gamma Q) \\
    I_d &  0_d 
    \end{bmatrix}.
\end{equation}
\mg{We observe from  \eqref{sys: TMM_quad} that the distance to the optimum $ \bcred{z}_k - \bcred{z}_* $ follows a (stochastic) affine recursion governed by the matrix $A_Q$ where the randomness comes from the gradient noise. Since the noise is assumed to be centered, the expected distance follows the linear recursion $\mathbb{E}(\bcred{z}_{k+1}-\bcred{z}_*)=A_Q \mathbb{E} (\bcred{z}_{k}-\bcred{z}_*)$. Then, from Gelfand's formula \cite{gelfand}, it follows that 
$$\|\mathbb{E}[\bcred{z}_{k}-\bcred{z}_*]\| \leq (\rho^{k} + \varepsilon_k)\|\mathbb{E}[\bcred{z}_{0}-\bcred{z}_*]\|,$$ 
for some non-negative sequence $\{\varepsilon_k\}_{k\geq 0}$ with the property that $\varepsilon_k \to 0$. However, such a result is asymptotic as the sequence $\varepsilon_k$ is not explicitly provided. Such asymptotic results have been obtained in \cite{gitman2019understanding} for the class of hyperbolic momentum methods (HMM) which cover \bcgreen{GMM} (by proper scaling of HMM parameters). In the next result, we obtain a new non-asymptotic bound on the expected distance with explicit constants. The proof is deferred to the appendix, it is based on explicitly computing the Jordan decomposition of the $A_Q$ as a function of the \bcgreen{GMM} parameters and showing that the Jordan blocks are of size 2 in the worst-case.}
\begin{lemma}
\label{lem: non_asym_conv_quad_obj}
Consider the \bc{noisy} \bcgreen{GMM} iterates $\bcred{z}_k$ satisfying the recursion \eqref{sys: TMM_quad} for minimizing a quadratic function $f$ of the form \eqref{def: quad-func} where the gradient noise obeys \bcgreen{Assumption} \ref{Assump: Noise}. Then, we have for any $k\geq 1$,
\begin{small}
\begin{align} 
\Vert\mathbb{E}[\bcred{z}_{k}]-\bcred{z}_{*}\Vert 
&\bcgreen{\leq 
    \max_{i=1,2, \dots,d}
        \begin{cases}  
      \frac{|c_i^2+2d_i+2|}{\sqrt{|c_i^2+4d_i|}} \rho_i^{k}\Vert z_0-z_*\Vert, 
                    & \mbox{if} \quad c_i^2 + 4d_i \neq 0,  \\
(\frac{c_i^2}{4}+2)\sqrt{2\rho_i^{2k}+k^2\rho_i^{2k-2}}\Vert z_0-z_*\Vert,
                  & \mbox{if}\quad c_i^2 + 4d_i = 0,
            \end{cases}
            }\nonumber\\
&\leq \bcgreen{C_k} \rho(A_Q)^{k-1} \Vert \bcred{z}_0-\bcred{z}_* \Vert,
\label{tmm-quad-perf-bound}
\end{align}
\end{small}
where $ \rho(A_Q)= \max_{i\in \{ 1,..,d\}}\{\rho_{i}\}$ is the spectral radius of $A_Q$ with
\bc{\begin{equation}\label{def: quad_rate}
\rho_i := \begin{cases}
       \frac{1}{2}|c_i|+\frac{1}{2}\sqrt{c_i^2+4d_i}, & \text{ if } c_i^2 +4d_i \geq 0,\\
       \sqrt{|d_i|}, & \text{ otherwise},
\end{cases}
\end{equation}
}
and \bc{$$c_i :=(1+\beta) - \alpha (1+\gamma )\lambda_i(Q), \quad d_i = -(\beta -\alpha\gamma \lambda_i(Q)),$$ 
$\lambda_i(Q)$ are the eigenvalues of the Hessian matrix $Q$ of $f$ \bcgreen{for $i=1,2,...,d$},
\bcgreen{
$$
C_k^{(i)}:= \begin{cases}                  \frac{|c_i^2+2d_i+2| \rho_i}{\sqrt{|c_i^2+4d_i|}},
                    & \mbox{if} \quad c_i^2 + 4d_i \neq 0,  \\
                    (\frac{c_i^2}{4}+2)\sqrt{2\rho_i^2 + k^2},
                   & \mbox{if}\quad c_i^2 + 4d_i = 0,
            \end{cases}
$$
}
and $ C_k:= \max_i C_k^{(i)}$.
}
\end{lemma}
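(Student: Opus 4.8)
The plan is to analyze the matrix $A_Q$ by block-diagonalizing it according to the eigenbasis of the symmetric Hessian $Q$. Since $A_Q$ is built out of scalar multiples of $I_d$ and $Q$ arranged in a $2\times 2$ block pattern, writing $Q = U\Lambda U^\top$ with $\Lambda=\mathrm{diag}(\lambda_1,\dots,\lambda_d)$ and conjugating by $\mathrm{diag}(U,U)$ decouples the $2d$-dimensional recursion into $d$ independent $2\times 2$ recursions, one for each eigenvalue $\lambda_i(Q)$. Each such block is the companion-type matrix $\begin{bmatrix} c_i & d_i \\ 1 & 0\end{bmatrix}$ with $c_i = (1+\beta)-\alpha(1+\gamma)\lambda_i(Q)$ and $d_i = -(\beta-\alpha\gamma\lambda_i(Q))$, whose characteristic polynomial is $t^2 - c_i t - d_i$. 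The eigenvalues are $\tfrac12 c_i \pm \tfrac12\sqrt{c_i^2+4d_i}$ when $c_i^2+4d_i \ge 0$ and a complex-conjugate pair of modulus $\sqrt{|d_i|}$ when $c_i^2+4d_i<0$; in all cases the modulus equals $\rho_i$ as defined in \eqref{def: quad_rate}, and $\rho(A_Q)=\max_i\rho_i$.

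Next I would bound the $k$-th power of each $2\times 2$ block. When $c_i^2+4d_i\ne 0$ the block is diagonalizable, say $M_i = P_i D_i P_i^{-1}$ with $D_i=\mathrm{diag}(r_i^+, r_i^-)$; then $\|M_i^k\| \le \|P_i\|\,\|P_i^{-1}\|\,\rho_i^k = \kappa(P_i)\,\rho_i^k$, and one computes the condition number $\kappa(P_i)$ explicitly in terms of $c_i,d_i$ — this is where the factor $|c_i^2+2d_i+2|/\sqrt{|c_i^2+4d_i|}$ comes from (the ``$+2$'' reflecting that the companion form has a unit entry, so the eigenvector matrix and its inverse carry $1+|r_i^\pm|^2$-type terms). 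When $c_i^2+4d_i=0$ the block has a single eigenvalue $r_i=c_i/2$ of algebraic multiplicity $2$ but is not a scalar (since the $(2,1)$ entry is $1\ne 0$), so its Jordan form is a single $2\times 2$ Jordan block $\begin{bmatrix} r_i & 1\\ 0 & r_i\end{bmatrix}$; hence $M_i^k$ has entries $r_i^k$ and $k r_i^{k-1}$ up to the (constant, explicitly computable) similarity transformation, giving the $\sqrt{2\rho_i^{2k}+k^2\rho_i^{2k-2}}$ growth with the prefactor $c_i^2/4+2$. The key structural observation to record is that the Jordan blocks of $A_Q$ are of size at most $2$, which is immediate from the block-decoupling since each $2\times 2$ companion block can contribute at most a size-$2$ Jordan block.

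Then I would assemble the pieces: $\mathbb{E}[z_k]-z_* = A_Q^k(\mathbb{E}[z_0]-z_*) = A_Q^k(z_0-z_*)$ (using that $z_0$ is deterministic and the noise is centered so $\mathbb{E}[z_k]-z_* = A_Q(\mathbb{E}[z_{k-1}]-z_*)$ by taking expectations in \eqref{sys: TMM_quad}), so $\|\mathbb{E}[z_k]-z_*\| \le \|A_Q^k\|\,\|z_0-z_*\|$. Under the orthogonal conjugation by $\mathrm{diag}(U,U)$ the operator norm of $A_Q^k$ equals $\max_i \|M_i^k\|$ (block-diagonal in an orthonormal basis, up to a fixed permutation interleaving coordinates), which yields the first, case-split bound in \eqref{tmm-quad-perf-bound}. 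The second bound follows by factoring $\rho_i^k = \rho_i\cdot\rho_i^{k-1}$ in the diagonalizable case and $\sqrt{2\rho_i^{2k}+k^2\rho_i^{2k-2}} = \rho_i^{k-1}\sqrt{2\rho_i^2+k^2}$ in the defective case, pulling out $\rho_i^{k-1}\le\rho(A_Q)^{k-1}$ and collecting the remaining prefactors into $C_k^{(i)}$ and $C_k=\max_i C_k^{(i)}$.

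The main obstacle I anticipate is not conceptual but bookkeeping: computing the condition number $\kappa(P_i)$ of the eigenvector matrix of the companion block cleanly enough to land exactly on the constant $|c_i^2+2d_i+2|/\sqrt{|c_i^2+4d_i|}$, and likewise pinning down the defective-case constant $c_i^2/4+2$. One has to choose a convenient normalization of the eigenvectors (e.g. $[r_i^\pm,\,1]^\top$), invert the $2\times 2$ matrix $P_i$ by hand, and then bound $\|P_i\|\|P_i^{-1}\|$ using $|r_i^+ - r_i^-| = \sqrt{|c_i^2+4d_i|}$, $r_i^+ r_i^- = -d_i$, and $(r_i^+)^2+(r_i^-)^2 = c_i^2+2d_i$; care is needed to handle the complex-eigenvalue case ($c_i^2+4d_i<0$), where the same algebraic identities hold over $\mathbb{C}$ and the relevant norm is still controlled by $\rho_i=\sqrt{|d_i|}$. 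A secondary subtlety is making sure the ``$\max_i$'' is legitimate, i.e. that the block-diagonalizing transformation is orthogonal (it is, since $U$ is orthogonal and the interleaving of the two copies is a permutation), so that no extra dimension-dependent condition-number factor sneaks in at the outer level.
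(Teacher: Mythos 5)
Your proposal follows essentially the same route as the paper's proof: orthogonal block-diagonalization of $A_Q$ into $2\times 2$ companion blocks indexed by the eigenvalues of $Q$, explicit Jordan/eigenvector decomposition of each block with the observation that Jordan blocks have size at most 2, and bounding $\|V_i\|\,\|V_i^{-1}\|$ (the paper does this via Frobenius norms, which is exactly how the constants $|c_i^2+2d_i+2|/\sqrt{|c_i^2+4d_i|}$ and $c_i^2/4+2$ arise from your $1+|r_i^{\pm}|^2$-type terms). The assembly via $\|\mathbb{E}[z_k]-z_*\|\leq\|A_Q^k\|\,\|z_0-z_*\|$ and the factoring out of $\rho_i^{k-1}$ also match the paper.
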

\mg{
\begin{remark}\label{remark-tmm-deter-rate} We note that Lemma \ref{lem: non_asym_conv_quad_obj} leads to new non-asymptotic performance bounds for deterministic \bcgreen{GMM} (where the gradient noise $w_k = 0$ for every $k$) as well. For deterministic methods, the iterates $\bcred{z}_{k}$ follow the recursion $\bcred{z}_{k+1} - \bcred{z}_* = A_Q(\bcred{z}_k - \bcred{z}_*)$ and the proof technique of Lemma \ref{lem: non_asym_conv_quad_obj} is still applicable. In this case, we can simply ignore the expectation in \eqref{tmm-quad-perf-bound} and the deterministic \bcgreen{GMM} iterates $\bcred{z}_{k}$ will satisfy the bound $\bc{\Vert}\bcred{z}_k - \bcred{z}_*\bc{\Vert} \leq C_k \rho(A_Q)^{k-1}\|\bcred{z}_0 - \bcred{z}_*\|$.
\end{remark}
}

For $\rho(A_Q)<1$, 
the expected iterates converge, i.e. $\mathbb{E}[{\bcred{z}}_k] \to \bcred{z}_*$, and $\rho(A_Q)$ determines the rate of convergence where $\rho(A_Q)$ depends on the choice of parameters $(\alpha,\beta,\gamma)$ (see \eqref{tmm-quad-perf-bound}). 
Next, we introduce the \emph{stable set} $\mathcal{S}_q$ as the set of parameters for which the iterates converge globally in expectation, i.e. 
\begin{equation}\label{def: stable_set}
\bcgreen{\mathcal{S}}_q:=\left\{(\alpha,\beta,\gamma) ~\mid~  \rho(A_Q)  <1 \right\},
\end{equation}
\mg{where the subscript $q$ in the definition of $\bcgreen{\mathcal{S}}_q$ is to emphasize the fact that the objective $f$ in question is a quadratic \bcred{function}.}
\mg{In the next result, we show that the finite-horizon entropic risk measure, $r_{k,\sigma^2}$, converges linearly to the infinite-horizon risk measure $r_{\sigma^2}$ \bcgreen{for} strongly convex quadratic objectives, where the rate of convergence is also governed by $\rho(A_Q)$. These results show that convergence rate and the entropic risk are both relevant for designing the parameters of \bcred{noisy \bcgreen{GMM}}. Basically, the infinite-horizon entropic risk characterizes the tail probabilities of suboptimality in an asymptotic fashion, whereas the convergence rate characterizes how fast this asymptotic region is attained.}

\begin{proposition}\label{prop: quad_qisk_meas_convergence}
Consider the \bc{noisy} \bcgreen{GMM} iterates $\bcred{z}_k$  satisfying the recursion \eqref{sys: TMM_quad} for minimizing a quadratic function $f$ of the form \eqref{def: quad-func} where the gradient noise obeys \bcgreen{Assumption} \ref{Assump: Noise}. \bcred{For a fixed risk parameter $\theta>0$, the risk measure $\mr_{\sigma^2}(\theta)$ is finite \bc{if and only if} the parameters $(\alpha,\beta,\gamma)$ belong to the $\theta$-feasible set $\mathcal{F}_\theta$:
\begin{equation}\label{cond: var-on-Sigma-quad}
\mathcal{F}_{\theta}:=\bigg\{ (\alpha,\beta,\gamma)\in \mathbb{R}_{++}\times \mathbb{R}_+\times \mathbb{R}_+ ~\vline~ \; \bcgreen{|c_i|<|1-d_i|} \mbox{ and } \theta < 2 u_i \mbox{ for } i=1,2,\dots,d \bigg\},
\end{equation}
where
\beq
u_i=\frac{(1+d_i)[(1-d_i)^2-c_i^2]}{\lambda_i(Q)(1-d_i)\alpha^2},
\label{def-ui}
\eeq
with $c_i$ and $d_i$ as in Lemma \ref{lem: non_asym_conv_quad_obj}.} \bc{Moreover, in this case we have $\rho(A_Q)<1$ and 
the \bcred{finite-horizon entropic risk}\bcgreen{, $\mr_{k,\sigma^2}(\theta)$,} converges to the \bcred{infinite-horizon} entropic risk\bcgreen{, $\mr_{\sigma^2}(\theta)$,} linearly at a rate $\rho(A_Q)$ satisfying}
\mg{
\begin{eqnarray} 
| \mr_{k,\sigma^2}(\theta)-\mr_{\sigma^2}(\theta)| \leq \mathcal{O} \left(C_k^{2}\rho(A_Q)^{2(k-1)} +  C_k^4\rho(A_Q)^{4(k-1)} \right), 
\end{eqnarray}
where $C_k$ is as in Lemma \ref{lem: non_asym_conv_quad_obj}, and $\mathcal{O}(\cdot)$ hides constants that depends on the initialization $\bcred{z}_0$.
}
\end{proposition}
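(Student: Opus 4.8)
The plan is to diagonalize and decouple. Write $Q=U\Lambda U^\top$ with $\Lambda=\mathrm{diag}(\lambda_1(Q),\dots,\lambda_d(Q))$; applying the orthogonal change of variables $x\mapsto U^\top x$ to both blocks of $\bcred{z}_k$ turns $A_Q$ into a block-diagonal matrix whose $i$-th $2\times 2$ block is the companion matrix $A_i=\begin{bmatrix}c_i & d_i\\ 1 & 0\end{bmatrix}$ — exactly the blocks isolated by the Jordan analysis in the proof of Lemma~\ref{lem: non_asym_conv_quad_obj} — and, since the gradient noise in Assumption~\ref{Assump: Noise} is isotropic, leaves the noise covariance unchanged. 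Hence \eqref{sys: TMM_quad} splits into $d$ independent two-dimensional recursions, and under Assumption~\ref{Assump: Noise} the component of $x_k-x_*$ along the $i$-th eigenvector of $Q$ is Gaussian, $\mathcal N(m_{i,k},s_{i,k}^2)$, where $m_{i,k}$ is a coordinate of $\mathbb{E}[\bcred{z}_k]-\bcred{z}_*=A_Q^k(\bcred{z}_0-\bcred{z}_*)$ (so $\sum_i m_{i,k}^2\le\|\mathbb{E}[\bcred{z}_k]-\bcred{z}_*\|^2\le C_k^2\rho(A_Q)^{2(k-1)}\|\bcred{z}_0-\bcred{z}_*\|^2$ by Lemma~\ref{lem: non_asym_conv_quad_obj}), and $s_{i,k}^2=\sigma^2\sum_{m=0}^{k-1}\big([A_i^m b]_1\big)^2$ for the relevant input vector $b$; this sum is nondecreasing in $k$ and, when $\rho(A_i)<1$, converges to the stationary variance $s_{i,\infty}^2$ solving the one-step Lyapunov equation for $A_i$. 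I will also use that the covariance $\Sigma_k$ of $x_k-x_*$ obeys $\Sigma_k-\Sigma_\infty=-A_Q^k\Sigma_\infty(A_Q^k)^\top$ (subtract the stationary Lyapunov equation from the time-$k$ one and iterate, using $\Sigma_0=0$), so in particular $s_{i,k}^2\le s_{i,\infty}^2$ and $|s_{i,k}^2-s_{i,\infty}^2|\le\|\Sigma_k-\Sigma_\infty\|\le\|A_Q^k\|^2\|\Sigma_\infty\|=\mathcal O\big(C_k^2\rho(A_Q)^{2(k-1)}\big)$.

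Next I compute the entropic risk in closed form. Since $f(x_k)-f(x_*)=\tfrac12(x_k-x_*)^\top Q(x_k-x_*)=\tfrac12\sum_i\lambda_i(Q)\,\xi_{i,k}^2$ with $\xi_{i,k}\sim\mathcal N(m_{i,k},s_{i,k}^2)$ independent, the moment generating function factorizes into noncentral-$\chi^2$ factors, giving
\begin{align}
\mr_{k,\sigma^2}(\theta)=\sum_{i=1}^d\Big[-\tfrac{1}{2\theta}\log\big(1-\theta\lambda_i(Q)s_{i,k}^2\big)+\tfrac{\lambda_i(Q)\,m_{i,k}^2}{2\big(1-\theta\lambda_i(Q)s_{i,k}^2\big)}\Big],
\end{align}
which is finite if and only if $\theta\lambda_i(Q)s_{i,k}^2<1$ for every $i$. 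Letting $k\to\infty$, $m_{i,k}\to0$ and $s_{i,k}^2\uparrow s_{i,\infty}^2$, so $\mr_{\sigma^2}(\theta)=\sum_i-\tfrac1{2\theta}\log(1-\theta\lambda_i(Q)s_{i,\infty}^2)$, which is finite iff $\rho(A_Q)<1$ (so each $s_{i,\infty}^2$ exists) and $\theta\lambda_i(Q)s_{i,\infty}^2<1$ for all $i$. Solving the Lyapunov equation for $A_i$ and simplifying gives $\theta\lambda_i(Q)s_{i,\infty}^2=\theta/(2u_i)$ with $u_i$ as in \eqref{def-ui}, so the last condition reads $\theta<2u_i$. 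This yields both directions. If $(\alpha,\beta,\gamma)\in\mathcal F_\theta$, then $\theta>0$ together with $\theta<2u_i$ and $|c_i|<|1-d_i|$ forces $u_i>0$, hence $-1<d_i<1$ and $|c_i|<1-d_i$, i.e.\ $\rho(A_i)<1$ for every $i$; thus $\rho(A_Q)<1$ (which proves that claim) and $\mr_{\sigma^2}(\theta)<\infty$. Conversely, if $\mr_{\sigma^2}(\theta)<\infty$ then the stationary law must exist — otherwise $s_{i,k}^2\to\infty$ for some mode and the Gaussian integral defining the risk diverges for $k$ large, forcing $\mr_{\sigma^2}(\theta)=+\infty$ — so $\rho(A_Q)<1$, hence $|c_i|<|1-d_i|$; and finiteness also forces $\theta\lambda_i(Q)s_{i,\infty}^2<1$, i.e.\ $\theta<2u_i$. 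Hence $(\alpha,\beta,\gamma)\in\mathcal F_\theta$.

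For the linear-rate claim I subtract the two closed forms and estimate each mode's contribution, summing over the $d$ modes. Because $s_{i,k}^2\le s_{i,\infty}^2<1/(\theta\lambda_i(Q))$ uniformly in $k$, the factor $1-\theta\lambda_i(Q)s_{i,k}^2$ is bounded below by $1-\theta/(2\max_j u_j)>0$, so: (i) by the mean value theorem $|\log(1-\theta\lambda_i(Q)s_{i,k}^2)-\log(1-\theta\lambda_i(Q)s_{i,\infty}^2)|\le \mathrm{const}\cdot|s_{i,k}^2-s_{i,\infty}^2|=\mathcal O(C_k^2\rho(A_Q)^{2(k-1)})$; and (ii) the mean term is $\le\mathrm{const}\cdot m_{i,k}^2=\mathcal O(C_k^2\rho(A_Q)^{2(k-1)})$. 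Carrying the second-order Taylor term in (i) — equivalently, the correction from replacing $s_{i,k}^2$ by $s_{i,\infty}^2$ in the denominator of the mean term — contributes $\mathcal O(|s_{i,k}^2-s_{i,\infty}^2|^2)=\mathcal O(C_k^4\rho(A_Q)^{4(k-1)})$, the source of the quartic term. Collecting everything and absorbing constants depending on $\sigma^2,\theta,\mu,L,d$ and $\|\bcred{z}_0-\bcred{z}_*\|$ into $\mathcal O(\cdot)$ gives $|\mr_{k,\sigma^2}(\theta)-\mr_{\sigma^2}(\theta)|\le\mathcal O(C_k^2\rho(A_Q)^{2(k-1)}+C_k^4\rho(A_Q)^{4(k-1)})$.

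I expect the main obstacle to be the bookkeeping in the third step rather than anything conceptual: one must (a) derive the closed form of $s_{i,\infty}^2$ from the Lyapunov equation and verify it reproduces \eqref{def-ui}; (b) maintain the uniform-in-$k$ lower bound on $1-\theta\lambda_i(Q)s_{i,k}^2$, which hinges on the monotonicity $s_{i,k}^2\le s_{i,\infty}^2$ and hence on $\Sigma_0=0$; and (c) check that the tail sums $\sum_{m\ge k}C_{m+1}^2\rho(A_Q)^{2m}$ entering $\|\Sigma_k-\Sigma_\infty\|$ remain $\mathcal O(C_k^2\rho(A_Q)^{2(k-1)})$, i.e.\ that the geometric factor swamps the polynomial growth of $C_m$. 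A secondary point is that the sharp ``iff'' relies on the exact moment generating function identity for quadratic forms of Gaussians; if Assumption~\ref{Assump: Noise} only supplies a one-sided sub-Gaussian bound, the equality becomes an inequality and necessity of $\theta<2u_i$ must instead be extracted from a matching variance lower bound.
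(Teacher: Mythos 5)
Your proposal is correct and takes essentially the same route as the paper: an explicit Gaussian moment-generating-function computation for the quadratic suboptimality, identification of the finiteness threshold $\theta<2u_i$ from the stationary Lyapunov solution in the eigenbasis of $Q$ (the paper's Lemma \ref{lem: sol-lyap-eq-quad}), and a perturbation bound combining the linear convergence of the mean from Lemma \ref{lem: non_asym_conv_quad_obj} with $\Vert\Sigma_k-\Sigma_\infty\Vert\leq\Vert A_Q^k\Vert^2\Vert\Xi_0-\Xi_\infty\Vert$; working mode-by-mode with noncentral-$\chi^2$ factors rather than with the matrix determinant/$M_k$ formulation is only a cosmetic difference. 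One small bookkeeping slip: your displayed entropic-risk formula drops the $\tfrac{1}{2\sigma^2}$ normalization inside the exponent (the correct finiteness condition is $\theta\lambda_i(Q)s_{i,\infty}^2<2\sigma^2$, which with $\lambda_i(Q)s_{i,\infty}^2=\sigma^2/u_i$ still yields $\theta<2u_i$), so the conclusion is unaffected.
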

\begin{proof}
\bcgreen{The proof is deferred to Appendix \ref{app: quad_qisk_meas_convergence}}. 
\end{proof}

\bcgreen{By definition,} the sets $\mathcal{F}_\theta$ are increasing as $\theta$ gets smaller, \bcgreen{ in the sense that $\mathcal{F}_{\theta_1}\subset \mathcal{F}_{\theta_2}$ for $\theta_1 > \theta_2$}. 
Roughly speaking, this says that the less risk-averse we are (for smaller $\theta$); we have more flexibility in the choice of the parameters. To illustrate this point, in Figure \ref{fig: feasible-region}, we present the $\mathcal{S}_q$ and the sets $\mathcal{F}_\theta$ for different values of $\theta$ for the two-dimensional quadratic function $f(x_{(1)},x_{(2)})=x_{(1)}^2+0.1x_{(2)}^2$ where $x_{(1)},x_{(2)}\in\mathbb{R}$ and we take $\sigma^2=1$. It can be seen that as the risk parameter $\theta$ increases, the set $\mathcal{F}_\theta$ gets smaller; i.e. we need to choose stepsize and momentum more conservatively to be able to limit deviations from the mean and control the risk.\footnote{In Figure \ref{fig: feasible-region}, we first plot the set $\mathcal{S}_q$ in blue, and then the sets $\mathcal{F}_{0.25}, \mathcal{F}_{1}$ and $\mathcal{F}_{2}$ in purple, brown and green color respectively. If an area is colored multiple times, the plot will display the last color used. Consequently, the set $\mathcal{F}_1$ is given by the union of the green and brown segment, $\mathcal{F}_2$ is the union of green, brown and purple segment, and so on.} In fact, in Lemma \ref{lemma-Ftheta-vs-Sq} of the \bcgreen{Appendix}, we \bcgreen{also} show that $\mathcal{F}_\theta$ is a subset of $\mathcal{S}_q$ for every $\theta>0$ and we have 
\begin{equation}\label{eq: lim_F_sq}
\bcgreen{\lim_{\theta\to 0} \mathcal{F}_\theta=\mathcal{S}_q=\left\{ (\alpha,\beta,\gamma)\in \mathbb{R}_{++}\times \mathbb{R}_{+}\times \mathbb{R}_{+}~|~|c_i| < |1-d_i|,\;\; 0<u_i \; \text{for $i$=1,...,d}\right\}.}
\end{equation}
\bcgreen{This formula characterizes the set $\bcgreen{\mathcal{S}}_q$ as a function of the GMM parameters and is new to our knowledge.} 
\begin{figure}
\centering
    \includegraphics[width=0.4\linewidth,height=0.4\linewidth]{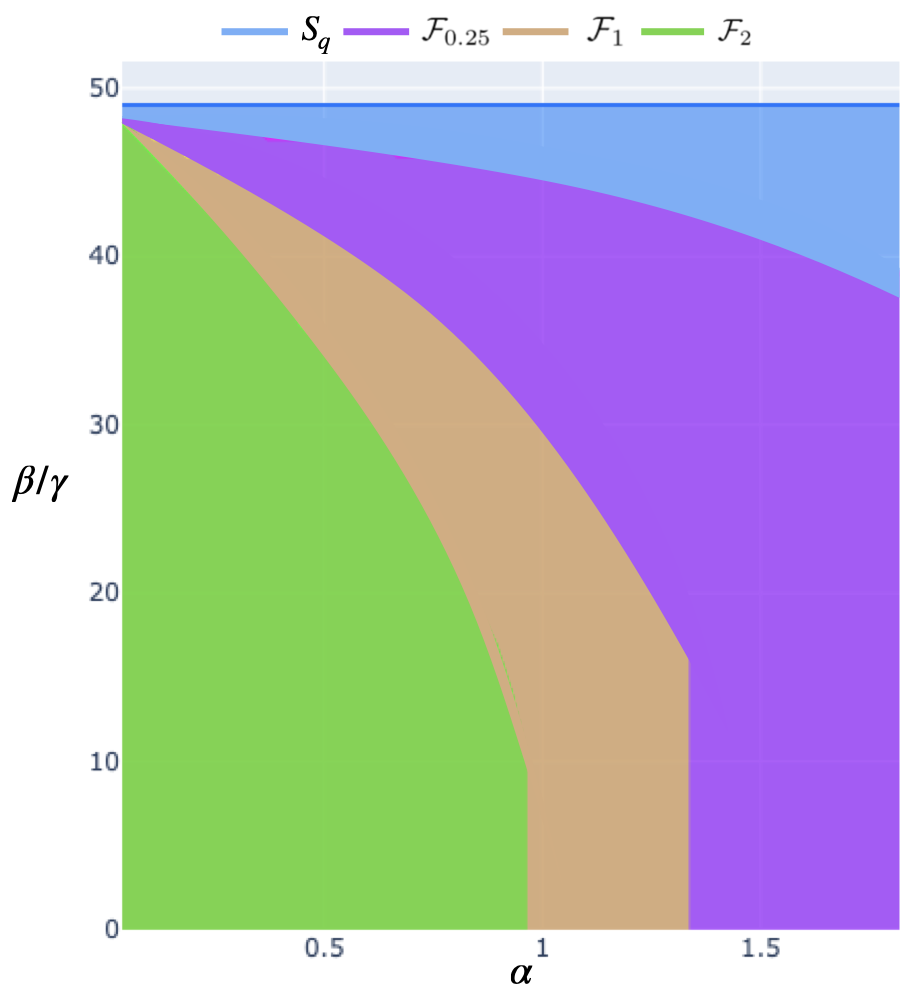}
    \caption{The feasible region $\mathcal{F}_{\theta}$ versus the stable set $\mathcal{S}_q$ for $f(x_{(1)},x_{(2)})=x_{(1)}^2+0.1x_{(2)}^2$ where $x_{(1)}, x_{(2)}\in\mathbb{R}$ and $\sigma^2=1$.}
    \label{fig: feasible-region}
    \vspace{-0.2cm}
\end{figure}
Next, we study the infinite-horizon entropic risk $\mr_{\sigma^2}(\theta)$ and the entropic value at risk of \bc{noisy} \bcgreen{GMM} on a strongly convex quadratic objective \bcred{under }\bcgreen{Assumption \ref{Assump: Noise}.}
\bcred{Let $U\Lambda U^\top$ be the eigenvalue decomposition of $Q$ where $\Lambda$ is a diagonal matrix containing the eigenvalues. With the change of variable $\tilde{x}_k:= U^\top x_k$ and $\tilde{\veps}_{k+1} := U^{\top} \bc{\veps_{k+1}}$, we can see that $\tilde{x}_k$ follows the recursion:}
\begin{eqnarray*}
\tilde{x}_{k+1}=(1+\beta) \tilde{x}_k-\beta \tilde{x}_{k-1}-\alpha \Lambda \left((1+\gamma)\tilde{x}_k -\gamma \tilde{x}_{k-1}\right)+\bc{\tilde{\veps}_{k+1}}.
\end{eqnarray*}
\bcred{Introducing $\tilde{\bcred{z}}_k:=[\tilde{x}_k^\top,\tilde{x}_{k-1}^\top]^{\top}$ and $\tilde{\bcred{z}}_*:=[\tilde{x}_*^\top,\tilde{x}_*^\top]^{\top}$, then the recursion becomes}
\begin{align} \label{sys: STMM_quad_Lambda}
\tilde{\bcred{z}}_{k+1}-\tilde{\bcred{z}}_*=A_\Lambda (\tilde{\bcred{z}}_k-\tilde{\bcred{z}}_*)+B \bc{\tilde{\veps}_{k+1}},
\end{align} 
\mg{where with slight abuse of notation $A_\Lambda$ refers to the matrix $A_Q$ defined in \eqref{def: A_Q} when we plug in $Q = \Lambda$}. \bcred{With a further change of variable $\bar{\bcred{z}}_k := \tilde{\bcred{z}}_k-\tilde{\bcred{z}}_*$, and using the fact that $\bc{\veps_{k+1}}$ and $\bc{\tilde{\veps}_{k+1}}$ have the same distribution we obtain} 
\begin{align}\label{sys: TMM_dynmcal_quad_obs-1}
\bar{\bcred{z}}_{k+1}&= A_{\Lambda} \bar{\bcred{z}}_k + B \bc{\veps_{k+1}},
\end{align}
here equality means that both sides have the same distribution. \bcred{If $(\alpha,\beta,\gamma)\in \mathcal{S}_q$, then it follows from Proposition \ref{prop: quad_qisk_meas_convergence} that the finite-horizon risk measure will converge to the infinite-horizon risk and we have\begin{equation}\label{eq-entropic-risk-quad}
\mr_{\sigma^2}(\theta)=\limsup_{k\to\infty}\frac{2\sigma^2}{\theta}\log \mathbb{E}[e^{\frac{\theta}{2\sigma^2}(f(x_k)-f(x_*))}] =\limsup_{k\to\infty}\frac{2\sigma^2}{\theta}\log \mathbb{E}[e^{\frac{\theta}{2\sigma^2}\Vert S \bar{\bcred{z}}_k \Vert^2}], 
\end{equation}
where in the last equality we used the eigenvalue decomposition of $Q$, \bcgreen{i.e. we choose the matrix $S=\frac{1}{\sqrt{2}}[\Lambda^{1/2}, 0_d]$ so that}
\begin{align*}
f(x_k)-f(x_*)=\frac{1}{2}(x_k-x_*)Q(x_k-x_*)
=\frac{1}{2}\Vert \Lambda^{1/2}U^\top (x_k-x_*)\Vert^2\bcgreen{=}\mg{\Vert S \bar{\bcred{z}}_k \Vert^2}. 
\end{align*}
}

\bcred{Next,} we explicitly compute the entropic risk for noisy \bcgreen{GMM} under the \bcred{isotropic} Gaussian noise (Assumption \ref{Assump: Noise}) and provide an upper bound on the EV@R of the asymptotic suboptimality. Since the noise has mean zero,  
we see from \eqref{sys: TMM_dynmcal_quad_obs-1} that  $\mathbb{E}[\bar{\bcred{z}}_{k}|\mathcal{F}_{k-1}]=A_\Lambda \bar{\bcred{z}}_{k-1}$ where $\mathcal{F}_{k-1}$ is the natural filtration generated by the iterates $\{\bcred{z}_j\}_{j=0}^{k-1}$ and \bc{the matrix} $V_k:=\mathbb{E}[\bar{\bcred{z}}_k \bar{\bcred{z}}_k^{\top}]$ satisfies the recursion
\begin{align*} 
V_{k+1}&=\mathbb{E}[\left( A_{\Lambda}\bar{\bcred{z}}_k + B\bc{\veps_{k+1}} \right)\left(A_{\Lambda}\bar{\bcred{z}}_k+B\bc{\veps_{k+1}} \right)^{\top}  ]\\
&= \mathbb{E}[ A_{\Lambda}\bar{\bcred{z}}_k\bar{\bcred{z}}_k^{\top}A^{\top}_{\Lambda} + A_{\Lambda}\bar{\bcred{z}}_k(\bc{\veps_{k+1}})^{\top}B^{\top}+ B\bc{\veps_{k+1}} \bar{\bcred{z}}_k^{\top}A^{\top}_{\Lambda}+B(\bc{\veps_{k+1}})(\bc{\veps_{k+1}})^{\top}B^{\top}]\\
&= A_{\Lambda} V_kA^{\top}_{\Lambda}+ \sigma^2BB^{\top}.
\end{align*}
\mg{Given that $(\alpha,\beta,\gamma)\in \mathcal{S}_q$, \bcred{we have} $\rho(A_\Lambda) = \rho(A_Q)<1$ and $\lim_{k\to\infty}\mathbb{E}[\bcred{z}_k]=\bcred{z}_*$ \bcred{from Lemma \ref{lem: non_asym_conv_quad_obj}}. \bcgreen{Since $\rho(A_\Lambda)<1$}, it follows from the recursion above that the limit $V_{\infty}:= \lim_{k\rightarrow \infty} V_{k}$ exists (see \cite[Theorem 5 D.6]{chen-book}) and satisfies the Lyapunov equation
\begin{equation}\label{eq: Quad_Lyapunov}
V_{\infty}= A_{\Lambda}V_{\infty}A_{\Lambda}^{\top} + \sigma^2 BB^{\top}.
\end{equation}
Under Assumption \ref{Assump: Noise}, we also observe from equality \eqref{sys: TMM_dynmcal_quad_obs-1} that $\bar{\bcred{z}}_{k}$ is a centered Gaussian random variable for every $k$ as $\bcred{z}_k$ is Gaussian. This implies that the limit $\bar{\bcred{z}}_\infty = \lim_k \bar{\bcred{z}}_{k}$ is also a Gaussian with a zero mean and with a covariance matrix $V_\infty$. Therefore, the distribution of the iterates $x_k$ converges to a Gaussian distribution as well; let $x_\infty$ denote a random variable with this limiting distribution. We also observe from \eqref{eq-entropic-risk-quad} that $\mr_{\sigma^2}(\theta) = \frac{2\sigma^2}{\theta} \log \mathbb{E}[e^{\frac{\theta}{2}X_\infty^\top X_\infty}]$ where $X_\infty = \frac{S\bar{\bcred{z}}_\infty}{\sigma}$ has a Gaussian distribution. By computing this integral explicitly, in the next result, we obtain an explicit characterization of the infinite-horizon entropic risk. The details of the proof is given in Appendix \ref{app: quad-risk-meas}.} 

\begin{proposition}\label{prop: quad-risk-meas-gauss-noise} Consider the \bc{noisy} \bcgreen{GMM} iterates $\bcred{z}_k$  satisfying the recursion \eqref{sys: TMM_quad} for minimizing a quadratic function $f\in\Sml$ of the form \eqref{def: quad-func} where the Hessian matrix $Q$ has the eigenvalues $0<\mu=\bcgreen{\lambda_1(Q)\leq \dots\leq}$ $\lambda_{d}(Q)=L$. Assume that the gradient noise obeys \bcgreen{Assumption} \ref{Assump: Noise}.
\bcgreen{The infinite-horizon entropic risk measure, $\mr_{\sigma^2}(\theta)$, is finite if and only if $(\alpha,\beta,\gamma)\in\mathcal{F}_{\theta}$ \bcgreen{, where $\mathcal{F}_\theta$ is as in \eqref{cond: var-on-Sigma-quad},} and is given as}
\begin{equation} \label{eq: STMM_Quad_mr}
\mg{\mr_{\sigma^2}(\theta) = 
-\frac{\sigma^2}{\theta}\sum_{i=1}^{d} \log \left(1-\frac{\theta}{2u_i}\right).}
\end{equation}
\end{proposition}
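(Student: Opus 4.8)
The plan is to reduce the infinite-horizon entropic risk to the moment generating function of a Gaussian quadratic form, diagonalize the problem through the eigenstructure of the Hessian so that only scalar ($2\times2$) objects remain, solve the resulting Lyapunov equations in closed form, and read off both the finiteness condition and the value. \emph{Step 1 (reduction, and the unstable case).} On $\mathcal{S}_q$ the discussion preceding the proposition already gives $\mr_{\sigma^2}(\theta)=\frac{2\sigma^2}{\theta}\log\mathbb{E}\big[e^{\frac{\theta}{2}X_\infty^\top X_\infty}\big]$, where $X_\infty=\sigma^{-1}S\bar z_\infty\sim\mathcal{N}(0,\Sigma)$ with $\Sigma:=\sigma^{-2}SV_\infty S^\top$ and $V_\infty$ the solution of the Lyapunov equation \eqref{eq: Quad_Lyapunov}; so on $\mathcal{S}_q$ it only remains to compute the spectrum of $\Sigma$ and evaluate this Gaussian moment. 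Since $\theta>0$, the conditions defining $\mathcal{F}_\theta$ force $u_i>0$, so the characterization \eqref{eq: lim_F_sq} (see also Lemma \ref{lemma-Ftheta-vs-Sq}) gives $\mathcal{F}_\theta\subseteq\mathcal{S}_q$; hence for parameters outside $\mathcal{S}_q$ there is nothing to compute, and I will simply argue $\mr_{\sigma^2}(\theta)=+\infty$ there. Indeed, when $\rho(A_Q)\ge1$, along an eigendirection realizing $\rho(A_Q)$ the scalar second moment of $\bar z_k$ diverges — geometrically if $\rho(A_Q)>1$, and linearly if $\rho(A_Q)=1$, since the corresponding $2\times2$ block $A_i$ (with first row $(c_i,d_i)$ and second row $(1,0)$) is never a semisimple $\pm1$, so the relevant mode behaves like a non-degenerate random walk — and therefore $\mathbb{E}\big[e^{\frac{\theta}{2\sigma^2}(f(x_k)-f(x_*))}\big]=+\infty$ for all large $k$, making the $\limsup$ defining $\mr_{\sigma^2}(\theta)$ infinite.

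\emph{Step 2 (decoupling and the scalar Lyapunov equations).} Because $\Lambda=U^\top Q U$ is diagonal, after the coordinate permutation interleaving $\tilde x_k$ with $\tilde x_{k-1}$ the matrix $A_\Lambda$ becomes block diagonal with the $2\times2$ blocks $A_i$, and $\sigma^2 BB^\top$ becomes block diagonal with blocks $\sigma^2\alpha^2\,\mathrm{diag}(1,0)$ (the factor $\alpha^2$ being dictated by the form of $B$ in \eqref{sys: TMM_quad}). Hence $V_\infty$ is correspondingly block diagonal, $V_\infty=\mathrm{diag}(V^{(1)},\dots,V^{(d)})$ with $V^{(i)}=A_iV^{(i)}A_i^\top+\sigma^2\alpha^2\,\mathrm{diag}(1,0)$, and since $S=\tfrac{1}{\sqrt2}[\Lambda^{1/2},0]$ only samples the $\tilde x_k$-coordinates, $\Sigma$ is diagonal with entries $\nu_i=\frac{\lambda_i(Q)}{2\sigma^2}(V^{(i)})_{11}$. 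Expanding the three scalar equations of the $i$-th $2\times2$ Lyapunov equation yields $(V^{(i)})_{22}=(V^{(i)})_{11}$ and $(1-d_i)(V^{(i)})_{12}=c_i(V^{(i)})_{11}$, and eliminating these together with the identity $1-d_i-d_i^2+d_i^3-c_i^2(1+d_i)=(1+d_i)[(1-d_i)^2-c_i^2]$ gives
\[
(V^{(i)})_{11}=\frac{\sigma^2\alpha^2(1-d_i)}{(1+d_i)\big[(1-d_i)^2-c_i^2\big]},
\]
which on $\mathcal{S}_q$ (where $1\pm d_i>0$ and $(1-d_i)^2>c_i^2$) is positive; substituting it into $\nu_i$ and comparing with \eqref{def-ui} gives exactly $\nu_i=\tfrac{1}{2u_i}$.

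\emph{Step 3 (Gaussian moment, finiteness, value, and the main obstacle).} For $X\sim\mathcal{N}(0,\Sigma)$ with $\Sigma$ positive definite, $\mathbb{E}\big[e^{\frac{\theta}{2}X^\top X}\big]=\det(I-\theta\Sigma)^{-1/2}=\prod_{i=1}^d(1-\theta\nu_i)^{-1/2}$, finite exactly when $\theta\nu_i<1$ for all $i$, i.e. $\theta<2u_i$ for all $i$; combined with membership in $\mathcal{S}_q$ (equivalently $|c_i|<|1-d_i|$ and $u_i>0$ for all $i$, by \eqref{eq: lim_F_sq}), this is precisely $(\alpha,\beta,\gamma)\in\mathcal{F}_\theta$ of \eqref{cond: var-on-Sigma-quad}. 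When this holds,
\[
\mr_{\sigma^2}(\theta)=\frac{2\sigma^2}{\theta}\Big(-\tfrac12\Big)\sum_{i=1}^d\log(1-\theta\nu_i)=-\frac{\sigma^2}{\theta}\sum_{i=1}^d\log\Big(1-\frac{\theta}{2u_i}\Big),
\]
which is \eqref{eq: STMM_Quad_mr}. It remains to observe that a point in $\mathcal{S}_q$ with $\theta\ge2u_i$ for some $i$ makes the product above diverge, and since $V_k\to V_\infty$ the finite-horizon exponential moments are then either already infinite or increase without bound, so $\mr_{\sigma^2}(\theta)=+\infty$; together with Step 1 this establishes the claimed ``if and only if.'' The main obstacle is not the algebra of Step 2, which is routine, but the sharpness of the finiteness statement: one must check carefully that $\mr_{\sigma^2}(\theta)=+\infty$ on the boundary, handling eigenvalues of $A_Q$ on the unit circle (where the relevant scalar mode is a genuine, variance-growing random walk) and the critical case $\theta=2u_i$.
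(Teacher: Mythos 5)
Your proposal is correct and follows essentially the same route as the paper's proof: diagonalize via the eigendecomposition of $Q$, solve the resulting $2\times 2$ Lyapunov blocks explicitly to get $\lambda_i(SV_\infty S^\top)=\sigma^2/(2u_i)$ (this is the paper's Lemma \ref{lem: sol-lyap-eq-quad}), and evaluate the Gaussian quadratic exponential moment as $\det(I-\theta\Sigma)^{-1/2}$. Your extra care about divergence outside $\mathcal{S}_q$ and at the critical value $\theta=2u_i$ is a welcome tightening of the ``only if'' direction, but it does not change the substance of the argument.
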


\begin{figure}[h]
\includegraphics[width=\linewidth]{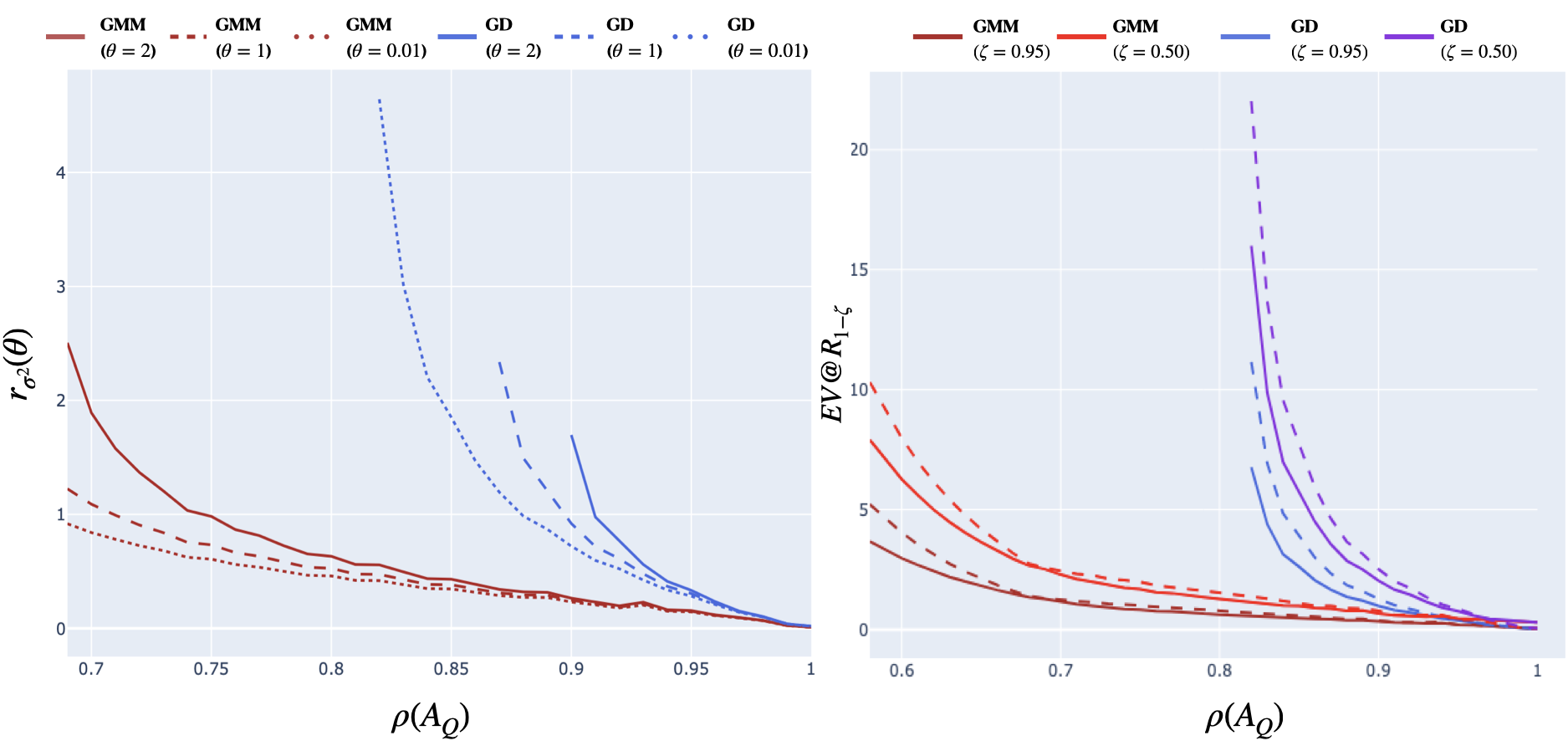}
\caption{{\small \bcgreen{Illustration of the} results of Proposition \ref{prop: quad-risk-meas-gauss-noise} and Theorem \ref{thm: quad-evar-bound} for noisy \bcgreen{GMM} and noisy GD on $f(x_{(1)},x_{(2)})=x_{(1)}^2+0.1 x_{(2)}^2$ with $\sigma^2 = 1$. \textbf{Left:} The convergence rate vs. optimal (smallest) infinite-horizon risk attainable at this convergence rate \bcgreen{for} $\theta \in \{0.01,1,2\}$, \textbf{Right:} The comparison of $EV@R_{1-\zeta}\bcred{[f(x_\infty)-f(x_*)]}$ \bcred{(straight lines)} and its approximation $\bar{E}^q_{1-\zeta}(\alpha,\beta,\gamma)$ \bcred{(dashed lines)} at confidence levels $\zeta\in \{0.95, 0.5\}$.}}
\label{fig: rate_vs_risk_meas_and_evar_quad}
\vspace{-0.2in}
\end{figure}

On the left panel of Figure \ref{fig: rate_vs_risk_meas_and_evar_quad}, we illustrate the trade-offs between the infinite-horizon entropic risk measure and the convergence rate $\rho(A_Q)$ where we compare noisy \bcgreen{GMM} with noisy gradient descent when the objective   
is the quadratic function $f(x_{(1)},x_{(2)})=x_{(1)}^2+0.1 x_{(2)}^2$ \bc{where $x_{(1)}, x_{(2)}\in\mathbb{R}$}. 
More specifically, for \bcgreen{GMM}, we grid the parameters $(\alpha,\beta,\gamma)$ to find the best (smallest) entropic risk that can be attained at a particular convergence rate $\rho(A_Q)$; i.e. among all the parameter choices $(\alpha,\beta,\gamma)$ that leads to a particular rate $\rho(A_Q)$, we calculate the smallest entropic risk that can be achieved. For noisy gradient descent, we follow the same procedure except that we take $\beta=\gamma=0$ and only vary the stepsize $\alpha$. We repeat this procedure for $\theta \in \{0.01,1,2\}$.
We observe that faster convergence (smaller values of $\rho(A_Q)$) is associated with worse (larger) entropic risk. Noisy \bcgreen{GMM} achieves better (smaller) entropic risk compared to noisy GD at any given convergence rate as expected; this is because \bcgreen{GMM} has more parameters than gradient descent to optimize over. We also observe that when the risk parameter $\theta$ is increased, the best risk that can be achieved at a given convergence rate level will increase. In fact, 
\mg{it can be seen from \eqref{eq: STMM_Quad_mr} that $\mr_{\sigma^2}(\theta)$ is a convex function of $\theta$ for $\theta \in (0, 2\min_i u_i)$ and $\mr_{\sigma^2}(\theta) \to \infty$ as $\theta \to 2\min_i u_i$. Therefore, based on the formula \eqref{def: EVaR}, $EV@R_{1-\zeta}[f(x_\infty)-f(x_*)]$ can be computed easily up to high accuracy by minimizing a one-dimensional convex objective \bcgreen{(in $\theta$)} over the interval $(0, 2\min_i u_i)$. We use the entropic risk formula from Proposition \ref{prop: quad-risk-meas-gauss-noise} to provide 
an explicit upper bound on the EV@R of suboptimality. This is achieved by approximating the optimal solution of the optimization problem 
\eqref{def: EVaR} that needs to be solved for computing the EV@R. The proof can be found in Appendix \ref{app: quad-evar-bound}.} 
\begin{theorem}\label{thm: quad-evar-bound} 
In the setting of Proposition \ref{prop: quad-risk-meas-gauss-noise}, consider noisy \bcgreen{GMM} \mg{with parameters $(\alpha,\beta,\gamma) \in \mathcal{S}_q$}.
The entropic value at risk at a given confidence level $\bc{\zeta\in(0,1)}$ of noisy \bcgreen{GMM} iterations at stationarity satisfies
\begin{align}\label{eq: Approx_EVAR_Quad}
\text{EV@R}_{1-\zeta}\bcred{[f(x_\infty)-f(x_*)]}\leq \bar{E}^{q}_{1-\zeta}(\alpha,\beta,\gamma):=
\frac{\sigma^2}{2\theta_0\bar{u}}\left[-d\log\left(1-\theta_0\right)+2\log(1/\zeta) \right],
\end{align}
where $\theta_0=\frac{\log(1/\zeta)}{d}\left[\sqrt{1+2d/\log(1/\zeta)}-1 \right]\bcred{<1}$, $\bar{u}=\bcred{\min}_{i\in \{1,..,d\}}\{u_i\}$ and \mg{$x_\infty$ is a random variable that is distributed according to the stationary distribution of the iterates $x_k$.}
\end{theorem}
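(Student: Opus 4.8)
The plan is to start from the variational formula for the entropic value at risk, namely
$$\text{EV@R}_{1-\zeta}[f(x_\infty)-f(x_*)] = \inf_{\theta>0}\left\{\frac{1}{\theta}\log\frac{\mathbb{E}[e^{\theta(f(x_\infty)-f(x_*))}]}{\zeta}\right\},$$
and plug in the closed-form expression for the entropic risk from Proposition \ref{prop: quad-risk-meas-gauss-noise}. Rewriting the expectation in terms of $\mr_{\sigma^2}(\cdot)$, the objective to minimize becomes, after the substitution $\theta \mapsto \frac{\theta}{2\sigma^2}$ to match normalizations, a function of a scalar $\theta$ of the form
$$g(\theta) := \frac{1}{\theta}\left[-\sigma^2\sum_{i=1}^d \log\left(1-\frac{\theta}{2u_i}\right) + 2\sigma^2\log(1/\zeta)\right],$$
valid on $\theta \in (0, 2\bar u)$ with $\bar u = \min_i u_i$. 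Since we only want an \emph{upper} bound on the EV@R, it suffices to evaluate $g$ at any admissible point $\theta$; there is no need to solve the infimum exactly.

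The key step is to choose a good feasible $\theta$. I would bound the sum using $\bar u \le u_i$, i.e. $-\log(1-\frac{\theta}{2u_i}) \le -\log(1-\frac{\theta}{2\bar u})$, so that
$$g(\theta) \le \frac{\sigma^2}{\theta}\left[-d\log\left(1-\frac{\theta}{2\bar u}\right) + 2\log(1/\zeta)\right].$$
Then I would make the change of variable $\theta_0 = \frac{\theta}{2\bar u} \in (0,1)$, turning the right-hand side into $\frac{\sigma^2}{2\theta_0 \bar u}\left[-d\log(1-\theta_0) + 2\log(1/\zeta)\right]$, which is exactly the claimed form $\bar E^q_{1-\zeta}(\alpha,\beta,\gamma)$. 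It remains to justify the specific choice of $\theta_0$ stated in the theorem. This value should come from (approximately) minimizing the one-dimensional surrogate $h(\theta_0) := \frac{1}{2\theta_0\bar u}\left[-d\log(1-\theta_0)+2\log(1/\zeta)\right]$, or more precisely from a convexity-exploiting upper bound on $-\log(1-\theta_0)$. Using the elementary inequality $-\log(1-\theta_0) \le \frac{\theta_0}{1-\theta_0}$ (or a similar quadratic-type bound) makes the surrogate an explicit rational function of $\theta_0$ whose minimizer solves a quadratic equation; solving that quadratic and keeping the root in $(0,1)$ yields $\theta_0 = \frac{\log(1/\zeta)}{d}\left[\sqrt{1+2d/\log(1/\zeta)}-1\right]$. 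I would also verify $\theta_0 < 1$ directly, since $\sqrt{1+2d/\log(1/\zeta)}-1 < 2d/(2\log(1/\zeta)) \cdot$ (something) — more carefully, $\sqrt{1+t}-1 \le t/2$ gives $\theta_0 \le 1$ only for $\log(1/\zeta)$ not too small, so the strict bound $\theta_0<1$ has to be checked using the precise form; in fact $\theta_0<1 \iff \sqrt{1+2d/L_\zeta} < 1 + d/L_\zeta$ with $L_\zeta = \log(1/\zeta)$, which holds by squaring since $1 + 2d/L_\zeta < 1 + 2d/L_\zeta + d^2/L_\zeta^2$.

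The main obstacle I anticipate is making the choice of $\theta_0$ rigorous: the stated $\theta_0$ is the exact minimizer of a \emph{surrogate} obtained by replacing $-\log(1-\theta_0)$ with an upper bound, and one has to check (i) that this surrogate genuinely dominates $h$ on $(0,1)$, (ii) that the quadratic it produces has the claimed root in the admissible interval, and (iii) that $\theta$ corresponding to this $\theta_0$ is feasible, i.e. lies in $(0,2\bar u)$ — which is automatic since $\theta_0 \in (0,1)$ forces $\theta = 2\bar u \theta_0 \in (0, 2\bar u)$, and feasibility of $(\alpha,\beta,\gamma) \in \mathcal{S}_q$ guarantees $u_i > 0$ so $\bar u > 0$. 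Everything else is routine algebra: substituting back, collecting the $\sigma^2$, $d$, and $\log(1/\zeta)$ factors, and recognizing the final expression. Since the theorem only claims an inequality, none of the surrogate steps need to be tight, which gives considerable freedom in step (i).
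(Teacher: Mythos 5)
Your proposal is correct and follows essentially the same route as the paper: reduce to minimizing $h(\theta)=-\frac{\sigma^2 d}{\theta}\log\bigl(1-\frac{\theta}{2\bar u}\bigr)+\frac{2\sigma^2}{\theta}\log(1/\zeta)$ over $(0,2\bar u)$ via $\bar u\le u_i$, then evaluate $h$ at the explicit feasible point $\theta=2\bar u\theta_0$, checking $\theta_0<1$ exactly as you do. The only inaccuracy is your guess at where $\theta_0$ comes from: the surrogate $-\log(1-x)\le \frac{x}{1-x}$ would give the stationary point $\sqrt{2\log(1/\zeta)}/(\sqrt d+\sqrt{2\log(1/\zeta)})$, not the stated $\theta_0$; the paper instead linearizes $\log(1-\frac{\theta}{2\bar u})\approx -\frac{\theta}{2\bar u}$ inside the first-order condition $h'(\theta)=0$ and solves the resulting quadratic --- but, as you correctly observe, this choice is immaterial to the validity of the inequality, which only requires $2\bar u\theta_0\in(0,2\bar u)$.
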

\begin{example} (EV@R bounds for GD)
Theorem \ref{thm: quad-evar-bound} implies $EV@R$ bounds for noisy GD, if we set $\beta=0=\gamma$. In this case, 
we have $u_i=\frac{1-(1-\alpha \lambda_i(Q))^2}{\lambda_i(Q) \alpha^2}$ and $\rho(A_Q)=\rho_{GD}(\alpha):= \max\{|1-\alpha \mu|,|1-\alpha L|\}$. The feasible set, on the other hand, becomes
$$
\mathcal{F}_\theta =\left\{\alpha\bcgreen{>0} ~\vline~  (1-\alpha\lambda_i(Q))^2 < 1-\frac{\theta \alpha^2 \lambda_i(Q)}{2} \mbox{ for every } i=1,2,\dots,d.\right\}
$$
Notice that as $\theta \rightarrow 0$ this yields 
$$
\bcgreen{\lim_{\theta\rightarrow 0}\mathcal{F}_\theta}= \left\{\alpha ~\vline~ (1-\alpha \lambda_i(Q))^2< 1 \right\}= \left\{ \alpha ~|~ \max\{|1-\alpha \mu|, |1-\alpha L|\}<1 \right\}= \mathcal{S}_q,
$$
as expected from Lemma \ref{lem: feas-set-stab-set}. Moreover, for $\alpha \in (0,2/L)$, 
we can compute $\bar{u}$ as 
\mg{
$$
\bar{u}=\min_{i\in\{1,...,d\}}\{u_i\}\geq \frac{1}{L\alpha^2}\min_{i\in\{1,..,d\}}\{1-(1-\alpha \lambda_i(Q))^2\}=\frac{1-\rho_{GD}^2(\alpha)}{\alpha^2 L}.
$$
}
Hence, for $\alpha \in (0,2/L)$, the inequality \eqref{eq: Approx_EVAR_Quad} becomes
\begin{eqnarray}
EV@R_{1-\zeta}[f(x_\infty)-f(x_*)]&\leq& \bar{E}^q_{1-\zeta}(\alpha,0,0)= \frac{\sigma^2 \alpha^2 L}{2\theta_0(1-\rho_{GD}(\alpha)^2)}\left[-d\log(1-\theta_0)+2\log(1/\zeta) \right],
\label{ineq-evar-gd}
\end{eqnarray}
which relates the convergence rate to our EV@R bound.
\end{example}

As a direct consequence of the inequalities \eqref{ineq: tail-prob-bound} and \eqref{eq: Approx_EVAR_Quad}, and Theorem  \ref{thm: quad-evar-bound}, we have also the following corollary.
\bcred{
\begin{corollary}
In the setting of Theorem \ref{thm: quad-evar-bound}, we have also
\begin{equation}
    \mathbb{P}\big\{f(x_\infty)-f(x_*)\geq \bar{E}^{q}_{1-\zeta}(\alpha,\beta,\gamma)\big\}\leq\zeta,
\end{equation}
for any confidence level $\zeta\in \bc{(0,1)}$.
\end{corollary}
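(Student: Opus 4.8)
The plan is to chain together two facts already available: the Chernoff-type tail bound for the entropic value at risk recorded in \eqref{ineq: tail-prob-bound}, and the explicit upper estimate \eqref{eq: Approx_EVAR_Quad} on $EV@R$ proved in Theorem \ref{thm: quad-evar-bound}. No new estimate is needed; the corollary is a monotonicity argument.

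First I would recall the defining property of $EV@R$ used in \eqref{ineq: tail-prob-bound}: for any real-valued random variable $X$ with finite moment generating function near the origin and any $\zeta\in(0,1)$, one has $\mathbb{P}\{X\geq EV@R_{1-\zeta}[X]\}\leq \zeta$. This follows by applying Chernoff's inequality $\mathbb{P}\{X\geq a\}\leq e^{-\theta a}\mathbb{E}[e^{\theta X}]$ at (or arbitrarily close to) the optimal $\theta$ in the variational formula \eqref{def: EVaR} defining $EV@R$, so that the exponent is driven to $\log\zeta$. Applying this with $X=f(x_\infty)-f(x_*)$, where $x_\infty$ is the stationary random variable whose existence and Gaussian-image structure were established in the discussion preceding Proposition \ref{prop: quad-risk-meas-gauss-noise} (and whose MGF is finite precisely because $(\alpha,\beta,\gamma)\in\mathcal{S}_q$ keeps the relevant $\theta$-range nonempty), gives
\[
\mathbb{P}\big\{f(x_\infty)-f(x_*)\geq EV@R_{1-\zeta}[f(x_\infty)-f(x_*)]\big\}\leq \zeta .
\]

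Second, I would invoke Theorem \ref{thm: quad-evar-bound}, which for $(\alpha,\beta,\gamma)\in\mathcal{S}_q$ yields the finite bound $EV@R_{1-\zeta}[f(x_\infty)-f(x_*)]\leq \bar{E}^{q}_{1-\zeta}(\alpha,\beta,\gamma)$ via \eqref{eq: Approx_EVAR_Quad}. Since raising the threshold can only shrink the exceedance event, $\{f(x_\infty)-f(x_*)\geq \bar{E}^{q}_{1-\zeta}(\alpha,\beta,\gamma)\}\subseteq\{f(x_\infty)-f(x_*)\geq EV@R_{1-\zeta}[f(x_\infty)-f(x_*)]\}$, and monotonicity of probability together with the displayed inequality above gives $\mathbb{P}\{f(x_\infty)-f(x_*)\geq \bar{E}^{q}_{1-\zeta}(\alpha,\beta,\gamma)\}\leq\zeta$, which is the claim.

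There is essentially no technical obstacle here; the only points requiring mild care are (i) confirming that the random variable $x_\infty$ appearing in the event is the same stationary-distribution variable used in Theorem \ref{thm: quad-evar-bound}, and (ii) noting that $\bar{E}^{q}_{1-\zeta}(\alpha,\beta,\gamma)$ is a legitimate (finite, positive) threshold under the standing assumption $(\alpha,\beta,\gamma)\in\mathcal{S}_q$, which was already checked in the proof of that theorem. Both are immediate, so the proof reduces to the two-line chaining described above.
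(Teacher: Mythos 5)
Your proposal is correct and is exactly the argument the paper intends: the corollary is stated as a direct consequence of the EV@R tail-probability property \eqref{ineq: tail-prob-bound} applied to $X=f(x_\infty)-f(x_*)$, chained with the upper bound \eqref{eq: Approx_EVAR_Quad} from Theorem \ref{thm: quad-evar-bound} and monotonicity of the exceedance event. No difference in approach; your two auxiliary checks (identity of $x_\infty$ and finiteness of the threshold on $\mathcal{S}_q$) are the right ones and are indeed already covered by the theorem's setting.
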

}

\mg{In order to illustrate that our EV@R bound $\bar{E}^q_{1-\zeta}(\alpha,\beta,\gamma)$ provided in Theorem \ref{thm: quad-evar-bound} is useful, we compare our EV@R bound to the actual $EV@R_{1-\zeta}($ $f(x_\infty) - f(x_*))$ value (which is defined as the optimal value of the minimization problem \eqref{def: EVaR}) on the right panel of Figure \ref{fig: rate_vs_risk_meas_and_evar_quad}. Here, we consider
the objective $f(x_{(1)},x_{(2)})=x_{(1)}^2+0.1x_{(2)}^2$ at $\zeta=0.95$ and $\zeta=0.5$ confidence levels. We grid the parameter space $(\alpha,\beta,\gamma)\in \mathcal{S}_q$ and compute the $EV@R_{1-\zeta}\bcred{[f(x_\infty)-f(x_*)]}$ and the convergence rate $\rho(A_Q)$ corresponding to each choice of parameters\footnote{\bcgreen{We find the $EV@R$ at any given parameters $(\alpha,\beta,\gamma)$ by using grid search over $\theta \in (0,2\min_{i}u_i)$.}} on the right panel of Figure \ref{fig: rate_vs_risk_meas_and_evar_quad}, we
plot the best EV@R value achievable at a given rate $\rho(A_Q)$ with straight lines, we also plot the upper bound $\bar{E}_{1-\zeta}^q$ for the EV@R value in dashed lines. We observe that the upper bound $\bar{E}^q_{1-\zeta}(\alpha,\beta,\gamma)$ is a good approximation in general to the actual EV@R value and the approximation quality is better when the convergence rate is closer to 1. For noisy GD, we follow the same procedure where we grid over the stepsize (and keep $\beta=\gamma = 0$). These results display a trade-off between the convergence rate $\rho(A_Q)$ and $EV@R_{1-\zeta}\bcred{[f(x_\infty)-f(x_*)]}$ as well. This raises the natural question of what is the best $EV@R$ achievable at a given rate level and \bcgreen{this is achieved} by which choice of parameters. The answer will naturally depend on the choice of the confidence interval $\zeta$ and how close we want to be to the fastest convergence rate. \bcgreen{Introducing $\rho(\alpha,\beta,\gamma):=\rho(A_Q)$ to emphasize the dependency of the spectral radius to parameters, }the distance to the
optimum of \bcgreen{GMM} iterates decay at rate $\rho(\alpha,\beta,\gamma)$ \bcgreen{in the absence of noise} (Remark \ref{remark-tmm-deter-rate}); however the linear convergence rate in suboptimality will be $\rho^2(\alpha,\beta,\gamma)$ as suboptimality is proportional to the squared distance to the optimum for strongly convex smooth functions. To our knowledge, the choice of parameters that optimizes the rate (minimizes $\rho(A_Q)$) for quadratics is not explicitly known for \bcgreen{GMM}; however it is explicitly known for AGD. More specifically, the fastest rate for AGD is $\rho^2_{q,*} =( 1 - \frac{2}{\sqrt{3\kappa+1}})^2$ where $\rho_{q,*}$ is the smallest value of the spectral radius $\rho(A_Q)$ that can be attained by the AGD algorithm \cite{lessard2016analysis}. Given $\zeta\in (0,1)$, the following optimization problem looks for the best EV@R bound that can be attained if we would allow \bcgreen{GMM} to be slower than the fastest AGD rate by a certain percent:}\looseness=-1
\begin{subequations}\label{def: risk_averse_stmm_quad}
\begin{align}
    (\alpha_{q},\beta_{q},\gamma_{q}):=\underset{(\alpha,\beta,\gamma)\in\mathcal{S}_q}{\text{argmin}}& \quad \bar{E}^q_{1-\zeta}(\alpha,\beta,\gamma)\\ 
    \text{s.t. }& \frac{\bcred{\rho^2}(\alpha,\beta,\gamma)}{\mg{\rho_{q,*}^2}} \mg{\leq} (1+\varepsilon) ,
\end{align}
\end{subequations}
\mg{
Here, the parameter $\varepsilon$ can be viewed as a trade-off parameter that captures the \bcgreen{percentage} of rate we want to give away from the benchmark rate to achieve better entropic risk and tail bounds. Since we have an explicit characterization of the set $\bcgreen{\mathcal{S}}_q$, $\rho(\alpha,\beta,\gamma)$ and $\bar{E}^q_{1-\zeta}(\alpha,\beta,\gamma)$ as a function of the parameters, this \bcgreen{three-dimensional} optimization problem can be solved by a simple grid search on the parameters. In Section \ref{sec: num-exp}, we will provide numerical experiments where we will illustrate how the parameter $\varepsilon$ leads to systematic trade-offs between the convergence rate and the entropic risk \bcgreen{as well as the} tail behavior of the iterates.}
\section{Main results for strongly convex \bcgreen{smooth} objectives}\label{sec: strongly-conv-obj}

This section extends our entropic risk and EV@R results from quadratic objectives to (more general) strongly convex objectives $f\in \mathcal{C}_\mu^{L}(\mathbb{R}^d)$. \mg{The convergence rate analysis of \emph{deterministic} \bcgreen{GMM} (without gradient noise) can be carried out based on 
the Lyapunov function}
\begin{equation}\label{def: Lyapunov}
{\mV}_P(\bcred{z}_k):=(\bcred{z}_k-\bcred{z}_*)^\top P (\bcred{z}_{k}-\bcred{z}_*)+f(x_k)-f(x_*),
\end{equation}
where the matrix $P$ is symmetric positive semi-definite (PSD) with $P= \tilde{P}\otimes I_d$ for a $3\times 3$ matrix $\tilde{P}$. More specifically, \mg{in \cite[Lemma 5]{hu2017dissipativity}, the authors show that for any given $\rho^2>0$, if the matrix $\tilde{P}$ satisfies a particular $3\times 3$ matrix inequality (that depends on $\rho$) then \bcgreen{GMM} admits linear convergence with rate $\rho^2$. However, one limitation of existing theory for strongly convex minimization is that both the rate $\rho^2$ and the matrix $\tilde{P}$ are not explicitly available for general choice of parameters. By using an alternative Lyapunov function, convergence rates can be obtained for (deterministic) robust momentum methods which correspond to a specific parametrization of GMM (where all the parameters are expressed as a particular function of the convergence rate $\rho$) (see \cite[Theorem 21 and Table 1]{vanscoy2021speedrobustness}), however such results do not apply to GMM with more general choice of parameters. Furthermore, to our knowledge, existing results do not provide non-asymptotic \bcgreen{suboptimality} bounds for GMM at a given iterate $k$ \bcgreen{in the presence of gradient noise}.\looseness=-1

In the following result, we reparametrize the \bcgreen{GMM} parameters in two free variables $(\vartheta,\psi)$ and obtain new (non-asymptotic) finite-time convergence rate results for \bcgreen{GMM} \bcgreen{in this parametrization}. The proof is based on constructing the Lyapunov matrix $P$ in \eqref{def: Lyapunov} explicitly where we take $P=\tilde{P}\otimes I_d$ with
\begin{align}\label{def: P_Matrix}
\tilde{P}=\begin{bmatrix} \sqrt{\frac{\vartheta}{2\alpha}}\\ -\sqrt{\frac{\vartheta}{2\alpha}}+\sqrt{\frac{\mu}{2}} \end{bmatrix}\begin{bmatrix}\sqrt{\frac{\vartheta}{2\alpha}}\\-\sqrt{\frac{\vartheta}{2\alpha}}+\sqrt{\frac{\mu}{2}} \end{bmatrix}^\top,
\end{align}
which generalizes the Lyapunov functions previously considered in \cite{hu2017dissipativity,aybat2019robust} for AGD. To our knowledge, these are the first non-asymptotic convergence results for \bcgreen{GMM} in the strongly convex case subject to noise. Previous results applied to the quadratics or gave only local convergence rates \bcgreen{ for non-quadratic strongly convex case (i.e. shows convergence only if initialization is close enough to the optimum)}
\cite{gitman2019understanding}. Furthermore, our choice of parameters has the advantage that it eliminates the need for solving the associated matrix inequality numerically to estimate the rate\bcgreen{, and provides  us a uniform analysis for AGD and HB methods}.
}

\begin{theorem}\label{thm: TMM-MI-solution}
We introduce the sets
\begin{align*} 
\mathcal{S}_{0}&= \{(\vartheta,\psi) ~\mid~ \vartheta=1=\psi\},\;\;\;\; \mathcal{S}_{+}=\left\{(\vartheta,\psi) ~\mid~ \psi>1\, \&\, 1<\vartheta \leq 2-\frac{1}{\psi} \right\},\\
\mathcal{S}_{-}&= \left\{(\vartheta,\psi) ~\mid~ 0\leq\psi<1\,\&\, \max\left\{2-\frac{1}{\psi},\frac{1}{1+\kappa(1-\psi)}\right\}\leq \vartheta < 1 \right\},\\
\mathcal{S}_1&= \left\{ (\vartheta,\psi) \big|  \psi\neq 1,  \left[1-\sqrt{\frac{(1-\vartheta)\vartheta}{\kappa(1-\psi)}}\right]\left[1-\frac{(1-\vartheta)(\mu\psi^2-L(1-\psi)^2)}{L(1-\psi)\vartheta}\right] \leq \left[1-\frac{(1-\vartheta)\psi}{\kappa(1-\psi)}\right]^2\right\},
\end{align*}
with the convention that $\max\{2-\frac{1}{0},\frac{1}{1+\kappa}\}=\frac{1}{1+\kappa}$.
\bcred{Under Assumption \ref{Assump: Noise}, noisy \bcgreen{GMM} with the parameters, }
\begin{subequations}\label{def: stmm-params-str-cnvx}
\begin{align}
&\alpha_{\vartheta,\psi} \in \begin{cases}
\{\frac{1-\vartheta}{L (1-\psi)}\}, & \text{if } (\vartheta,\psi)\in \mathcal{S}_c := (\mathcal{S}_{-}\cup \mathcal{S}_{+})\cap \bcred{\mathcal{S}_1},\\
(0,\frac{1}{L}], & \text{if } (\vartheta,\psi) \in \mathcal{S}_0,
\end{cases}\\
&\beta_{\vartheta,\psi}=\frac{1-\sqrt{\vartheta\alpha_{\vartheta,\psi}\mu}}{1-\alpha_{\vartheta,\psi}\psi\mu}\left[1-\sqrt{\frac{\alpha_{\vartheta,\psi}\mu}{\vartheta}} \right],\\
& \gamma_{\vartheta,\psi}=\psi \beta_{\vartheta,\psi}.
\end{align}
\end{subequations}
\bcred{satisfies the inequality } 
\bcred{
\begin{equation}\label{ineq: exp-subopt-str-cnvx-gaus}
\mathbb{E}[f(x_{k})]-f(x_*) \leq \rho^{2k}_{\vartheta,\psi}\mathcal{V}_P(\bcred{z}_0)+\left(\frac{\sqrt{\alpha_{\vartheta,\psi}}(L\alpha_{\vartheta,\psi}+\vartheta)}{2\sqrt{\vartheta\mu}}\right) d\sigma^{2},
\end{equation}
where $\mathcal{V}_P$ is the Lyapunov function given by \ref{def: Lyapunov}, \bc{$P=\tilde{P}\otimes I_d$ with $\tilde{P}$ as in \eqref{def: P_Matrix}} and the convergence rate is}
$$\rho_{\vartheta,\psi}^2=1-\sqrt{\vartheta\alpha_{\vartheta,\psi}\mu}~ \mg{\in (0,1)}.$$
\end{theorem}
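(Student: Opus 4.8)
The plan is to establish \eqref{ineq: exp-subopt-str-cnvx-gaus} by a dissipativity / Lyapunov-recursion argument applied to the stochastic GMM iterates, combined with an explicit verification that the matrix inequality underlying the Lyapunov analysis of \cite{hu2017dissipativity} is satisfied by our reparametrized choice \eqref{def: stmm-params-str-cnvx} on the sets $\mathcal{S}_0$ and $\mathcal{S}_c$. First I would write the noisy GMM dynamics in the lifted state $\bcred{z}_k=[x_k^\top,x_{k-1}^\top]^\top$ as $\bcred{z}_{k+1}-\bcred{z}_* = A(\bcred{z}_k-\bcred{z}_*) + B\,\nabla f(x_k)\text{-correction} + B\veps_{k+1}$, i.e. in the standard ``$A,B$ plus gradient feedback'' form used for strongly convex (non-quadratic) $f$, and recall the sector/interpolation inequality (the quadratic constraint) that $\mu$-strong-convexity and $L$-smoothness impose relating $\nabla f(x_k)$, $x_k-x_*$, and $f(x_k)-f(x_*)$. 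The goal is a one-step inequality of the form
\[
\mathbb{E}\big[\mathcal{V}_P(\bcred{z}_{k+1})\,|\,\mathcal{F}_k\big] \le \rho_{\vartheta,\psi}^2\,\mathcal{V}_P(\bcred{z}_k) + c\, d\sigma^2,
\]
where $\mathcal{V}_P$ is the Lyapunov function \eqref{def: Lyapunov} with $P=\tilde P\otimes I_d$, $\rho_{\vartheta,\psi}^2 = 1-\sqrt{\vartheta\alpha_{\vartheta,\psi}\mu}$, and $c = \frac{\sqrt{\alpha_{\vartheta,\psi}}(L\alpha_{\vartheta,\psi}+\vartheta)}{2\sqrt{\vartheta\mu}}$. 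The noise term is handled cleanly because $\veps_{k+1}$ is zero-mean and independent of $\mathcal{F}_k$ (Assumption \ref{Assump: Noise}), so its cross terms vanish in expectation and it contributes only a $\mathbb{E}\|B\veps_{k+1}\|_P^2 = \mathrm{tr}(B^\top P B)\,\sigma^2$ term, which I expect to evaluate to exactly $c\,d\sigma^2$ given the rank-one structure of $\tilde P$ in \eqref{def: P_Matrix}.

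The deterministic part is where the real work lies: I must show that with $\tilde P$ as in \eqref{def: P_Matrix} and the parameter choice \eqref{def: stmm-params-str-cnvx}, the $3\times 3$ (dimension-reduced, via the Kronecker structure) linear matrix inequality
\[
\begin{bmatrix} A^\top P A - \rho^2 P & A^\top P B \\ B^\top P A & B^\top P B\end{bmatrix} + (\text{sector-condition slack terms}) \preceq 0
\]
holds, which in turn yields the one-step contraction on the noise-free drift. Because $\tilde P$ is rank one, $\tilde P = vv^\top$ with $v = [\sqrt{\vartheta/2\alpha},\, -\sqrt{\vartheta/2\alpha}+\sqrt{\mu/2}]^\top$, the quadratic form $\mathcal{V}_P$ reduces essentially to a scalar, and the LMI collapses to a small number of scalar/quadratic inequalities in $(\vartheta,\psi,\alpha,\beta,\gamma,\mu,L)$. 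Substituting $\alpha=\alpha_{\vartheta,\psi}=\frac{1-\vartheta}{L(1-\psi)}$, $\beta=\beta_{\vartheta,\psi}$, $\gamma=\psi\beta_{\vartheta,\psi}$, and $\rho^2 = 1-\sqrt{\vartheta\alpha\mu}$, these inequalities should simplify — after some algebra — precisely to the membership conditions defining $\mathcal{S}_+$, $\mathcal{S}_-$, and $\mathcal{S}_1$ (e.g. $\psi\lessgtr 1$, $\vartheta$ in the stated interval, and the product inequality defining $\mathcal{S}_1$), so that $(\vartheta,\psi)\in\mathcal{S}_c$ is exactly what makes the LMI feasible; the $\mathcal{S}_0$ case ($\vartheta=\psi=1$, i.e. plain GD) should be checked separately and is straightforward since there $\beta=\gamma=0$. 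I would also verify $\rho_{\vartheta,\psi}^2\in(0,1)$, which amounts to $0<\vartheta\alpha_{\vartheta,\psi}\mu<1$; this needs $\alpha_{\vartheta,\psi}>0$ (forcing the sign pattern of $1-\vartheta$ and $1-\psi$ to agree, consistent with the $\mathcal{S}_\pm$ definitions) and an upper bound on $\alpha_{\vartheta,\psi}$ that should follow from the $\mathcal{S}_1$ / $\vartheta\le 2-1/\psi$ constraints.

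Finally, I would unroll the one-step inequality: taking total expectations and iterating gives
\[
\mathbb{E}[\mathcal{V}_P(\bcred{z}_k)] \le \rho_{\vartheta,\psi}^{2k}\mathcal{V}_P(\bcred{z}_0) + c\,d\sigma^2\sum_{j=0}^{k-1}\rho_{\vartheta,\psi}^{2j} \le \rho_{\vartheta,\psi}^{2k}\mathcal{V}_P(\bcred{z}_0) + \frac{c\,d\sigma^2}{1-\rho_{\vartheta,\psi}^2},
\]
and then I would use $\mathcal{V}_P(\bcred{z}_k) \ge f(x_k)-f(x_*)$ (since $P\succeq 0$) on the left. The only subtlety is matching the constant: $\frac{c}{1-\rho_{\vartheta,\psi}^2} = \frac{c}{\sqrt{\vartheta\alpha_{\vartheta,\psi}\mu}}$, and plugging $c=\frac{\sqrt{\alpha_{\vartheta,\psi}}(L\alpha_{\vartheta,\psi}+\vartheta)}{2\sqrt{\vartheta\mu}}$ must reproduce exactly the coefficient $\frac{\sqrt{\alpha_{\vartheta,\psi}}(L\alpha_{\vartheta,\psi}+\vartheta)}{2\sqrt{\vartheta\mu}}$ claimed in \eqref{ineq: exp-subopt-str-cnvx-gaus} — so in fact the stated bound is the $\sum\rho^{2j}\le 1/(1-\rho^2)$ version already simplified, meaning I actually want the one-step noise constant to come out as $c\cdot(1-\rho_{\vartheta,\psi}^2) = c\sqrt{\vartheta\alpha\mu}$ per step; I would need to be careful to get this bookkeeping right, but it is mechanical. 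The main obstacle, by far, is the second step: carrying out the algebraic reduction of the rank-one LMI and showing its feasibility region is exactly $\mathcal{S}_0\cup\mathcal{S}_c$ — this is the step that justifies the specific and somewhat opaque forms of $\tilde P$, $\alpha_{\vartheta,\psi}$, $\beta_{\vartheta,\psi}$, and the sets $\mathcal{S}_\pm,\mathcal{S}_1$, and everything else is routine once it is in place.
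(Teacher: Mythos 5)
Your overall strategy coincides with the paper's: verify the matrix inequality of \cite{hu2017dissipativity} (Lemma \ref{lem: result-of-hu-lessard}) with the explicit rank-one $\tilde P$ of \eqref{def: P_Matrix} and the parametrization \eqref{def: stmm-params-str-cnvx}, obtain a one-step descent inequality for $\mathcal{V}_P$ (Lemma \ref{lem: func-contr-prop}), take conditional expectations so the noise cross term vanishes, and unroll using $f(x_k)-f(x_*)\le \mathcal{V}_P(\bcred{z}_k)$. You also correctly locate the real work in the algebraic reduction of the LMI, and your observation that the per-step noise constant must equal $c\,(1-\rho_{\vartheta,\psi}^2)=\frac{\alpha_{\vartheta,\psi}(L\alpha_{\vartheta,\psi}+\vartheta)}{2}$ is exactly what the paper's computation delivers.

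Two local claims in your write-up are wrong and would derail the corresponding steps if carried out as stated. First, $\mathcal{S}_0$ (i.e. $\vartheta=\psi=1$) is \emph{not} plain gradient descent with $\beta=\gamma=0$: plugging $\vartheta=\psi=1$ into \eqref{def: stmm-params-str-cnvx} gives $\beta_{1,1}=\gamma_{1,1}=\frac{1-\sqrt{\alpha\mu}}{1+\sqrt{\alpha\mu}}>0$, which is AGD (see Remark \ref{remark-deter-TMM}); the separate check for this branch must verify the LMI with this nonzero momentum, as the paper does by computing $\tilde L_0=\frac{(1-\alpha\mu)\sqrt{\alpha\mu}(1-\sqrt{\alpha\mu})^2}{2\alpha(1+\sqrt{\alpha\mu})}>0$ for $\alpha\le 1/L$. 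Second, the noise does not contribute only $\mathrm{tr}(B^\top PB)\,\sigma^2=\frac{\alpha\vartheta}{2}d\sigma^2$ per step: since $\mathcal{V}_P$ also contains the function-value term and the noise enters $x_{k+1}$, the $L$-smoothness bound on $f(x_{k+1})$ picks up an additional $\frac{L\alpha^2}{2}\Vert w_{k+1}\Vert^2$, so the correct per-step constant is $\alpha^2\bigl(\lambda_P+\tfrac{L}{2}\bigr)d\sigma^2=\frac{\alpha(\vartheta+\alpha L)}{2}d\sigma^2$ with $\lambda_P=\frac{\vartheta}{2\alpha}$ (Lemma \ref{lem: func-contr-prop}); with only the trace term your final constant would be too small by the $L\alpha$ contribution. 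Both errors are repairable without changing your architecture, but as written neither step would reproduce \eqref{ineq: exp-subopt-str-cnvx-gaus}.
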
 
\begin{remark}\label{remark-noise-in-L2} (More general noise) \mg{The proof of Theorem \ref{thm: TMM-MI-solution} uses only the unbiasedness of the noise, i.e.  $\mathbb{E}[w_{k+1}|\mathcal{F}_k] = 0$, and the boundedness of the conditional second moments $\mathbb{E}[\|w_{k+1}\|^2 | \mathcal{F}_k]$ where $\mathcal{F}_k$ is the natural filtration generated by the iterates $\{x_j\}_{j=0}^k$. Therefore, as long as the noise sequence $\{w_{k+1}\}_{k\geq 0}$ satisfies $\mathbb{E}[w_{k+1}|\mathcal{F}_k] = 0$ and $\mathbb{E}[\|w_{k+1}\|^2 | \mathcal{F}_k] \leq \sigma^2 d$ for some $\sigma^2>0$ and for every $k$, our results would still hold verbatim.}
\end{remark}
\begin{remark}\label{remark-deter-TMM} (AGD case) \mg{The case $(\vartheta,\psi) \in \mathcal{S}_0$ (when $\psi=\vartheta=1$) in Theorem \ref{thm: TMM-MI-solution} yields the AGD algorithm with momentum parameter $\beta=\frac{1-\sqrt{\alpha \mu}}{1+\sqrt{\alpha \mu}}$ which has been studied in the literature. In this case, our rate result recovers the existing rate results $\rho^2 = 1 - \sqrt{\alpha\mu}$ for the AGD \cite{hu2017dissipativity,aybat2019robust}. 

}
\end{remark} 
\begin{remark}\label{remark-hb} (HB case)
\bcgreen{
The set $\mathcal{S}_{-}$ given in Theorem \ref{thm: TMM-MI-solution} allows $\psi=0$ and it can be easily shown that $\{(\vartheta,0) | \vartheta\in [\frac{\kappa}{1+\kappa}, 1)\}$ is contained in the stable set $\mathcal{S}_c$. \bcgreen{T}herefore, Theorem \ref{thm: TMM-MI-solution} recovers the Polyak's heavy ball method (HB) with parameters $\alpha = \frac{1-\vartheta}{L}$ and $\beta=\left[1-\sqrt{\frac{\vartheta(1-\vartheta)}{\kappa}}\right]\left[ 1-\sqrt{\frac{(1-\vartheta)}{\kappa\vartheta}}\right]$, for $\vartheta \in [\frac{\kappa}{1+\kappa},1)$ and implies that deterministic HB can achieve the linear convergence rate $\rho^2= 1-\sqrt{\frac{\vartheta(1-\vartheta)}{\kappa}}$ on strongly convex \bcgreen{smooth} objectives. By setting $\vartheta=1-\alpha L$, this implies that for $\alpha \in (0, \frac{1}{L(\kappa+1)} ]$ and $\beta=[1-\sqrt{(1-\alpha L)\alpha\mu}][1-\sqrt{\frac{\alpha \mu}{1-\alpha L}}]$, \bcgreen{HB admits the} convergence rate $\rho^2=1-\sqrt{(1-\alpha L)\alpha\mu}$. Previously in \cite{ghadimi-heavy-ball}, it was shown that deterministic HB with parameters $\alpha\in (0,\frac{1}{L}]$, $\beta=\sqrt{(1-\alpha \mu)(1-\alpha L)}$ can achieve the rate $\rho^2_{HB}=1-\alpha\mu$ on strongly convex non-quadratic objectives. For $\alpha \in (0,\frac{1}{L(\kappa+1)}]$, \bcgreen{to our knowledge} convergence rate we prove for HB is faster than existing rate, $\rho^2_{HB}$, from the literature. Our rate for HB scales with $\sqrt{\alpha}$ similar to the rate of AGD. In \cite{liu2020improved}, it is shown that HB method subject to noise assumption discussed in Remark \ref{remark-noise-in-L2} satisfies $\mathbb{E}[f(x_k) - f(x_*)] \leq \mathcal{O}(r^k + \frac{L}{\mu} \alpha \sigma^2d)$ for $r=\max(1-\alpha\mu, \beta)$ provided that $k\geq k_0 := \lfloor \frac{\log 0.5}{\log (\beta)}\rfloor$. If the stepsize is small enough, or if the target accuracy $\varepsilon$ (in expected suboptimality) is small enough, then our result leads to a better iteration complexity. \looseness=-1
}
\end{remark}

\bcgreen{One natural question to ask is how large the other set $\mathcal{S}_c=(\mathcal{S}_{-}\cup \mathcal{S}_{+})\cap \bcred{\mathcal{S}_1}$ for which we can provide performance bounds is. In Lemma \ref{lem: proof-of-non-empty-set} of the Appendix, we show that the set $\mathcal{S}_c$ is non-empty and contains non-trivial parameter choices \bcgreen{in addition to the AGD and HB cases we discussed above}. In Figure \ref{fig: str-cnvx-evar-rate-on-stab-reg}, we also plot the range of parameters $\alpha_{\vartheta,\psi}$ and $\beta_{\vartheta,\psi}/\gamma_{\vartheta,\psi}$ on an example when $(\vartheta,\psi) \in \mathcal{S}_c$ \bcgreen{to illustrate our results}. 
}

\mg{Theorem \ref{thm: TMM-MI-solution} shows that noisy \bcgreen{GMM} converges to a neighborhood of the optimum. The suboptimality bound \eqref{ineq: exp-subopt-str-cnvx-gaus} is a sum of two terms: A ``bias term" that decays exponentially with rate $\rho^2_{\vartheta,\psi}$ and is about how fast the suboptimality at the initialization decays and a ``variance term" due to noise that characterizes the asymptotic suboptimality (and the radius of the neighborhood that the iterates belong to in expectation asymptotically). There are classic bias variance trade-offs in the bound we obtain in the sense that when the rate $\rho^2_{\vartheta,\psi}$ gets smaller, the bias term will get smaller but the variance term will get larger.}
\mg{The inequality \eqref{ineq: exp-subopt-str-cnvx-gaus} provides performance bounds on the expected suboptimality but Theorem \ref{thm: TMM-MI-solution} lacks providing further information about the deviations from the expected suboptimality. In the next proposition, we obtain a characterization of the finite-horizon risk which captures information about the deviations from the expected suboptimality as well as the tail probabilities of the iterates. The proof is based on bounding the suboptimality $f(x_{k+1}) - f(x_*)$ at step $k+1$ in terms of a decay in the Lyapunov function $V_P(\bcred{z}_k)$ at the previous step $k$ and a random error term that depends on the noise $\bc{\veps_{k+1}}$ and the difference $x_{k}-x_{k-1}$ (see Lemma \ref{lem: func-contr-prop} from the \bc{A}ppendix) and then integrating the noise along the \bcgreen{GMM} iterations using the properties of the moment generating function of Gaussian distributions. We can then show that the finite-horizon entropic risk $\mr_{k,\sigma^2}(\theta)$ decays linearly with rate $\bar{\rho}_{\vartheta,\psi}^2$ to a limiting value (in the sense that $\mr_{\sigma^2}(\theta) = \limsup_{k\to\infty}\mr_{k,\sigma^2}(\theta)$ exists for which we can provide an upper bound). For any $\theta>0$, due to the random error term that needs to be exponentially integrated, the convergence rate $\bar{\rho}_{\vartheta,\psi}^2$ we obtain for the entropic risk is slower than the convergence rate ${\rho}_{\vartheta,\psi}^2$ we have previously showed for the expected suboptimality in Theorem \ref{thm: TMM-MI-solution}, i.e. we have ${\rho}_{\vartheta,\psi}^2< \bar{\rho}_{\vartheta,\psi}^2<1$. However, in the limit as $\theta\to 0$, $\bar{\rho}_{\vartheta,\psi} \to {\rho}_{\vartheta,\psi}$; this is because as $\theta\to 0$, $\mr_{k,\sigma^2}(\theta)\to \mathbb{E}[f(x_k) - f(x_*)]$ (see \eqref{def: risk_meas_infty}) in which case the entropic risk captures the expected suboptimality previously studied in Theorem \ref{thm: TMM-MI-solution}.
}

\begin{proposition}\label{prop: risk-meas-bound-gaussian} Consider the noisy \bcgreen{GMM} iterations for $f \in \Sml$, where the parameters $(\alpha,\beta,\gamma)$ (as a function of $(\vartheta,\psi)$) are chosen as in 
Theorem \ref{thm: TMM-MI-solution}. Assume the gradient noise satisfies Assumption \ref{Assump: Noise}.
Then, the finite-horizon entropic risk $\mr_{k,\sigma^2}(\theta)$ is finite for 
\begin{align}\label{cond: cond-on-theta-gaus}
\theta < \theta^{g}_{u}:= \frac{2\sqrt{\vartheta\mu}}{\alpha_{\vartheta,\psi}(8\mathtt{v}_{\vartheta,\psi}\sqrt{\alpha_{\vartheta,\psi}}+\sqrt{\vartheta\mu}(\vartheta+\alpha_{\vartheta,\psi}L))},
\end{align}
with
\begin{equation*}\small\mtv_{\vartheta,\psi}=\frac{2L^2}{\mu}\left(2(\beta_{\vartheta,\psi}-\gamma_{\vartheta,\psi})^2+(1-\alpha_{\vartheta,\psi} L)^2(1+2\gamma_{\vartheta,\psi}+2\gamma_{\vartheta,\psi}^2)\right)+ \frac{\vartheta}{2\alpha_{\vartheta,\psi}}(1-\sqrt{\vartheta \alpha_{\vartheta,\psi}\mu}),
\end{equation*}
and admits the bound
\bcred{
\begin{equation}\label{ineq: fin-risk-meas-bound-gaus}
\mr_{k,\sigma^2}(\theta) < \frac{\sigma^2 d \alpha_{\vartheta,\psi} \big(\vartheta + \alpha_{\vartheta,\psi}L\big)}{(1-\bar{\rho}^2_{\vartheta,\psi})(2-\theta \alpha_{\vartheta,\psi} \big( \vartheta+\alpha_{\vartheta,\psi}L\big))} + \bar{\rho}^{2k}_{\vartheta,\psi}2\mV_P(\bcred{z}_0),
\end{equation}
where $\bar{\rho}_{\vartheta,\psi}^2<1$ is given by the formula}
\bc{
\begin{multline}\label{def: bar-rho}
\bar{\rho}^{2}_{\vartheta,\psi}=\frac{1}{2} \left(1-\sqrt{\vartheta\alpha_{\vartheta,\psi}\mu}+\frac{\theta 4\alpha^2_{\vartheta,\psi}\mtv_{\vartheta,\psi}}{2-\theta \alpha_{\vartheta,\psi}(\vartheta+\alpha_{\vartheta,\psi}L)}\right)\\+\frac{1}{2}\sqrt{\Big( 1-\sqrt{\vartheta\alpha_{\vartheta,\psi}\mu}+ \frac{\theta 4\alpha^2_{\vartheta,\psi}\mtv_{\vartheta,\psi}}{2-\theta \alpha_{\vartheta,\psi}(\vartheta+\alpha_{\vartheta,\psi}L)}\Big)^2+\frac{16 \theta \alpha_{\vartheta,\psi}^2\mtv_{\vartheta,\psi}}{2-\theta \alpha_{\vartheta,\psi}(\vartheta+\alpha_{\vartheta,\psi}L)}}.
\end{multline}
\mg{Consequently, taking limit superior of both sides of \eqref{ineq: fin-risk-meas-bound-gaus}, we obtain
$$\mr_{\sigma^2}(\theta) \leq \frac{\sigma^2 d \alpha_{\vartheta,\psi} \big(\vartheta + \alpha_{\vartheta,\psi}L\big)}{(1-\bar{\rho}^2_{\vartheta,\psi})(2-\theta \alpha_{\vartheta,\psi} \big( \vartheta+\alpha_{\vartheta,\psi}L\big))}.$$
}
}
\end{proposition}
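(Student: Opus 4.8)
\emph{Overview.} The plan is to derive a one-step stochastic decrease for the Lyapunov function $\mV_P$ and then integrate the Gaussian noise out of it iteration by iteration, using the explicit moment generating function of an isotropic Gaussian; the effective rate $\bar\rho_{\vartheta,\psi}^2$ will emerge as the larger root of a quadratic produced by this integration. \textbf{Step 1 (one-step contraction with a noise error).} I would start from Lemma~\ref{lem: func-contr-prop}, which for the parameters of Theorem~\ref{thm: TMM-MI-solution} gives a pointwise bound of the shape
$$\mV_P(z_{k+1})\ \le\ \rho_{\vartheta,\psi}^2\,\mV_P(z_k)\ +\ \langle h_k,\,w_{k+1}\rangle\ +\ \tfrac{1}{2}\alpha_{\vartheta,\psi}\big(\vartheta+\alpha_{\vartheta,\psi}L\big)\,\|w_{k+1}\|^2 ,$$
with $h_k$ measurable w.r.t.\ $\mathcal{F}_k$ and built from the increment $x_k-x_{k-1}$ and the gradients; note that the $\|w_{k+1}\|^2$-coefficient is exactly the one in the variance term of Theorem~\ref{thm: TMM-MI-solution}. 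Using $L$-smoothness, $\mu$-strong convexity (so $\tfrac{\mu}{2}\|x_k-x_*\|^2\le f(x_k)-f(x_*)\le\mV_P(z_k)$ and $\|\nabla f(x_k)\|^2\le2L(f(x_k)-f(x_*))$), the rank-one structure of $\tilde P$ in~\eqref{def: P_Matrix} (which writes the quadratic part of $\mV_P(z_k)$ in terms of $x_k-x_{k-1}$ and $x_{k-1}-x_*$), and the GMM recursion, I would bound $\|h_k\|^2$ by a constant multiple of $\mV_P(z_k)+\mV_P(z_{k-1})$; collecting the constants that appear produces exactly $\mtv_{\vartheta,\psi}$.

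\textbf{Step 2 (exponential integration of one noise step) and Step 3 (unrolling).} Conditioning on $\mathcal{F}_k$ and using that $w_{k+1}\sim\mathcal{N}(0,\sigma^2 I_d)$ is independent of $\mathcal{F}_k$ (Assumption~\ref{Assump: Noise}), the Gaussian identity $\mathbb{E}[e^{\langle a,w\rangle+b\|w\|^2}]=(1-2b\sigma^2)^{-d/2}\exp(\tfrac{\sigma^2\|a\|^2}{2(1-2b\sigma^2)})$, valid iff $2b\sigma^2<1$, applied to Step~1 with any $0<\lambda\le\tfrac{\theta}{2\sigma^2}$ gives
$$\mathbb{E}\big[e^{\lambda\mV_P(z_{k+1})}\,\big|\,\mathcal{F}_k\big]\ \le\ \big(1-\lambda\sigma^2\alpha_{\vartheta,\psi}(\vartheta+\alpha_{\vartheta,\psi}L)\big)^{-d/2}\exp\!\Big(\lambda\big[(\rho_{\vartheta,\psi}^2+S)\,\mV_P(z_k)+S\,\mV_P(z_{k-1})\big]\Big),$$
where $S=S(\lambda)$ is increasing in $\lambda$ with $S(\tfrac{\theta}{2\sigma^2})=\tfrac{4\theta\alpha_{\vartheta,\psi}^2\mtv_{\vartheta,\psi}}{2-\theta\alpha_{\vartheta,\psi}(\vartheta+\alpha_{\vartheta,\psi}L)}$; the requirement $2b\sigma^2<1$ is here $2-\theta\alpha_{\vartheta,\psi}(\vartheta+\alpha_{\vartheta,\psi}L)>0$. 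I would then introduce the auxiliary potential $W_k:=\mV_P(z_k)+\tfrac{S}{\bar\rho_{\vartheta,\psi}^2}\mV_P(z_{k-1})$ and take $\bar\rho_{\vartheta,\psi}^2$ to be the larger root of $x^2=(\rho_{\vartheta,\psi}^2+S)x+S$, which is precisely~\eqref{def: bar-rho}; since this is equivalent to $(\rho_{\vartheta,\psi}^2+S)+\tfrac{S}{\bar\rho_{\vartheta,\psi}^2}\le\bar\rho_{\vartheta,\psi}^2$, the two-term bound collapses to a one-term recursion $\mathbb{E}[e^{\lambda W_{k+1}}\mid\mathcal{F}_k]\le(1-\lambda\sigma^2\alpha_{\vartheta,\psi}(\vartheta+\alpha_{\vartheta,\psi}L))^{-d/2}\exp(\lambda\bar\rho_{\vartheta,\psi}^2 W_k)$, also invoking monotonicity of the rate in $\lambda$. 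Peeling $w_k,w_{k-1},\dots,w_1$ one at a time — the exponent parameter being multiplied by $\bar\rho_{\vartheta,\psi}^2<1$ at each peel, so that the prefactors form a summable product — yields
$$\mathbb{E}\big[e^{\frac{\theta}{2\sigma^2}(f(x_k)-f(x_*))}\big]\ \le\ \mathbb{E}\big[e^{\frac{\theta}{2\sigma^2}W_k}\big]\ \le\ \prod_{j=0}^{k-1}\Big(1-\tfrac{\theta}{2}\bar\rho_{\vartheta,\psi}^{2j}\alpha_{\vartheta,\psi}(\vartheta+\alpha_{\vartheta,\psi}L)\Big)^{-d/2}\ e^{\frac{\theta}{2\sigma^2}\bar\rho_{\vartheta,\psi}^{2k}W_0},$$
using $f(x_k)-f(x_*)\le\mV_P(z_k)\le W_k$ (valid since $P\succeq 0$).

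\textbf{Step 4 (conclusion).} Apply $\tfrac{2\sigma^2}{\theta}\log(\cdot)$, use $-\log(1-t)\le\tfrac{t}{1-t}$ together with $\bar\rho_{\vartheta,\psi}^{2j}\le1$ to sum $\sum_{j\ge0}\bar\rho_{\vartheta,\psi}^{2j}=\tfrac{1}{1-\bar\rho_{\vartheta,\psi}^2}$, and bound $W_0\le2\mV_P(z_0)$ (the weight $S/\bar\rho_{\vartheta,\psi}^2<1$ because $S$ lies between the two roots of $x^2-(\rho_{\vartheta,\psi}^2+S)x-S$, and at initialization the increment term is absorbed into $\mV_P(z_0)$). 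This gives exactly~\eqref{ineq: fin-risk-meas-bound-gaus}, and taking $\limsup_k$ gives the stated bound on $\mr_{\sigma^2}(\theta)$ since $\bar\rho_{\vartheta,\psi}^{2k}\to0$. For finiteness, evaluating $x^2-(\rho_{\vartheta,\psi}^2+S)x-S$ at $x=1$ shows $\bar\rho_{\vartheta,\psi}^2<1\iff 1-\rho_{\vartheta,\psi}^2>2S\iff\theta<\theta^{g}_{u}$ (a one-line identity using $1-\rho_{\vartheta,\psi}^2=\sqrt{\vartheta\alpha_{\vartheta,\psi}\mu}$), and this condition also gives $\theta<\tfrac{2}{\alpha_{\vartheta,\psi}(\vartheta+\alpha_{\vartheta,\psi}L)}$, which keeps every Gaussian integral in the unrolling finite, so $\mr_{k,\sigma^2}(\theta)<\infty$.

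\textbf{Main obstacle.} The hard part is Step~1, i.e.\ Lemma~\ref{lem: func-contr-prop}: establishing the one-step contraction with the \emph{precise} constants that make the rest close — in particular that the $\|w_{k+1}\|^2$ term carries coefficient $\tfrac12\alpha_{\vartheta,\psi}(\vartheta+\alpha_{\vartheta,\psi}L)$ and that $\|h_k\|^2$ is controlled by $\mtv_{\vartheta,\psi}\,(\mV_P(z_k)+\mV_P(z_{k-1}))$ up to an explicit numerical factor, since these two quantities are exactly what enter $\theta^{g}_{u}$ and $\bar\rho_{\vartheta,\psi}^2$; this requires the same interpolation-type manipulations for strongly convex smooth $f$ used to prove Theorem~\ref{thm: TMM-MI-solution}, now carried out so as to cleanly separate the deterministic decay from the noise contribution. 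Granting that lemma, the remaining delicate point is the bookkeeping in Step~3: recognizing that the noise-integration feedback produces a genuinely two-step recursion, closing it via the correct quadratic for $\bar\rho_{\vartheta,\psi}^2$, and handling the $\lambda$-dependence of the prefactors (monotonicity) so that they accumulate into a convergent geometric series rather than a term growing with $k$.
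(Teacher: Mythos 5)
Your proposal is correct and follows essentially the same route as the paper: the one-step Lyapunov contraction with noise error (the paper's Lemma~\ref{lem: func-contr-prop}), the explicit Gaussian moment-generating-function computation (Lemma~\ref{lem: Gaussian-expectation}), the bound $\Vert m_k\Vert^2\leq 8\alpha^2\mtv_{\vartheta,\psi}(\mV_P(z_k)+\mV_P(z_{k-1}))$ (Lemma~\ref{lem: bound-on-mk}), and the closing of the resulting two-step recursion via the auxiliary potential $\mV_P(z_{k+1})+k_\theta\mV_P(z_k)$ with $\bar\rho^2_{\vartheta,\psi}$ the larger root of the same quadratic (Lemma~\ref{lem: risk-meas-bound-gauss}). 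Your identification of $\theta_u^g$ via evaluating the quadratic at $x=1$ and your handling of the $\lambda$-monotonicity in the unrolling also match the paper's argument.
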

\mg{In the next result, by plugging the entropic risk bound into the EV@R formula \eqref{def: EVaR} and approximating the solution of the optimization problem over $\theta$ in \eqref{def: EVaR}, we obtain an upper bound for the EV@R of the suboptimality $f(x_k) - f(x_*)$} for every $k$.
\begin{theorem}\label{thm: Evar-TMM-str-cnvx-bound}
Consider the noisy \bcgreen{GMM} to minimize the objective $f \in \Sml$ 
under the setting of Proposition \ref{prop: risk-meas-bound-gaussian}. Let $\varphi \in (0,1)$ be a fixed constant and let the confidence level $\zeta\in(0,1)$ be given. Set $\theta_{\varphi}^g:=\varphi\theta_u^g$ and 
\begin{multline}\label{def: bbrho}
\bbrho_{\vartheta,\psi}:=\frac{1}{2} \left(1-\sqrt{\vartheta\alpha_{\vartheta,\psi}\mu}+\frac{\theta_{\varphi}^g 4\alpha^2_{\vartheta,\psi}\mtv_{\vartheta,\psi}}{2-\theta_{\varphi}^g \alpha_{\vartheta,\psi}(\vartheta+\alpha_{\vartheta,\psi}L)}\right)\\+\frac{1}{2}\sqrt{\Big( 1-\sqrt{\vartheta\alpha_{\vartheta,\psi}\mu}+ \frac{\theta_{\varphi}^g 4\alpha^2_{\vartheta,\psi}\mtv_{\vartheta,\psi}}{2-\theta_{\varphi}^g \alpha_{\vartheta,\psi}(\vartheta+\alpha_{\vartheta,\psi}L)}\Big)^2+\frac{16 \theta_{\varphi}^g \alpha_{\vartheta,\psi}^2\mtv_{\vartheta,\psi}}{2-\theta_{\varphi}^g \alpha_{\vartheta,\psi}(\vartheta+\alpha_{\vartheta,\psi}L)}}.
\end{multline}
If confidence level satisfies the condition
\begin{equation}\label{cond: conf-lev-gauss}
\log(\frac{1}{\zeta}) \leq \frac{d}{2(1-\bbrho_{\vartheta,\psi})}\left( \frac{\theta_\varphi^g \alpha_{\vartheta,\psi}(\vartheta+\alpha_{\vartheta,\psi}L)}{2-\theta_{\varphi}^g\alpha_{\vartheta,\psi}(\vartheta+\alpha_{\vartheta,\psi}L)} \right)^2,
\end{equation}
then, the EV@R of the $f(x_k)-f(x_*)$ at iteration $k\geq1$ admits the bound
\begin{equation}\label{ineq: fin-EVAR-bound-gauss-1}
EV@R_{1-\zeta}[f(x_k)-f(x_*)]\bcred{\leq}\frac{\sigma^2 \alpha_{\vartheta,\psi}(\vartheta+\alpha_{\vartheta,\psi}L)}{2}\left( \sqrt{\frac{d}{1-\bbrho_{\vartheta,\psi}}}+ \sqrt{2 \log(1/\zeta)}\right)^2 + (\bbrho_{\vartheta,\psi})^k 2\mV_P(\bcred{z}_0).
\end{equation}
In the case \eqref{cond: conf-lev-gauss} does not hold, we have 
\begin{equation}\label{ineq: fin-EVAR-bound-gauss-2}
    EV@R_{1-\zeta}[f(x_k)-f(x_*)]
    \bcred{\leq}\frac{\sigma^2 d \alpha_{\vartheta,\psi}(\vartheta+\alpha_{\vartheta,\psi}L)}{(1-\bbrho_{\vartheta,\psi})(2-\theta_{\varphi}^g \alpha_{\vartheta,\psi}(\vartheta+\alpha_{\vartheta,\psi}L))}+\frac{2\sigma^2}{\theta_\varphi^g}\log(1/\zeta)+ (\bbrho_{\vartheta,\psi})^k2\mV_P(\bcred{z}_0).
\end{equation}
Moreover, the EV@R of the suboptimality satisfies asymptotically 
\begin{multline}\label{ineq: EVAR-bound-gauss}
\mg{\limsup_{k\to\infty}}\;EV@R_{1-\zeta}[f(x_\mg{k})-f(x_*)]\\
\leq\bar{E}_{1-\zeta}:=\begin{cases}
\frac{\sigma^2 \alpha_{\vartheta,\psi}(\vartheta+\alpha_{\vartheta,\psi}L)}{2}\left( \sqrt{\frac{d}{1-\bbrho_{\vartheta,\psi}}}+ \sqrt{2 \log(1/\zeta)}\right)^2, & \text{ if \eqref{cond: conf-lev-gauss} holds.}\\
\frac{\sigma^2 d \alpha_{\vartheta,\psi}(\vartheta+\alpha_{\vartheta,\psi}L)}{(1-\bbrho_{\vartheta,\psi})(2-\theta_{\varphi}^g \alpha_{\vartheta,\psi}(\vartheta+\alpha_{\vartheta,\psi}L))}+\frac{2\sigma^2}{\theta_\varphi^g}\log(1/\zeta), & \text{ otherwise.}
\end{cases}
\end{multline}
\end{theorem}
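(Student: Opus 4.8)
The plan is to feed the finite-horizon entropic risk bound of Proposition~\ref{prop: risk-meas-bound-gaussian} into the Chernoff-type representation of the entropic value at risk and then carry out a one-dimensional minimization over the risk parameter $\theta$. Concretely, the definition \eqref{def: EVaR} of the EV@R can be rewritten, with $X=f(x_k)-f(x_*)$, as $EV@R_{1-\zeta}[f(x_k)-f(x_*)]=\inf_{\theta>0}\bigl\{\mr_{k,\sigma^2}(\theta)+\tfrac{2\sigma^2}{\theta}\log(1/\zeta)\bigr\}$. Restricting the infimum to $\theta\in(0,\theta_\varphi^g]$ — which only increases its value — and invoking Proposition~\ref{prop: risk-meas-bound-gaussian}, which is legitimate because $\theta_\varphi^g=\varphi\theta_u^g<\theta_u^g$, I would substitute the explicit upper bound in \eqref{ineq: fin-risk-meas-bound-gaus} for $\mr_{k,\sigma^2}(\theta)$.

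The next step is to remove the $\theta$-dependence of the rate $\bar\rho^2_{\vartheta,\psi}$ appearing in \eqref{def: bar-rho} by bounding it uniformly on $(0,\theta_\varphi^g]$ by $\bbrho_{\vartheta,\psi}$. For this I would first note that $\theta_u^g<\tfrac{2}{\alpha_{\vartheta,\psi}(\vartheta+\alpha_{\vartheta,\psi}L)}$, which follows at once from \eqref{cond: cond-on-theta-gaus} and $\mtv_{\vartheta,\psi}>0$; hence the quantity $m(\theta):=\tfrac{4\theta\alpha^2_{\vartheta,\psi}\mtv_{\vartheta,\psi}}{2-\theta\alpha_{\vartheta,\psi}(\vartheta+\alpha_{\vartheta,\psi}L)}$ is positive and strictly increasing on $(0,\theta_u^g]$. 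Writing $r_0:=1-\sqrt{\vartheta\alpha_{\vartheta,\psi}\mu}=\rho_{\vartheta,\psi}^2\in(0,1)$ (by Theorem~\ref{thm: TMM-MI-solution}), formula \eqref{def: bar-rho} reads $\bar\rho^2_{\vartheta,\psi}=\tfrac12(r_0+m(\theta))+\tfrac12\sqrt{(r_0+m(\theta))^2+4m(\theta)}$, which is increasing in $m\ge0$ and hence increasing in $\theta$; thus $\bar\rho^2_{\vartheta,\psi}\le\bbrho_{\vartheta,\psi}<1$ for $\theta\le\theta_\varphi^g$. Consequently the variance term in \eqref{ineq: fin-risk-meas-bound-gaus} is at most $\tfrac{\sigma^2 d\,\alpha_{\vartheta,\psi}(\vartheta+\alpha_{\vartheta,\psi}L)}{(1-\bbrho_{\vartheta,\psi})(2-\theta\alpha_{\vartheta,\psi}(\vartheta+\alpha_{\vartheta,\psi}L))}$ and, since $\mV_P(\bcred{z}_0)\ge0$, the bias term is at most $(\bbrho_{\vartheta,\psi})^k 2\mV_P(\bcred{z}_0)$; the latter is independent of $\theta$ and I would pull it out of the infimum.

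It then remains to minimize $g(\theta):=\tfrac{a}{2-B\theta}+\tfrac{b}{\theta}$ over $(0,\theta_\varphi^g]$, where $B:=\alpha_{\vartheta,\psi}(\vartheta+\alpha_{\vartheta,\psi}L)$, $a:=\tfrac{\sigma^2 dB}{1-\bbrho_{\vartheta,\psi}}$ and $b:=2\sigma^2\log(1/\zeta)$. On $(0,2/B)$ the function $g$ is strictly convex and diverges at both endpoints, hence has a unique unconstrained minimizer $\theta^\star$ solving $\sqrt{aB}\,\theta^\star=\sqrt{b}\,(2-B\theta^\star)$; substituting back gives $g(\theta^\star)=\tfrac12(\sqrt a+\sqrt{Bb})^2=\tfrac{\sigma^2 B}{2}\bigl(\sqrt{d/(1-\bbrho_{\vartheta,\psi})}+\sqrt{2\log(1/\zeta)}\bigr)^2$. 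By convexity $g'$ is increasing, so the constrained minimizer on $(0,\theta_\varphi^g]$ is $\theta^\star$ exactly when $g'(\theta_\varphi^g)\ge0$; a short computation shows $g'(\theta_\varphi^g)\ge0$ is equivalent to condition \eqref{cond: conf-lev-gauss}, and in that case restoring the bias term yields \eqref{ineq: fin-EVAR-bound-gauss-1}. Otherwise $g$ is still decreasing at $\theta_\varphi^g$, so its minimum over $(0,\theta_\varphi^g]$ is attained at $\theta=\theta_\varphi^g$, giving $g(\theta_\varphi^g)=\tfrac{\sigma^2 dB}{(1-\bbrho_{\vartheta,\psi})(2-B\theta_\varphi^g)}+\tfrac{2\sigma^2}{\theta_\varphi^g}\log(1/\zeta)$, i.e. \eqref{ineq: fin-EVAR-bound-gauss-2}. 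Finally \eqref{ineq: EVAR-bound-gauss} follows by letting $k\to\infty$: since $\bbrho_{\vartheta,\psi}<1$, the bias term $(\bbrho_{\vartheta,\psi})^k 2\mV_P(\bcred{z}_0)$ vanishes.

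The main obstacle is the second step: replacing the $\theta$-dependent rate $\bar\rho^2_{\vartheta,\psi}(\theta)$ by the fixed surrogate $\bbrho_{\vartheta,\psi}$. This forces one to prove the monotonicity of $\bar\rho^2_{\vartheta,\psi}$ in $\theta$ and to verify $\theta_\varphi^g<2/B$ so that every denominator stays positive on the relevant range; once this is done the remaining optimization is elementary calculus, whose only delicate point is checking that the interior-versus-boundary dichotomy for $\theta^\star$ matches the precise algebraic form of the threshold \eqref{cond: conf-lev-gauss}.
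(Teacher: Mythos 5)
Your proposal is correct and follows essentially the same route as the paper's proof: restrict the infimum to $\theta\in(0,\theta_\varphi^g]$, replace the $\theta$-dependent rate $\bar{\rho}^2_{\vartheta,\psi}$ by the fixed surrogate $\bbrho_{\vartheta,\psi}$ using its monotonicity in $\theta$ (the paper does this via the monotonicity of $l_\theta$), and then minimize the resulting convex one-variable function, with the interior-versus-boundary dichotomy for the minimizer $\theta_*$ matching condition \eqref{cond: conf-lev-gauss} exactly as you describe. The closed form $\tfrac12(\sqrt{a}+\sqrt{Bb})^2$ for the interior optimum and the limit $k\to\infty$ for the asymptotic bound coincide with the paper's computations.
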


\mg{
Let us denote $\bar{E}_{1-\zeta}$ by $\bar{E}_{1-\zeta}(\vartheta,\psi)$ for the bound provided in Theorem \ref{thm: Evar-TMM-str-cnvx-bound} to emphasize its dependency on the parameters $\vartheta$ and $\psi$. A consequence of \eqref{eq-tail-proba} and Theorem \ref{thm: Evar-TMM-str-cnvx-bound} is the following corollary which provides us information about the tail probabilities of suboptimality.
\begin{corollary}\label{cor: gaussian-tail-bound}
Consider noisy \bcgreen{GMM} under the setting of Proposition \ref{prop: risk-meas-bound-gaussian}. Let risk parameter $\theta$ satisfy $\theta<\theta_{u}^{g}$, then the tail probability of suboptimality admits the following bound, 
\begin{equation}\label{ineq: tail-bound-1}
\mathbb{P} \left\{ f(x_k)-f(x_*)\geq t_{\zeta}  \right\}< e^{\frac{\theta}{2\sigma^2}\big(\bar{\rho}_{\vartheta,\psi}^{2k} 2\mV_P(\bcred{z}_0)-t_{\zeta}\big)+\frac{\theta d\alpha_{\vartheta,\psi}(\vartheta+\alpha_{\vartheta,\psi} L)}{2(1-\bar{\rho}^2_{\vartheta,\psi})(2-\theta \alpha_{\vartheta,\psi}(\vartheta+\alpha_{\vartheta,\psi}L))}},
\end{equation}
where \bcgreen{$z_0=[x_0^\top,x_{-1}^{\top}]^\top$ is the initialization} and $\bar{\rho}^2_{\vartheta,\psi}$ is as given in Proposition \ref{prop: risk-meas-bound-gaussian}. Furthermore, for a given confidence level $\zeta \in (0,1)$ and a constant $\varphi\in(0,1)$, optimizing over the choice of $\theta$,
\begin{equation}\label{ineq: tail-bound-2}
    \mathbb{P}\{ f(x_k)-f(x_*) \geq \bar{E}_{1-\zeta}(\vartheta,\psi)+(\bbrho_{\vartheta,\psi})^k 2\mV_P(\bcred{z}_0)\}\leq \zeta,
\end{equation}
where $\bbrho_{\vartheta,\psi}$ as given in \eqref{def: bbrho}. 
\end{corollary}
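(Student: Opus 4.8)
The plan is to derive both displayed tail bounds from the exponential Markov (Chernoff) inequality, feeding in the finite-horizon entropic risk estimate of Proposition~\ref{prop: risk-meas-bound-gaussian} as the exponential-moment input. Since $x_*$ is the global minimizer, $Y_k:=f(x_k)-f(x_*)\ge 0$, so for any $\theta>0$ and any $t$,
\[
\mathbb{P}\{Y_k\geq t\}=\mathbb{P}\big\{e^{\frac{\theta}{2\sigma^2}Y_k}\geq e^{\frac{\theta}{2\sigma^2}t}\big\}\leq e^{-\frac{\theta}{2\sigma^2}t}\,\mathbb{E}\big[e^{\frac{\theta}{2\sigma^2}Y_k}\big].
\]
Recalling that, by the definition of the finite-horizon entropic risk, $\mathbb{E}\big[e^{\frac{\theta}{2\sigma^2}Y_k}\big]=e^{\frac{\theta}{2\sigma^2}\mr_{k,\sigma^2}(\theta)}$, this reads
\[
\mathbb{P}\{Y_k\geq t\}\leq \exp\!\Big(\tfrac{\theta}{2\sigma^2}\big(\mr_{k,\sigma^2}(\theta)-t\big)\Big),\qquad \theta>0,\ t\in\mathbb{R}.
\]

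For \eqref{ineq: tail-bound-1}, I would fix $\theta<\theta_u^g$ so that Proposition~\ref{prop: risk-meas-bound-gaussian} applies; in particular $2-\theta\alpha_{\vartheta,\psi}(\vartheta+\alpha_{\vartheta,\psi}L)>0$ and $\mr_{k,\sigma^2}(\theta)$ obeys the strict bound \eqref{ineq: fin-risk-meas-bound-gaus}. Substituting that bound for $\mr_{k,\sigma^2}(\theta)$ in the last display and evaluating at $t=t_\zeta$ turns the exponent into exactly $\tfrac{\theta}{2\sigma^2}\big(\bar\rho_{\vartheta,\psi}^{2k}2\mV_P(z_0)-t_\zeta\big)+\tfrac{\theta d\alpha_{\vartheta,\psi}(\vartheta+\alpha_{\vartheta,\psi}L)}{2(1-\bar\rho^2_{\vartheta,\psi})(2-\theta\alpha_{\vartheta,\psi}(\vartheta+\alpha_{\vartheta,\psi}L))}$, and the strictness of \eqref{ineq: fin-risk-meas-bound-gaus} is inherited, which is precisely \eqref{ineq: tail-bound-1}.

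For \eqref{ineq: tail-bound-2}, I would specialize to $\theta=\theta_\varphi^g=\varphi\theta_u^g$, which is admissible because $\varphi\in(0,1)$ forces $\theta_\varphi^g<\theta_u^g$; at this value $\bar\rho^2_{\vartheta,\psi}$ coincides with $\bbrho_{\vartheta,\psi}$, as one sees by comparing the formulas \eqref{def: bar-rho} and \eqref{def: bbrho}. The cleanest finish is to invoke Theorem~\ref{thm: Evar-TMM-str-cnvx-bound}, whose bounds \eqref{ineq: fin-EVAR-bound-gauss-1}--\eqref{ineq: fin-EVAR-bound-gauss-2} state, in both regimes, that $EV@R_{1-\zeta}[Y_k]\leq \bar E_{1-\zeta}(\vartheta,\psi)+(\bbrho_{\vartheta,\psi})^k 2\mV_P(z_0)$; combining this with the EV@R tail property \eqref{eq-tail-proba}, i.e. $\mathbb{P}\{Y_k\geq EV@R_{1-\zeta}[Y_k]\}\leq\zeta$, together with the monotonicity of $t\mapsto \mathbb{P}\{Y_k\geq t\}$, yields \eqref{ineq: tail-bound-2}. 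As a cross-check one can also re-derive it directly from the first display by choosing the threshold $t$ that makes the exponent equal $\log\zeta$: in the regime where \eqref{cond: conf-lev-gauss} fails this is $t=\bar E_{1-\zeta}(\vartheta,\psi)+(\bbrho_{\vartheta,\psi})^k 2\mV_P(z_0)$, for which the $\mV_P(z_0)$ terms and the noise-dependent constant cancel, leaving $e^{\log\zeta}=\zeta$.

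I do not expect a substantive obstacle: the real content is already carried by Proposition~\ref{prop: risk-meas-bound-gaussian}, Theorem~\ref{thm: Evar-TMM-str-cnvx-bound}, and the Chernoff/EV@R machinery, so the corollary is essentially bookkeeping. The points that need care are (a) tracking the admissibility constraints $\theta<\theta_u^g$ and $\varphi\in(0,1)$ so that all denominators remain positive; (b) the identification $\bar\rho^2_{\vartheta,\psi}|_{\theta=\theta_\varphi^g}=\bbrho_{\vartheta,\psi}$, which is exactly what makes the exponent in \eqref{ineq: tail-bound-1} collapse onto the EV@R bound when passing to \eqref{ineq: tail-bound-2}; and (c) a genuinely minor measure-theoretic point — the EV@R tail property is comfortably applicable here because $\bar E_{1-\zeta}(\vartheta,\psi)+(\bbrho_{\vartheta,\psi})^k2\mV_P(z_0)$ dominates rather than merely equals $EV@R_{1-\zeta}[Y_k]$, so the event $\{Y_k\geq\cdot\}$ only shrinks, which is also why the direct Chernoff optimization is the tidiest way to write \eqref{ineq: tail-bound-2} up.
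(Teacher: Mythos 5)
Your proposal is correct and follows essentially the same route as the paper: a Chernoff/exponential-Markov bound combined with the finite-horizon entropic risk estimate \eqref{ineq: fin-risk-meas-bound-gaus} for \eqref{ineq: tail-bound-1}, and the EV@R tail property \eqref{eq-tail-proba} together with the bounds \eqref{ineq: fin-EVAR-bound-gauss-1}--\eqref{ineq: fin-EVAR-bound-gauss-2} and monotonicity of the tail for \eqref{ineq: tail-bound-2}. The admissibility points you flag ($\theta<\theta_u^g$, $\varphi\in(0,1)$, and the identification of $\bar{\rho}^2_{\vartheta,\psi}$ at $\theta=\theta_\varphi^g$ with $\bbrho_{\vartheta,\psi}$) are exactly the bookkeeping the paper relies on.
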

}
\mg{
\begin{remark} (More general noise)
In Proposition \ref{prop: risk-meas-bound-gaussian} and Theorem \ref{thm: Evar-TMM-str-cnvx-bound}, we assumed the gradient noise $w_k$ to be additive, i.i.d. Gaussian with a covariance matrix $\sigma^2 I_d$. Our results would hold for Gaussian noise with an arbitrary covariance matrix as well. More specifically, if the gradient noise $w_k\sim \mathcal{N}(0,\Sigma_{w_k})$ is independent from the past iterates $\{x_j\}_{j=0}^{k-1}$ and satisfies $\Sigma_{w_k}\preceq \sigma^2 I_d$ \bcgreen{for every $k$}, then our proof techniques extend and the same results would hold. In Section \ref{sec: Subgaussian}, we will also obtain results for more general noise structure that is not necessarily additive allowing sub-Gaussian tails.
\end{remark}
}
\bcred{
The parameter choices studied in Theorem \ref{prop: risk-meas-bound-gaussian} and Proposition \ref{thm: Evar-TMM-str-cnvx-bound} captures the AGD method (when $\vartheta=1=\psi$), HB method (when $\psi=0$) and \bcgreen{GMM} but requires the momentum term $\beta>0$. Therefore, these results do not directly apply to noisy gradient descent where $\beta=\gamma=0$. Using similar techniques we used to derive Proposition \ref{prop: risk-meas-bound-gaussian},
in the following we obtain bounds on the  entropic risk of suboptimality for noisy GD.
\begin{proposition}\label{prop: gd-evar-bound-gauss}
Consider the noisy GD iterations $\{x_k\}_{k\geq 0}$ to minimize an objective $f\in \Sml$ with stepsize $\alpha \in (0,\frac{2}{\mu+L}]$ where the gradient noise obeys \bcgreen{Assumption} \ref{Assump: Noise}. If the risk parameter $\theta>0$ satisfies the condition 
\beq
\theta < \frac{2(1-\rho_{GD}(\alpha)^2)}{L\alpha^2},
\label{ineq: upper bound on theta}
\eeq
where $\rho_{GD}(\alpha)=\max\{|1-\alpha \mu|,|1-\alpha L|\}$, then the finite-horizon entropic risk satisfies,
\begin{equation}\label{ineq: gd-risk-meas-bound-gauss}
\mr_{k,\sigma^2}(\theta) < \frac{\sigma^2 d\alpha^2 L}{2(1-\bcgreen{\rho_{GD}}(\alpha)^2)-\theta\alpha^2L}+\bcgreen{\big(\bar{\rho}_{GD}(\alpha)\big)^{2k}}\frac{L\Vert x_0-x_*\Vert^2}{2},
\end{equation}
where $\bar{\rho}_{GD}^2:=\frac{\rho_{GD}(\alpha)}{1-\theta \alpha^2 (L/2)} \in [0,1)$. Similarly, the infinite-horizon entropic risk admits the following bound: 
\begin{equation}\label{ineq: gd-inf-risk-meas-bound-gauss}
\mr_{\sigma^2}(\theta) \leq \frac{\sigma^2 d\alpha^2 L}{2(1-\rho_{GD}(\alpha)^2)-\theta\alpha^2L}.
\end{equation}
\end{proposition}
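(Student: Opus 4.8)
The plan is to control the exponential moments of the scaled distance to the optimum $V_k := \tfrac{L}{2}\|x_k-x_*\|^2$, which by $L$-smoothness (and $\nabla f(x_*)=0$) dominates suboptimality, $f(x_k)-f(x_*)\le V_k$, so it suffices to upper bound the deterministic sequence $M_k := \mathbb{E}\big[e^{\lambda V_k}\big]$ with $\lambda := \tfrac{\theta}{2\sigma^2}$, since then $\mr_{k,\sigma^2}(\theta) = \tfrac{1}{\lambda}\log\mathbb{E}\big[e^{\lambda(f(x_k)-f(x_*))}\big]\le \tfrac{1}{\lambda}\log M_k$. I would write the noisy GD step as $x_{k+1}-x_* = (y_k - x_*) + \hat{w}_{k+1}$, with $y_k := x_k-\alpha\nabla f(x_k)$ and $\hat{w}_{k+1}$ the additive perturbation (the stepsize-scaled gradient noise $-\alpha w_{k+1}$, centered Gaussian with covariance $\alpha^2\sigma^2 I_d$ under Assumption \ref{Assump: Noise}, independent of $\mathcal{F}_k$). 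The restriction $\alpha\in(0,\tfrac{2}{\mu+L}]$ makes the deterministic gradient map a contraction, $\|y_k-x_*\|\le \rho_{GD}(\alpha)\|x_k-x_*\|$; hence, conditionally on $\mathcal{F}_k$, $V_{k+1}=\tfrac{L}{2}\|(y_k-x_*)+\hat{w}_{k+1}\|^2$ is a quadratic form in a Gaussian vector with a fixed deterministic shift.

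First I would carry out the one-step conditional computation. Expanding the square and integrating out $\hat{w}_{k+1}$ against its Gaussian density — i.e. using $\mathbb{E}\big[e^{\langle b,\hat w\rangle+\tfrac{s}{2}\|\hat w\|^2}\big] = (1-s\alpha^2\sigma^2)^{-d/2}\exp\big(\tfrac{\alpha^2\sigma^2}{2(1-s\alpha^2\sigma^2)}\|b\|^2\big)$ for $\hat w\sim\mathcal{N}(0,\alpha^2\sigma^2 I_d)$, $s\alpha^2\sigma^2<1$, with $b=\lambda L(y_k-x_*)$ and $s=\lambda L$ — gives, after collecting the $\|y_k-x_*\|^2$ terms,
\[
\mathbb{E}\big[e^{\lambda V_{k+1}}\mid\mathcal{F}_k\big] = \Big(1-\tfrac{\theta L\alpha^2}{2}\Big)^{-d/2}\exp\!\Big(\tfrac{\lambda L}{2(1-\theta L\alpha^2/2)}\,\|y_k-x_*\|^2\Big),
\]
valid precisely when $\theta<\tfrac{2}{L\alpha^2}$, which holds under \eqref{ineq: upper bound on theta}. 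Applying the contraction $\|y_k-x_*\|^2\le\rho_{GD}(\alpha)^2\cdot\tfrac{2}{L}V_k$ turns this into $\mathbb{E}\big[e^{\lambda V_{k+1}}\mid\mathcal{F}_k\big]\le c\,e^{\bar\rho_{GD}^2\lambda V_k}$ with $c:=(1-\theta L\alpha^2/2)^{-d/2}$ and $\bar\rho_{GD}^2:=\rho_{GD}(\alpha)^2/(1-\theta L\alpha^2/2)$; condition \eqref{ineq: upper bound on theta} is exactly what forces $\bar\rho_{GD}^2\in[0,1)$. Taking total expectations and invoking Jensen's inequality on the concave map $t\mapsto t^{\bar\rho_{GD}^2}$ (legitimate as $\bar\rho_{GD}^2\in(0,1)$), so that $\mathbb{E}\big[(e^{\lambda V_k})^{\bar\rho_{GD}^2}\big]\le M_k^{\bar\rho_{GD}^2}$, yields the scalar recursion $M_{k+1}\le c\,M_k^{\bar\rho_{GD}^2}$.

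Next I would iterate this recursion in logarithmic form, $\log M_{k+1}\le\log c+\bar\rho_{GD}^2\log M_k$, so by the geometric sum and $M_0=e^{\lambda V_0}$ (the initialization is deterministic), $\log M_k\le\tfrac{\log c}{1-\bar\rho_{GD}^2}+\bar\rho_{GD}^{2k}\lambda V_0$. Dividing by $\lambda=\tfrac{\theta}{2\sigma^2}$ produces the bias term $\bar\rho_{GD}^{2k}V_0=\bar\rho_{GD}^{2k}\tfrac{L\|x_0-x_*\|^2}{2}$ directly; for the variance term I would bound $\log c=-\tfrac{d}{2}\log(1-\tfrac{\theta L\alpha^2}{2})\le\tfrac{d}{2}\cdot\tfrac{\theta L\alpha^2/2}{1-\theta L\alpha^2/2}$ and simplify using the identity $(1-\bar\rho_{GD}^2)(1-\tfrac{\theta L\alpha^2}{2})=(1-\rho_{GD}(\alpha)^2)-\tfrac{\theta L\alpha^2}{2}$, which turns $\tfrac{2\sigma^2}{\theta}\cdot\tfrac{\log c}{1-\bar\rho_{GD}^2}$ into exactly $\tfrac{\sigma^2 d\alpha^2 L}{2(1-\rho_{GD}(\alpha)^2)-\theta\alpha^2 L}$; this is \eqref{ineq: gd-risk-meas-bound-gauss}. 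Finally, since $\bar\rho_{GD}^2<1$, letting $k\to\infty$ kills the bias term and yields the infinite-horizon bound \eqref{ineq: gd-inf-risk-meas-bound-gauss}.

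The Gaussian integral and the final algebraic simplification are routine; the one genuinely delicate point is how to close the moment-generating-function recursion. One must resist the temptation to crudely bound $e^{\bar\rho_{GD}^2\lambda V_k}\le e^{\lambda V_k}$ — that would leave $M_{k+1}\le c\,M_k$, with no contraction and no decaying bias term — and instead keep the reduced exponent $\bar\rho_{GD}^2\lambda$ and apply Jensen to get the \emph{nonlinear} recursion $M_{k+1}\le c\,M_k^{\bar\rho_{GD}^2}$, which does contract on the log-scale because $\bar\rho_{GD}^2<1$. A secondary bookkeeping point is to verify that the single threshold \eqref{ineq: upper bound on theta} simultaneously guarantees (i) finiteness of the conditional Gaussian MGF at every step, (ii) $\bar\rho_{GD}^2<1$, and (iii) positivity of the denominator $2(1-\rho_{GD}(\alpha)^2)-\theta\alpha^2L$ of the variance term — all three collapse to the same inequality on $\theta$, which is why one condition suffices.
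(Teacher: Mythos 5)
Your proposal is correct and arrives at exactly the bound \eqref{ineq: gd-risk-meas-bound-gauss}, but it closes the moment-generating-function recursion differently from the paper. The paper never invokes Jensen's inequality: starting from $\mathbb{E}[e^{\frac{\theta L}{4\sigma^2}\Vert x_{k+1}-x_*\Vert^2}]$ it conditions on $\mathcal{F}_k$, integrates the Gaussian noise via Lemma \ref{lem: Gaussian-expectation}, and observes that the resulting exponent is $\frac{\theta L}{4\sigma^2}\bar{\rho}_{GD}^2\Vert x_k-x_*\Vert^2$ --- i.e.\ the \emph{same} quantity one step earlier but at the shrunken parameter $\theta\bar{\rho}_{GD}^2$. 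Peeling off iterations backward with this geometrically decaying parameter yields the product $\prod_{j=0}^{k-1}\big(1-\theta\bar{\rho}_{GD}^{2j}\tfrac{\alpha^2L}{2}\big)^{-d/2}$ of \emph{distinct} normalizing constants, which is then bounded by the same geometric sum you obtain. You instead keep the MGF parameter fixed at $\lambda=\theta/(2\sigma^2)$ and close the forward recursion $M_{k+1}\le c\,\mathbb{E}[(e^{\lambda V_k})^{\bar{\rho}_{GD}^2}]\le c\,M_k^{\bar{\rho}_{GD}^2}$ via concavity of $t\mapsto t^{\bar{\rho}_{GD}^2}$, turning everything into a linear recursion for $\log M_k$. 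The two routes are essentially dual: the paper's avoids Jensen at the cost of bookkeeping a changing parameter (and the auxiliary monotonicity observations $\bar{\rho}_{GD}^4/(2-\theta\bar{\rho}_{GD}^2\alpha^2L)<1/(2-\theta\alpha^2L)$), while yours is more modular and makes the single threshold on $\theta$ do all three jobs --- finiteness of the conditional Gaussian integral, $\bar{\rho}_{GD}^2<1$, and positivity of the denominator --- exactly as you note. Your identity $(1-\bar{\rho}_{GD}^2)(1-\tfrac{\theta L\alpha^2}{2})=(1-\rho_{GD}(\alpha)^2)-\tfrac{\theta L\alpha^2}{2}$ reproduces the paper's variance term verbatim, and the passage to the infinite-horizon bound is identical.
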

}

\mg{For $\alpha\in (0,\frac{2}{L+\mu})$, the upper bound $\bar{\theta}:=\frac{2(1-\rho_{GD}(\alpha)^2)}{L\alpha^2}$ on $\theta$ in \eqref{ineq: upper bound on theta} is a decreasing function of $\alpha$ whereas the rate $\rho_{GD}(\alpha)$ is a decreasing function of $\alpha$. Therefore, for a given $\alpha_0\in (0,\frac{2}{L+\mu})$, if we are more risk-averse and consequently if we want to increase $\theta$ beyond \bcgreen{$\bar{\theta}_0=\frac{2(1-\rho_{GD}(\alpha_0)^2)}{L\alpha^2_0}$}, then in Proposition \ref{prop: gd-evar-bound-gauss}, our analysis applies only if we give up from the rate $\rho_{GD}(\alpha_0)$ by choosing a smaller stepsize than $\alpha_0$. This phenomenon is not a shortcoming of our analysis in the sense that for the case of quadratic objectives (which are also in the class $\Sml$) where we can do exact computations, we also see \bcgreen{a similar behaviour}.
}
\mg{In the next result, we obtain EV@R results for gradient descent based on the previous result and the formula \eqref{def: EVaR}.}
\bc{
\begin{theorem}\label{thm: gd-evar-bound-gaussian}
Consider the noisy GD under the setting of Proposition \ref{prop: gd-evar-bound-gauss}. Let confidence level $\zeta\in (0,1)$ be given. The entropic value at risk of noisy GD admits the bound, 
\begin{small}
\begin{equation}\label{ineq: gd-evar-bound-gaus}
EV@R_{1-\zeta}[f(x_k)-f(x_*)]
\leq \frac{\sigma^2 \alpha^2 L}{2(1-\rho_{GD}(\alpha)^2)}\left( \sqrt{2\log(1/\zeta)}+\sqrt{d} \right)^2 + \Big( \frac{\rho_{GD}(\alpha)^2}{1-\varphi_{GD} (1-\rho_{GD}(\alpha)^2)}\Big)^k\frac{L\Vert x_0-x_*\Vert^2}{2},
\end{equation}
\end{small}
where $\varphi_{GD}= \frac{\sqrt{2\log(1/\zeta)}}{\sqrt{2\log(1/\zeta)}+\sqrt{d}}$, and $\alpha\leq\frac{2}{\mu+L}$.
\end{theorem}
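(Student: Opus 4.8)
The plan is to combine the entropic risk bound from Proposition \ref{prop: gd-evar-bound-gauss} with the variational formula \eqref{def: EVaR} for the entropic value at risk. Recall that $EV@R_{1-\zeta}[X] = \inf_{\theta>0}\left\{\frac{2\sigma^2}{\theta}\log(1/\zeta) + \mr_{k,\sigma^2}(\theta)\right\}$ (up to the normalization conventions used in the paper). We do not need to solve this infimum exactly; it suffices to plug in a well-chosen feasible $\theta$ and obtain the stated upper bound. So the first step is to substitute the finite-horizon bound \eqref{ineq: gd-risk-meas-bound-gauss} into the EV@R formula, giving
\begin{equation*}
EV@R_{1-\zeta}[f(x_k)-f(x_*)] \leq \inf_{0<\theta<\bar\theta}\left\{\frac{2\sigma^2}{\theta}\log(1/\zeta) + \frac{\sigma^2 d\alpha^2 L}{2(1-\rho_{GD}(\alpha)^2)-\theta\alpha^2 L} + \big(\bar\rho_{GD}(\alpha)\big)^{2k}\frac{L\Vert x_0-x_*\Vert^2}{2}\right\},
\end{equation*}
where $\bar\theta = 2(1-\rho_{GD}(\alpha)^2)/(L\alpha^2)$ and $\bar\rho_{GD}^2 = \rho_{GD}(\alpha)/(1-\theta\alpha^2 L/2)$ also depends on $\theta$.

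The second step is to choose $\theta$. The natural choice is to set $\theta = \varphi_{GD}\,\bar\theta$ for a well-chosen fraction $\varphi_{GD}\in(0,1)$; this makes the denominator $2(1-\rho_{GD}^2) - \theta\alpha^2 L = (1-\varphi_{GD})\cdot 2(1-\rho_{GD}^2)$ and makes $\bar\rho_{GD}^2 = \rho_{GD}(\alpha)^2/\big(1-\varphi_{GD}(1-\rho_{GD}(\alpha)^2)\big)$, matching the bracketed quantity in the last term of \eqref{ineq: gd-evar-bound-gaus}. With this substitution the first two terms become, after writing $A := \sigma^2\alpha^2 L/(2(1-\rho_{GD}^2))$, something of the form $\frac{2\log(1/\zeta)}{\varphi_{GD}}\cdot\frac{(1-\rho_{GD}^2)}{L\alpha^2}\cdot\sigma^2 + \frac{d}{1-\varphi_{GD}}A$; I would then rewrite this as $A\left(\frac{2\log(1/\zeta)}{\varphi_{GD}} + \frac{d}{1-\varphi_{GD}}\right)$ after checking that $\sigma^2(1-\rho_{GD}^2)/(L\alpha^2) = A\cdot(1-\rho_{GD}^2)^2/( \sigma^2 \alpha^2 L)\cdot\sigma^2$... more cleanly, note $\frac{2\sigma^2}{\theta}\log(1/\zeta) = \frac{2\sigma^2 L\alpha^2}{2\varphi_{GD}(1-\rho_{GD}^2)}\log(1/\zeta) = \frac{A}{\varphi_{GD}}\cdot 2\log(1/\zeta)$. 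So the sum of the first two terms is exactly $A\left(\frac{2\log(1/\zeta)}{\varphi_{GD}} + \frac{d}{1-\varphi_{GD}}\right)$.

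The third step is to optimize the scalar function $g(\varphi) = \frac{2\log(1/\zeta)}{\varphi} + \frac{d}{1-\varphi}$ over $\varphi\in(0,1)$. Setting $g'(\varphi)=0$ gives $\frac{2\log(1/\zeta)}{\varphi^2} = \frac{d}{(1-\varphi)^2}$, i.e. $\frac{1-\varphi}{\varphi} = \sqrt{d/(2\log(1/\zeta))}$, which yields $\varphi^* = \frac{\sqrt{2\log(1/\zeta)}}{\sqrt{2\log(1/\zeta)}+\sqrt{d}}$ — exactly $\varphi_{GD}$ as defined in the statement. At this optimum, $g(\varphi^*) = \big(\sqrt{2\log(1/\zeta)} + \sqrt{d}\big)^2$ by a direct computation (a standard Cauchy–Schwarz / AM–GM style identity: $\inf_\varphi (a/\varphi + b/(1-\varphi)) = (\sqrt a + \sqrt b)^2$). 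Multiplying back by $A = \sigma^2\alpha^2 L/(2(1-\rho(\alpha)^2))$ recovers the first term of \eqref{ineq: gd-evar-bound-gaus}, and the bias term $\big(\bar\rho_{GD}(\alpha)\big)^{2k}\frac{L\Vert x_0-x_*\Vert^2}{2}$ carries over unchanged with $\bar\rho_{GD}^2$ now evaluated at $\theta = \varphi_{GD}\bar\theta$, giving precisely the bracketed base in the last term. Finally I would verify the feasibility constraint: we need $\theta = \varphi_{GD}\bar\theta < \bar\theta$, which holds since $\varphi_{GD}\in(0,1)$, and we need $\alpha\le 2/(\mu+L)$ so that Proposition \ref{prop: gd-evar-bound-gauss} applies and $\rho_{GD}(\alpha)<1$.

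The main obstacle is bookkeeping rather than conceptual: one must be careful that the normalization constants in the definition \eqref{def: EVaR} of EV@R and in the definition of $\mr_{k,\sigma^2}(\theta)$ (the $\frac{2\sigma^2}{\theta}$ factors and the $\frac{\theta}{2\sigma^2}$ in the exponent) are tracked consistently so that the $\sigma^2$ factors and the factor of $2$ in $\big(\sqrt{2\log(1/\zeta)}+\sqrt d\big)^2$ come out exactly as stated; a sign error or a factor-of-two slip in choosing $\theta$ relative to $\bar\theta$ would break the clean identity $g(\varphi^*) = (\sqrt a+\sqrt b)^2$. A secondary point to handle carefully is that $\bar\rho_{GD}^2$ depends on $\theta$, so the bias term is not simply minimized at the same $\varphi$; but since we are only claiming an upper bound (we evaluate the whole infimand at the single point $\varphi_{GD}$, which is legitimate for any choice), this causes no difficulty — we just report the bias term at $\theta = \varphi_{GD}\bar\theta$.
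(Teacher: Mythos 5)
Your proposal is correct and follows essentially the same route as the paper: both evaluate the EV@R infimand at $\theta=\varphi_{GD}\,\bar\theta$, where the paper arrives at this point by minimizing the (convex) bound with the bias term frozen at $\theta_\varphi$ and then setting $\varphi=\varphi_{GD}$ so that the critical point $\theta_*$ coincides with $\theta_\varphi$, while you reach the same point by optimizing $a/\varphi+b/(1-\varphi)$ in closed form. Your observation that one may simply evaluate the infimand at a single feasible $\theta$ (so the $\theta$-dependence of $\bar\rho_{GD}$ causes no trouble) is a slightly more direct packaging of the same argument, and all constants check out.
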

}

\begin{figure}[ht!]
    \centering
    \includegraphics[width=0.9\linewidth]{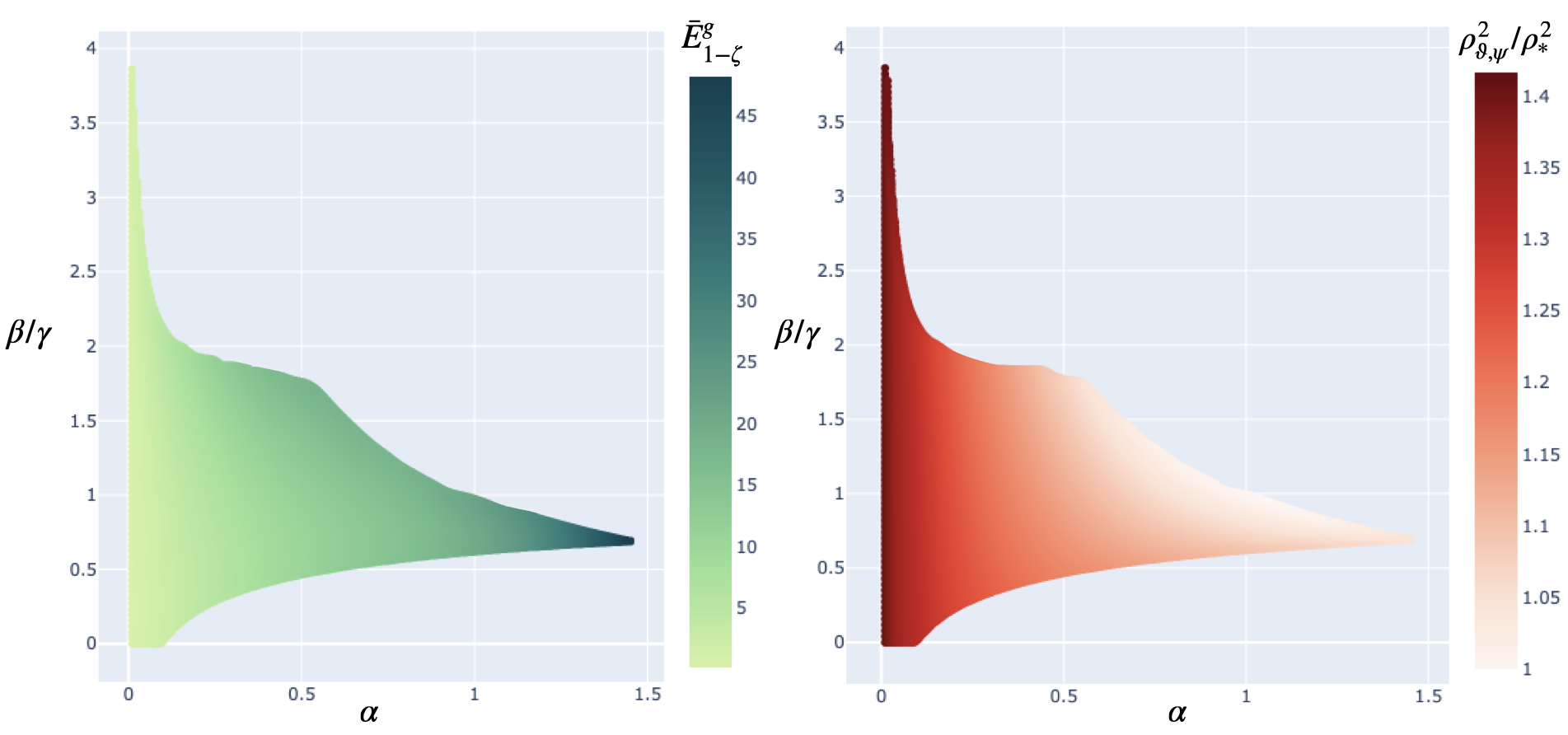}
    \caption{The EV@R bound $\bar{E}_{1-\zeta}$ (\textbf{Left}) and the rate $\rho_{\vartheta,\psi}^2/\rho_*^2$ (\textbf{Right}) on stable set $\mathcal{S}_c$ at $\varphi=0.99$ and $\zeta=0.99$ confidence level where $x\in\mathbb{R}^{10}$, $L=1$, $\mu=0.1$ for the objective function and the Gaussian noise is $\mathcal{N}(0,I_{10})$.}
    \label{fig: str-cnvx-evar-rate-on-stab-reg}
    \vspace{-0.2in}
\end{figure}

\mg{In Figure \ref{fig: str-cnvx-evar-rate-on-stab-reg}, on the left panel, we plot the EV@R bound $\bar{E}_{1-\zeta}$ as a function of the parameters allowed by Theorem \ref{thm: TMM-MI-solution}. More specifically, for visualization purposes, we consider the two dimensional set 
$$ 
\mathcal{A}:= \{ (\alpha_{\vartheta,\psi}, \beta_{\vartheta,\psi}/\gamma_{\vartheta,\psi}) : ({\vartheta,\psi}) \in \mathcal{S}_0 \cup \mathcal{S}_c
\}
$$
and compute and plot the EV@R for every point in $\mathcal{A}$ based on Theorem \ref{thm: Evar-TMM-str-cnvx-bound} for confidence level $\zeta = 0.99$. On the other hand, on the right panel of Figure \ref{fig: str-cnvx-evar-rate-on-stab-reg}, we plot the relative convergence rate
$\rho_{\vartheta,\psi}^2/\rho_*$  for every point in the set $\mathcal{A}$ where $\rho_*^2 = 1 - \sqrt{\alpha_{\vartheta,\psi} \mu}$ is the fastest convergence rate we can certify (corresponds to the choice $\vartheta=\psi=1$) in Theorem \ref{thm: TMM-MI-solution}. We take the noise variance $\sigma^2 = 1$, $d=10$, \bcred{and we set $\varphi=0.99$}. Similar to the quadratic case, our results show that faster convergence, i.e. smaller values of the rate, is associated with larger values of the upper bound on the risk $\bar{E}_{1-\zeta}$. We also observe that decreasing the stepsize in this region leads to a deterioration in the convergence rate while improving the risk bound $\bar{E}_{1-\zeta}$. These results highlight the trade-offs between the convergence rate and the entropic risk and tail probabilities at stationarity (as $k\to \infty)$.} 
\mg{Motivated by these results, we consider the following optimization problem to design the parameters of \bcgreen{GMM} algorithms to achieve systematic trade-offs between the relative convergence rate $\rho_{\vartheta,\psi}^2/\rho_*$ (which will be controlled by the parameter $\varepsilon$ below) and the entropic risk:}
\begin{subequations}
\label{def: risk-averse-stmm-param-design}
\begin{align}
    (\vartheta_\star,\psi_\star)=\underset{(\vartheta,\psi)\in \mg{ \mathcal{S}_c} \cup \mathcal{S}_0}{\text{argmin}} &\; \bar{E}_{1-\zeta}(\vartheta,\psi)\\
    \text{s.t.}\quad & \frac{\rho_{\vartheta,\psi}^2}{\rho_*^2} \leq (1+\varepsilon).
\end{align}
\end{subequations}
\mg{The parameter $\varepsilon$ is a trade-off parameter \bcgreen{as before}, it captures what percent of rate we want to give up from the fastest convergence rate $\rho_*^2$ to ensure better entropic risk bounds.}
\mg{Since the stepsize and momentum parameters are parametrized by $(\vartheta,\psi)$, once we find the optimal $(\vartheta_\star,\psi_\star)$; we can calculate the optimal 
parameters 
directly based on the relationship \eqref{def: stmm-params-str-cnvx}. We call the \bcgreen{GMM} algorithm with this choice of parameters \emph{entropic-risk averse \bcgreen{GMM}}. In the next section, we study the performance of noisy \bcgreen{GMM} algorithms numerically and illustrate our theoretical results. }
\section{Numerical Experiments} \label{sec: num-exp}


In the numerical experiments, we first consider the following quadratic optimization problem where the objective in \eqref{opt_problem} is given by
\beq
f(x)=\frac{1}{2}x^\top Q x+ b^\top x+ \frac{\lambda_{\scriptscriptstyle\mbox{reg}}}{2} \Vert x\Vert^2,
\label{opt-pbm-quad}
\eeq
\mg{in dimension $d=10$} with $b=\frac{1}{\Vert \tilde{b}\Vert}\tilde{b}$ for $\tilde{b}=[1,...,1]\in\mathbb{R}^{10}$, \mg{$Q$ is a diagonal matrix with diagonals $Q_{ii}=i^2$} for $i\in\{1,..,10\}$, and $\lambda_{\scriptscriptstyle\mbox{reg}}=5$ is a regularization parameter. We find the parameters $(\alpha_q,\beta_q,\gamma_q)$ of the (entropic) risk-averse \bcgreen{GMM} (RA-\bcgreen{GMM}), by solving the optimization problem 
\eqref{def: risk_averse_stmm_quad} using grid search over $\mathcal{S}_q$ where we set the trade-off parameter as $\varepsilon=0.25$ at a confidence level $\zeta=0.95$. We take gradient noise according to Assumption \ref{Assump: Noise} with $\sigma^2=1$. For the (entropic) risk-averse AGD (RA-AGD), we consider the problem \eqref{def: risk_averse_stmm_quad} but also enforce the additional constraint \mg{$\beta_{\vartheta,\psi} = \gamma_{\vartheta,\psi}$} 
using the same $\varepsilon$ and $\zeta$ values. For comparison purposes, we also consider GD with standard parameters $(\alpha=\frac{1}{L}, \beta=\gamma=0)$ used in deterministic optimization, as well as AGD with standard parameters $(\alpha=1/L,\beta=\gamma=\frac{\sqrt{L}-\sqrt{\mu}}{\sqrt{L}+\sqrt{\mu}})$ used in deterministic optimization \cite{hu2017dissipativity}. 

In Figure \ref{fig: quad_subopt_and_hist}, we compare RA-\bcgreen{GMM}, RA-AGD, AGD and GD algorithms. 
\begin{figure}
    \centering
    \includegraphics[width=\linewidth]{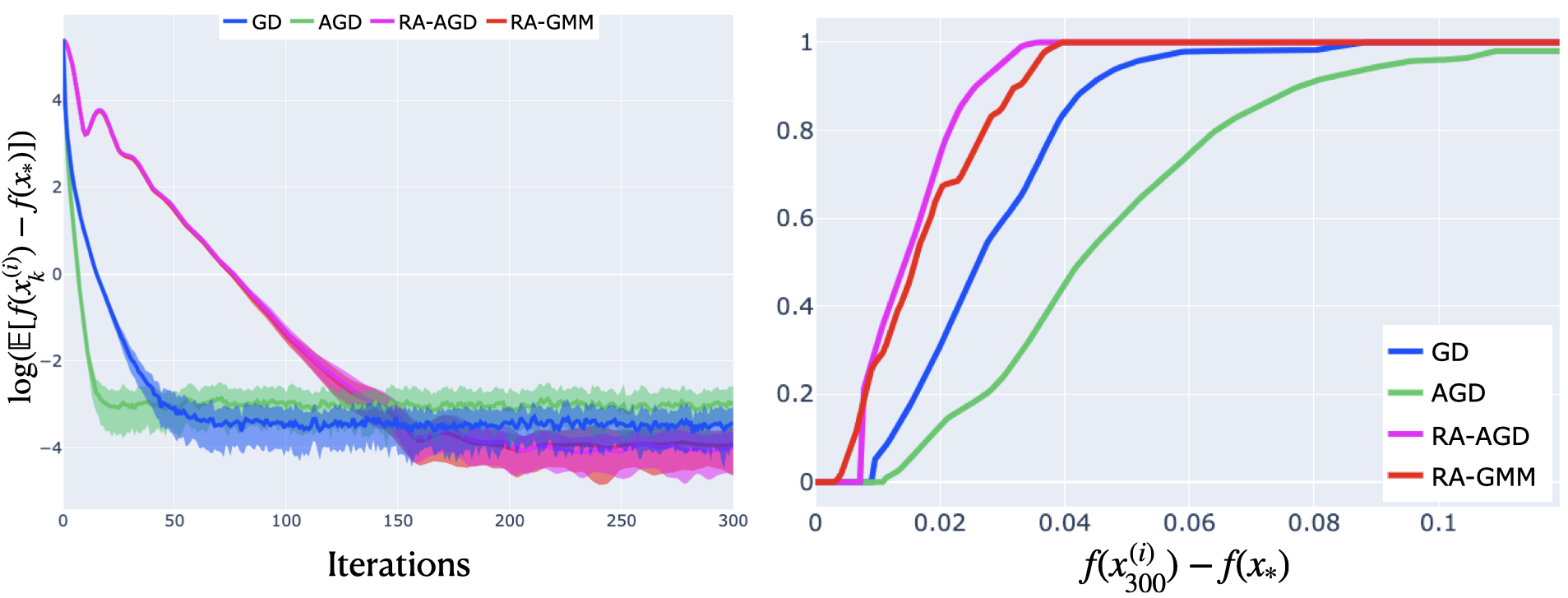}
    \caption{\textbf{(Left panel)} The expected suboptimality versus iterations for GD, AGD, RA-AGD and RA-\bcgreen{GMM}. \textbf{(Right panel)} The cumulative distribution of the suboptimality of the last iterates for GD, AGD, RA-AGD and RA-\bcgreen{GMM} after $k=300$ iterations.}
    \label{fig: quad_subopt_and_hist}
    \vspace{-0.2in}
\end{figure}
We initialized all the algorithms at $x_0=[1,...,1]\in\mathbb{R}^{10}$. We generated 50 sample paths $\{f(x_{0}^{(i)}),..., f(x_{300}^{(i)})\}_{i=1}^{50}$ by running the algorithms for 300 iterations, then computed the empirical mean and standard deviation of the \mg{suboptimality $f(x_k)-f(x_*)$ based on the samples $f(x_k^{(i)})-f(x_*)$ at every iteration $k\leq 300$}. The left panel of Figure \ref{fig: quad_subopt_and_hist} displays the average suboptimality, $(\bar{f}_1,...,\bar{f}_{300})$ where $\bar{f}_k:=\frac{1}{50}\sum_{i=1}^{50}f(x_k^{(i)})-f(x_*)$. Light colored area around the plots illustrate the standard deviation of the mean suboptimality.\footnote{More specifically, we highlight the region between $(\bar{f}_{0}\pm \sigma^{f}_0,...,\bar{f}_{300}\pm \sigma^f_{300})$ where $\sigma^f_k:=\big(\frac{1}{50}\sum_{i=1}^{50} |f(x_k^{(i)})-f(x_*)|^2\big)^{1/2}$.}  RA-AGD and RA-\bcgreen{GMM} performed similarly, where we see an overlap on their plots. These results illustrate that RA-AGD and RA-\bcgreen{GMM} can trade-off convergence rate with the entropic risk; both algorithms converge slower than GD and AGD; but achieve better asymptotic mean suboptimality. We also plotted the cumulative distribution of $\{f(x_{300}^{(i)})-f(x_*)\}_{i=1}^{50}$ on the right panel of Figure \ref{fig: quad_subopt_and_hist}. 
We can observe from this figure that both of the risk-averse algorithms (RA-\bcgreen{GMM} and RA-AGD) have stochastic dominance over \bcred{GD} and \bcred{AGD}; that is, for any given $t>0$, the tail probability $\mathbb{P}\{f(x^{(i)}_{300})-f(x_*) \geq t\}$ of the samples $\{f(x_{300}^{(i)})\}_{i=1}^{50}$ generated by risk averse algorithms is \mg{smaller} than the the same probability of the ones generated by standard algorithms. This is expected as RA-AGD and RA-\bcgreen{GMM} minimizes the entropic risk subject to rate constraints and this leads to better control of the tail behavior of suboptimality. We also consider the \emph{empirical finite-horizon entorpic risk}, \beq\tilde{r}_{k,\sigma^2}(\theta):= \frac{2\sigma^2}{\theta}\log \frac{1}{50}\sum_{i=1}^{50}e^{\frac{\theta}{2\sigma^2}(f(x_k^{(i)})-f(x_*))},
\label{eq-empirical-risk}
\eeq 
which approximates the finite-horizon entropic risk $r_{k,\sigma^2}(\theta)$, of each of the algorithms. In Figure \ref{fig: quad_risk_meas}, we illustrate the the convergence of empirical finite-horizon risk $\tilde{r}_{k,\sigma^2}(\theta)$ for $\theta=5$. According to \bc{Proposition \ref{prop: quad_qisk_meas_convergence}}, finite-horizon risk converges to the infinite horizon risk with rate $\rho(A_Q)$. As expected, GD and AGD converges to stationarity faster (as their $\rho(A_Q)$ is smaller) than RA-AGD and RA-\bcgreen{GMM}; however RA-AGD and RA-\bcgreen{GMM} have better asymptotic entropic risk as intended. 
\begin{figure}[ht!]
    \centering
    \includegraphics[width=0.5\linewidth,height=0.4\linewidth]{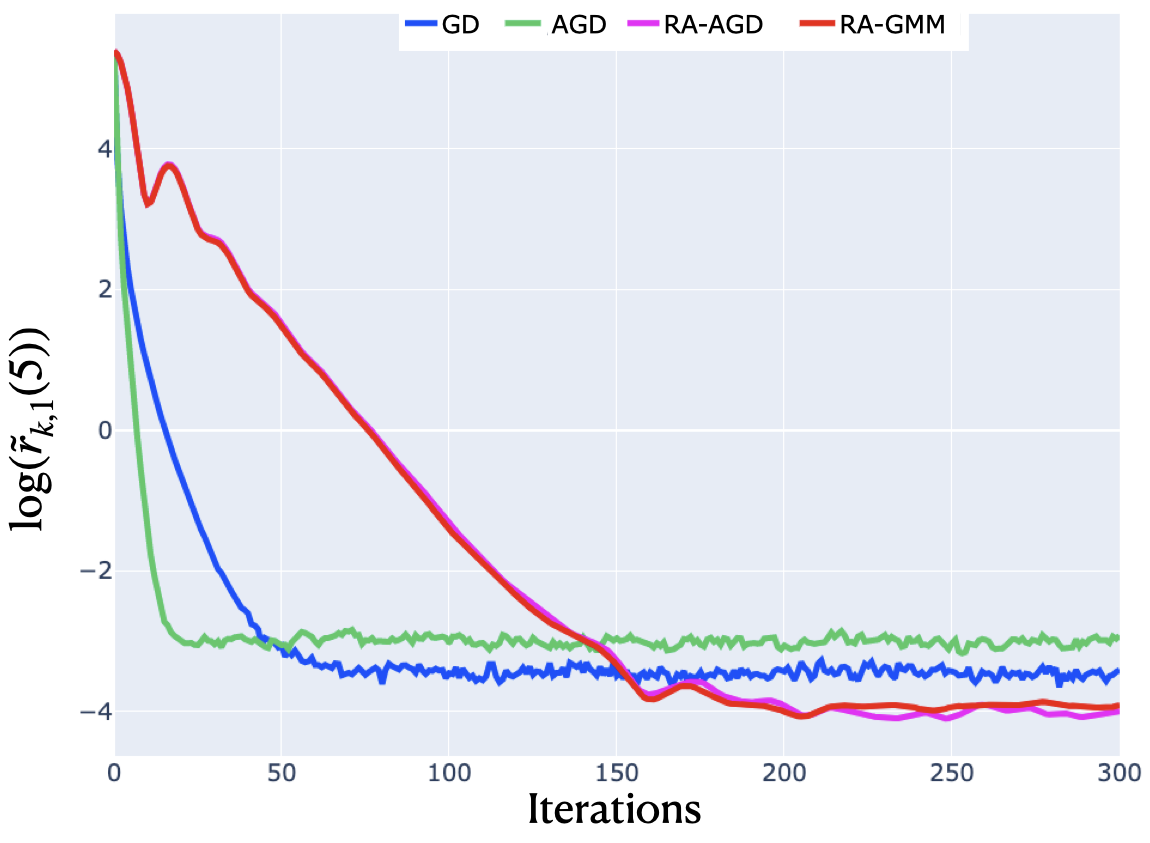}
    \caption{The empirical finite-horizon risk measure, $\tilde{r}_{k,1}(5)$ versus iterations on the quadratic optimization problem \eqref{opt-pbm-quad}.}
    \label{fig: quad_risk_meas}
    \vspace{-0.2in}
\end{figure}

Next, we consider the regularized binary logistic regression problem for which the objective function $f$ in \eqref{opt_problem} is 
$$
f(x)=\frac{1}{N}\sum_{i=1}^{N} \log(1+\exp\{-y_i (X_i^\top x) \})+\frac{\lambda_{\scriptscriptstyle\mbox{reg}}}{2}\Vert x\Vert^2,
$$
where $X_i\in\mathbb{R}^{100}$ is the feature vector and $y_i\in \{-1,1\}$ is the label of $i$-th sample. We set the regularization parameter $\lambda_{\scriptscriptstyle\mbox{reg}}=1$, the number of features $d=100$, and the sample size $N=1000$. We generated the synthetic data by sampling a random vector $\mathbf{w}\in\mathbb{R}^{100}$, \bc{where its each component, $\mathbf{w}_i$, is drawn from } $\mathcal{N}(0,25)$. \bc{We also generated the vectors $X_i\in \mathbb{R}^{100}$ by sampling its each component from $\mathcal{N}(0,25)$ for every $i=1,2,\dots,N$. 
We created the labels by $y_i=1 $ if $X_i^\top \mathbf{w}\geq 0$ and $-1$ otherwise. For the logistic regression, we calculated the parameters of the entropic risk-averse \bcgreen{GMM} \bc{(RA-\bcgreen{GMM})} by solving the optimization problem \eqref{def: risk-averse-stmm-param-design} for \bcred{$\varphi=0.99$ (used to estimate the EV@R bound),} $\zeta=0.95$ and $\varepsilon=0.05$ on the sets 
using grid search for the parameters. \bc{For risk-averse AGD (RA-AGD), we choose $\vartheta=1=\psi$. Under this choice of parameters, the class of parameters $(\alpha_{\vartheta,\psi},\beta_{\vartheta,\psi},\gamma_{\vartheta,\psi})$ we can choose simplifies and can be represented as
$(\alpha,\frac{1-\sqrt{\alpha\mu}}{1-\sqrt{\alpha\mu}},\frac{1-\sqrt{\alpha \mu}}{1+\sqrt{\alpha\mu}})$ where $\alpha \in (0,1/L]$. Therefore, we can see that the EV@R bound can be written as a function of $\alpha$ only, i.e., $\bar{E}_{1-\zeta}=\bar{E}_{1-\zeta}(\alpha)$, and similarly the convergence rate can be written as $\rho_{AGD}^2(\alpha)=1-\sqrt{\alpha\mu}$. Therefore, the analogue of the problem \eqref{def: risk-averse-stmm-param-design} for RA-AGD becomes $\alpha \in (0,1/L]$ to solve $\min_{\alpha \in (0,1/L]}\bar{E}_{1-\zeta}(\alpha)$ subject to $\rho_{AGD}^2(\alpha)\leq (1+\varepsilon)\rho_*^2$ where we take the same $\zeta=0.95$ and $\varepsilon=0.05$ to compare with RA-\bcgreen{GMM}.} We use grid search over the parameter $\alpha$ to solve this problem. On the left panel of Figure \ref{fig: str_cnvx_subopt_and_hist}, we compared the mean suboptimality of RA-\bcgreen{GMM}, RA-AGD, AGD and GD over 50 sample paths for $k=600$ iterations.\footnote{For plotting the suboptimality, the optimal value $f(x_*)$ of the objective in logistic regression needs to be estimated, for this we run deterministic gradient descent on the logistic regression problem.} Similar to the quadratic example, we added standard deviations of the runs and started the iterations from $x_0=[1,...,1]^{\top}\in\mathbb{R}^{100}$ .} 
Our results in Figure \ref{fig: str_cnvx_subopt_and_hist} (on the left) shows that AGD is the fastest among the algorithms as expected; however, it yields the least accurate result asymptotically. GD is slower than AGD but with better asymptotic performance.
Both of the risk-averse algorithms converge slower than GD and AGD but they yield to better mean suboptimality subject to standard deviations of the suboptimality. This is expected from the formula \eqref{def: risk_meas_infty} as the entropic risk captures both the mean suboptimality and the standard deviation of the suboptimality.

RA-\bcgreen{GMM} does have a slight improvement over RA-AGD. On the right panel of Figure \ref{fig: str_cnvx_subopt_and_hist}, we illustrate the cumulative distribution of the suboptimality on the last iterate. 
Similar to what we have observed on quadratic problem, 
we can see that distributions corresponding to \bcred{RA-\bcgreen{GMM}} and \bcred{RA-AGD} have stochastic dominance over the distributions of GD and AGD. This illustrates that with RA-AGD and RA-\bcgreen{GMM} the probability that the suboptimality exceeds a threshold is smaller. 

Lastly in Figure \ref{fig: str_cnvx_risk_meas}, we plot the empirical finite-horizon entropic risk (see \eqref{fig: str_cnvx_subopt_and_hist}) for logistic regression, taking $\theta=5$. For AGD and \bcgreen{GMM}, from  \eqref{ineq: fin-risk-meas-bound-gaus} we expect the finite-horizon entropic risk to converge linearly with a rate proportional to the convergence rate $\bar{\rho}_{\vartheta,\psi}^2$ to an interval; for GD, based on\eqref{ineq: fin-risk-meas-bound-gaus}, we also expect the same behavior with convergence rate $\rho_{GD}^2(\alpha)$. Figure \ref{fig: str_cnvx_risk_meas} illustrates these results where we observe that RA-AGD and RA-\bcgreen{GMM} converges to a smaller asymptotic entropic risk value but at a slower pace as expected illustrating our theoretical results.  
\begin{figure}
    \centering
    \includegraphics[width=0.8\linewidth]{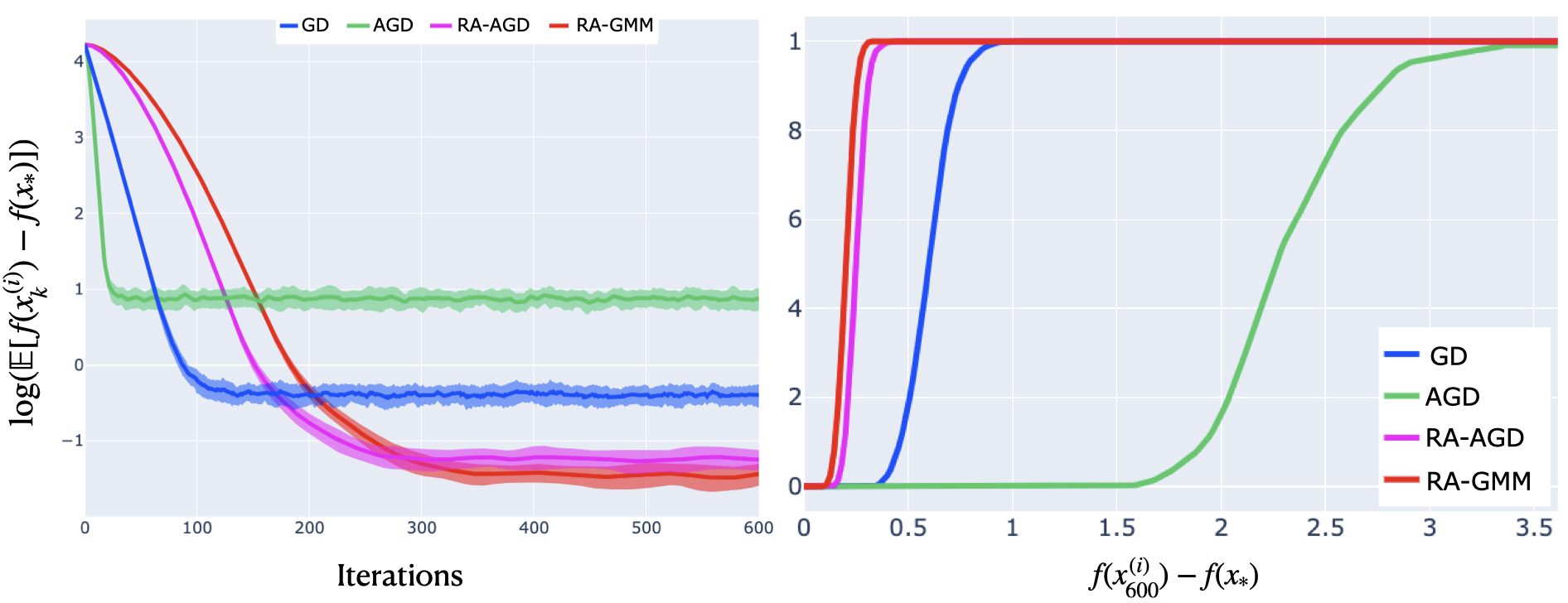}
    \caption{\textbf{(Left)} The expected suboptimality curves for each algorithm (lightly colored regions around each curve highlight the points within one standard deviation from the expected suboptimality), \textbf{(Right)} The cumulative distribution of the last iterates generated by \bcgreen{RA-GMM, RA-AGD, AGD, GD subject to gradient noise} on the logistic regression problem.}
    \label{fig: str_cnvx_subopt_and_hist}
\end{figure}
\begin{figure}
    \centering
\includegraphics[width=0.4\linewidth,height=0.3\linewidth]{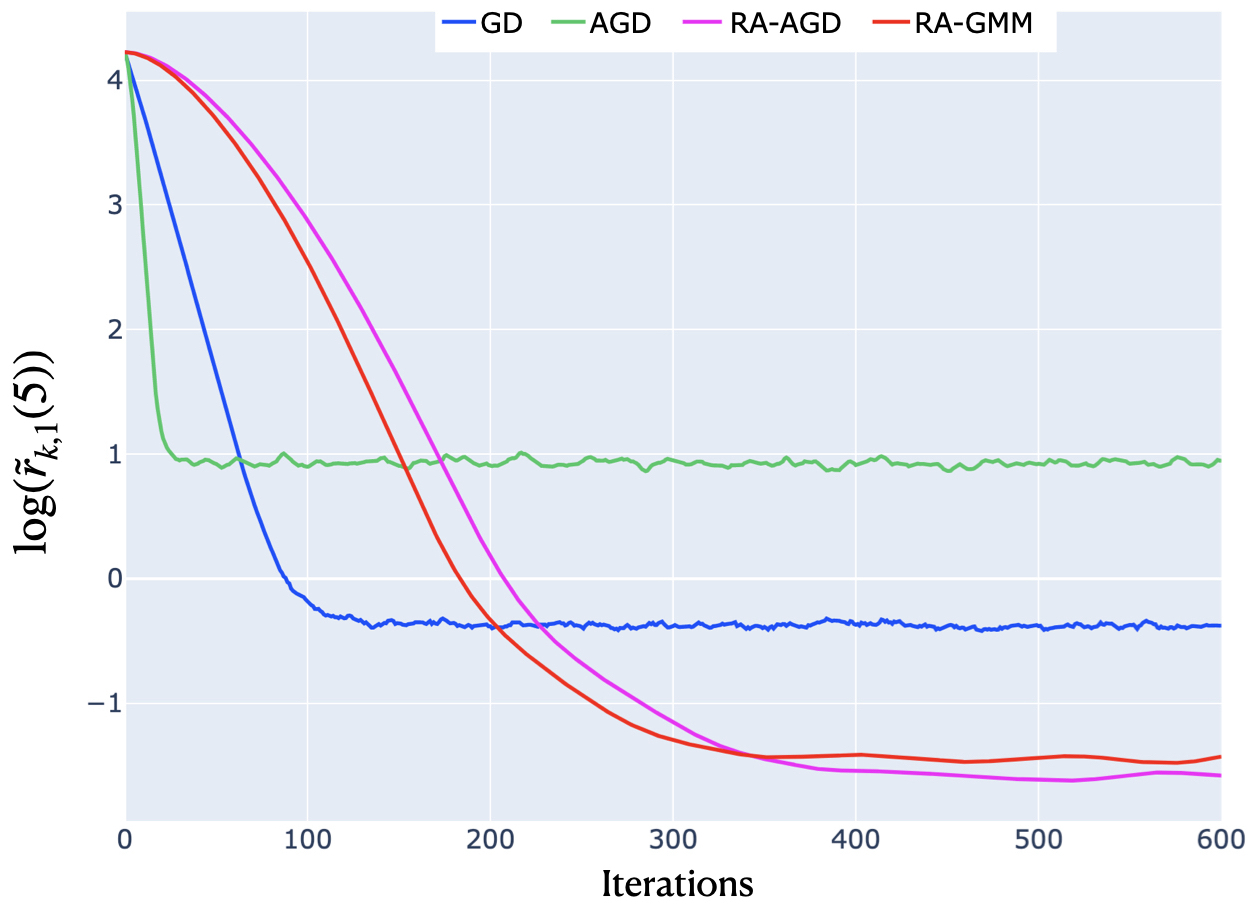}
    \caption{The empirical finite-horizon risk measure $\tilde{r}_{k,1}(5)$ over iterations for the logistic regression problem.}
    \label{fig: str_cnvx_risk_meas}
\end{figure}
\section{\bcgreen{Extensions} \bcred{to sub-Gaussian noise and \bcgreen{to more} general choice of parameters}}\label{sec: Subgaussian}
\bcgreen{
In this section, we discuss how our results can be extended to hold under more general noise that have light tails. The same proof techniques as before will work, albeit some minor differences in the constants. In the following, we assume the noise, when conditioned on the iterates, have sub-Gaussian tails. Such noise structure have been commonly considered in the literature for studying stochastic gradient methods and their variants (see e.g. \cite{ghadimi-lan,alistarh2018byzantine,agarwal}). In empirical risk minimization problems such as the logistic regression problem we considered in Section \ref{sec: num-exp}; if the gradients are estimated from randomly selected $b$ data points, the noise will behave sub-Gaussian for $b$ large enough due to the central limit theorem and this assumption will hold \cite{agarwal}. If the noise is uniformly bounded over iterations (with probability one); this assumption will also hold.}
\begin{assump}\label{Assump: Gen-Noise}
For each $k\in \mathbb{N}$, the gradient noise $\bc{\veps_{k+1}}=\tn f(y_k) - \nabla f(y_k) $ is independent from the natural filtration $\mathcal{F}_k$ generated by the iterates $\{x_j\}_{j=0}^k$, it is centered and sub-Gaussian with a variance proxy $\sigma^2>0$, i.e.,
\begin{eqnarray} 
\mathbb{E}[\bcgreen{w_{k+1}} ~|~ \mathcal{F}_k ] &=& 0,\label{cond: SubGaussian - mean}\\
\mathbb{P}\{\Vert \bcgreen{w_{k+1}}\Vert \geq t ~|~ \mathcal{F}_k\}&\leq& 2e^{-\frac{t^2}{2\sigma^2}}. \label{cond: SubGaussian}
\end{eqnarray} 
\end{assump}
\mg{Theorem \ref{thm: TMM-MI-solution} assumed Gaussian noise and used the properties of the moment generating function of Gaussian distributions to integrate the noise. The next result, extends this result to the sub-Gaussian setting where the proof idea applies the same; except the bounds on the moment generating function of the noise needs to be adapted. The proof is in Appendix \ref{app: SubOpt-Subgaussian}. We obtain the same rate and the asymptotic suboptimality bound (the variance term) has the same dependency to the parameters and is larger by a constant factor of 2 compared to the Gaussian noise setting considered in Theorem  \ref{thm: TMM-MI-solution}}.
\bcred{
\begin{proposition}\label{prop: SubOpt-Subgaussian}
Consider \bc{noisy} \bcgreen{GMM} on $f\in \Sml$ with initialization $\bc{z}_0 = \begin{bmatrix} x_0^{\top} & x_{-1}^{\top} \end{bmatrix}^{\top}\in \mathbb{R}^{2d}$ and with parameters  \bcgreen{$(\alpha,\beta,\gamma)=(
\alpha_{\vartheta,\psi},\beta_{\vartheta,\psi},\gamma_{\vartheta,\psi})$} as given in Theorem \ref{thm: TMM-MI-solution}. Under Assumption \ref{Assump: Gen-Noise}, the expected suboptimality satisfies the bound 
\begin{equation}\label{ineq: subopt-subgauss}
\mE[f(x_k)-f(x_*)] \leq \bcgreen{ \rho^{2k}\mV(\bc{z}_0)+\bcgreen{ \frac{\sigma^2\alpha^2}{1-\rho^2} (\frac{L}{2}+\lambda_P)}},
\end{equation}
\bcgreen{where $\lambda_P=\frac{\vartheta}{2\alpha_{\vartheta,\psi}}$ and $\rho^2=\rho^2_{\vartheta,\psi}$ as in \ref{thm: TMM-MI-solution}.}
\end{proposition}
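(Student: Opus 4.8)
The plan is to re-run the proof of Theorem~\ref{thm: TMM-MI-solution} essentially unchanged and to isolate the single place where the Gaussianity of the noise is used, namely a bound on the conditional second moment of the gradient noise. This is consistent with Remark~\ref{remark-noise-in-L2}, which already records that the conclusion of Theorem~\ref{thm: TMM-MI-solution} holds verbatim for any noise sequence that is conditionally centered and has conditionally bounded second moment; so the task reduces to (i) recalling the one-step Lyapunov recursion underlying that theorem and (ii) checking that a sub-Gaussian noise with variance proxy $\sigma^2$ in the sense of \eqref{cond: SubGaussian} meets the required second-moment bound, while keeping track of the constant.

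First I would recall the deterministic mechanism. For the parameters $(\alpha_{\vartheta,\psi},\beta_{\vartheta,\psi},\gamma_{\vartheta,\psi})$ of Theorem~\ref{thm: TMM-MI-solution} with the explicit Lyapunov matrix $P=\tilde P\otimes I_d$ from \eqref{def: P_Matrix}, the dissipativity argument of \cite{hu2017dissipativity} combined with the strong-convexity/smoothness interpolation inequalities for $f\in\Sml$ gives, for the \emph{noiseless} GMM iterates, the one-step contraction $\mathcal{V}_P(z_{k+1})\le \rho_{\vartheta,\psi}^{2}\,\mathcal{V}_P(z_k)$, with $\mathcal{V}_P$ as in \eqref{def: Lyapunov} and $\rho_{\vartheta,\psi}^{2}=1-\sqrt{\vartheta\alpha_{\vartheta,\psi}\mu}\in(0,1)$. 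In the noisy case the only change is that $x_{k+1}$ acquires the extra summand $-\alpha_{\vartheta,\psi}w_{k+1}$, so $\mathcal{V}_P(z_{k+1})$ exceeds its noiseless value by: terms linear in $w_{k+1}$ with $\mathcal{F}_k$-measurable coefficients (arising from expanding the quadratic form $(z-z_*)^\top P(z-z_*)$ and, after using $L$-smoothness of $f$, from expanding $f(x_{k+1})-f(x_*)$), plus a quadratic term $\alpha_{\vartheta,\psi}^{2}\big(\tfrac{L}{2}+\lambda_P\big)\|w_{k+1}\|^{2}$, where $\lambda_P=\tfrac{\vartheta}{2\alpha_{\vartheta,\psi}}$ is the $(1,1)$ entry of $\tilde P$ (the coefficient with which $\alpha_{\vartheta,\psi}w_{k+1}$ enters the quadratic form through the $x$-block) and $\tfrac{L}{2}$ comes from smoothness. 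Taking $\mathbb{E}[\cdot\mid\mathcal{F}_k]$ and using \eqref{cond: SubGaussian - mean} together with independence, all linear terms vanish and one is left with
\[
\mathbb{E}[\mathcal{V}_P(z_{k+1})\mid\mathcal{F}_k]\;\le\;\rho_{\vartheta,\psi}^{2}\,\mathcal{V}_P(z_k)+\alpha_{\vartheta,\psi}^{2}\Big(\tfrac{L}{2}+\lambda_P\Big)\,\mathbb{E}\big[\|w_{k+1}\|^{2}\,\big|\,\mathcal{F}_k\big].
\]

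The only noise-model-dependent ingredient is $\mathbb{E}[\|w_{k+1}\|^{2}\mid\mathcal{F}_k]$. Under Assumption~\ref{Assump: Gen-Noise}, integrating the tail bound \eqref{cond: SubGaussian},
\[
\mathbb{E}\big[\|w_{k+1}\|^{2}\mid\mathcal{F}_k\big]=\int_0^\infty 2t\,\mathbb{P}\{\|w_{k+1}\|\ge t\mid\mathcal{F}_k\}\,dt\;\le\;\int_0^\infty 4t\,e^{-t^{2}/(2\sigma^{2})}\,dt=4\sigma^{2},
\]
i.e.\ a constant multiple of $\sigma^{2}$; this is where the constant-factor loss relative to the Gaussian analysis (in which the analogous quantity is $d\sigma^{2}$) enters. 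Substituting this bound, unrolling the recursion from $k=0$, and summing the geometric series with ratio $\rho_{\vartheta,\psi}^{2}<1$ gives
\[
\mathbb{E}[\mathcal{V}_P(z_k)]\;\le\;\rho_{\vartheta,\psi}^{2k}\,\mathcal{V}_P(z_0)+\frac{\alpha_{\vartheta,\psi}^{2}}{1-\rho_{\vartheta,\psi}^{2}}\Big(\tfrac{L}{2}+\lambda_P\Big)\sup_j\mathbb{E}\big[\|w_{j+1}\|^{2}\big],
\]
and then $f(x_k)-f(x_*)\le\mathcal{V}_P(z_k)$ (valid since $P\succeq0$) yields \eqref{ineq: subopt-subgauss}.

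I expect the main obstacle to be bookkeeping rather than anything conceptual: one must verify carefully that \emph{every} noise cross-term in $\mathcal{V}_P(z_{k+1})$ — both those coming from $(z-z_*)^\top P(z-z_*)$ and the one produced when $f(x_{k+1})-f(x_*)$ is linearized via smoothness — is conditionally mean-zero, so that no $\sigma^{2}$-contribution of order lower than $\|w_{k+1}\|^{2}$ survives and the clean recursion above holds. This is exactly what the Gaussian proof of Theorem~\ref{thm: TMM-MI-solution} checks, and since that verification uses only unbiasedness it carries over unchanged; the genuinely new (and routine) ingredient is the tail integration above, whose constant is what accounts for the factor-of-$2$ statement.
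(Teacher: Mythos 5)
Your proposal is correct and follows essentially the same route as the paper: the one-step Lyapunov inequality of Lemma \ref{lem: func-contr-prop} (valid since the chosen parameters satisfy \eqref{MI}), conditional expectation to annihilate the linear noise term, a conditional second-moment bound for the sub-Gaussian noise, and a geometric sum combined with $f(x_k)-f(x_*)\le \mathcal{V}_P(z_k)$. The only discrepancy is in constants: your tail integration correctly gives $\mathbb{E}[\|w_{k+1}\|^{2}\mid\mathcal{F}_k]\le 4\sigma^{2}$ (matching Lemma \ref{lem: SubGauss-prop} at $n=2$), whereas the paper's own proof asserts $2\sigma^{2}$ and then drops a further factor of $2$ in the final display, so the stated bound \eqref{ineq: subopt-subgauss} is off by a constant factor relative to what either argument actually delivers.
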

Next, we provide bounds on the finite/infinite-horizon entropic risk. The results look similar to Proposition \ref{prop: risk-meas-bound-gaussian} where $\theta$ needs to be chosen smaller; and the entropic risk bounds we obtain are \bcgreen{a bit} larger\footnote{This follows from comparing the formulae \eqref{ineq: fin-hor-risk-meas-str-cnvs-subgaus} and \eqref{ineq: fin-risk-meas-bound-gaus} with the observation that, for $\theta < \theta_u^{sg}$, we have $\frac{1}{2-\theta \alpha_{\vartheta,\psi} (\vartheta + \alpha L)} \leq \frac{1}{2-\frac{1}{4}} = \frac{4}{7}$ and
$\bar{\rho}^{2}_{\vartheta,\psi}\leq \hat{\rho}^{2}_{\vartheta,\psi}$.}. This is expected as we consider more general sub-Gaussian noise (that contains Gaussian noise as a special case). The proof is in the Appendix \ref{app: entr-risk-meas-str-cnvx-subgaus}.
}
\begin{proposition}\label{prop: entr-risk-meas-str-cnvx-subgaus} Consider \bc{noisy} \bcgreen{GMM} on $f\in \Sml$ with initialization $\bc{z}_0=\begin{bmatrix} x_0^{\top} & x_{-1}^{\top} \end{bmatrix}^{\top}\in \mathbb{R}^{2d}$ and with parameters \bcgreen{ $(\alpha,\beta,\gamma)=(
\alpha_{\vartheta,\psi},\beta_{\vartheta,\psi},\gamma_{\vartheta,\psi})$} as given in Theorem \ref{thm: TMM-MI-solution}. Under
Assumption \ref{Assump: Gen-Noise}, if the risk-averseness parameter $\theta$ satisfies the condition 
\begin{equation}\label{cond: cond-theta-subgauss}
    \theta < \theta_u^{sg}:=\min\left\{ \bcgreen{\frac{1-\rho^2}{64 \alpha^2\hat{\mtv}_{\alpha,\beta,\gamma}}}, \bcgreen{\frac{1}{4\alpha^2(L+2\lambda_P)} }\right\},
\end{equation}
where \bcgreen{$\hat{\mtv}_{\alpha,\beta,\gamma}=\frac{2L^2}{\mu}(2(\beta-\gamma)^2+(1-\alpha L)^2(1+2\gamma+2\gamma^2))+\lambda_P \rho^2$ with $\rho=\rho_{\vartheta,\psi}$ and $\lambda_P=\frac{\vartheta}{2\alpha_{\vartheta,\psi}}$}. The finite-horizon entropic risk measure of the algorithm admits the bound
\begin{equation}\label{ineq: fin-hor-risk-meas-str-cnvs-subgaus}
\bcgreen{
    \mr_{k,\sigma^2}(\theta) < \frac{4\sigma^2 \alpha^2(L+2\lambda_P)}{1-\hat{\rho}^2_{\alpha,\beta,\gamma}}+ \hat{\rho}^{2k}_{\alpha,\beta,\gamma} 2\mV_P(\bc{z}_0),}
\end{equation}
where \bcgreen{$P:=\tilde{P}\otimes I_d$ for $\tilde{P}$ defined in \eqref{def: P_Matrix}} and the rate $\hat{\rho}^2_{\alpha,\beta,\gamma} <1$ is given as 
\begin{equation}\label{def: bar-rho-subgauss}
    \bcgreen{\hat{\rho}^{2}_{\alpha,\beta,\gamma}= \frac{1}{2}\left(\rho^2+32\alpha^2\hat{\mtv}_{\alpha,\beta,\gamma}\theta \right)+ \frac{1}{2}\sqrt{(\rho^2+32\alpha^2\hat{\mtv}_{\alpha,\beta,\gamma}\theta)^2 + 128 \alpha^2\hat{\mtv}_{\alpha,\beta,\gamma}\theta}.}
\end{equation}
\end{proposition}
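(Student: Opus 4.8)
The plan is to follow the roadmap of the Gaussian case (Proposition \ref{prop: risk-meas-bound-gaussian}), replacing the exact moment generating function (MGF) identities for Gaussians by one-sided MGF bounds valid under the sub-Gaussian Assumption \ref{Assump: Gen-Noise}. First I would invoke the deterministic one-step inequality of Lemma \ref{lem: func-contr-prop} in the form $\mV_P(z_{k+1}) \le \rho^2 \mV_P(z_k) + 2\langle b_k, w_{k+1}\rangle + \alpha^2\big(\tfrac{L}{2}+\lambda_P\big)\|w_{k+1}\|^2$, where $\rho^2 = \rho^2_{\vartheta,\psi}$, $\lambda_P = \tfrac{\vartheta}{2\alpha}$, and $b_k$ is $\mathcal{F}_k$-measurable and built from $x_k - x_{k-1}$ and $\nabla f(y_k)$. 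Using $\mu$-strong convexity in the form $\|x_j - x_*\|^2 \le \tfrac{2}{\mu}(f(x_j) - f(x_*)) \le \tfrac{2}{\mu}\mV_P(z_j)$ together with $L$-smoothness, I would bound $\|b_k\|^2$ by a universal constant times $\alpha^2\hat{\mtv}_{\alpha,\beta,\gamma}\big(\mV_P(z_k) + \mV_P(z_{k-1})\big)$; the appearance of $\mV_P(z_{k-1})$ is forced because the rank-one matrix $\tilde P$ of \eqref{def: P_Matrix} does not by itself control $\|x_{k-1} - x_*\|$, and this is precisely what makes the recursion below a two-step one.

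Second, with $\lambda := \tfrac{\theta}{2\sigma^2}$, I would take the conditional expectation $\mathbb{E}[e^{\lambda\mV_P(z_{k+1})}\mid\mathcal{F}_k]$. Since $w_{k+1}$ is independent of $\mathcal{F}_k$ and $\sigma$-sub-Gaussian, I would use two elementary MGF estimates (recorded, if needed, as supplementary lemmas): the linear bound $\mathbb{E}[e^{\langle a,w_{k+1}\rangle}\mid\mathcal{F}_k]\le e^{c_1\sigma^2\|a\|^2}$, valid for every $a$, and the quadratic bound $\mathbb{E}[e^{t\|w_{k+1}\|^2}\mid\mathcal{F}_k]\le e^{c_2\sigma^2 t}$, valid only for $t$ below a universal multiple of $1/\sigma^2$ (since $\|w_{k+1}\|^2$ is merely sub-exponential). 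Splitting $e^{\lambda(2\langle b_k,w_{k+1}\rangle + \alpha^2(\frac{L}{2}+\lambda_P)\|w_{k+1}\|^2)}$ by Cauchy--Schwarz into its linear and quadratic factors and substituting the bound on $\|b_k\|^2$ yields, for $\theta$ below the stated threshold $\theta_u^{sg}$,
$$\mathbb{E}[e^{\lambda\mV_P(z_{k+1})}\mid\mathcal{F}_k]\;\le\;e^{\lambda\mathcal{C}_\sigma}\exp\!\Big(\lambda\big[\rho^2+32\alpha^2\hat{\mtv}_{\alpha,\beta,\gamma}\theta\big]\mV_P(z_k)+\lambda\,32\alpha^2\hat{\mtv}_{\alpha,\beta,\gamma}\theta\,\mV_P(z_{k-1})\Big),$$
with $\mathcal{C}_\sigma$ proportional to $\sigma^2\alpha^2(L+2\lambda_P)$; the requirement $t\le c/\sigma^2$ in the quadratic estimate is what produces the $\tfrac{1}{4\alpha^2(L+2\lambda_P)}$ term in $\theta_u^{sg}$. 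Setting $M_k:=\mathbb{E}[e^{\lambda\mV_P(z_k)}]$, taking full expectations, and applying Hölder and Jensen (using $a+a'<1$ so that the exponents that appear stay below $\lambda$) collapses this to $M_{k+1}\le e^{\lambda\mathcal{C}_\sigma}M_k^{a}M_{k-1}^{a'}$ with $a=\rho^2+32\alpha^2\hat{\mtv}\theta$ and $a'=32\alpha^2\hat{\mtv}\theta$, i.e. a two-step linear recursion $\ell_{k+1}\le\lambda\mathcal{C}_\sigma+a\ell_k+a'\ell_{k-1}$ for $\ell_k=\log M_k$.

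Third, I would solve this recursion: its homogeneous part decays like $\hat{\rho}^{2k}$, where $\hat{\rho}^2$ is the larger root of $r^2-ar-a'=0$, which is exactly \eqref{def: bar-rho-subgauss}, and a constant particular solution contributes $\tfrac{\mathcal{C}_\sigma}{1-a-a'}\le\tfrac{\mathcal{C}_\sigma}{1-\hat{\rho}^2}$ (the last step using $\hat{\rho}^2\ge a+a'$, which holds iff $a+a'\le1$). The condition for $\hat{\rho}^2<1$ is $a+a'<1$, i.e. $\rho^2+64\alpha^2\hat{\mtv}\theta<1$, i.e. $\theta<\tfrac{1-\rho^2}{64\alpha^2\hat{\mtv}}$ --- exactly the first term of $\theta_u^{sg}$. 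Finally, using $f(x_k)-f(x_*)\le\mV_P(z_k)$ and that $z_0$ is deterministic, $\mr_{k,\sigma^2}(\theta)=\tfrac{1}{\lambda}\log\mathbb{E}[e^{\lambda(f(x_k)-f(x_*))}]\le\tfrac{1}{\lambda}\ell_k$, and unrolling the two-step recursion from its base case (both initial $\ell$-values at most $\lambda\mV_P(z_0)$) gives $\ell_k\le\hat{\rho}^{2k}\,2\lambda\mV_P(z_0)+\tfrac{\lambda\mathcal{C}_\sigma}{1-\hat{\rho}^2}$, hence \eqref{ineq: fin-hor-risk-meas-str-cnvs-subgaus}, the factor $2$ on $\mV_P(z_0)$ coming from combining the two initial terms.

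The main obstacle I anticipate lies not in this scheme --- which mirrors the Gaussian proof --- but in two bookkeeping points. The first is pinning down the one-sided sub-Gaussian MGF estimates with explicit constants: because $\|w_{k+1}\|^2$ is only sub-exponential, its MGF is finite only below a threshold in $1/\sigma^2$, and that threshold must be tracked precisely to land on the $\tfrac{1}{4\alpha^2(L+2\lambda_P)}$ term of $\theta_u^{sg}$. The second is routing the dependence of $b_k$ on $x_k-x_{k-1}$ through both $\mV_P(z_k)$ and $\mV_P(z_{k-1})$ --- including the Hölder/Jensen step that controls the resulting product of exponential moments --- so that the recursion closes as a genuine two-step linear recursion whose dominant root $\hat{\rho}^2<1$ has the closed form \eqref{def: bar-rho-subgauss}. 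The remaining work (the jump from the Gaussian constants $4,16$ to $32,128$, and estimates such as $\tfrac{1}{2-\theta\alpha(\vartheta+\alpha L)}\le\tfrac{4}{7}$) is routine.
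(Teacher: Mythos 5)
Your proposal is correct, and the ingredients you assemble --- the one-step inequality of Lemma \ref{lem: func-contr-prop}, the bound $\|m_k\|^2\le 8\alpha^2\hat{\mtv}_{\alpha,\beta,\gamma}(\mV_P(z_k)+\mV_P(z_{k-1}))$ from Lemma \ref{lem: bound-on-mk}, the Cauchy--Schwarz split of the noise exponential into its linear and quadratic factors, and the two sub-Gaussian MGF estimates (with the restriction on the quadratic one producing the $\frac{1}{4\alpha^2(L+2\lambda_P)}$ branch of $\theta_u^{sg}$) --- are exactly the ones the paper uses. Where you genuinely depart from the paper is in how the two-step memory is closed. The paper never factorizes across time steps: it introduces the auxiliary constant $\hat{k}_\theta$ solving $\hat{k}_\theta^2+\hat{k}_\theta(\rho^2+32\alpha^2\hat{\mtv}\theta)=32\alpha^2\hat{\mtv}\theta$ and tracks the single exponential moment $\mE\big[e^{\frac{\theta}{2\sigma^2}(\mV_P(z_{k+1})+\hat{k}_\theta\mV_P(z_k))}\big]$, showing that the combined argument contracts pointwise in the exponent at rate $\hat{\rho}^2=\rho^2+\hat{k}_\theta+32\alpha^2\hat{\mtv}\theta$; the same quadratic relation you obtain as the characteristic equation $r^2=ar+a'$ appears there as the defining identity for $\hat{k}_\theta$. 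Your route instead applies generalized H\"older with exponents $1/a$, $1/a'$, $1/(1-a-a')$ (legitimate precisely because $\theta<\theta_u^{sg}$ forces $a+a'<1$) together with Jensen, to reduce everything to the scalar linear recurrence $\ell_{k+1}\le\lambda\mathcal{C}_\sigma+a\ell_k+a'\ell_{k-1}$ on the log-MGFs. Both mechanisms yield the same dominant root \eqref{def: bar-rho-subgauss} and the same variance constant (your $\frac{\mathcal{C}_\sigma}{1-a-a'}$ is in fact slightly sharper before you relax it to $\frac{\mathcal{C}_\sigma}{1-\hat{\rho}^2}$ via $a+a'\le\hat{\rho}^2$). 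Your version is more modular --- the probabilistic content is confined to one conditional-expectation step and the rest is a deterministic recurrence --- at the price of needing the H\"older/Jensen bookkeeping and a slightly careful induction for the inhomogeneous two-step inequality (the comparison sequence must dominate both base cases $\ell_0$ and $\ell_1$, which is where your factor $2$ on $\mV_P(z_0)$ enters); the paper's version avoids any cross-time factorization but requires guessing the Lyapunov weight $\hat{k}_\theta$ in advance.
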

Similarly, based on the results of Proposition \ref{prop: entr-risk-meas-str-cnvx-subgaus}, we can obtain bounds on EV@R that extends Theorem \ref{thm: Evar-TMM-str-cnvx-bound} to the sub-Gaussian noise, the proof is \bcgreen{deferred to} the Appendix.
\begin{theorem}\label{thm: evar-bound-str-cnvx-subgaus}
Consider the \bc{noisy} \bcgreen{GMM} on $f\in\Sml$ where the parameters are set as
\bcgreen{$\alpha=\alpha_{\vartheta,\psi}$, $\beta=\beta_{\vartheta,\psi}$, and $\gamma=\gamma_{\vartheta,\psi}$} as in Theorem \ref{thm: TMM-MI-solution} and the noise on the gradient admits Assumption \ref{Assump: Gen-Noise}. \bcred{Fix an arbitrary $\varphi \in (0,1)$ and confidence level $\zeta\in(0,1)$. Consider $\theta_\varphi^{sg}:=\varphi \theta_{u}^{sg}$ and 
$$
\bcgreen{\htv_{\alpha,\beta,\gamma} := \sqrt{(\rho^2+32\alpha^2\hat{\mtv}_{\alpha,\beta,\gamma}\theta_{\varphi}^{sg})^2+128\alpha^2\hat{\mtv}_{\alpha,\beta,\gamma}\theta_{\varphi}^{sg}},}
$$
\bcgreen{where $\rho=\rho_{\vartheta,\psi}$ and $\lambda_P=\frac{\vartheta}{2\alpha_{\vartheta,\psi}}$.}
If the confidence level satisfies the condition, 
\begin{equation}\label{cond: conf-lev-subgauss}
    \bcgreen{\log(1/\zeta) \leq 128\alpha^4\hat{\mtv}_{\alpha,\beta,\gamma}(L+2\lambda_P)\Big(\frac{\theta_{\varphi}^{sg}}{2-\rho^2-\htv_{\alpha,\beta,\gamma}-\theta_{\varphi}^{sg}32\alpha^2\hat{\mtv}_{\alpha\beta,\gamma}}\Big)^2,}
\end{equation}
\bcgreen{for $\hat{\mtv}_{\alpha,\beta,\gamma}$ defined in Proposition \ref{prop: entr-risk-meas-str-cnvx-subgaus}},
then the EV@R of $f(x_k)-f(x_*)$ admits the bound 
\begin{equation}\label{ineq: fin-step-evar-bound-subgauss-1}
    EV@R_{1-\zeta}[f(x_k)-f(x_*)]
    \bcgreen{
    \bc{\leq} \frac{\sigma^2\alpha^2}{2-\rho^2-\htv_{\alpha,\beta,\gamma}}\Big(\sqrt{8(L+2\lambda_P)}+\sqrt{64\hat{\mtv}_{\alpha,\beta,\gamma}\log(1/\zeta)} \Big)^2 + (\hhrho_{\alpha,\beta,\gamma})^{k}2\mV(\bc{z}_0)}.
\end{equation}
In the case \eqref{cond: conf-lev-subgauss} does not hold, we have 
\begin{equation}\label{ineq: fin-step-evar-bound-subgauss-2}
EV@R_{1-\zeta}[f(x_k)-f(x_*)]\\
\bc{\leq} \frac{8\sigma^2 \bcgreen{\alpha^2(L+2\lambda_P)}}{\bcgreen{\rho^2-\htv_{\alpha,\beta,\gamma}-32\alpha^2\hat{\mtv}_{\alpha,\beta,\gamma}\theta_{\varphi}^{sg}}}+\frac{2\sigma^2}{\theta_{\varphi}^{sg}}\log(1/\zeta)+(\hhrho_{\alpha,\beta,\gamma})^k 2\mV(\bc{z}_0),
\end{equation}
where the rate \bcgreen{$\hhrho_{\alpha,\beta,\gamma}$} is given as 
\begin{equation}\label{def: hat-hat-rho}
\bcgreen{
\hhrho_{\alpha,\beta,\gamma}:= \frac{1}{2}\big(\rho^2+32\alpha^2\hat{\mtv}_{\alpha,\beta,\gamma}\theta_{\varphi}^{sg}\big)+\frac{1}{2}\sqrt{(\rho^2+32\alpha^2\hat{\mtv}_{\alpha,\beta,\gamma}\theta_{\varphi}^{sg})^2+128 \alpha^2\hat{\mtv}_{\alpha,\beta,\gamma}\theta_{\varphi}^{sg}}.}
\end{equation}
We have also
\begin{small}
\begin{multline}\label{ineq: inf-evar-bound-subgauss}
    \underset{k\rightarrow \infty}{\lim\sup}\; EV@R_{1-\zeta}[f(x_k)-f(x_*)]
    \\ \leq \hat{E}_{1-\zeta}(\alpha,\beta,\gamma):=\begin{cases} 
    \bcgreen{\frac{\sigma^2\alpha^2}{2-\rho^2-\htv_{\alpha,\beta,\gamma}}\Big(\sqrt{8(L+2\lambda_P)}+\sqrt{64\hat{\mtv}_{\alpha,\beta,\gamma}\log(1/\zeta)} \Big)^2, }
    & \text{ if \eqref{cond: conf-lev-subgauss} holds},\\
    \frac{8\sigma^2 \bcgreen{\alpha^2(L+2\lambda_P)}}{\bcgreen{\rho^2-\htv_{\alpha,\beta,\gamma}-32\alpha^2\hat{\mtv}_{\alpha,\beta,\gamma}\theta_{\varphi}^{sg}}}+\frac{2\sigma^2}{\theta_{\varphi}^{sg}}\log(1/\zeta),
    & \text{ otherwise}.
    \end{cases}
\end{multline}
\end{small}

\begin{corollary}
Consider the \bc{noisy} \bcgreen{GMM} in the premise of Theorem \ref{thm: evar-bound-str-cnvx-subgaus}. The tail of the distribution $f(x_k)-f(x_*)$ at $k$-th iteration admits the inequality 
\begin{equation}\label{ineq: tail-bound-subgauss-1}
    \mathbb{P}\{ f(x_k)-f(x_*) \geq \bcgreen{\hat{E}_{1-\zeta}(\alpha,\beta,\gamma)}+(\bcgreen{\hhrho_{\alpha,\beta,\gamma}})^k 2\mV(\bc{z}_0)\}\leq \zeta,
\end{equation}
where $\hhrho$ as given in \eqref{def: bbrho}. Moreover, we have the following for the limiting distribution of \bc{noisy} \bcgreen{GMM}
\begin{equation}\label{ineq: tail-bound-subgauss-2}
    \mathbb{P}\{ f(x_\infty)-f(x_*)\geq \bcgreen{\hat{E}_{1-\zeta}(\alpha,\beta,\gamma)}\} \leq \zeta.
\end{equation} 
\end{corollary}
\begin{proof}We omit the proof since it is similar to the proof of Corollary \ref{cor: gaussian-tail-bound}.
\end{proof}
\bcgreen{
\begin{remark} \label{remark: general-params}
Proposition \ref{prop: SubOpt-Subgaussian}, Proposition \ref{prop: entr-risk-meas-str-cnvx-subgaus}, and Theorem \ref{thm: evar-bound-str-cnvx-subgaus} provide explicit bounds for particular choice of parameters based on constructing explicit solutions to the \eqref{MI} provided in Appendix \ref{sec-app-b}. However, if we do not seek for explicit bounds, then our results
can be extended to more general choice of parameters than the ones provided in Theorem \ref{thm: TMM-MI-solution}. In particular, suppose there exist a choice of parameters $(\alpha,\beta, \gamma)$, a positive semi-definite matrix $\tilde{P}\in\mathbb{R}^2$ and a convergence rate $\rho^2 \in (0,1)$ which satisfies the matrix inequality \eqref{MI}. If such parameters, $\tilde{P}$ and $\rho^2$ can be found numerically, then
the results of Proposition \ref{prop: SubOpt-Subgaussian}, Proposition \ref{prop: entr-risk-meas-str-cnvx-subgaus}, and Theorem \ref{thm: evar-bound-str-cnvx-subgaus} will hold verbatim if 
we take $\lambda_P$ to be the largest eigenvalue of the $d$-by-$d$ principle minor matrix of $P := \tilde{P} \otimes I_d$ and use the parameters $(\alpha,\beta,\gamma)$ and the rate $\rho^2$ that satisfies the \eqref{MI}.
\end{remark}
}
}
\end{theorem}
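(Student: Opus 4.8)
The plan is to mimic the proof of Theorem~\ref{thm: Evar-TMM-str-cnvx-bound} (the Gaussian EV@R bound), substituting the sub-Gaussian finite-horizon entropic-risk estimate of Proposition~\ref{prop: entr-risk-meas-str-cnvx-subgaus} for the Gaussian one of Proposition~\ref{prop: risk-meas-bound-gaussian}; the constants inflate by bounded factors but the argument is structurally identical. The starting point is the dual representation of the entropic value at risk used in its definition~\eqref{def: EVaR}, namely $EV@R_{1-\zeta}[X]=\inf_{\theta>0}\big\{\tfrac{2\sigma^{2}}{\theta}\log\mathbb{E}[e^{\theta X/(2\sigma^{2})}]+\tfrac{2\sigma^{2}}{\theta}\log(1/\zeta)\big\}$. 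Taking $X=f(x_k)-f(x_*)$, for every $\theta<\theta_u^{sg}$ the first term inside the infimum equals the finite-horizon entropic risk $\mr_{k,\sigma^{2}}(\theta)$, which Proposition~\ref{prop: entr-risk-meas-str-cnvx-subgaus} bounds by $\tfrac{4\sigma^{2}\alpha^{2}(L+2\lambda_P)}{1-\hat\rho^{2}_{\alpha,\beta,\gamma}(\theta)}+\hat\rho^{2k}_{\alpha,\beta,\gamma}(\theta)\,2\mV_P(z_0)$, where I make the $\theta$-dependence of $\hat\rho^{2}_{\alpha,\beta,\gamma}$ explicit. Restricting the infimum to $\theta\in(0,\theta_\varphi^{sg}]$ with $\theta_\varphi^{sg}=\varphi\theta_u^{sg}$ gives
\[
EV@R_{1-\zeta}[f(x_k)-f(x_*)]\le\inf_{0<\theta\le\theta_\varphi^{sg}}\Big\{\tfrac{4\sigma^{2}\alpha^{2}(L+2\lambda_P)}{1-\hat\rho^{2}_{\alpha,\beta,\gamma}(\theta)}+\tfrac{2\sigma^{2}}{\theta}\log(1/\zeta)+\hat\rho^{2k}_{\alpha,\beta,\gamma}(\theta)\,2\mV_P(z_0)\Big\}.
\]

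Next I would carry out this one-dimensional optimization. The map $\theta\mapsto\hat\rho^{2}_{\alpha,\beta,\gamma}(\theta)$ is strictly increasing on $(0,\theta_u^{sg})$: from~\eqref{def: bar-rho-subgauss}, $y=\hat\rho^{2}_{\alpha,\beta,\gamma}(\theta)$ is the larger root of $y^{2}-(\rho^{2}+32\alpha^{2}\hat{\mtv}\theta)y-32\alpha^{2}\hat{\mtv}\theta=0$, equivalently $32\alpha^{2}\hat{\mtv}\theta=\tfrac{y(y-\rho^{2})}{y+1}$, whose right side is increasing in $y\in(\rho^{2},1)$; moreover $y=1$ corresponds to $\theta=\tfrac{1-\rho^{2}}{64\alpha^{2}\hat{\mtv}}\ge\theta_u^{sg}$, so $1-\hat\rho^{2}_{\alpha,\beta,\gamma}(\theta)>0$ throughout and every denominator is well defined. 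Consequently the transient term obeys $\hat\rho^{2k}_{\alpha,\beta,\gamma}(\theta)\le(\hhrho_{\alpha,\beta,\gamma})^{k}$ on the whole interval and can be factored out. For the main term $h(\theta):=\tfrac{4\sigma^{2}\alpha^{2}(L+2\lambda_P)}{1-\hat\rho^{2}_{\alpha,\beta,\gamma}(\theta)}+\tfrac{2\sigma^{2}}{\theta}\log(1/\zeta)$, a balancing (AM--GM type) argument in $\theta$ — choosing $\theta$ so the two contributions are comparable and using the identity $2(1-\hhrho_{\alpha,\beta,\gamma})=2-\rho^{2}-\htv_{\alpha,\beta,\gamma}-32\alpha^{2}\hat{\mtv}\theta_\varphi^{sg}$ — produces the $\big(\sqrt{8(L+2\lambda_P)}+\sqrt{64\hat{\mtv}\log(1/\zeta)}\big)^{2}$ form, the cross term being exactly the $2\sqrt{ab}$ from $at+b/t\ge2\sqrt{ab}$. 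Condition~\eqref{cond: conf-lev-subgauss} is precisely the requirement that this balancing value of $\theta$ does not exceed $\theta_\varphi^{sg}$: when it holds, substituting that $\theta$ yields~\eqref{ineq: fin-step-evar-bound-subgauss-1}; when it fails, $h$ is decreasing on $(0,\theta_\varphi^{sg}]$ and the best admissible choice is $\theta=\theta_\varphi^{sg}$, giving $\mr_{k,\sigma^{2}}(\theta_\varphi^{sg})\le\tfrac{4\sigma^{2}\alpha^{2}(L+2\lambda_P)}{1-\hhrho_{\alpha,\beta,\gamma}}+(\hhrho_{\alpha,\beta,\gamma})^{k}2\mV_P(z_0)$ plus $\tfrac{2\sigma^{2}}{\theta_\varphi^{sg}}\log(1/\zeta)$, i.e.~\eqref{ineq: fin-step-evar-bound-subgauss-2}. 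In either case, taking $\limsup_{k\to\infty}$ and using $\hhrho_{\alpha,\beta,\gamma}<1$ removes the transient term and yields~\eqref{ineq: inf-evar-bound-subgauss}.

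Finally, for the corollary I would use the Chernoff/EV@R tail inequality already employed in the paper, namely $\mathbb{P}\{X\ge EV@R_{1-\zeta}[X]\}\le\zeta$ for any $X$: applied with $X=f(x_k)-f(x_*)$ and combined with the per-$k$ bounds~\eqref{ineq: fin-step-evar-bound-subgauss-1}--\eqref{ineq: fin-step-evar-bound-subgauss-2}, this gives~\eqref{ineq: tail-bound-subgauss-1}, and letting $k\to\infty$ (using that under Assumption~\ref{Assump: Gen-Noise} the iterates converge in distribution to $x_\infty$ and that $(\hhrho_{\alpha,\beta,\gamma})^{k}2\mV_P(z_0)\to0$) gives the stationary statement~\eqref{ineq: tail-bound-subgauss-2}. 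The main obstacle is the constrained one-dimensional minimization in the second step: one must track the implicitly defined rate $\hat\rho^{2}_{\alpha,\beta,\gamma}(\theta)$ through its defining quadratic, keep all denominators positive on $(0,\theta_u^{sg})$, and correctly identify the threshold~\eqref{cond: conf-lev-subgauss} with the event that the balancing value of $\theta$ leaves the admissible interval $(0,\theta_\varphi^{sg}]$ — the only genuinely new ingredient relative to Theorem~\ref{thm: Evar-TMM-str-cnvx-bound} being the factor-of-two inflation of constants coming from the sub-Gaussian moment-generating-function bound.
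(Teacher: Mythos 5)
Your proposal follows essentially the same route as the paper's proof: plug the finite-horizon entropic-risk bound of Proposition \ref{prop: entr-risk-meas-str-cnvx-subgaus} into the EV@R dual formula, restrict the infimum to $(0,\theta_\varphi^{sg}]$ so that $\hat{\rho}^2_{\alpha,\beta,\gamma}(\theta)$ can be uniformly dominated by $\hhrho_{\alpha,\beta,\gamma}$, solve the resulting one-dimensional convex minimization (your balancing value of $\theta$ is exactly the paper's stationary point $\hat{\theta}_*=\mtc_1/(\mtc_2+\sqrt{\mtc_0\mtc_2/(2\log(1/\zeta))})$, and condition \eqref{cond: conf-lev-subgauss} is indeed the statement $\hat{\theta}_*\le\theta_\varphi^{sg}$), and take the limit superior for the asymptotic bound. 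Your monotonicity argument for $\theta\mapsto\hat{\rho}^2_{\alpha,\beta,\gamma}(\theta)$ via the defining quadratic is a slightly cleaner justification than the paper's direct inequality manipulation, but it is the same proof in substance, including the Chernoff-type tail argument for the corollary.
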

\section{Conclusion}\label{sec: conclusion}
\mg{In this work, we considered \bcgreen{generalized momentum methods (GMM)} subject to random gradient noise. We study the convergence rate and entropic risk of suboptimality as a function of the parameters. For both quadratic objectives and strongly convex smooth objectives, we obtained a number of new non-asymptotic convergence results when the noise is light-tailed (Gaussian or sub-Gaussian) as a function of the parameters. Furthermore, we provided explicit bounds on the entropic risk and entropic value at risk of suboptimality at a given iterate as well as the probability that the suboptimality exceeds a given threshold. For quadratic objectives, we also obtained (explicit) sharp characterizations of the entropic risk and the set of parameters for which the entropic risk is finite. Our results highlight trade-offs between the convergence rate and the entropic risk. We also introduced risk-averse \bcgreen{GMM} (RA-\bcgreen{GMM}) which can select the parameters to trade-off the convergence rate and the entropic value at risk in a systematic manner. Finally, we provided numerical experiments which illustrate our result and show that RA-\bcgreen{GMM} leads to improved tail behavior.
}

\vspace{1cm}
\textbf{Acknowledgements} This work was partially funded by the grants ONR N00014-21-1-2244, NSF CCF-1814888, and NSF DMS-2053485.

\bibliographystyle{unsrt}
\bibliography{references}  

\appendix
\section{Proofs of Section \ref{sec: quad-obj}} 
\subsection{Proof of Lemma \ref{lem: non_asym_conv_quad_obj}}
The result follows from the Jordan decomposition of the matrix $A_Q$.  By arguments similar to \cite{Recht}, there exists an orthonormal matrix \bcgreen{$U$} such that 
    \begin{equation*}
     \bcgreen{U}A_Q\bcgreen{U}^{\top} = \text{blkdiag}( \{M_i\}_{i=1}^n), 
     \end{equation*}
where $M_i$ is the $2\times 2$ matrix 
\begin{equation}\label{def: AQ-block-diag-form}
M_i :=\begin{bmatrix} (1+\beta) - \alpha (1+\gamma )\lambda_i  & -(\beta -\alpha\gamma \lambda_i) \\
    1 &  0 
    \end{bmatrix}.
\end{equation}
\bc{Recall that} 
$$c_i=(1+\beta) - \alpha (1+\gamma )\lambda_i, \quad d_i = -(\beta -\alpha\gamma \lambda_i),$$ 
$M_i$ admits the Jordan decomposition $M_i = V_i J_i V_i^{-1}$ where
 $$ V_i = 
\begin{cases} 
    \begin{bmatrix} 
            \frac{c_i - \sqrt{c_i^2 + 4d_i}}{2} & \frac{c_i + \sqrt{c_i^2 + 4d_i}}{2}\\
            1 & 1
    \end{bmatrix}, & \mbox{if} \quad c_i^2 + 4d_i \neq 0, \\
     \begin{bmatrix} 
            \frac{c_i}{2} & 1\\
            1 & 0
    \end{bmatrix}, & \mbox{if} \quad c_i^2 + 4d_i = 0,
 \end{cases}
 $$
  $$ V_i^{-1} =  
\begin{cases} 
    \frac{1}{\sqrt{c_i^2 + 4d_i}} \begin{bmatrix} 
            -1 &  \frac{c_i + \sqrt{c_i^2 + 4d_i}}{2}\\
            1 & -\frac{c_i - \sqrt{c_i^2 + 4d_i}}{2} 
    \end{bmatrix}, & \mbox{if} \quad c_i^2 + 4d_i \bc{\neq} 0, \\
     \begin{bmatrix} 
            0 & 1\\
            1 & \frac{-c_i}{2}
    \end{bmatrix}, & \mbox{if} \quad c_i^2 + 4d_i = 0,
 \end{cases}
 $$
  $$ J_i =  
\begin{cases} 
     \begin{bmatrix} 
            \frac{c_i - \sqrt{c_i^2 + 4d_i}}{2} & 0 \\
             0 & \frac{c_i + \sqrt{c_i^2 + 4d_i}}{2}
    \end{bmatrix}, & \mbox{if} \quad c_i^2 + 4d_i \neq 0, \\
     \begin{bmatrix} 
           \frac{c_i}{2} & 1 \\
             0 & \frac{c_i }{2}
    \end{bmatrix}, & \mbox{if} \quad c_i^2 + 4d_i = 0.
 \end{cases}
 $$
\bc{We observe that }if $c_i^2 + 4d_i = 0$, then $M_i$ has a double eigenvalue with a corresponding Jordan block of size 2, otherwise $M_i$ is diagonalizable with two different eigenvalues (Jordan blocks are of size 1). It follows that
   $$ M_i^k = V_i J_i^k V_i^{-1},$$
where 

$$ J_i^k =  
\begin{cases} 
     \begin{bmatrix} 
            \left(\frac{c_i - \sqrt{c_i^2 + 4d_i}}{2}\right)^k & 0 \\
             0 & \left(\frac{c_i + \sqrt{c_i^2 + 4d_i}}{2}\right)^k
    \end{bmatrix}, & \mbox{if} \quad c_i^2 + 4d_i \neq 0, \\
     \begin{bmatrix} 
           (\frac{c_i}{2})^k & k (\frac{c_i}{2})^{k-1} \\
             0 & (\frac{c_i }{2})^k
    \end{bmatrix}, & \mbox{if} \quad c_i^2 + 4d_i = 0.
 \end{cases}
 $$
 Then, we have
\begin{eqnarray} \|A_Q^k\| &=& \|\bcgreen{U}^{\top} \text{blkdiag}( \{M_i^k\}_{i=1}^n) \bcgreen{U}\|, \\
             &\leq& \max_i \|M_i^k\|, \\
             &\leq & \max_i  \|V_i\| \|J_i^k\| \|V_i^{-1}\|, \label{ineq-power-matrix}
\end{eqnarray}
where we used $\|P \| = 1$. \mg{Notice that 
\begin{align*}
\rho(J_i) 
= \rho_i = \begin{cases} 
\frac{1}{2}|c_i|+ \frac{1}{2}\sqrt{c_i^2 + 4d_i}, & \text{ if } c_i^2+4d_i>0, \\
\sqrt{|d_i|}, & \text{ otherwise},
\end{cases}
\end{align*}
We have also
}
$$\rho(A_Q) = \max_i \rho_i$$
by definition. If $c_i^2 + 4d_i \neq 0$ then $J_i^k$ is diagonal and we obtain
\begin{equation}
    \|J_i^k\| \leq \mg{\rho_i^k} \quad \mbox{if} \quad c_i^2 + 4d_i \neq 0.
\end{equation}
Otherwise, if $c_i^2 + 4d_i=0$, we have $\bc{\frac{c_i}{2} =} \mg{\rho_i}$ and
\begin{equation}
    \|J_i^k\|_2 \leq \|J_i^k\|_F := \sqrt{2(\frac{c_i}{2})^{2k}+{k^2(\frac{c_i}{2})^{2k-2}}} \bc{=} \sqrt{\mg{2\rho_i^{2k}+k^2\rho_i^{2k-2}}},
\end{equation}
where $\|\cdot\|_F$ denotes the Frobenius norm. In any case, we have 
\begin{equation}
    \|J_i^k\|_2 \leq \sqrt{2 \big(\rho(A_Q)\big)^2+ k^2}\rho(A_Q)^{k-1} \mbox{ for every } i,
\end{equation}
where we used $\mg{\rho_i} \leq \rho(A_Q)$. \bcred{Moreover},
\begin{equation}
    \|V_i\|_2 \leq \|V_i\|_F := 
            \begin{cases} 
            \sqrt{|c_i^2+2d_i+2|},
                    & \mbox{if} \quad c_i^2 + 4d_i \neq 0,  \\
                    \sqrt{\frac{c_i^2}{4} +2}, &  \mbox{if}\quad c_i^2 + 4d_i = 0.
            \end{cases}
\end{equation}
Similarly, 
\begin{equation}
    \|V_i^{-1}\|_2 \leq \|V_i^{-1}\|_F := 
            \begin{cases}  
        \sqrt{\frac{|c_i^2+2d_i+2|}{|c_i^2+4d_i|}},
                    & \mbox{if} \quad c_i^2 + 4d_i \neq 0,  \\
                   \sqrt{\frac{c_i^2}{4} + 2}, & \mbox{if}\quad c_i^2 + 4d_i = 0.
            \end{cases}
\end{equation}
Therefore,
$$\|V_i\|_2  \|V_i^{-1}\|_2 \leq 
            \begin{cases}  
                   \frac{|c_i^2+2d_i+2|}{\sqrt{|c_i^2+4d_i|}},
                    & \mbox{if} \quad c_i^2 + 4d_i \neq 0,  \\
                    \frac{c_i^2}{4}+2,
                   & \mbox{if}\quad c_i^2 + 4d_i = 0.
            \end{cases}
$$            
Putting everything together, we conclude from \bc{\eqref{sys: TMM_quad} that}
$$\bc{\|\mathbb{E}[z_k-z_*]\|  \leq  \| A_Q^k\| \|z_0-z_*\|},$$ 
where
    \begin{eqnarray*} \|A_Q^k\| &\leq&  \mg{
    \max_{i=1,2, \dots,d}
        \begin{cases}  
      \frac{|c_i^2+2d_i+2|}{\sqrt{|c_i^2+4d_i|}} \rho_i^{k}, 
                    & \mbox{if} \quad c_i^2 + 4d_i \neq 0,  \\
                 (\frac{c_i^2}{4}+2)\sqrt{2\rho_i^{2k}+k^2\rho_i^{2k-2}},
                   & \mbox{if}\quad c_i^2 + 4d_i = 0,
            \end{cases}
            }\\
&\leq& \bcgreen{ \rho(A_Q)^{k-1} \max_{i=1,2, \dots,d}
        \begin{cases}  
      \frac{|c_i^2+2d_i+2|\rho_i}{\sqrt{|c_i^2+4d_i|}}  
    & \mbox{if} \quad c_i^2 + 4d_i \neq 0,  \\
                    (\frac{c_i^2}{4}+2)\sqrt{2 \rho_i^2+ k^2}
                   & \mbox{if}\quad c_i^2 + 4d_i = 0.
            \end{cases}}
    \end{eqnarray*}
\bc{This completes the proof.}
%
\subsection{Proof of Proposition \ref{prop: quad_qisk_meas_convergence}}\label{app: quad_qisk_meas_convergence}
\bcred{Before proving Proposition \ref{prop: quad_qisk_meas_convergence}, we first give the characterization of eigenvalues of the matrix $Q^{1/2}\Sigma_\infty Q^{1/2}$, where $\Sigma_\infty$ is the limiting covariance matrix \bc{defined} as $\Sigma_\infty:= \lim_{k\rightarrow \infty} \mathbb{E}[(x_k-\mathbb{E}[x_k])(x_k-\mathbb{E}[x_k])^\top]$.}
\bcred{
\begin{lemma}\label{lem: sol-lyap-eq-quad} Under the setting of Proposition \ref{prop: quad_qisk_meas_convergence}, the eigenvalues of the matrix $Q^{1/2}\Sigma_\infty Q^{1/2}$ where $\Sigma_\infty$ is the limiting covariance matrix $\Sigma_\infty:= \lim_{k\rightarrow \infty} \mathbb{E}[(x_k-\mathbb{E}[x_k])(x_k-\mathbb{E}[x_k])^\top]$ admit the form 
$$
\lambda_i\left( Q^{1/2}\Sigma_\infty Q^{1/2}\right)= 
\frac{\lambda_i(Q) (1-d_i)\alpha^2 \sigma^2}{(1+d_i)[(1-d_i)^2-c_i^2]},
$$
for $i=1,2,\dots,d$.
\end{lemma}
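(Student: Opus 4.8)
The plan is to diagonalize the noisy recursion, decouple the $2d$-dimensional Lyapunov equation \eqref{eq: Quad_Lyapunov} into $d$ scalar $2\times 2$ Lyapunov equations — one per eigenvalue of $Q$ — solve each in closed form, and then read off the eigenvalues of $Q^{1/2}\Sigma_\infty Q^{1/2}$.

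First I would work in the coordinates $\bar z_k$ of \eqref{sys: TMM_dynmcal_quad_obs-1}, where the governing matrix is $A_\Lambda$ (namely $A_Q$ in \eqref{def: A_Q} with $Q$ replaced by $\Lambda=\mathrm{diag}(\lambda_1(Q),\dots,\lambda_d(Q))$) and the noise coupling satisfies $BB^\top=\alpha^2\,\mathrm{blkdiag}(I_d,0_d)$, since the gradient noise enters scaled by the stepsize $\alpha$. Arguing as in the proof of Lemma~\ref{lem: non_asym_conv_quad_obj}, a fixed orthonormal change of coordinates conjugates $A_\Lambda$ into $\mathrm{blkdiag}(M_1,\dots,M_d)$ with $M_i=\left(\begin{smallmatrix} c_i & d_i \\ 1 & 0 \end{smallmatrix}\right)$, and under the same conjugation \eqref{eq: Quad_Lyapunov} splits into the $2\times2$ equations $V_\infty^{(i)}=M_i V_\infty^{(i)} M_i^\top+\alpha^2\sigma^2 e_1 e_1^\top$, where $V_\infty^{(i)}$ is the stationary covariance of the $i$-th scalar $AR(2)$ coordinate and $e_1=(1,0)^\top$. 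Under the premise $(\alpha,\beta,\gamma)\in\mathcal{S}_q$ we have $\rho(M_i)=\rho_i<1$ by Lemma~\ref{lem: non_asym_conv_quad_obj}, so each $V_\infty^{(i)}$ exists and is unique.

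Next I would relate $\Sigma_\infty$ to these blocks. Since $x_k=U\tilde x_k$ with $Q=U\Lambda U^\top$, the limiting covariance is $\Sigma_\infty=U\tilde\Sigma_\infty U^\top$, where $\tilde\Sigma_\infty=\lim_k\mathrm{Cov}(\tilde x_k)$ is the top-left $d\times d$ block of $V_\infty=\lim_k\mathbb{E}[\bar z_k\bar z_k^\top]$; this identification is clean because $\mathbb{E}[\bar z_k]=A_\Lambda^k(\tilde z_0-\tilde z_*)\to 0$, so $\lim_k\mathbb{E}[\bar z_k\bar z_k^\top]$ agrees with $\lim_k\mathrm{Cov}(\bar z_k)$. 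By the decoupling, $\tilde\Sigma_\infty$ is diagonal with $(\tilde\Sigma_\infty)_{ii}=(V_\infty^{(i)})_{11}$, hence $Q^{1/2}\Sigma_\infty Q^{1/2}=U\Lambda^{1/2}\tilde\Sigma_\infty\Lambda^{1/2}U^\top$ has eigenvalues $\lambda_i(Q)\,(V_\infty^{(i)})_{11}$ for $i=1,\dots,d$.

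Finally I would solve the scalar equation explicitly: writing $V_\infty^{(i)}=\left(\begin{smallmatrix} a_i & b_i \\ b_i & e_i\end{smallmatrix}\right)$ and expanding $M_i V_\infty^{(i)} M_i^\top$ yields the three identities $e_i=a_i$, $b_i(1-d_i)=c_i a_i$, and $a_i(1-c_i^2-d_i^2)=2c_i d_i b_i+\alpha^2\sigma^2$; eliminating $e_i$ and $b_i$ and using the factorization $(1-d_i)(1-c_i^2-d_i^2)-2c_i^2 d_i=(1+d_i)\big[(1-d_i)^2-c_i^2\big]$ gives $a_i=\dfrac{(1-d_i)\alpha^2\sigma^2}{(1+d_i)[(1-d_i)^2-c_i^2]}$. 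Multiplying by $\lambda_i(Q)$ produces the claimed formula (equivalently $\lambda_i(Q^{1/2}\Sigma_\infty Q^{1/2})=\sigma^2/u_i$ with $u_i$ as in \eqref{def-ui}), which is the identity that feeds into the proof of Proposition~\ref{prop: quad_qisk_meas_convergence}. The main obstacle is purely the algebra of that last factorization; positivity of the factors $1+d_i$ and $(1-d_i)^2-c_i^2$ — needed so $a_i$ is a genuine nonnegative variance — follows from $\rho(M_i)<1$, equivalently $|d_i|<1$ and $|c_i|<1-d_i$, and everything else is routine bookkeeping.
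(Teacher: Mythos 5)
Your proposal is correct and follows essentially the same route as the paper's proof: conjugating the stationary Lyapunov equation by the eigenbasis of $Q$ (plus the interleaving permutation) to decouple it into $d$ independent $2\times 2$ Lyapunov equations for the blocks $M_i$, solving the resulting $3\times 3$ linear system for each block, and reading off $\lambda_i(Q^{1/2}\Sigma_\infty Q^{1/2})=\lambda_i(Q)\,(V_\infty^{(i)})_{11}$. Your closed-form solution for $(V_\infty^{(i)})_{11}$ and the factorization of the denominator match the paper's $\tilde{x}_{\lambda_i}$ exactly.
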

\bc{
\begin{proof}
The proof is given in Appendix \ref{app: sol-lyap-eq-quad}.
\end{proof}
}
}
Under the assumption on the noise each iteration $x_k$ \bc{is} \bc{Gaussian with $\mu_k:=\mathbb{E}[x_k]$} and covariance matrix $\Sigma_k:=\mathbb{E}[(x_k-\mu_k)(x_k-\mu_k)^\top]$. \bc{Notice that the following recursion holds for the covariance matrix $\Xi_k$ of $\bc{z}_k$ \bcred{under} Assumption \ref{Assump: Noise} (see \bcred{also} the proof of Lemma \ref{lem: sol-lyap-eq-quad}), 
\begin{align}
\Xi_{k+1}= A_{Q}\Xi_{k}A_Q^\top + \sigma^2 BB^\top. \label{eq: Var_It_Quad}
\end{align} 
\bcred{Moreover, for all $(\alpha,\beta,\gamma)\in\mathcal{F}_\theta$ the rate satisfies $\rho(A_Q)<1$ since $(\alpha,\beta,\gamma)\in\mathcal{S}_q$ by Lemma \ref{lem: feas-set-stab-set}. Therefore, we can see that Lyapunov equation given above is stable, \bcgreen{i.e. $\rho(A_Q)<1$}; hence, }
there exists $\Xi_\infty$ such that $\lim_{k\rightarrow \infty}\Xi_k=\Xi_\infty$ which also satisfies the Lyapunov equation $\Xi_\infty= A_Q\Xi_\infty A^\top_Q+ \sigma^2 BB^\top$. Therefore, we can define the matrix $\Sigma_{\infty}:= \lim_{k\rightarrow \infty}\Sigma_k$ which is the d-by-d principle minor of the matrix $\Xi_\infty$.} \bcred{Since} the iterates $\{x_k\}_{k\geq 0}$ admit normal distribution $\mathcal{N}(\mu_k,\Sigma_k)$, we can write,
\begin{equation*}
    \bcgreen{R_k}(\theta):=\mathbb{E}\left[\exp\{\frac{\theta}{2\sigma^2}(f(x_k)-f(x_*))\}\right]=n_k\int \exp\{\frac{\theta}{\bc{4}\sigma^2}(x-x_*)^\top Q(x-x_*)\}\exp\{ -\frac{1}{2} (x-\mu_k)^{\top}\Sigma_k^{-1}(x-\mu_k)\}dx,
\end{equation*}
where $n_k:=(2\pi)^{-\frac{d}{2}}det(\Sigma_k)^{-\frac{1}{2}}$. Notice that $$(x-x_*)^\top Q(x-x_*)=(x-\mu_k+\mu_k-x_*)^\top Q(x-\mu_k+\mu_k-x_*).$$
Hence we write 
\begin{align*}
\bcgreen{R_k}(\theta)&=n_k \int \Big(e^{ \frac{\theta}{\bc{4}\sigma^2}(x-\mu_k)^\top Q(x-\mu_k)+\frac{\bc{\theta}}{2\sigma^2}(x-\mu_k)^\top Q(\mu_k-x_*)}+e^{\frac{\theta}{\bc{4}\sigma^2}(\mu_k-x_*)^\top Q (\mu_k-x_*)-\frac{1}{2}(x-\mu_k)^\top \Sigma_k^{-1}(x-\mu_k)}\Big)dx,\\
&=n_ke^{\frac{\theta}{\bc{4}\sigma^2}(\mu_k-x_*)^\top Q(\mu_k-x_*)}\int e^{\frac{\bc{\theta}}{2\sigma^2}(x-\mu_k)^\top Q (\mu_k-x_*)-\frac{1}{2}(x-\mu_k)^\top [\Sigma_k^{-1}-\frac{\theta}{\bc{2}\sigma^2}Q](x-\mu_k)}dx,\\
&=n_k e^{\frac{\theta}{\bc{4}\sigma^2}(\mu_k-x_*)^\top Q(\mu_k-x_*)+\frac{\theta^2}{\bc{8}\sigma^4}(\mu_k-x_*)^\top Q^{1/2}M_k^{-1}Q^{1/2}(\mu_k-x_*)}\int e^{-\frac{1}{2}(x-v_k)^\top Q^{1/2}M_kQ^{1/2}(x-v_k)}dx,
\end{align*}
where $M_k=Q^{-1/2}\Sigma_k^{-1}Q^{-1/2}-\frac{\theta}{\bc{2}\sigma^2}I$ and $v_k=\mu_k+\frac{\theta}{\bc{2}\sigma^2}Q^{-1/2}M_k^{-1}Q^{1/2}(\mu_k-x_*)$. 
\bcgreen{We can see from the characterization of eigenvalues of $\lambda_i(Q^{1/2}\Sigma_\infty Q^{1/2})$ given at Lemma \ref{lem: sol-lyap-eq-quad} and the definition of $u_i$ that $Q^{1/2}\Sigma_\infty Q^{1/2} \prec \frac{2\sigma^2}{\theta}I_d$ for all $(\alpha,\beta,\gamma)\in\mathcal{F}_\theta$. Moreover, the covariance matrix $\Sigma_k$ satisfies $\Sigma_k \preceq \Sigma_\infty$ for each $k\geq 0$ which we prove later in Lemma \ref{lem: X-and-Y-pos-def}. Therefore, we also obtain the property that $Q^{1/2}\Sigma_kQ^{1/2}\prec \frac{2\sigma^2}{\theta}I_d$.} \bcgreen{Consequently, both} $M_k$ and $M_\infty$ are invertible.
The integral on the right-hand side of the last equality can be considered as a multidimensional Gaussian integral which can be computed as 
\bcgreen{
\begin{equation}\label{eq: gauss-integral}
    \int e^{-\frac{1}{2}(x-v_k)^\top Q^{1/2}M_kQ^{1/2}(x-v_k)}dx
    =\begin{cases} 
    (2\pi)^{d/2}\det(Q^{-1/2}M_k^{-1}Q^{-1/2})^{1/2}, &\text{if }  \lambda_{\min}\{Q^{-1/2}M_k^{-1}Q^{-1/2}\}>0,\\
    \infty, &\text{otherwise},
    \end{cases}
\end{equation}
\bcgreen{where $\lambda_{\min}$ denotes the smallest eigenvalue.} Since $(\alpha,\beta,\gamma)\in\mathcal{F}_\theta$, \bcgreen{it can be seen} that the integral \eqref{eq: gauss-integral} is finite \bcgreen{which} yields} 
\begin{equation*}
\mathbb{E}[e^{\frac{\theta}{2\sigma^2}(f(x_k)-f(x_*))}]=\det(I_d-\frac{\theta}{\bc{2}\sigma^2}\Sigma_k^{1/2}Q\Sigma_k^{1/2})^{-1/2} e^{\frac{\theta}{\bc{4}\sigma^2}(\mu_k-x_*)^\top \left[ Q+\frac{\theta}{\bc{2}\sigma^2}Q^{1/2}M_k^{-1}Q^{1/2}\right](\mu_k-x_*)}.
\end{equation*}
\bcgreen{Hence we can compute the entropic risk,}
\begin{equation*}
\mr_{k,\sigma^2}(\theta)=(\mu_k-x_*)^\top \left[ Q+\frac{\theta}{\bc{2}\sigma^2}Q^{1/2}M_k^{-1}Q^{1/2}\right](\mu_k-x_*)-\frac{\bc{2}\sigma^2}{\theta}\log \det(I_d-\frac{\theta}{\bc{2}\sigma^2}\Sigma_k^{1/2}Q\Sigma_k^{1/2}).
\end{equation*}
\bcgreen{In the following, we }\bc{adopt the notation $\mu_\infty :=\lim_{k\rightarrow \infty}\bcgreen{\mu_k}$. Since $\Sigma_\infty$ exists by \bcgreen{our arguments above}, we can see that the infinite horizon entropic risk measure is of the form 
$$
\mr_{\sigma^2}(\theta)=\frac{2\sigma^2}{\theta}\log\mE[\exp\{ \frac{\theta}{2\sigma^2} f(x_\infty)-f(x_*)\}],
$$
where $x_\infty \sim \mathcal{N}(\mu_\infty, \Sigma_\infty)$.
\bcgreen{Notice that we can compute $\mr_{\sigma^2}(\theta)$ by writing 
$$
R_\infty(\theta)=\mathbb{E}[\exp\{\frac{\theta}{2\sigma^2}f(x_\infty)-f(x_*)\}],
$$ 
for $x_\infty \sim \mathcal{N}(\mu_\infty,\Sigma_\infty)$. Then, by following the arguments we made for $\mr_{k,\sigma^2}(\theta)$, we obtain 
\begin{equation*}
R_\infty(\theta)= n_\infty e^{\frac{\theta}{\bc{4}\sigma^2}(\mu_\infty-x_*)^\top Q(\mu_\infty-x_*)+\frac{\theta^2}{\bc{8}\sigma^4}(\mu_\infty-x_*)^\top Q^{1/2}M_\infty^{-1}Q^{1/2}(\mu_\infty-x_*)}
\times\int e^{-\frac{1}{2}(x-v_\infty)^\top Q^{1/2}M_\infty Q^{1/2}(x-v_\infty)}dx,
\end{equation*}
where $n_\infty = \lim_{k\rightarrow \infty}n_k$ and $v_\infty=\lim_{k\rightarrow \infty}v_k$. If we substitute $v_k$ and $\Sigma_k$ with $v_\infty$ and $\Sigma_\infty$ at \eqref{eq: gauss-integral}, we can see from the eigenvalues of $Q^{1/2}\Sigma_\infty Q^{1/2}$ provided in Lemma \ref{lem: sol-lyap-eq-quad} that the infinite-horizon risk measure is finite if and only if $(\alpha,\beta,\gamma)\in\mathcal{F}_{\theta}$. After showing the existence of infinite-horizon risk measure, we are next going to show the linear convergence of $r_{k,\sigma^2}(\theta)$ to $r_{\sigma^2}(\theta)$. The difference between finite-horizon and infinite-horizon risk is}}
\begin{small}
\begin{multline*}
\mr_{k,\sigma^2}(\theta)-\mr_{\sigma^2}(\theta)=\frac{\bc{2}\sigma^2}{\theta}\log \det(I_d-\frac{\theta}{\bc{2}\sigma^2}\Sigma_\infty^{1/2}Q\Sigma_\infty^{1/2})-\frac{\bc{2}\sigma^2}{\theta}\log \det(I_d-\frac{\theta}{\bc{2}\sigma^2}\Sigma_k^{1/2}Q\Sigma_k^{1/2})\\
+(\mu_k-x_*)^\top \left[ Q+\frac{\theta}{\bc{2}\sigma^2}Q^{1/2}M_k^{-1}Q^{1/2} \right] (\mu_{k}-x_*)\\-(\mu_\infty-x_*)^\top \left[ Q+\frac{\theta}{\bc{2}\sigma^2}Q^{1/2}M_\infty^{-1}Q^{1/2} \right] (\mu_{\infty}-x_*).
\end{multline*}
\end{small}
\bc{Since $(\alpha,\beta,\gamma)\in \mathcal{S}_q$ by Lemma \ref{lem: feas-set-stab-set}, the equality $\mu_\infty=x_*$ directly follows from Lemma \ref{lem: non_asym_conv_quad_obj}. Adding and subtracting $\frac{\theta}{\bcred{2}\sigma^2}(\mu_k-x_*)^\top Q^{1/2}M_\infty^{-1}Q^{1/2} (\mu_{k}-x_*)$ \bcgreen{leads to},
\begin{small}
\begin{multline*}
\mr_{k,\sigma^2}(\theta)-\mr_{\sigma^2}(\theta)=\frac{\bcred{2}\sigma^2}{\theta}\log \det(I_d-\frac{\theta}{\bcred{2}\sigma^2}\Sigma_\infty^{1/2}Q\Sigma_\infty^{1/2})-\frac{\bcred{2}\sigma^2}{\theta}\log \det(I_d-\frac{\theta}{\bcred{2}\sigma^2}\Sigma_k^{1/2}Q\Sigma_k^{1/2})\\
+(\mu_k-x_*)^\top \left[ Q+\frac{\theta}{\bcred{2}\sigma^2}Q^{1/2}\left(M_k^{-1}-M_{\infty}^{-1}\right)Q^{1/2} \right] (\mu_{k}-x_*)\\
+\frac{\theta}{\bcred{2}\sigma^2}(\mu_k-\bcgreen{x}_*)^\top Q^{1/2}M_\infty^{-1}Q^{1/2} (\mu_k-x_*).
\end{multline*}
\end{small}
The matrices $Q, M_k$ and $M_\infty$ are all symmetric matrices. \bcgreen{From the} Cauchy-Schwarz inequality; 
\begin{multline}\label{ineq: risk_diff_bound}
    |\mr_{k,\sigma^2}(\theta)-\mr_{\sigma^2}(\theta)| \leq \frac{\bcred{2}\sigma^2}{\theta}|\log \det(I_d-\frac{\theta}{\bcred{2}\sigma^2}\Sigma_\infty^{1/2}Q\Sigma_\infty^{1/2})-\log \det(I_d-\frac{\theta}{\bcred{2}\sigma^2}\Sigma_k^{1/2}Q\Sigma_k^{1/2})|\\
    +\Vert Q+\frac{\theta}{\bcred{2}\sigma^2}Q^{1/2}M_\infty^{-1}Q^{1/2} \Vert_2 \Vert \mu_{k}-x_*\Vert^2\\+ \frac{\theta}{\bcred{2}\sigma^2}\Vert M_k^{-1}-M_\infty^{-1}\Vert_2\Vert Q^{1/2} (\mu_{k}-x_*)\Vert^2.
    \vspace{-0.2cm}
\end{multline}
Recall the following result from Lemma \ref{lem: non_asym_conv_quad_obj},
\begin{equation*}
\Vert \mathbb{E}[\bc{z}_k]-\bc{z}_* \Vert^2 \leq C_k^2 \rho(A_Q)^{2(k-1)} \Vert \bc{z}_0-\bc{z}_*\Vert^2,
\end{equation*}
where $C_k$ is as given in Lemma \ref{lem: non_asym_conv_quad_obj}. Since $\bc{z}_k^\top=[x_k^\top, x_{k-1}^{\top}]^\top$, we also obtain 
\begin{equation}\label{ineq: lin_conv_mean}
    \Vert \mu_k-x_*\Vert^2 \leq \Vert \mathbb{E}[\bc{z}_k]-\bc{z}_* \Vert^2\leq C_k^2\rho(A_Q)^{2(k-1)}\Vert \bc{z}_0-\bc{z}_*\Vert^2.
\end{equation}
On the other hand, the difference $\Xi_k-\Xi_\infty$ also satisfies the recursion,
\begin{equation}\label{eq: covar-recur}
\Xi_k-\Xi_\infty = A_Q \left( \Xi_{k-1}-\Xi_\infty\right) A_Q^\top.
\end{equation}
\bcgreen{We} \bc{introduce the notation  
$$
vec(X')=[X'_{11},X'_{21},..., X'_{d1},X'_{12}, ..., X'_{dd}],
$$
to denote the vectorization $vec(X')$ of a matrix $X' \in\mathbb{R}^{d\times d}$.
Under this notation, \eqref{eq: covar-recur} can be \bcgreen{rewritten},}
\begin{equation*}
vec(\Xi_k-\Xi_\infty)=vec\left(A_Q^k(\Xi_{0}-\Xi_\infty)(A_Q^\top)^k \right)= (A_Q^k \otimes A_Q^k) vec(\Xi_0-\Xi_\infty),
\end{equation*}
\bcgreen{where $\otimes$ denotes the Kronecker product.}
Hence,
\begin{equation}
\Vert \Xi_k-\Xi_\infty \Vert_F^2=\Vert vec(\Xi_k-\Xi_\infty)\Vert^2= \Vert A_Q^k\Vert_2^{4}\Vert vec(\Xi_{0}-\Xi_\infty)\Vert^2,
\end{equation}
where we used the fact that $\Vert A_Q^k\otimes A_Q^k\Vert_2^2=\Vert A_Q^k\Vert_2^4$ (see e.g. \cite{lancaster1972norms}). Notice that $\Vert \Sigma_k-\Sigma_\infty \Vert^2_F\bcgreen{\leq}\Vert \Xi_k-\Xi_\infty\Vert^2_F$ since $\bc{z}_k^\top=[x_k^\top,x_{k-1}^{\top}]$ and $\bc{z}_\infty=[x_\infty^\top,x_\infty^\top]^\top$. \bcgreen{Hence,}
\begin{equation}\label{ineq: lin_conv_variance}
    \Vert \Sigma_k-\Sigma_\infty\Vert^2_F\leq  \Vert A_Q^k\Vert^4_2 \Vert vec(\Xi_0-\Xi_\infty)\Vert^2_2.
\end{equation}
Now, we are going to provide the convergence rate for \bcgreen{the} log-determinant term on \eqref{ineq: risk_diff_bound}; \bc{but first,}  \bcgreen{we provide} \bc{the following property of the log-determinant:}
\begin{align}\label{ineq: log-det}
\log\det(X')-\log\det(Y')&=\log\det((Y')^{-1/2}X'(Y')^{-1/2}),\nonumber \\
&\leq \text{Tr}((Y')^{-1/2}X'(Y')^{-1/2}-I), \nonumber\\
&=\text{Tr}((Y')^{-1}(X'-Y'))=vec((Y')^{-1})^{\top}vec(X'-Y'),
\end{align}
for any \bcgreen{positive definite} matrices $X'$ and $Y'$. \bcred{In Lemma \ref{lem: X-and-Y-pos-def}, we show that $X$ and $Y$ are positive definite matrices; so that we can \bcgreen{apply} the equality \eqref{ineq: log-det} \bcgreen{with  $X=X'$ and $Y=Y'$}.

\begin{lemma}\label{lem: X-and-Y-pos-def} 
\bcgreen{Suppose $(\alpha,\beta,\gamma)\in\mathcal{F}_\theta$.} Then, the matrices $X=I-\frac{\theta}{2\sigma^2}\Sigma_k^{1/2}Q\Sigma_{k}^{1/2}$ and $Y=I-\frac{\theta}{2\sigma^2}\Sigma_\infty^{1/2}Q\Sigma_{\infty}^{1/2}$ are positive definite for all $k\in\mathbb{N}$. Moreover, \bc{$\Sigma_k \preceq \Sigma_\infty$ for all $k\in\mathbb{N}$.} 
\end{lemma}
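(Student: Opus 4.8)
The plan is to deduce both assertions from the Loewner ordering $\Sigma_k\preceq\Sigma_\infty$, which I would establish first via monotonicity of the discrete Lyapunov recursion, and then combine it with the eigenvalue formula of Lemma \ref{lem: sol-lyap-eq-quad}.

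\textbf{Step 1 (covariance ordering).} Since the initialization $z_0$ is deterministic, the covariance $\Xi_0$ of $z_0$ is the zero matrix. Subtracting consecutive instances of the recursion \eqref{eq: Var_It_Quad} gives $\Xi_{k+1}-\Xi_k = A_Q(\Xi_k-\Xi_{k-1})A_Q^\top$ for $k\geq 1$, while $\Xi_1-\Xi_0=\sigma^2 BB^\top\succeq 0$; an easy induction then shows $(\Xi_k)_{k\geq 0}$ is nondecreasing in the Loewner order. Subtracting \eqref{eq: Var_It_Quad} from the limiting Lyapunov equation $\Xi_\infty = A_Q\Xi_\infty A_Q^\top+\sigma^2 BB^\top$ (whose solution exists because $\rho(A_Q)<1$ on $\mathcal{F}_\theta\subseteq\mathcal{S}_q$ by Lemma \ref{lem: feas-set-stab-set}) gives $\Xi_\infty-\Xi_{k+1}=A_Q(\Xi_\infty-\Xi_k)A_Q^\top$, so iterating from $\Xi_\infty-\Xi_0=\Xi_\infty\succeq 0$ yields $\Xi_\infty-\Xi_k=A_Q^k\Xi_\infty(A_Q^\top)^k\succeq 0$, i.e. $\Xi_k\preceq\Xi_\infty$ for every $k$. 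Because $\Sigma_k$ and $\Sigma_\infty$ are the leading $d\times d$ principal submatrices of $\Xi_k$ and $\Xi_\infty$, restricting the associated quadratic forms to vectors of the form $[v^\top,0^\top]^\top$ gives $\Sigma_k\preceq\Sigma_\infty$ for all $k$, which is the ``moreover'' claim.

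\textbf{Step 2 (from the ordering to positive definiteness).} I would use the elementary fact that for symmetric PSD matrices $\Sigma$ and $Q$ the matrices $\Sigma^{1/2}Q\Sigma^{1/2}$ and $Q^{1/2}\Sigma Q^{1/2}$ share the same spectrum, since with $M:=\Sigma^{1/2}Q^{1/2}$ they equal $MM^\top$ and $M^\top M$. Applying this with $\Sigma=\Sigma_\infty$ and invoking Lemma \ref{lem: sol-lyap-eq-quad} together with the definition \eqref{def-ui}, the eigenvalues of $\Sigma_\infty^{1/2}Q\Sigma_\infty^{1/2}$ are $\sigma^2/u_i$, so $Y=I-\tfrac{\theta}{2\sigma^2}\Sigma_\infty^{1/2}Q\Sigma_\infty^{1/2}$ has eigenvalues $1-\theta/(2u_i)>0$, since $(\alpha,\beta,\gamma)\in\mathcal{F}_\theta$ forces $\theta<2u_i$ for each $i$ by \eqref{cond: var-on-Sigma-quad}; hence $Y\succ 0$. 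Equivalently $Q^{1/2}\Sigma_\infty Q^{1/2}\prec\tfrac{2\sigma^2}{\theta}I$, and by Step 1 and monotonicity of congruence, $Q^{1/2}\Sigma_k Q^{1/2}\preceq Q^{1/2}\Sigma_\infty Q^{1/2}\prec\tfrac{2\sigma^2}{\theta}I$; applying the spectrum identity once more with $\Sigma=\Sigma_k$ gives $\Sigma_k^{1/2}Q\Sigma_k^{1/2}\prec\tfrac{2\sigma^2}{\theta}I$, that is $X\succ 0$.

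\textbf{Main obstacle.} The only delicate point is that $\Sigma_k$ (and possibly $\Sigma_\infty$) may be singular, so one cannot diagonalize $\Sigma^{1/2}Q\Sigma^{1/2}$ through a similarity transformation with $Q\Sigma$; the $MM^\top$ versus $M^\top M$ identity is precisely what circumvents this and is the one place requiring a small amount of care. The remaining ingredients — monotonicity of the discrete Lyapunov iteration, preservation of the Loewner order under taking principal submatrices and under congruence by $Q^{1/2}$, and the existence of $\Xi_\infty$ — are standard or already available from the earlier discussion.
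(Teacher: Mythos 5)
Your proposal is correct and follows essentially the same route as the paper: establish $\Xi_k\preceq\Xi_\infty$ from the Lyapunov recursion (your telescoped form $\Xi_\infty-\Xi_k=A_Q^k\Xi_\infty(A_Q^\top)^k$ is equivalent to the paper's explicit sum $\Xi_k=\sigma^2\sum_{j=0}^{k-1}A_Q^jBB^\top(A_Q^\top)^j\preceq\Xi_\infty$), pass to the leading principal submatrices to get $\Sigma_k\preceq\Sigma_\infty$, and then use the eigenvalue formula of Lemma \ref{lem: sol-lyap-eq-quad} together with the condition $\theta<2u_i$ defining $\mathcal{F}_\theta$ to conclude $Y\succ 0$. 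The one place where you genuinely improve on the paper is the passage from $Y\succ0$ to $X\succ0$: the paper asserts the Loewner inequality $X\succeq Y$ directly from $\Sigma_k\preceq\Sigma_\infty$, but the map $\Sigma\mapsto\Sigma^{1/2}Q\Sigma^{1/2}$ is not Loewner-monotone in general, so that step as written is not justified. Your detour through the $MM^\top$ versus $M^\top M$ spectrum identity and the congruence $Q^{1/2}\Sigma_kQ^{1/2}\preceq Q^{1/2}\Sigma_\infty Q^{1/2}$ compares only the largest eigenvalues, which is all that is needed and is rigorous; this is the cleaner way to close the argument.
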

\begin{proof}
The proof is \bcgreen{deferred to} Appendix \ref{app: X-and-Y-pos-def}
\end{proof}
}
\bcgreen{Lemma \ref{lem: X-and-Y-pos-def} implies that the matrices $X$ and $Y$ are positive-definite since $(\alpha,\beta,\gamma)\in \mathcal{F}_\theta$ by assumption of Proposition \ref{prop: quad_qisk_meas_convergence}; therefore, the inequality \eqref{ineq: log-det} holds for $X'=X$ and $Y'=Y$.} \bc{Notice that the matrices $\Sigma_k^{1/2}Q\Sigma_k^{1/2}$ and $\Sigma_{\infty}^{1/2}Q\Sigma_{\infty}^{1/2}$ share the same eigenvalues with $Q^{1/2}\Sigma_kQ^{1/2}$ and $Q^{1/2}\Sigma_\infty Q^{1/2}$, respectively. On the other hand, we also have the eigenvalue characterization of the log-determinant which implies $\log\det(X') = \sum_{i=1}^{d}\lambda_i(X')$ where $\lambda_i(X')$ is the $i$-th largest eigenvalue of the positive\bcgreen{-}definite matrix $X'$. Hence we can write 
\begin{equation*}
\log\det(X)-\log\det(Y) = \log \det \left(I- \frac{\theta}{2\sigma^2}Q^{1/2}\Sigma_kQ^{1/2}\right)-\log\det\left(I-\frac{\theta}{2\sigma^2} Q^{1/2}\Sigma_\infty Q^{1/2}\right),
\end{equation*}
and the following inequalities hold by applying the inequality \eqref{ineq: log-det} \bcgreen{for} $X'= I-\frac{\theta}{2\sigma^2}Q^{1/2}\Sigma_kQ^{1/2}$ and $Y'=I-\frac{\theta}{2\sigma^2}Q^{1/2}\Sigma_\infty Q^{1/2}$, and using Cauchy-Schwarz inequality
\begin{equation*}
\frac{\bcred{2}\sigma^2}{\theta}\left|\log\det(X)-\log\det(Y)\right|\leq  \Big\Vert vec((I-\frac{\theta}{\bcred{2}\sigma^2}Q^{1/2}\Sigma_{\infty}Q^{1/2})^{-1})\Big\Vert \Big\Vert vec\left(Q^{1/2}(\Sigma_k-\Sigma_\infty) Q^{1/2}\right)\Big\Vert.
\end{equation*}
The Frobenius norm satisfies the property $\Vert vec(X')\Vert^2= \Vert X' \Vert^2_F$ for any matrix $X'$, and we can also write $vec(Q^{1/2}(\Sigma_k-\Sigma_\infty)Q^{1/2})= (Q^{1/2}\otimes Q^{1/2})vec(\Sigma_k-\Sigma_\infty)$ using the Kronecker product. Therefore, \bcgreen{the inequality above} becomes 
\begin{align}
\frac{\bcred{2}\sigma^2}{\theta}\left|\log\det(X)-\log\det(Y)\right|& \leq \Big\Vert (I-\frac{\theta}{\bcred{2}\sigma^2}Q^{1/2}\Sigma_\infty Q^{1/2})^{-1}\Big\Vert_F \Big\Vert Q^{1/2}\otimes Q^{1/2}\Big\Vert_2 \Big\Vert vec(\Sigma_k-\Sigma_\infty)\Big\Vert\nonumber,\nonumber\\
& \leq  L\Big\Vert (I-\frac{\theta}{2\sigma^2}Q^{1/2}\Sigma_\infty Q^{1/2})\Big\Vert_F \Big\Vert \Sigma_k-\Sigma_\infty\Big\Vert_F,\label{ineq: another-helper}
\end{align}
where we used the fact that eigenvalues of $Q^{1/2}\otimes Q^{1/2}$ are $\lambda_i(Q)^{1/2}\lambda_j(Q)^{1/2}$ \bcgreen{for $i=1,2,...,d$ and $j=1,2,...,d$} where  $\lambda_i(Q)$ is the $i$-th largest eigenvalue of $Q$\footnote{See e.g. \cite{schacke2004kronecker} for details.}. \bcgreen{ Combining \eqref{ineq: another-helper} with the inequality \eqref{ineq: lin_conv_variance} yields that the log-determinant term of \eqref{ineq: risk_diff_bound} converges linearly}\bcgreen{, i.e.} 
\begin{align}\label{ineq: Lin_Conv_2}
   \frac{\bcred{2}\sigma^2}{\theta}\left|\log\det(X)-\log\det(Y)\right| \leq  L\Vert (I-\frac{\theta}{2\sigma^2}\Sigma_\infty^{1/2} Q \Sigma_{\infty}^{1/2})^{-1}\Vert_F \Vert A_Q^k\Vert_2^2 \Vert \Xi_0-\Xi_\infty\Vert_F.
\end{align}
}
}
Next, we are going to show that $M_k^{-1}$ converges linearly to $M_\infty^{-1}$. We can write the following equality for any invertible matrix $M$: 
\begin{small}
\begin{align*}
    I=(\frac{\theta}{\bcred{2}\sigma^2}I+M)(\frac{\theta}{\bcred{2}\sigma^2}I+M)^{-1}\implies M^{-1}=\frac{\theta}{\bcred{2}\sigma^2}M^{-1}(\frac{\theta}{\bcred{2}\sigma^2}I+M)^{-1}+(\frac{\theta}{\bcred{2}\sigma^2}I+M)^{-1}.
\end{align*}
\end{small}
Setting $M=M_k$ and $M=M_\infty$ yields the following relations,
\begin{align*}
    M_k^{-1}=\frac{\theta}{\bcred{2}\sigma^2}M_k^{-1}Q^{1/2}\Sigma_kQ^{1/2}+Q^{1/2}\Sigma_kQ^{1/2}\bcgreen{,}\\
    M_\infty^{-1}=\frac{\theta}{\bcred{2}\sigma^2}M_\infty^{-1}Q^{1/2}\Sigma_\infty Q^{1/2}+Q^{1/2}\Sigma_\infty Q^{1/2}\bcgreen{.}
\end{align*}
Therefore the difference $M_k^{-1}-M_\infty^{-1}$ is, 
$$
M_k^{-1}-M_\infty^{-1}= \frac{\theta}{2\sigma^2}M_k^{-1}Q^{1/2}\Sigma_kQ^{1/2}-\frac{\theta}{2\sigma^2}M_\infty^{-1} Q^{1/2}\Sigma_\infty Q^{1/2}+Q^{1/2}(\Sigma_k-\Sigma_\infty)Q^{1/2},
$$
Adding and subtracting \bcgreen{the term} $\frac{\theta}{\bcred{2}\sigma^2}M_\infty^{-1}Q^{1/2}\Sigma_k Q^{1/2}$ \bcgreen{leads to}
\begin{small}
\begin{align*}
    M_k^{-1}-M_\infty^{-1}= \frac{\theta}{2\sigma^2}(M_k^{-1}-M_\infty^{-1})Q^{1/2}\Sigma_kQ^{1/2}+\left( I+\frac{\theta}{\bcred{2}\sigma^2}M_\infty^{-1} \right) Q^{1/2}(\Sigma_k-\Sigma_\infty)Q^{1/2}.
\end{align*}
\end{small}
Hence, \bcgreen{solving this linear system for $M_k^{-1}$}, 
$$
M_{k}^{-1}-M_\infty^{-1}=\left(I+\frac{\theta}{2\sigma^2}M_\infty^{-1}\right)Q^{1/2}(\Sigma_k-\Sigma_\infty)Q^{1/2}\left( I-\frac{\theta}{2\sigma^2}Q^{1/2}\Sigma_kQ^{1/2}\right)^{-1}.
$$
Notice that the condition \bc{$\Sigma_k\preceq \Sigma_\infty $}  (from Lemma \ref{lem: X-and-Y-pos-def}) and $(\alpha,\beta,\gamma)\in\mathcal{F}_\theta$ imply \bc{$I-\frac{\theta}{\bcred{2}\sigma^2}Q^{1/2}\Sigma_k Q^{1/2}\succeq I-\frac{\theta}{2\sigma^2}Q^{1/2}\Sigma_\infty Q^{1/2}\succ 0$ }; hence, we get 
\begin{equation}\label{ineq: Lin_Conv_3}
\Vert M_{k}^{-1}-M_\infty \Vert_2 \leq D_\infty  \Vert \Sigma_k-\Sigma_\infty \Vert_2,
\end{equation}
where $D_\infty= \Vert A_Q\Vert_2\Vert I+\frac{\theta}{\bcred{2}\sigma^2}M_{\infty}^{-1}\Vert_2 \Vert I-\frac{\theta}{2\sigma^2}Q^{1/2}\Sigma_\infty Q^{1/2}\Vert^{-1}$.
\bcgreen{Combining everything together,} the bound \eqref{ineq: risk_diff_bound} with inequalities \eqref{ineq: lin_conv_mean}, \eqref{ineq: lin_conv_variance}, \eqref{ineq: Lin_Conv_2}, and \eqref{ineq: Lin_Conv_3} yields the desired result. 
\subsection{Proof of Proposition \ref{prop: quad-risk-meas-gauss-noise}}\label{app: quad-risk-meas}
\bcgreen{Consider the dynamical system \eqref{sys: TMM_dynmcal_quad_obs-1} that the transformed iterates $\bar{z}_k$ obey with the output vector $\bar{q}_k$ defined as,}
\begin{align}\label{sys: TMM_dynmcal_quad_obs}
\bar{\bc{z}}_{k+1}&= A_{\Lambda} \bar{\bc{z}}_k + B \bc{\veps_{k+1}}, \\ 
\bc{\bar{q}}_{k}&= S\bar{\bc{z}}_{k},
\end{align}
where $S=\frac{1}{\sqrt{2}} \left[\Lambda^{1/2}, 0_d \right]$. The entropic risk measure of \bcgreen{GMM} under standard Gaussian noise $\bc{\veps_{k+1}}\sim\mathcal{N}(0,\sigma^2I_d)$ is
\begin{equation*}
\mr_{\sigma^2}(\theta)= \frac{2\sigma^2}{\theta}\log \mathbb{E} \left[
e^{\frac{\theta}{2\sigma^2} [f(x_{\infty}) - f(x_*)]}
\right]=\frac{2\sigma^2}{\theta} 
 \log \mathbb{E} \left[
e^{\frac{\theta}{2\sigma^2} \bc{\bar{q}}_{\infty}^{\top}\bc{\bar{q}}_{\infty}}
\right],
\end{equation*}
where $\bc{\bar{q}}_{\infty}=\lim_{k\rightarrow \infty }\bc{\bar{q}}_k$ is a normally distributed random variable with mean \bcgreen{$0$} and the matrix \bc{$V_\infty:=\mathbb{E}[\bar{\bc{z}}_\infty\bar{\bc{z}}_\infty^\top]$} satisfies 
\begin{align*}
    \mathbb{E}[\bc{\bar{q}}_{\infty}\bc{\bar{q}}_{\infty}^{\top}] =\mathbb{E}[ S\bar{\bc{z}}_{\infty}\bar{\bc{z}}_{\infty}^{\top}S^{\top}]=S V_{\infty}S^{\top}.
\end{align*}
\bcgreen{Furthermore, the matrix} $V_{\infty}$ also satisfies the Lyapunov equation \eqref{eq: Quad_Lyapunov}; hence we can write 
\begin{equation*}
    \bcgreen{U}^\top V_{\infty}\bcgreen{U}= \bcgreen{U}^\top A_{\Lambda}\bcgreen{U} \left(\bcgreen{U}^\top  V_{\infty} \bcgreen{U} \right) \bcgreen{U}^\top A_{\Lambda}^\top\bcgreen{U}+\sigma^2 \bcgreen{U}^\top BB^\top \bcgreen{U},
\end{equation*}
where $\bcgreen{U}$ is the projection matrix given in \eqref{def: AQ-block-diag-form}\bc{which satisfies $\bcgreen{U}^\top \bcgreen{U}=I$. We \bcgreen{introduce} $\tilde{V}_\infty=\bcgreen{U}^\top V_\infty \bcgreen{U}$, $\tilde{B}=\bcgreen{U}^\top B$, and $\tilde{A}_\Lambda = \bcgreen{U}^\top A_\Lambda \bcgreen{U}$, and} rewrite the Lyapunov equation as 
\begin{equation}\label{eq: Lyap-tilde-V-sol}
\tilde{V}_{\infty} = \tilde{A}_{\Lambda}\tilde{V}_{\infty}\tilde{A}_{\Lambda}^{\top} + \sigma^2 \tilde{B}\tilde{B}^\top,
\end{equation}
where $\tilde{A}_{\Lambda}=\underset{i=1,..,d}{\mbox{Diag}}(\{M_i\})$ where 
$$ 
\bcgreen{
M_i :=\begin{bmatrix} (1+\beta) - \alpha (1+\gamma )\lambda_i  & -(\beta -\alpha\gamma \lambda_i) \\
    1 &  0 
    \end{bmatrix},}
$$
\bcgreen{and $\lambda_i:=\lambda_i(Q)$ drops the dependency of the eigenvalues to $Q$ for notational simplicity.} The solution $\tilde{V}_\infty=\underset{i=1,..,d}{\mbox{Diag}}\begin{bmatrix}\bar{x}_{\lambda_i} & \bar{y}_{\lambda_i}\\\bar{y}_{\lambda_i}& \bar{v}_{\lambda_i} \end{bmatrix}$ of the Lyapunov equation \eqref{eq: Lyap-tilde-V-sol} satisfies the system of equations:
\begin{align*}
    \bar{x}_{\lambda_i}&= c_{i}^2\bar{x}_{\lambda_i} + 2c_id_i \bar{y}_{\lambda_i} + d_i^2 \bar{v}_{\lambda_i} +\alpha^2\sigma^2,\\
    \bar{y}_{\lambda_i} &= c_i \bar{x}_{\lambda_i}+d_i\bar{y}_{\lambda_i},\\
    \bar{v}_{\lambda_i}&= \bar{x}_{\lambda_i},
\end{align*}
where $c_i=1+\beta-\alpha(1+\gamma)\lambda_i $ and $d_i=-\beta+\alpha\gamma\lambda_i$. The solution of \bcgreen{this} system is given as 
\begin{align*}
\bar{x}_{\lambda_i}= \frac{(1-d_i)\alpha^2 \sigma^2}{(1+d_{i})[(1-d_i)^2-c_i^2]},\;\;\bar{y}_{\lambda_i}=\frac{c_i\alpha^2 \sigma^2}{(1+d_{i})[(1-d_i)^2-c_i^2]}, \;\; \bar{v}_{\lambda_i}=\bar{x}_{\lambda_i},
\end{align*}
\bcgreen{From the definition of $\tilde{V}_\infty$, we have} $\lambda_{i}(SV_{\infty}S^\top)=\lambda_{i}(S\bcgreen{U}\tilde{V}_{\infty}\bcgreen{U}^\top S^\top)$ where the \bcgreen{latter} matrix can be written as
\bc{
\begin{align*}
    S\bcgreen{U}\tilde{V}_{\infty}\bcgreen{U}^\top S^\top&= \underset{i=1,..,d}{\mbox{Diag}}\left\{ \frac{1}{2}\lambda_i\bar{x}_{\lambda_i}\right\}.
\end{align*}

It directly follows that,
\begin{equation*} 
\lambda_{i}(SV_{\infty}S^{\top})=\frac{\lambda_i(1-d_i)\alpha^2\sigma^2}{2(1+d_i)[(1-d_i)^2-c_i^2].}
\end{equation*}
Recall that for all $(\alpha,\beta,\gamma)\in\mathcal{F}_\theta$, \bcgreen{by definition}, we already have the property, 
\begin{equation}\label{ineq: app-helper-1}
\bcgreen{\lambda_i(SV_\infty S^\top)=}
\frac{\lambda_i (1-d_i)\alpha^2 \sigma^2}{2(1+d_i)[(1-d_i)^2-c_i^2]}=\frac{\sigma^2}{2u_i} <\frac{\sigma^2}{\theta}.
\end{equation}
Therefore $\lambda_{\max}(SV_\infty S^\top)<\frac{\sigma^2}{\theta}$ from the condition, \bcgreen{where $\lambda_{\max}(.)$ denotes the largest eigenvalue}. 
On the other hand, $\bc{\bar{q}}_\infty$ is a Gaussian variable with covariance matrix $SV_\infty S^\top$; hence, it can be shown\footnote{\bcgreen{It suffices to choose} $m=0$, $\alpha^2(\lambda_P+L/2)=1$, and $\Sigma=SV_\infty S$ \bcgreen{in} \eqref{eq: Gauss-exp}.} using the proof of Lemma \ref{lem: Gaussian-expectation} that}
$$
\bc{
\mathbb{E}[e^{\frac{\theta}{2\sigma^2}\Vert \bc{\bar{q}}_\infty \Vert^2}]=\begin{cases}\det\left(I_d- \frac{\theta}{\sigma^2}SV_\infty S^\top\right)^{-1/2}, &\text{ if } \lambda_{\max}(SV_\infty S^\top)< \frac{\sigma^2}{\theta},\\ 
\infty, & \text{otherwise}.
\end{cases}}
$$
\bcgreen{where we used} $\lambda_{\max}(SV_\infty S^\top)< \frac{\sigma^2}{\theta}$ on $\mathcal{F}_{\theta}$, this yields the finite-horizon risk measure is finite if and only if $(\alpha,\beta,\gamma)\in\mathcal{F}_\theta$ and given as
\begin{equation} \label{eq: risk-meas-app-helper}
\mr_{\sigma^2}(\theta) = 
\frac{2\sigma^2}{\theta}\sum_{i=1}^{d} \log \left(1-\frac{\theta}{\sigma^2}\lambda_i \left(SV_{\infty}S^{\top} \right) \right)^{-1/2},
\end{equation}
\bcgreen{and we conclude from \eqref{ineq: app-helper-1}}.

\subsection{Proof of Theorem \ref{thm: quad-evar-bound}}\label{app: quad-evar-bound}
\bcgreen{Given $(\alpha,\beta,\gamma)\in \mathcal{S}_q$, by Proposition \ref{prop: quad-risk-meas-gauss-noise} and the definition of EV@R, }
\begin{equation*} 
EV@R_{1-\zeta}\bcred{[f(x_\infty)-f(x_*)]}=\bcgreen{\inf}_{\bcgreen{0<\theta}}  \;\mr_{\sigma^2}(\theta)+\frac{\sigma^2}{\theta}\log(1/\zeta)\bcgreen{\leq} \min_{0<\theta<2\bar{u}}\left\{ -\frac{\sigma^2}{\theta}\sum_{j=1}^{d}\log(1-\frac{\theta}{2u_i})+\frac{2\sigma^2}{\theta}\log(1/\zeta)\right\}.
\end{equation*}
Notice that \bcred{$\bar{u}\leq u_i$ \bcgreen{for every $i$ which} implies $-\log(1-\frac{\theta}{2u_i}) \leq -\log(1-\frac{\theta}{2\bar{u}})$}; therefore, we obtain \bcgreen{an} upper bound on the EV@R as follows: 

\begin{equation}\label{ineq: evar-min-quad}
\bcred{
EV@R_{1-\zeta}[f(x_\infty)-f(x_*)] \leq \min_{0<\theta<2\bar{u}}\left\{ h(\theta) \right\},
}
\end{equation}
\bcgreen{where} \bcred{ $h(\theta):=-\frac{\sigma^2d}{\theta}\log(1-\frac{\theta}{2\bar{u}})+\frac{2\sigma^2}{\theta}\log(1/\zeta)$} and its derivative $h'(\theta)$ is given as
\begin{align}\label{eq: h-derivative}
h'(\theta)&= \frac{\sigma^2d}{\theta^2}\left[ \log\left(1-\frac{\theta}{2 \bar{u}}\right)+\frac{2\bar{u}}{2\bar{u}-\theta}-1-\frac{2\log(1/\zeta)}{d}\right].
\end{align}
\bcgreen{A simple } explicit formula for $\theta_*$ \bcgreen{satisfying} $h'(\theta_*)=0$ does not \bcgreen{appear to} exist \bcgreen{which would be an optimizer of \eqref{ineq: evar-min-quad}}; therefore, we are going to approximate $h'(\theta)$ by using  $-\log(1-x)\approx x$ \bcgreen{for small $x$ to find an approximate root of \eqref{eq: h-derivative}:} 
$$
h'(\theta)\approx \tilde{h}'(\theta):= \frac{\sigma^2 d}{\theta^2}\left[-\frac{\theta}{2\bar{u}} + \frac{2 \bar{u}}{2\bar{u}-\theta}-(1+\frac{2\log(1/\zeta)}{d}) \right]. 
$$
The solution \bcgreen{$\tilde{ \theta}_0$ of $\tilde{ h}'(\tilde{\theta}_0)=0$ on $(0,2\bar{u})$ is the positive root of the polynomial}
$$
\theta^2+ \frac{4\bar{u}\log(1/\zeta)}{d}\theta-\frac{8\bar{u}^2\log(1/\zeta)}{d}=0,
$$
which is given as 
$$
\tilde{ \theta}_0= \frac{2\bar{u}\log(1/\zeta)}{d}\left[-1+\sqrt{1+\frac{2d}{\log(1/\zeta)}}\right].
$$
\bcred{Notice the following relation holds
\begin{align*}
    \sqrt{1+\frac{2d}{\log(1/\zeta)}}< 1+\frac{d}{\log(1/\zeta)} \implies \frac{\log(1/\zeta)}{d}\left[ -1+\sqrt{1+\frac{2d}{\log(1/\zeta)}}\right]<1.
\end{align*}
\bcgreen{Therefore $\tilde{\theta}_0=\theta_02\bar{u}< 2\bar{u}$}\bcgreen{; hence,}} we conclude from \eqref{ineq: evar-min-quad} that  \bcgreen{  $EV@R_{1-\zeta}\leq h(\tilde{\theta_0})=h(\theta_02\bar{u}) $} and obtain the inequality \eqref{eq: Approx_EVAR_Quad}.

{
\section{Proofs of Section \ref{sec: strongly-conv-obj}}\label{sec-app-b}}\bcgreen{We first provide a result from the literature about deterministic \bcgreen{GMM}.}
\begin{lemma}[\cite{hu2017dissipativity}]\label{lem: result-of-hu-lessard}Let $f\in\Sml$, and $\{\bc{z}_k\}_{k\in \mathbb{N}}$ be the iterates generated by the deterministic \bcred{\bcgreen{generalized} momentum method}, i.e. \eqref{Sys: TMM} with $\bc{\veps_{k}}=0$ \bcgreen{for every $k$}. Suppose there exists a symmetric positive semi-definite matrix $\tilde{P}$ and a rate $\rho<1$ for which the following matrix inequality holds 
\begin{equation}\label{MI}\tag{MI} 
\left(
\begin{array}{cc}
\tilde{A}^{\top}\tilde{P}\tilde{A}-\rho^2 \tilde{P} & \tilde{A}^{\top}\tilde{P}\tilde{B}
\\
\tilde{B}^{\top}\tilde{P}\tilde{A} & \tilde{B}^{\top}\tilde{P}\tilde{B}
\end{array}
\right)
-\tilde{X}\preceq 0,
\end{equation}
where $\tilde{X}=\tilde{X}_{1} +\rho^2 \tilde{X}_2 +(1-\rho^2)\tilde{X}_3$ with
\begin{align*}
    &\tilde{X}_1:=
    \frac{1}{2}
    \begin{bmatrix} 
    -L\delta^2 & L\delta^2 & -(1-\alpha L)\delta \\
    L\delta^2 & -L\delta^2 & (1-\alpha L)\delta \\
    -(1-\alpha L)\delta & (1-\alpha L)\delta & \alpha(2-\alpha L)
    \end{bmatrix},\;\;
    \tilde{X}_2:=\frac{1}{2}
    \begin{bmatrix} 
    \gamma^2\mu  & -\gamma^2\mu & -\gamma \\
    -\gamma^2\mu & \gamma^2\mu  &  \gamma \\
    -\gamma      & \gamma       &  0      
    \end{bmatrix},\\
    &\qquad\qquad \quad \quad \tilde{X}_3:=\frac{1}{2}
    \begin{bmatrix} 
    (1+\gamma)^2\mu  & -\gamma(1+\gamma)\mu & -(1+\gamma) \\
    -\gamma(1+\gamma)\mu & \gamma^2\mu      & \gamma      \\ 
    -(1+\gamma)          & \gamma           &0
    \end{bmatrix},
\end{align*}
and $\delta:=\beta-\gamma$. Then, the Lyapunov function $\mathcal{V}_P$ defined as \eqref{def: Lyapunov} with $P=\tilde{P}\otimes I_d$ decays with rate $\rho$ along the trajectories of $\{\bc{z}_k\}_{k\in\mathbb{N}}$, 
\begin{equation}\label{ineq: lyapunov-contraction}
\mV_P(\bc{z}_{k+1}) \leq \rho^{2} \mV_P(\bc{z}_k).
\end{equation}
\end{lemma}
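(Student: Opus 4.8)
The plan is to recover \eqref{ineq: lyapunov-contraction} by the dissipativity argument of \cite{hu2017dissipativity}: combine the ``quadratic part'' of the one-step change in $\mV_P$ with a supply rate assembled from interpolation inequalities for $f\in\Sml$, and observe that \eqref{MI} says precisely that the total is negative semidefinite along the iterates.

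First I would put deterministic \bcgreen{GMM} into state-space form in the error coordinates. Write $\xi_k := \bc{z}_k - \bc{z}_*$, split as the two $d$-blocks $x_k-x_*$ and $x_{k-1}-x_*$, and set $g_k := \nabla f(y_k)$ where $y_k := (1+\gamma)x_k-\gamma x_{k-1}$ is the extrapolation point at which \bcgreen{GMM} queries the gradient. Then \eqref{Sys: TMM} with $\bc{\veps_k}\equiv 0$ reads $\xi_{k+1} = A\xi_k + Bg_k$ with $A=\tilde A\otimes I_d$, $B=\tilde B\otimes I_d$, $\tilde A=\begin{bmatrix}1+\beta & -\beta\\ 1 & 0\end{bmatrix}$, $\tilde B=\begin{bmatrix}-\alpha\\ 0\end{bmatrix}$. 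Introducing the stacked vector $\eta_k := \big[(x_k-x_*)^\top,\ (x_{k-1}-x_*)^\top,\ g_k^\top\big]^\top\in\R^{3d}$ and expanding $\xi_{k+1}^\top P\xi_{k+1}$, one gets
$$
(\xi_{k+1}^\top P\,\xi_{k+1}) - \rho^2(\xi_k^\top P\,\xi_k) \;=\; \eta_k^\top\big(\tilde M_0\otimes I_d\big)\eta_k,\qquad \tilde M_0 := \begin{bmatrix} \tilde A^\top\tilde P\tilde A-\rho^2\tilde P & \tilde A^\top\tilde P\tilde B\\ \tilde B^\top\tilde P\tilde A & \tilde B^\top\tilde P\tilde B\end{bmatrix},
$$
so $\tilde M_0$ is exactly the first matrix on the left-hand side of \eqref{MI}.

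Next I would bound the change in the function-value part of $\mV_P$ using three ``one-gradient'' interpolation inequalities for $f\in\Sml$: the descent lemma for $L$-smooth $f$ at $y_k$ evaluated at $x_{k+1}$, together with $\mu$-strong convexity at $y_k$ evaluated at $x_k$ and at $x_*$. Taking the combination with weights $1$, $-\rho^2$, $-(1-\rho^2)$ respectively makes the $f(y_k)$ contributions cancel and yields
$$
[f(x_{k+1})-f(x_*)] - \rho^2[f(x_k)-f(x_*)] \;\le\; g_k^\top\big[(x_{k+1}-y_k)-\rho^2(x_k-y_k)-(1-\rho^2)(x_*-y_k)\big] + \tfrac{L}{2}\|x_{k+1}-y_k\|^2 - \tfrac{\rho^2\mu}{2}\|x_k-y_k\|^2 - \tfrac{(1-\rho^2)\mu}{2}\|x_*-y_k\|^2.
$$
Using $x_{k+1}-y_k=\delta(x_k-x_{k-1})-\alpha g_k$ with $\delta=\beta-\gamma$, $x_k-y_k=-\gamma(x_k-x_{k-1})$, and $x_*-y_k=-(1+\gamma)(x_k-x_*)+\gamma(x_{k-1}-x_*)$, the right-hand side is a quadratic form in $\eta_k$, and a direct computation identifies it with $-\eta_k^\top\big((\tilde X_1+\rho^2\tilde X_2+(1-\rho^2)\tilde X_3)\otimes I_d\big)\eta_k=-\eta_k^\top(\tilde X\otimes I_d)\eta_k$: the smoothness term produces $\tilde X_1$, while the two strong-convexity terms produce $\tilde X_2$ and $\tilde X_3$, exactly as stated.

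Adding the two estimates gives $\mV_P(\bc{z}_{k+1}) - \rho^2\mV_P(\bc{z}_k) \le \eta_k^\top\big((\tilde M_0-\tilde X)\otimes I_d\big)\eta_k$. Since \eqref{MI} asserts $\tilde M_0-\tilde X\preceq 0$ as a $3\times3$ matrix and $I_d\succeq 0$, the Kronecker product $(\tilde M_0-\tilde X)\otimes I_d$ is negative semidefinite, so the right-hand side is $\le 0$, which is \eqref{ineq: lyapunov-contraction}. The main obstacle is the bookkeeping in the third step: one has to expand the quadratic form carefully in the coordinates $(x_k-x_*,\ x_{k-1}-x_*,\ g_k)$ and track the $\delta=\beta-\gamma$ and $(1-\alpha L)$ factors to verify the diagonal and off-diagonal entries match $\tilde X_1,\tilde X_2,\tilde X_3$ term by term; the state-space reformulation and the Kronecker-product step are routine.
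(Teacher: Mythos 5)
Your proposal is correct and follows essentially the same route as the source: the paper itself only cites \cite{hu2017dissipativity} for this lemma, but the argument you give — expanding $\xi_{k+1}^\top P\xi_{k+1}-\rho^2\xi_k^\top P\xi_k$ as a quadratic form in $(x_k-x_*,x_{k-1}-x_*,\nabla f(y_k))$, combining the descent lemma at $(y_k,x_{k+1})$ with strong convexity at $(y_k,x_k)$ and $(y_k,x_*)$ with weights $1,-\rho^2,-(1-\rho^2)$ so the $f(y_k)$ terms cancel, identifying the result with $-\eta_k^\top(\tilde X\otimes I_d)\eta_k$, and invoking \eqref{MI} — is exactly the dissipativity computation the paper reproduces (with noise terms added) in its proof of Lemma \ref{lem: func-contr-prop}. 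The substitutions $x_{k+1}-y_k=\delta(x_k-x_{k-1})-\alpha g_k$, $x_k-y_k=-\gamma(x_k-x_{k-1})$, $x_*-y_k=-(1+\gamma)(x_k-x_*)+\gamma(x_{k-1}-x_*)$ and the resulting identification of $\tilde X_1,\tilde X_2,\tilde X_3$ all check out.
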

\begin{proof}The result directly follows from \bcgreen{combining} \cite{hu2017dissipativity}[Lemma 5] \bcgreen{and} \cite{hu2017dissipativity}[Theorem 2].
\end{proof}
\begin{lemma}\label{lem: proof-of-non-empty-set}
The set \bcgreen{$\mathcal{S}_c$ defined in Theorem \ref{thm: TMM-MI-solution} is not empty. In particular,
$$\left\{(\vartheta,\psi) : \max\Big\{ \frac{1}{\kappa} \sqrt{\frac{\kappa^2}{4}+\kappa}-\frac{\kappa}{2}\leq \vartheta=\psi < 1\Big\}\right\}\subset \mathcal{S}_c.$$}
\end{lemma}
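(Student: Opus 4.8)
The plan is to exhibit an explicit one-parameter family inside $\mathcal{S}_c=(\mathcal{S}_{-}\cup\mathcal{S}_{+})\cap\mathcal{S}_1$ by restricting to the diagonal $\psi=\vartheta$. Since $\psi=\vartheta<1$ rules out membership in $\mathcal{S}_{+}$, it suffices to show that $(\vartheta,\vartheta)\in\mathcal{S}_{-}\cap\mathcal{S}_1$ for every $\vartheta$ in some non-empty sub-interval of $[1/\kappa,1)$; non-emptiness of $\mathcal{S}_c$ is then immediate.

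First I would handle $\mathcal{S}_{-}$ membership. With $\psi=\vartheta$, the requirement $2-1/\psi\leq\vartheta$ becomes $2\vartheta-1\leq\vartheta^2$, i.e. $(\vartheta-1)^2\geq0$, which is automatic; the requirement $\tfrac{1}{1+\kappa(1-\psi)}\leq\vartheta$ becomes, after clearing the positive denominator, $\kappa\vartheta^2-(1+\kappa)\vartheta+1\leq0$, and factoring the left side as $\kappa(\vartheta-1/\kappa)(\vartheta-1)$ shows this holds precisely for $\vartheta\in[1/\kappa,1)$. Hence $(\vartheta,\vartheta)\in\mathcal{S}_{-}$ exactly when $1/\kappa\leq\vartheta<1$.

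Next I would reduce the $\mathcal{S}_1$ condition on the diagonal. Substituting $\psi=\vartheta$ and using $\mu/L=1/\kappa$, the three compound expressions collapse: $\tfrac{(1-\vartheta)\vartheta}{\kappa(1-\vartheta)}=\vartheta/\kappa$ (so the radical is $\sqrt{\vartheta/\kappa}$ and the right-hand side is $(1-\vartheta/\kappa)^2$), and $\tfrac{(1-\vartheta)(\mu\vartheta^2-L(1-\vartheta)^2)}{L(1-\vartheta)\vartheta}=\vartheta/\kappa-(1-\vartheta)^2/\vartheta$. Writing $a:=\vartheta/\kappa$ and $b:=(1-\vartheta)^2/\vartheta$ (note $a<1$ since $\vartheta<1\leq\kappa$, so $1-\sqrt a>0$), the $\mathcal{S}_1$ inequality becomes $(1-\sqrt a)(1-a+b)\leq(1-a)^2$; factoring $1-a=(1-\sqrt a)(1+\sqrt a)$ and dividing through by $1-\sqrt a>0$ leaves $1-a+b\leq(1-a)(1+\sqrt a)$, equivalently
\[
\frac{(1-\vartheta)^2}{\vartheta}\ \leq\ \sqrt{\frac{\vartheta}{\kappa}}\,\Big(1-\frac{\vartheta}{\kappa}\Big).
\]
As $\vartheta\uparrow1$ the left side vanishes while the right side tends to $\tfrac{1}{\sqrt\kappa}(1-1/\kappa)>0$, so this inequality --- and with Step 1 the inclusion $(\vartheta,\vartheta)\in\mathcal{S}_c$ --- holds on some interval $(\vartheta_0,1)$, which already proves $\mathcal{S}_c\neq\varnothing$. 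To pin down the explicit threshold I would, on the range $\vartheta\in[1/\kappa,1)$, bound the left side above via $1/\vartheta\leq\kappa$ and the right side below via $\vartheta\geq1/\kappa$ and $\vartheta<1$ (using monotonicity of $t\mapsto\sqrt t(1-t)$ on the relevant subinterval), turning the requirement into a quadratic condition in $\vartheta$ whose larger root is the claimed bound, and then verify that this root lies in $[1/\kappa,1)$ so that the displayed diagonal family is non-empty and contained in $\mathcal{S}_c$.

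The main obstacle is the third step: correctly collapsing the three nested expressions in the definition of $\mathcal{S}_1$ when $\psi=\vartheta$ and legitimately dividing by $1-\sqrt{\vartheta/\kappa}$, and then calibrating the final bounds tightly enough that the resulting sufficient condition reproduces exactly the stated closed-form threshold while staying valid (in particular, no weaker than $\vartheta\geq1/\kappa$) uniformly in $\kappa$.
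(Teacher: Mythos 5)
Your overall route is the same as the paper's: restrict to the diagonal $\vartheta=\psi<1$ (which forces $\alpha_{\vartheta,\psi}=1/L$ and rules out $\mathcal{S}_{+}$), check membership in $\mathcal{S}_{-}$, and reduce the $\mathcal{S}_1$ inequality to a scalar condition. Your $\mathcal{S}_{-}$ step is correct and in fact more explicit than the paper's (the factorization $\kappa\vartheta^2-(1+\kappa)\vartheta+1=\kappa(\vartheta-1/\kappa)(\vartheta-1)$ cleanly gives the interval $[1/\kappa,1)$). Your reduction of $\mathcal{S}_1$ on the diagonal to
$\frac{(1-\vartheta)^2}{\vartheta}\leq\sqrt{\vartheta/\kappa}\,\bigl(1-\vartheta/\kappa\bigr)$
is exactly the inequality the paper arrives at (after dividing its factored expression by $1-\sqrt{\vartheta/\kappa}>0$), and your limit argument as $\vartheta\uparrow 1$ does establish non-emptiness of $\mathcal{S}_c$ (for $\kappa>1$).

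The gap is in the last step, where you must recover the stated threshold $\vartheta\geq\sqrt{\kappa^2/4+\kappa}-\kappa/2$, which is the positive root of $\vartheta^2+\kappa\vartheta-\kappa=0$, i.e.\ of $1-\vartheta-\vartheta^2/\kappa\leq 0$. The calibration you propose --- bounding the left side above via $1/\vartheta\leq\kappa$ and the right side below via $\vartheta\geq 1/\kappa$ --- produces a sufficient condition of the form $\kappa(1-\vartheta)^2\leq\frac{1}{\kappa}(1-\frac{1}{\kappa})$, whose threshold is $1-\sqrt{(\kappa-1)/\kappa^3}$, not the claimed root; so as written this step does not close. The bounds that actually work (and are what the paper uses) keep the $\vartheta$-dependence on the right side: since $\vartheta/\kappa\in(0,1)$ one has $\sqrt{\vartheta/\kappa}\geq\vartheta/\kappa$ and $\vartheta^2/\kappa^2\leq\vartheta^2/\kappa$, so
$\sqrt{\vartheta/\kappa}\,(1-\vartheta/\kappa)\geq\frac{\vartheta}{\kappa}-\frac{\vartheta^2}{\kappa}=\frac{\vartheta(1-\vartheta)}{\kappa}$,
and the scalar condition is implied by $\frac{(1-\vartheta)^2}{\vartheta}\leq\frac{\vartheta(1-\vartheta)}{\kappa}$; dividing by $\frac{1-\vartheta}{\vartheta}>0$ gives precisely $1-\vartheta\leq\vartheta^2/\kappa$, i.e.\ the claimed quadratic threshold. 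With that substitution your argument becomes a complete proof matching the paper's.
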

\begin{proof}
\bcgreen{Assume}
$\max\{\frac{1}{\kappa},\sqrt{\frac{\kappa^2}{4}+\kappa}-\frac{\kappa}{2}\}\leq \vartheta=\psi <1$. This would \bcgreen{imply} $\alpha=\frac{1}{L}$ (from the equality \eqref{cond: str-cnvx-alpha}) and we can write
\begin{align*}
\left[1-\sqrt{\frac{\psi}{\kappa}} \right]\left[1-\frac{\mu\psi^2-L(1-\psi)^2}{L\psi}\right]-\left(1-\frac{\psi}{\kappa} \right)^2&=\left[1-\sqrt{\frac{\psi}{\kappa}}\right]\left[1-\frac{\psi}{\kappa}+\frac{(1-\psi)^2}{\psi} -\left(1+\sqrt{\frac{\psi}{\kappa}}\right)\left(1-\frac{\psi}{\kappa}\right)\right],\\
& =\left[1-\sqrt{\frac{\psi}{\kappa}}\right]\left[\frac{(1-\psi)^2}{\psi} -\sqrt{\frac{\psi}{\kappa}}\left(1-\frac{\psi}{\kappa}\right)\right].
\end{align*}
From the fact that $\sqrt{\frac{\psi}{\kappa}}\geq \frac{\psi}{\kappa}$ and $\frac{\psi^2}{\kappa^2}\leq \frac{\psi^2}{\kappa}$, we get 
\begin{small}
\begin{eqnarray*}
\left[1-\sqrt{\frac{\psi}{\kappa}} \right]\left[1-\frac{\mu\psi^2-L(1-\psi)^2}{L\psi}\right]-\left(1-\frac{\psi}{\kappa} \right)^2\leq \left[1-\sqrt{\frac{\psi}{\kappa}} \right]\left[\frac{1-\psi}{\psi}\right]\left(1-\psi -\frac{\psi^2}{\kappa}\right).
\end{eqnarray*}
\end{small}
The condition $\sqrt{\frac{\kappa^2}{4}+\kappa}-\frac{\kappa}{2}\leq\psi$ implies $1-\psi-\frac{\psi^2}{\kappa}\leq 0$ showing that \bcgreen{$(\vartheta,\psi)\in \mathcal{S}_1$}:
\bcgreen{
$$
\left[1-\sqrt{\frac{(1-\vartheta)\vartheta}{\kappa(1-\psi)}}\right]\left[1-\frac{(1-\vartheta)(\mu\psi^2-L(1-\psi)^2)}{L(1-\psi)\vartheta}\right] \leq \left(1-\frac{(1-\vartheta)\psi}{\kappa(1-\psi)}\right)^2.
$$
}
Moreover, all $\frac{1}{\kappa}\geq \psi$ satisfy $\max\{2-\frac{1}{\psi}, \frac{1}{1+\kappa(1-\psi)}\}\leq\psi$ which proves that $\psi\in \mathcal{S}_{-}$. Hence, we conclude that 
$$
\left\{(\vartheta,\psi) ~\vline~ \max\left\{\frac{1}{\kappa},\sqrt{\frac{\kappa^2}{4}+\kappa}-\frac{\kappa}{2}\right\}\leq \vartheta=\psi <1\right\}\bcgreen{\subseteq (\mathcal{S}_{-} \cap \mathcal{S}_1) \subseteq \mathcal{S}_c.}
$$
\end{proof}

\subsection{Proof of Theorem \ref{thm: TMM-MI-solution}}
\bcred{The proof is based on \bcgreen{providing explicit solutions to} \eqref{MI}. Let us first show how \eqref{MI} can be used to derive an upper bound on the suboptimality, $f(x_k)-f(x_*)$, in Lemma \ref{lem: func-contr-prop}.
\begin{lemma}\label{lem: func-contr-prop} Consider the dynamical system \eqref{Sys: TMM} defined by \bcgreen{noisy} \bcgreen{GMM}. 
Suppose $(\alpha,\beta,\gamma)$ satisfies \eqref{MI} for some rate $\rho \in (0,1)$, and a \bcgreen{positive semi-definite} matrix $P$, then \bcgreen{we have} the inequality 
\begin{equation}\label{ineq: Lyapunov-contraction-2}
f(x_{k+1})-f(x_*) \leq \mathcal{V}_P(\bc{z}_{k+1}) \leq \rho^2 \mV_P(\bc{z}_k)
+ \bc{\veps_{k+1}}^\top m_k+ \alpha^2(\frac{L}{2}+\lambda_P)\Vert \bc{\veps_{k+1}}\Vert^2,
\end{equation}
where \bc{$m_k=\alpha \delta L(x_{k-1}-x_k)-\alpha(1-\alpha L)\nabla f(y_k)+ 2B^\top P (A (\bc{z}_k-\bc{z}_*)+B\nabla f(y_k))$ and $\lambda_P$ is the largest eigenvalue of \bcgreen{the} $d$-by-$d$ principle submatrix of P.}
\end{lemma}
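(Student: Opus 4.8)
The plan is to combine the matrix inequality \eqref{MI} — evaluated at the current iterate and the current gradient — with an explicit expansion of the Lyapunov function at the next iterate that isolates the randomness. The first inequality $f(x_{k+1})-f(x_*)\le \mathcal{V}_P(z_{k+1})$ is immediate from \eqref{def: Lyapunov}, because $P\succeq 0$ forces $(z_{k+1}-z_*)^\top P(z_{k+1}-z_*)\ge 0$, so all the work goes into the second inequality.

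First I would peel off the noise. Let $\hat z_{k+1}:=z_*+A(z_k-z_*)+B\nabla f(y_k)$ be the \emph{noise-free} image of the (noisy) state $z_k$, with leading $d$-block $\hat x_{k+1}$; here $y_k=x_k+\gamma(x_k-x_{k-1})$, and one checks that $\hat x_{k+1}=y_k+a_k$ with $a_k:=\delta(x_k-x_{k-1})-\alpha\nabla f(y_k)$. The noisy recursion then reads $z_{k+1}-z_*=(\hat z_{k+1}-z_*)+Bw_{k+1}$ with $Bw_{k+1}=[-\alpha w_{k+1}^\top,\,0^\top]^\top$, so $x_{k+1}=\hat x_{k+1}-\alpha w_{k+1}$. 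Expanding the quadratic part of $\mathcal{V}_P$ produces $(\hat z_{k+1}-z_*)^\top P(\hat z_{k+1}-z_*)$, a cross term $w_{k+1}^\top\big(2B^\top P(A(z_k-z_*)+B\nabla f(y_k))\big)$, and $w_{k+1}^\top B^\top P B\,w_{k+1}$; since $B^\top P B$ equals $\alpha^2$ times the $d$-by-$d$ principal submatrix of $P$, the last term is at most $\alpha^2\lambda_P\|w_{k+1}\|^2$. For the function value, $L$-smoothness of $f$ at $y_k$ along the direction $x_{k+1}-y_k=a_k-\alpha w_{k+1}$ gives $f(x_{k+1})\le \Phi_k+\big(\alpha\delta L(x_{k-1}-x_k)-\alpha(1-\alpha L)\nabla f(y_k)\big)^\top w_{k+1}+\tfrac{\alpha^2 L}{2}\|w_{k+1}\|^2$, where $\Phi_k:=f(y_k)+\nabla f(y_k)^\top a_k+\tfrac{L}{2}\|a_k\|^2\ge f(\hat x_{k+1})$ is the smoothness surrogate for $f(\hat x_{k+1})$. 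Summing the two expansions and collecting the terms linear and quadratic in $w_{k+1}$ yields
\[
\mathcal{V}_P(z_{k+1})\le \Psi_k+m_k^\top w_{k+1}+\alpha^2\big(\tfrac{L}{2}+\lambda_P\big)\|w_{k+1}\|^2,\qquad \Psi_k:=(\hat z_{k+1}-z_*)^\top P(\hat z_{k+1}-z_*)+\Phi_k-f(x_*),
\]
with $m_k$ exactly as in the statement (its $\alpha\delta L(x_{k-1}-x_k)-\alpha(1-\alpha L)\nabla f(y_k)$ part coming from smoothness, its $2B^\top P(A(z_k-z_*)+B\nabla f(y_k))$ part from the quadratic cross term).

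It then remains to show $\Psi_k\le\rho^2\mathcal{V}_P(z_k)$, which is where \eqref{MI} enters. I would evaluate the negative semidefinite quadratic form of \eqref{MI} (tensored with $I_d$) at $\eta_k:=[(z_k-z_*)^\top,\ \nabla f(y_k)^\top]^\top$: the $\tilde A,\tilde B$ blocks contribute $(\hat z_{k+1}-z_*)^\top P(\hat z_{k+1}-z_*)-\rho^2(z_k-z_*)^\top P(z_k-z_*)$, and the $\tilde X$ block contributes $-\eta_k^\top(\tilde X_1+\rho^2\tilde X_2+(1-\rho^2)\tilde X_3)\eta_k$. A direct calculation gives $\eta_k^\top(\tilde X_1\otimes I_d)\eta_k=f(y_k)-\Phi_k$ — the very same surrogate — while the $\mu$-strong-convexity inequalities of $f$ applied to the pairs $(y_k,x_k)$ and $(y_k,x_*)$ give $\eta_k^\top(\tilde X_2\otimes I_d)\eta_k\le f(x_k)-f(y_k)$ and $\eta_k^\top(\tilde X_3\otimes I_d)\eta_k\le f(x_*)-f(y_k)$. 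Substituting these three bounds, the $f(y_k)$ contributions cancel and non-positivity of the quadratic form collapses to $\Psi_k\le\rho^2\mathcal{V}_P(z_k)$; together with the display above, this proves \eqref{ineq: Lyapunov-contraction-2}.

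The step I expect to be the main obstacle is keeping the surrogate $\Phi_k$ consistent across the two halves of the argument. Lemma \ref{lem: result-of-hu-lessard} as stated only yields $\mathcal{V}_P(\hat z_{k+1})\le\rho^2\mathcal{V}_P(z_k)$ with the \emph{true} value $f(\hat x_{k+1})$, which is strictly weaker than what is needed since $\Phi_k\ge f(\hat x_{k+1})$; hence one cannot simply quote that lemma and must instead work from \eqref{MI} directly, exploiting that its $\tilde X_1$ block supplies exactly the same quantity $\Phi_k$ already produced by the noise expansion, so that no slack is introduced. The remaining ingredients — the block identity for $B^\top P B$ and the three quadratic-form evaluations of $\tilde X_1,\tilde X_2,\tilde X_3$ — are routine.
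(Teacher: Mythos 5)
Your proposal is correct and follows essentially the same route as the paper's proof: the descent lemma at $y_k$ producing the $\tilde{X}_1$ quadratic form and the noise terms $n_k$ and $\frac{L\alpha^2}{2}\Vert w_{k+1}\Vert^2$, the two strong-convexity inequalities matching $\tilde{X}_2$ and $\tilde{X}_3$, the expansion of $(z_{k+1}-z_*)^\top P(z_{k+1}-z_*)$ yielding the cross term $2n_k'$ and $w_{k+1}^\top B^\top P B\, w_{k+1}\leq \alpha^2\lambda_P\Vert w_{k+1}\Vert^2$, and finally \eqref{MI} to absorb the $\tilde{X}$ quadratic form; your reorganization into a noise-free surrogate $\Psi_k$ versus the paper's interleaved bookkeeping is only presentational. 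Your observation that one must work from \eqref{MI} directly rather than quoting Lemma \ref{lem: result-of-hu-lessard} is also consistent with what the paper does.
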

\begin{proof}
\bcgreen{The proof is provided in Appendix \ref{app: func-contr-prop}.}
\end{proof}
\bcgreen{In order to prove \eqref{ineq: exp-subopt-str-cnvx-gaus}; we will provide explicit choice of $\rho$ and $P$ that satisfy \eqref{MI} and then build on Lemma \ref{lem: func-contr-prop}. We consider the left hand-side of \eqref{MI} and introduce 
$$
\textbf{L}=\begin{bmatrix}A^\top P A-\rho^2 P & A^\top P B\\ 
B^\top P A & B^\top P B\end{bmatrix}-X, 
$$
where $A=\tilde{A}\otimes I_d$ and $B=\tilde{B}\otimes I_d$ with 
\begin{align*}
    \tilde{A}= \begin{bmatrix}
    (1+\beta) & -\beta \\ 
    1 & 0
    \end{bmatrix}, \;\; \tilde{B}=\begin{bmatrix}
    -\alpha \\ 
    0
    \end{bmatrix}.
\end{align*}
}
}
By definition of the matrices $A,B, P$ and $X$, we can see that $\mathbf{L}=\tilde{L}\otimes I_d$ and $\mathbf{L}\preceq 0$ if and only if $\tilde{L}\preceq 0$. Hence in the rest of the proof, we are going to find the parameters $(\alpha,\beta,\gamma)$ for which $\tilde{L}$ satisfies \bcgreen{the} Sylvester criterion for negative semi-definite matrices. First, we set $\gamma=\psi\beta$ and $\tilde{P}=\tilde{p}\tilde{p}^\top$ where $\tilde{p}^\top= [\tilde{p}_0, -\tmp_0+\tilde{p}_{1}]$ \bcgreen{with $\tilde{p}_0,\tilde{p}_1\in\mathbb{R}$ that we will specify later}. The elements at $i$-th row and $j$-th column ($\tilde{L}_{ij}$) of $\tilde{L}$ can be computed as
\begin{align*}
\tilde{L}_{11}&= (\beta^2-\rho^2)\tmp_0^2+2\beta \tmp_0\tmp_1 +\tmp_1^2 + \frac{L\delta^2-\gamma^2 \mu}{2}-\frac{(1-\rho^2)(1+2\gamma)\mu}{2},\\
\tilde{L}_{12}&=(\rho^2-\beta^2)\tmp_0^2-(\beta+\rho^2)\tmp_0\tmp_1-\frac{L\delta^2-\gamma^2\mu}{2}+\frac{(1-\rho^2)\gamma\mu}{2},\\
\tilde{L}_{22}&=(\beta^2-\rho^2)\tmp_0^2-\rho^2(\tmp_1^2-2\tmp_1 \tmp_0)+\frac{L\delta^2-\gamma^2\mu}{2},\\
\tilde{L}_{32}&=\alpha\beta \tmp_0^2-\frac{\beta(1-\alpha L)+\alpha\gamma L}{2},\\
\tilde{L}_{31}&= -\alpha\beta \tmp_0^2-\alpha \tmp_1\tmp_0 +\frac{\beta(1-\alpha L)+\alpha \gamma L +1-\rho^2}{2}= -\tilde{L}_{32}+\frac{1-\rho^2}{2}-\alpha \tmp_1 \tmp_0,\\
\tilde{L}_{33}&= \alpha^2\tmp_0^2-\frac{\alpha(2-\alpha L)}{2}.
\end{align*}
\bcgreen{Choosing $\tmp_1=\sqrt{\frac{\mu}{2}}$, $\tmp_0=\sqrt{\frac{\vartheta}{2\alpha}}$, $\rho^2=1-\sqrt{\alpha \vartheta \mu}$, implies $1-\rho^2=2\alpha \tmp_0\tmp_1$ and we get $\tilde{L}_{31}=-\tilde{L}_{32}$}. \bcgreen{Note that \bcgreen{this} corresponds to choosing $P$ according to \eqref{def: P_Matrix}.} \bcgreen{Under this particular choice of $\tmp_0, \tmp_1$, and $\rho$; if $\alpha,\vartheta$ and $\gamma$ satisfy the condition
\begin{equation}\label{cond: str-cnvx-alpha}
\alpha L (1-\psi)-1+\vartheta=0,
\end{equation}
then $\tilde{L}_{31}=-\tilde{L}_{32}=0$. If we set $\beta= \frac{\rho^2}{1-\alpha\psi\mu}\left[ 1-\sqrt{\frac{\alpha \mu}{\vartheta}}\right]$ for $\alpha, \vartheta,\psi>0$ satisfying the condition \eqref{cond: str-cnvx-alpha} and $\rho$ as given before; then we can see that $\tilde{L}_{11}=\tilde{L}_{22}=-\tilde{L}_{12}$. Overall, our choice of parameters converts the }
matrix $\tilde{\mathbf{L}}$ \bcgreen{into} the following form
\begin{equation*}
\tilde{\textbf{L}}=
\begin{bmatrix} 
-\tilde{L}_{0} & \tilde{L}_{0} & 0\\
\tilde{L}_{0} & -\tilde{L}_{0} & 0 \\
0      & 0      &  \tilde{L}_{1}
\end{bmatrix}, 
\end{equation*}
where
\begin{align*}
    \tilde{L}_0&= \rho^2 \left[\sqrt{\frac{\vartheta}{2\alpha}}-\sqrt{\frac{\mu}{2}}\right]^2-\beta^2\left[\frac{\vartheta}{2\alpha}+\frac{L(1-\psi)^2-\psi^2\mu}{2}\right],\\
    \tilde{L}_1&=- \frac{\alpha (1-\psi \alpha L)}{2}.
\end{align*}
Notice that \eqref{cond: str-cnvx-alpha} uniquely determines the value of $\alpha$ if $\psi\neq 1$ and choosing $\psi=1$ directly implies $\vartheta=1$, since $\alpha>0$. In the case of $\psi=1=\vartheta$, we can \bcgreen{have any $\alpha \in(0,1/L)$ and satisfy \eqref{cond: str-cnvx-alpha}.} Therefore, we will study these two cases separately.

\textbf{The case $\psi\neq 1$:} In this case, the condition \eqref{cond: str-cnvx-alpha} \bcgreen{yields} $\alpha=\frac{1-\vartheta}{L(1-\psi)}$. Hence the definitions of $\alpha, \beta, \rho$ and $\tilde{L}_1$ imply that if 
$$
(\vartheta,\psi) \in \bcgreen{\tilde{S}}:= \left\{(\vartheta,\psi) ~\mid~ 0< \frac{1-\vartheta}{1-\psi} \leq\bcgreen{ \min \{\kappa \vartheta, \frac{1}{\psi},\frac{\kappa}{\vartheta}\}}\;\&\; \psi\neq1 \right\},
$$
then the inequalities
$\tilde{L}_1<0$, $\beta> 0$,  $\rho > 0$, and $ \alpha>0$ are satisfied. We can decompose $\bcgreen{\tilde{S}}$ as  $\bcgreen{\tilde{S}}=S_-\cup S_{+}$ where $S_{-}=\bcgreen{\tilde{S}}\cap \{\bcgreen{(\vartheta,\psi)} ~\mid~ \psi<1\}$ and $S_{+}:=\bcgreen{\tilde{S}}\cap \{\bcgreen{(\vartheta,\psi)} ~\mid~ \psi>1 \}$ are given as 
\begin{align*}
\mathcal{S}_{-}&= \left\{(\vartheta,\psi) ~\mid~ 0\leq \psi<1\,\&\, \max\left\{2-\frac{1}{\psi},\frac{1}{1+\kappa(1-\psi)}\right\}\leq \vartheta < 1 \right\},\\
\mathcal{S}_{+}&=\left\{(\vartheta,\psi) ~\mid~ \psi>1\, \&\, 1<\vartheta \leq \min\left\{2-\frac{1}{\psi}, \frac{1}{2}+\frac{1}{2}\sqrt{1+4\kappa(\psi-1)} \right\} \right\}.
\end{align*}
Next, we will simplify the conditions \bcgreen{in} $\mathcal{S}_+$. Consider the function \bcgreen{$p(\psi)=\frac{1}{2}\sqrt{4\psi-3}-\frac{3}{2}+\frac{1}{\psi}$ for $\psi>1$}. Notice that $p(1)=0$ and also the derivative $p'$ satisfies 
$$
p'(\psi)= \frac{1}{\sqrt{4\psi-3}}-\frac{1}{\psi^2}>0.
$$
for all $\psi>1$. Moreover, for all $\psi>1$ we have 
\begin{footnotesize}
\begin{equation*}
\sqrt{1+4(\psi-1)} \leq \sqrt{1+4\kappa(\psi-1)} \implies \frac{1}{2}\sqrt{4\psi-3}-\frac{3}{2}+\frac{1}{\psi} \leq \frac{1}{2}+\sqrt{1+4\kappa (\psi-1)}-2+\frac{1}{\psi}.
\end{equation*}
\end{footnotesize}
Hence we obtain,
$$
0< p(\psi) < \frac{1}{2}+\frac{1}{2}\sqrt{1+4\kappa(\psi-1)}-2+\frac{1}{\psi},
$$
for $\psi>1$ and $\kappa>1$. This simplifies the set $\mathcal{S}_+$ to the following 
$$
\mathcal{S}_{+}=\left\{(\vartheta,\psi)~\mid~ \psi>1\,\&\, 1< \vartheta \leq 2-\frac{1}{\psi} \right\}.
$$
The inequality $\tilde{L}_{0}\geq 0$, on the other hand, holds if and only if 
$\beta^2\left[\frac{\vartheta}{2\alpha}+\frac{L(1-\psi)^2-\psi^2\mu}{2}\right]\leq\rho^2 \left[\sqrt{\frac{\vartheta}{2\alpha}}-\sqrt{\frac{\mu}{2}}\right]^2$, \mg{which is true if and only if}
\begin{equation}\label{ineq: cond-on-L0}
\rho^2\left[1-\alpha\frac{\mu\psi^2-L(1-\psi)^2}{\vartheta}\right]\leq (1-\alpha\psi\mu)^2.
\end{equation}
\bc{Notice that above inequality equals to the inequality that defines the set  $\mathcal{S}_1$ by the definitions of the parameters $\rho^2$ and $\alpha$.} \bcgreen{Hence we can see that for $(\vartheta,\psi)\in\mathcal{S}_1$, we have $\tilde{L}_0\geq 0$} Therefore the parameters $(\vartheta,\psi)$ belonging to set $(\mathcal{S}_{-}\cup \mathcal{S}_{+})\cap \mathcal{S}_1$ 
implies that the matrix $\tilde{\mathbf{L}}$ is negative semi-definite from the Sylvester theorem.

\textbf{The case $\psi=1$.} The condition \eqref{cond: str-cnvx-alpha} implies that $\vartheta=1$, as well. Hence we would obtain
\begin{align*}
    \rho^2=1-\sqrt{\alpha\mu},\;\beta=\frac{1-\sqrt{\alpha\mu}}{1+\sqrt{\alpha\mu}}=\gamma.
\end{align*}
This satisfies $\rho>0$, $\beta>0$, $\tilde{L}_1\leq0$, and
$
\tilde{L}_0= \frac{(1-\alpha\mu)\sqrt{\alpha\mu}(1-\sqrt{\alpha\mu})^2}{2\alpha(1+\sqrt{\alpha\mu})}>0,
$
for $\alpha\leq\frac{1}{L}$. \bcgreen{Therefore, if $(\vartheta,\psi)\in\mathcal{S}_0$, \eqref{MI} is satisfied with the parameters $(\alpha_{\vartheta,\psi},\beta_{\vartheta,\psi},\gamma_{\vartheta,\psi})$ and rate $\rho_{\vartheta,\psi}$.}

\bcred{The Sylvester theorem implies that the matrix $\textbf{L}$ with given choice of parameters $(\alpha,\beta,\gamma)$, rate $\rho$, and the matrix $P$ satisfies \eqref{MI}. Hence the assumptions of Lemma \ref{lem: func-contr-prop} are satisfied and we can write the inequality \eqref{ineq: Lyapunov-contraction-2} for the iterates $x_k$ generated by \bcgreen{noisy GMM} with \bcgreen{parameters} $(\alpha_{\vartheta,\psi},\beta_{\vartheta,\psi},\gamma_{\vartheta,\psi})$: 
\bcred{
$$
f(x_{k+1})-f(x_*) \leq \mathcal{V}_P(\bc{z}_{k+1})\leq \rho^2 \mathcal{V}_P(\bc{z}_k)+\bc{\veps_{k+1}}^\top m_k + \frac{\alpha(L\alpha + \vartheta)}{2} \Vert \bc{\veps_{k+1}}\Vert^2,
$$
\bcgreen{where we used $\lambda_{P}=\frac{\vartheta}{2\alpha}$}.
}
If we take the conditional expectation of the above inequality with respect to the natural filtration $\mathcal{F}_k$ generated by the iterates $\{x_j\}_{j=0}^k$, use the tower property of the conditional expectation with the facts that $\mE[\Vert \bc{\veps_{k+1}}\Vert^2 | \mathcal{F}_k]\leq \sigma^2d$ \bcgreen{and $\mE[w_{k+1}|\mathcal{F}_k]=0$} (from Assumption \ref{Assump: Noise}), and recurse over $k$, we obtain the desired result.
}

\subsection{Proof of Proposition \ref{prop: risk-meas-bound-gaussian}}
\bcred{We are going to prove Proposition \ref{prop: risk-meas-bound-gaussian} by providing an upper bound on the finite-horizon risk measure, $\mr_{k,\sigma^2}(\theta)$. \bcgreen{Our proof strategy is to use} the contraction property of \bcgreen{the} Lyapunov function $\mV_P(\bc{z}_k)$ \bcgreen{and the fact that} $f(x_k)-f(x_*)\leq \mV_P(\bc{z}_k)$.}
\bc{For simplicity, \bcgreen{we} drop the \bcgreen{subscript} $P$ in $\mV_P$ and \bcgreen{instead} adopt $\mV$, \bcgreen{if it is clear from the context}. Similarly, \bcgreen{we} set $\alpha_{\vartheta,\psi}\rightarrow\alpha$, $\beta_{\vartheta,\psi} \rightarrow\beta$, $\gamma_{\vartheta,\psi}\rightarrow\gamma$, $\mathtt{v}_{\vartheta,\psi}\rightarrow\mathtt{v}$, $\bar{\rho}_{\vartheta,\psi}\rightarrow\bar{\rho}$, and $\rho_{\vartheta,\psi}\rightarrow \rho$. Recall that we have shown in the proof of Theorem \ref{thm: TMM-MI-solution} that this choice of parameters together with the matrix $P$ given in Theorem \ref{thm: TMM-MI-solution} satisfy \eqref{MI} \bcred{and the largest eigenvalue $\lambda_P$ of \bcgreen{the} $d$-by-$d$ principle submatrix of $P$ is $\frac{\vartheta}{2\alpha}$, that is $\lambda_P=\frac{\vartheta}{2\alpha}$}. This also implies the following inequality (see the inequality \eqref{ineq: lyap-ineq} \bcgreen{from} the Proof of Lemma \ref{lem: func-contr-prop}),
\begin{equation}\label{ineq: lyap-func-helper-1}
\mV(\bc{z}_{k+1})\leq \rho^{2}\mV(\bc{z}_k)+m_k^\top\bc{\veps_{k+1}}+\alpha^2(\frac{L}{2}+\lambda_P)\Vert\bc{\veps_{k+1}}\Vert^2,
\end{equation}
where $\lambda_P$ is the largest eigenvalue of $d$-by-$d$ principle submatrix of $P$ and $m_k$ is as given in Lemma \ref{lem: func-contr-prop}.} \bcgreen{We next} \bcred{show that the inequality \eqref{ineq: lyap-func-helper-1}, in fact, provides an upper bound on \bcgreen{the} finite-horizon \bcgreen{entropic} risk. By Assumption \ref{Assump: Noise}, the noise $\bc{\veps_{k+1}}=\tilde{\nabla}f(y_k)-\nabla f(y_k)$ is independent from \bcgreen{the} sigma-algebra, $\mathcal{F}_k$, generated by  $\{x_j\}_{j=0}^{k}$; therefore, by the tower property of conditional \bcgreen{expectations} and the inequality \eqref{ineq: lyap-func-helper-1},
\begin{align}
\mathbb{E}\left[e^{\frac{\theta}{2\sigma^2} \mV(\bc{z}_{k+1})}\right]&\leq \mathbb{E}\left[e^{\frac{\theta}{2\sigma^2}\left(\rho^2 \mV(\bc{z}_k)+ m_k^\top \bc{\veps_{k+1}} + \alpha^2(\frac{L}{2}+\lambda_P)\Vert \bc{\veps_{k+1}}\Vert^2\right)}\right],\nonumber\\
&=\mE\left[e^{\frac{\theta}{2\sigma^2}\rho^2 \mV(\bc{z}_k)}\mE\left[ e^{\frac{\theta}{2\sigma^2}\left(m_k^\top \bc{\veps_{k+1}}+ \alpha^2 (\frac{L}{2}+\lambda_P)\Vert \bc{\veps_{k+1}}\Vert^2\right)} ~\vline~ \mathcal{F}_k\right] \right].\label{eq: cond-expectation}
\end{align}

Since the noise is Gaussian, we can explicitly compute the conditional expectation at the right-hand side of the equation \eqref{eq: cond-expectation} \bcgreen{as follows.}


\begin{lemma}\label{lem: Gaussian-expectation}
Let $\alpha, \lambda_P, \theta, L$ be non-negative constants and $w \in\mathbb{R}^d$ be a Gaussian random vector satisfying Assumption \ref{Assump: Noise}. Then, the inequality 
\begin{equation} 
\mathbb{E}[e^{\frac{\theta}{2\sigma^2}( \bcred{m}^\top \veps+ \alpha^2 \bcred{(\lambda_P+\frac{L}{2})}\Vert \veps\Vert^2)}]\bcgreen{=} \frac{1}{(1-\theta\alpha^2\bcred{(\lambda_P+L/2)})^{d/2}}e^{\frac{\theta^2 \Vert m\Vert^2}{\bcred{4}\sigma^2 (\bcred{2}-\theta \alpha^2 \bcred{(2\lambda_P+L)})}},
\end{equation} 
holds for any vector $m\in\mathbb{R}^d$ and $\theta$ satisfying \bcgreen{for all} $\theta \alpha^2 \bcred{(2\lambda_P+L)}< 2$.
\end{lemma}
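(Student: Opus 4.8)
The plan is to reduce the claim to a routine completion of the square in a finite‑dimensional Gaussian integral. It is convenient to first record the computation for an arbitrary centered Gaussian vector $w\sim\mathcal N(0,\Sigma)$ and arbitrary $m\in\mathbb R^d$: I would show that
\begin{multline}\label{eq: Gauss-exp}
\mathbb E\Big[e^{\frac{\theta}{2\sigma^2}(m^\top w+\alpha^2(\lambda_P+\frac L2)\|w\|^2)}\Big]\\
=\det\Big(I_d-\tfrac{\theta\alpha^2(2\lambda_P+L)}{2\sigma^2}\Sigma\Big)^{-1/2}\exp\Big(\tfrac{\theta^2}{8\sigma^4}\,m^\top\big(I_d-\tfrac{\theta\alpha^2(2\lambda_P+L)}{2\sigma^2}\Sigma\big)^{-1}\Sigma\, m\Big),
\end{multline}
valid precisely when $\lambda_{\max}(\Sigma)<\frac{2\sigma^2}{\theta\alpha^2(2\lambda_P+L)}$, and the lemma is then the special case $\Sigma=\sigma^2 I_d$. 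The general form \eqref{eq: Gauss-exp} is also what is invoked, with $m=0$ and $\alpha^2(\lambda_P+\tfrac L2)=1$, in the proof of Proposition \ref{prop: quad-risk-meas-gauss-noise}.

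To prove \eqref{eq: Gauss-exp}, I would write $\mathbb E[\,\cdot\,]=(2\pi)^{-d/2}\det(\Sigma)^{-1/2}\int_{\mathbb R^d}e^{g(w)}\,dw$ with $g(w)=\tfrac{\theta}{2\sigma^2}m^\top w+c\|w\|^2-\tfrac12 w^\top\Sigma^{-1}w$ where $c:=\tfrac{\theta\alpha^2(\lambda_P+L/2)}{2\sigma^2}$, collect the quadratic part as $-\tfrac12 w^\top M w$ with $M:=\Sigma^{-1}-2cI_d$, and complete the square: $g(w)=-\tfrac12\big(w-\tfrac{\theta}{2\sigma^2}M^{-1}m\big)^\top M\big(w-\tfrac{\theta}{2\sigma^2}M^{-1}m\big)+\tfrac{\theta^2}{8\sigma^4}m^\top M^{-1}m$. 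The Gaussian integral $\int_{\mathbb R^d}e^{-\frac12(w-v)^\top M(w-v)}\,dw=(2\pi)^{d/2}\det(M)^{-1/2}$ is independent of the shift $v$, so after multiplying by the normalization one obtains $\det(\Sigma)^{-1/2}\det(M)^{-1/2}\exp(\tfrac{\theta^2}{8\sigma^4}m^\top M^{-1}m)$; then $\det(\Sigma M)^{-1/2}=\det(I_d-2c\Sigma)^{-1/2}$ and $M^{-1}=(I_d-2c\Sigma)^{-1}\Sigma$ (both functions of $\Sigma$, hence commuting), together with $2c=\tfrac{\theta\alpha^2(2\lambda_P+L)}{2\sigma^2}$, give exactly \eqref{eq: Gauss-exp}. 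This step requires $M\succ0$, i.e. $\Sigma\prec\tfrac{1}{2c}I_d$, which is exactly the stated spectral condition and is precisely what keeps the integral finite.

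Specializing to $\Sigma=\sigma^2 I_d$ finishes the lemma: the prefactor becomes $(1-2c\sigma^2)^{-d/2}=(1-\theta\alpha^2(\lambda_P+\tfrac L2))^{-d/2}$, the spectral condition reduces to $\theta\alpha^2(2\lambda_P+L)<2$, and the exponent becomes $\tfrac{\theta^2}{8\sigma^4}\cdot\tfrac{\sigma^2}{1-\theta\alpha^2(\lambda_P+L/2)}\|m\|^2=\tfrac{\theta^2\|m\|^2}{4\sigma^2(2-\theta\alpha^2(2\lambda_P+L))}$, using the identity $8\sigma^2(1-\theta\alpha^2(\lambda_P+\tfrac L2))=4\sigma^2(2-\theta\alpha^2(2\lambda_P+L))$, which is the asserted formula.

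There is no genuine obstacle here; the whole computation is elementary. The only points that require care are the bookkeeping of the Gaussian normalization constants so that the determinant prefactor emerges cleanly (rather than as a power of $\sigma$), and the observation that the hypothesis $\theta\alpha^2(2\lambda_P+L)<2$ (equivalently $M\succ0$) is exactly the threshold separating a finite expectation from a divergent one: above it the integrand grows without bound and the left‑hand side equals $+\infty$, which is why this quantity reappears in the finiteness conditions and denominators of the entropic‑risk bounds that build on this lemma.
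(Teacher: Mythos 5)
Your proposal is correct and follows essentially the same route as the paper's own proof: complete the square in the Gaussian integral for a general covariance $\Sigma_w$, obtain the determinant prefactor and the quadratic exponent in $m$, and then specialize to $\Sigma_w=\sigma^2 I_d$ (the paper's equation \eqref{eq: Gauss-exp} is exactly your intermediate identity, and it is likewise reused with $m=0$ in the proof of Proposition \ref{prop: quad-risk-meas-gauss-noise}). The algebraic simplification $8\sigma^2(1-\theta\alpha^2(\lambda_P+\tfrac L2))=4\sigma^2(2-\theta\alpha^2(2\lambda_P+L))$ and the positivity threshold $\theta\alpha^2(2\lambda_P+L)<2$ both check out.
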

\begin{proof}
\bcgreen{The proof is provided in \ref{app: Gaussian-expectation}.}
\end{proof}
It is easy to check that $\theta_u^g=\frac{2(1-\rho^2)}{8\alpha^2 \mathtt{v}+(1-\rho^2) \alpha^2(L+2\lambda_P)}$\bc{, since $\lambda_P=\frac{\vartheta}{2\alpha}$,} and \bcgreen{by} \eqref{cond: cond-on-theta-gaus}, 
\begin{equation}\label{ineq: theta-bound-2}
\theta < \frac{2(1-\rho^2)}{8\alpha^2 \mathtt{v}+(1-\rho^2) \alpha^2(L+2\lambda_P)}< \frac{2}{\alpha^2 (L+2\lambda_P)}.
\end{equation}
Hence, the assumptions of Lemma \ref{lem: Gaussian-expectation} hold under the setting of Proposition \ref{prop: risk-meas-bound-gaussian} and setting $m=m_k$ and $\lambda_P$ as the largest eigenvalue of the $d$-by-$d$ principle submatrix of $P$ in Lemma \ref{lem: Gaussian-expectation} \bcgreen{gives} the conditional expectation \eqref{eq: cond-expectation} as 
\begin{equation}\label{ineq: bound-on-cond-exp}
\mE\left[ e^{\frac{\theta}{2\sigma^2} \left( m_k^\top \bc{\veps_{k+1}} +\alpha^2 (\frac{L}{2}+\lambda_P)\Vert \bc{\veps_{k+1}}\Vert^2 \right)} ~\vline~ \mF_k \right] \bcgreen{=} \Big(1-\theta \alpha^2 \big(\frac{L}{2}+\lambda_P\big)\Big)^{-d/2} e^{\frac{\theta^2 \Vert m_k\Vert^2}{4\sigma^2 (2- \theta \alpha^2 (2\lambda_P+L))}}.
\end{equation}
Next, we present an upper bound on the norm of the vector $m_k$ in Lemma \ref{lem: bound-on-mk}.
\bcred{
\begin{lemma}\label{lem: bound-on-mk}
The vector $m_k$ given in Lemma \ref{lem: func-contr-prop} under the premise of Proposition \ref{prop: risk-meas-bound-gaussian} admits the following inequality
\begin{equation}\label{ineq: m_k-bound}
     \Vert m_k \Vert^2 \leq 8\alpha^2 \mathtt{v}_{\alpha,\beta,\gamma}\left(\mV_P(\bc{z}_k)+\mV_P(\bc{z}_{k-1})\right),
\end{equation}
where $\mathtt{v}_{\alpha,\beta,\gamma}=\frac{2L^2 }{\mu}(2\delta^2+(1-\alpha L)^2(1+2\gamma+2\gamma^2))+\lambda_P \rho^2$, \bcgreen{$\rho$ and $P$ are as in Lemma \ref{lem: func-contr-prop} and $\lambda_P=\frac{\vartheta}{2\alpha}$ is }the largest eigenvalue of $d$-by-$d$ principle submatrix of $P$.
\end{lemma}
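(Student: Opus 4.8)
\textbf{Proof proposal for Lemma \ref{lem: bound-on-mk}.} The plan is to split $m_k$ into three pieces and bound each in terms of $\mV_P(\bc{z}_k)$ and $\mV_P(\bc{z}_{k-1})$, then recombine. Writing $\delta=\beta-\gamma$, set
$$a_k:=\alpha\delta L(x_{k-1}-x_k),\quad b_k:=-\alpha(1-\alpha L)\nabla f(y_k),\quad c_k:=2B^\top P\big(A(\bc{z}_k-\bc{z}_*)+B\nabla f(y_k)\big),$$
so that $m_k=a_k+b_k+c_k$. Two applications of $\|u+v\|^2\le 2\|u\|^2+2\|v\|^2$ give $\|m_k\|^2\le 2\|c_k\|^2+4\|a_k\|^2+4\|b_k\|^2$; the asymmetric split (factor $2$ on $c_k$) is what will make the $\lambda_P\rho^2$ coefficient come out as $8\alpha^2$ rather than $16\alpha^2$, so it is important not to use the symmetric bound $3(\|a_k\|^2+\|b_k\|^2+\|c_k\|^2)$ here.

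For $a_k$ and $b_k$ I would use only two elementary consequences of $f\in\Sml$: strong convexity gives $\tfrac{\mu}{2}\|x_j-x_*\|^2\le f(x_j)-f(x_*)\le\mV_P(\bc{z}_j)$ for $j\in\{k-1,k\}$, and $L$-smoothness together with $\nabla f(x_*)=0$ gives $\|\nabla f(y_k)\|\le L\|y_k-x_*\|$. From the form of $A_Q$ in \eqref{def: A_Q} (equivalently, the definition of \bcgreen{GMM} in \eqref{Sys: TMM}), the extrapolation point satisfies $y_k-x_*=(1+\gamma)(x_k-x_*)-\gamma(x_{k-1}-x_*)$. Combining these with $\|u+v\|^2\le 2\|u\|^2+2\|v\|^2$ yields $\|a_k\|^2\le\frac{4\alpha^2\delta^2L^2}{\mu}\big(\mV_P(\bc{z}_k)+\mV_P(\bc{z}_{k-1})\big)$ and, after using $(1+\gamma)^2\le 1+2\gamma+2\gamma^2$ and $\gamma^2\le 1+2\gamma+2\gamma^2$, $\|b_k\|^2\le\frac{4\alpha^2(1-\alpha L)^2L^2(1+2\gamma+2\gamma^2)}{\mu}\big(\mV_P(\bc{z}_k)+\mV_P(\bc{z}_{k-1})\big)$; note that only $(1-\alpha L)^2$ enters, so no sign analysis of $1-\alpha L$ is required even though it changes sign over $\mathcal{S}_c$.

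The term $c_k$ is the crux. The first observation is that $A(\bc{z}_k-\bc{z}_*)+B\nabla f(y_k)=\bar{\bc{z}}_{k+1}-\bc{z}_*$, the \emph{noiseless} \bcgreen{GMM} successor of $\bc{z}_k$ (using $A\bc{z}_*=\bc{z}_*$ and $B\nabla f(x_*)=0$). The second is that, with $P=\tilde P\otimes I_d$ and $\tilde P=\tilde p\tilde p^\top$ as in \eqref{def: P_Matrix}, a direct computation gives $B^\top P=-\alpha\sqrt{\vartheta/(2\alpha)}\,(\tilde p^\top\otimes I_d)$, hence $\|B^\top P v\|^2=\alpha^2\lambda_P\,v^\top Pv$ for every $v\in\mathbb{R}^{2d}$, where $\lambda_P=\vartheta/(2\alpha)$ is the largest eigenvalue of the $d$-by-$d$ principal submatrix of $P$. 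Taking $v=\bar{\bc{z}}_{k+1}-\bc{z}_*$, using $v^\top Pv\le\mV_P(\bar{\bc{z}}_{k+1})$ (the suboptimality term in $\mV_P$ is nonnegative), and invoking the deterministic contraction $\mV_P(\bar{\bc{z}}_{k+1})\le\rho^2\mV_P(\bc{z}_k)$ — valid by Lemma \ref{lem: result-of-hu-lessard}, since the parameters in Theorem \ref{thm: TMM-MI-solution} were shown to satisfy \eqref{MI} — we obtain $\|c_k\|^2=4\alpha^2\lambda_P\,v^\top Pv\le 4\alpha^2\lambda_P\rho^2\mV_P(\bc{z}_k)$.

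Finally, plugging the three bounds into $\|m_k\|^2\le 2\|c_k\|^2+4\|a_k\|^2+4\|b_k\|^2$ gives a coefficient of $\mV_P(\bc{z}_k)+\mV_P(\bc{z}_{k-1})$ equal to $\frac{16\alpha^2L^2}{\mu}\big(\delta^2+(1-\alpha L)^2(1+2\gamma+2\gamma^2)\big)+8\alpha^2\lambda_P\rho^2$, which is bounded above by $8\alpha^2\mathtt{v}_{\alpha,\beta,\gamma}$ since $\delta^2\le 2\delta^2$; this is exactly \eqref{ineq: m_k-bound}. I expect the only real obstacle to be the handling of $c_k$: one must recognize the noiseless-successor identity so the already-established deterministic Lyapunov decay can be reused, and exploit the rank-one structure of $\tilde P$ to turn $\|B^\top P\,\cdot\,\|^2$ into a multiple of the $P$-quadratic form; everything else is routine constant-tracking, with the one subtlety being the asymmetric triangle-inequality split noted above.
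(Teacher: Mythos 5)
Your proof is correct and follows essentially the same route as the paper's: the same decomposition of $m_k$ into the $n_k$-part (bounded via $L$-smoothness, strong convexity, and $\|x_{k-1}-x_k\|^2\le 2\|\bc{z}_k-\bc{z}_*\|^2$) and the $B^\top P(\cdot)$-part (bounded by $\alpha^2\lambda_P$ times the $P$-quadratic form of $A(\bc{z}_k-\bc{z}_*)+B\nabla f(y_k)$, then contracted via \eqref{MI}), with the same asymmetric triangle-inequality split. The only cosmetic deviations are that you bound $\|y_k-x_*\|^2$ by the triangle inequality where the paper computes the eigenvalues of the associated $2\times 2$ matrix (your factor-2 loss is absorbed by the slack $\delta^2\le 2\delta^2$), and you invoke the one-step Lyapunov contraction on the noiseless successor rather than re-expanding the quadratic form against \eqref{MI} and the $\tilde{X}$-bound — these are the same inequality packaged differently.
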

}
\bcgreen{
\begin{proof}
The proof is provided in Appendix \ref{app: bound-on-mk}.
\end{proof}
}
Notice that  $f(x_k)-f(x_*)\leq \mV(\bc{z}_k)$; therefore, we can write 
\begin{equation}\label{eq: exp-bound-helper-1}
\mE[e^{\frac{\theta}{2\sigma^2}(f(x_k)-f(x_*))}]\leq \mE[e^{\frac{\theta}{2\sigma^2}\mV(\bc{z}_k)}].
\end{equation}
\bcgreen{It follows from \eqref{ineq: lyap-func-helper-1}, \eqref{eq: cond-expectation}, \eqref{ineq: bound-on-cond-exp}, \eqref{ineq: m_k-bound}, and \eqref{eq: exp-bound-helper-1} that}
\begin{equation}\label{ineq: mom-gen-f-bound}
\mE[e^{\frac{\theta}{2\sigma^2}(f(x_k)-f(x_*))}]\\
\leq \big(1-\theta\alpha^2 (\frac{L}{2}+\lambda_P)\big)^{-d/2} \mE\left[e^{\frac{\theta}{2\sigma^2}\big( \rho^2\mV(\bc{z}_{k-1})+ \frac{4\alpha^2\mtv \bcgreen{\theta}}{2-\theta\alpha^2(2\lambda_P+L)}(\mV(\bc{z}_{k-1})+\mV(\bc{z}_{k-2}))\big)}\right],
\end{equation}
which also provides an upper bound on the finite horizon entropic risk measure  \bcgreen{after taking logarithms.}
However, the inequality \eqref{ineq: mom-gen-f-bound} is not very useful due to its dependency on the iterates $\bc{z}_{k-1}$ and $\bc{z}_{k-2}$. Therefore, in the following Lemma \ref{lem: risk-meas-bound-gauss}, we further analyze \eqref{ineq: mom-gen-f-bound} and derive an upper bound on the moment generating function of $f(x_k)-f(x_*)$ which depends only on \bcgreen{the} initialization $\bc{z}_0$. 

\begin{lemma}\label{lem: risk-meas-bound-gauss}
Consider the setting of Proposition \ref{prop: risk-meas-bound-gaussian} where $\theta < \theta_u^g$. \bcgreen{Consider the iterates $z_k=\begin{bmatrix} x_k^\top, x_{k-1}^\top\end{bmatrix}^\top$ of the noisy \bcgreen{GMM}. We have}
\begin{equation}\label{ineq: risk-meas-bound-helper-2}
    \mE[e^{\frac{\theta}{2\sigma^2}(f(x_k)-f(x_*))}]\leq \prod_{j=0}^{k-1}\big(1-\theta \bar{\rho}^{2j}_{\vartheta,\psi}\alpha^2_{\vartheta,\psi}(\lambda_P+ \frac{L}{2})\big)^{-d/2} e^{\frac{\theta}{2\sigma^2}\bar{\rho}^{2k}_{\vartheta,\psi}2\mV(\bc{z}_0)},
\end{equation}
where $\lambda_P=\frac{\vartheta}{2\alpha_{\vartheta,\psi}}$.
\end{lemma}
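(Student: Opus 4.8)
## Proof Plan for Lemma \ref{lem: risk-meas-bound-gauss}

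The plan is to iterate the bound \eqref{ineq: mom-gen-f-bound} by unrolling the recursion in the Lyapunov function, tracking how the prefactor constants accumulate across steps. First I would rewrite \eqref{ineq: mom-gen-f-bound} in a cleaner form: using $\lambda_P = \frac{\vartheta}{2\alpha}$ and the definition $\theta_u^g$, observe that the coefficient multiplying $\mV(\bc{z}_{k-1})$ on the right-hand side of \eqref{ineq: mom-gen-f-bound} is exactly $\rho^2 + \frac{4\alpha^2\mtv\theta}{2-\theta\alpha^2(2\lambda_P+L)}$, which is a quantity strictly less than $1$ when $\theta < \theta_u^g$ (this is precisely what the bound \eqref{ineq: theta-bound-2} and the definition of $\theta_u^g$ are set up to guarantee). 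The key algebraic observation is that $\bar\rho^2_{\vartheta,\psi}$ as defined in \eqref{def: bar-rho} is the fixed point of the map $t \mapsto \rho^2 + \frac{4\alpha^2\mtv\theta}{2-\theta\alpha^2(2\lambda_P+L)} + \frac{8\alpha^2\mtv\theta}{2-\theta\alpha^2(2\lambda_P+L)}\cdot\frac{1}{t}$ — i.e. $\bar\rho^2$ is the positive root of the quadratic that arises when one tries to dominate the two-term recursion $a_k \le p\, a_{k-1} + q\,a_{k-2}$ (with $p$ and $q$ the coefficients appearing in \eqref{ineq: mom-gen-f-bound}) by a single geometric sequence $a_k \le \bar\rho^{2k} \cdot 2\mV(\bc{z}_0)$. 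So the first substantive step is to verify the induction hypothesis $\mV(\bc{z}_k) \le \bar\rho^{2k}\cdot 2\mV(\bc{z}_0)$ in the deterministic/pathwise envelope sense, or rather to set up the right surrogate quantity that makes the exponential moment bound telescope.

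More precisely, I would argue as follows. Define $\Phi_k := \mE\big[e^{\frac{\theta}{2\sigma^2}\mV(\bc{z}_k)}\big]$ is not quite the right object because of the cross-dependence on $\bc{z}_{k-2}$; instead I would introduce a weighted Lyapunov surrogate $W_k := \mV(\bc{z}_k) + c\,\mV(\bc{z}_{k-1})$ for a constant $c>0$ chosen so that the one-step recursion \eqref{ineq: lyap-func-helper-1} combined with the Gaussian integration \eqref{ineq: bound-on-cond-exp}–\eqref{ineq: m_k-bound} yields $\mE[e^{\frac{\theta}{2\sigma^2}W_{k+1}} \mid \mF_k] \le (\text{prefactor}_k)\cdot e^{\frac{\theta}{2\sigma^2}\bar\rho^2 W_k}$, where $\text{prefactor}_k = \big(1 - \theta\bar\rho^{2k}\alpha^2(\lambda_P+\tfrac L2)\big)^{-d/2}$. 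Choosing $c = \frac{8\alpha^2\mtv\theta/(2-\theta\alpha^2(2\lambda_P+L))}{\bar\rho^2}$ makes the recursion close exactly, by the defining quadratic for $\bar\rho^2$. Then taking the tower expectation and iterating from $k$ down to $0$ gives
\begin{equation*}
\mE\big[e^{\frac{\theta}{2\sigma^2}W_k}\big] \le \prod_{j=0}^{k-1}\big(1-\theta\bar\rho^{2j}\alpha^2(\lambda_P+\tfrac L2)\big)^{-d/2}\, e^{\frac{\theta}{2\sigma^2}\bar\rho^{2k} W_0},
\end{equation*}
and since $\mV(\bc{z}_k) \le W_k$ and $W_0 = \mV(\bc{z}_0) + c\,\mV(\bc{z}_{-1}) \le 2\mV(\bc{z}_0)$ (using that initialization sets $\bc{z}_0 = \bc{z}_{-1}$ in the doubled coordinates, or bounding $c \le 1$), together with $f(x_k)-f(x_*) \le \mV(\bc{z}_k)$, we obtain \eqref{ineq: risk-meas-bound-helper-2}. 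The bookkeeping to check $c\le1$ (equivalently that the ``off-diagonal'' coefficient $q$ in the two-term recursion does not exceed $\bar\rho^2$) follows from the explicit formula \eqref{def: bar-rho} since $\bar\rho^2$ dominates the second term under the radical.

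The main obstacle I anticipate is handling the product of prefactors $\prod_{j=0}^{k-1}\big(1-\theta\bar\rho^{2j}\alpha^2(\lambda_P+\tfrac L2)\big)^{-d/2}$ cleanly and confirming it is finite and bounded: at step $j$ of the unrolling, the Gaussian integration \eqref{ineq: bound-on-cond-exp} produces a factor $(1-\theta\alpha^2(\lambda_P+\tfrac L2))^{-d/2}$, but after rescaling $\mV$ by the accumulated $\bar\rho^{2j}$ factor, the effective variance-like coefficient that enters Lemma \ref{lem: Gaussian-expectation} at that step shrinks geometrically — this is the mechanism producing $\bar\rho^{2j}$ inside the prefactor rather than a constant. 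Making this shrinkage precise requires carefully re-applying Lemma \ref{lem: Gaussian-expectation} at each step with the correctly scaled argument, i.e. tracking that after $j$ telescoping steps the exponent carries a factor $\bar\rho^{2j}$, so that the condition $\theta\bar\rho^{2j}\alpha^2(2\lambda_P+L)<2$ needed for Lemma \ref{lem: Gaussian-expectation} holds a fortiori whenever $\theta<\theta_u^g$. The subsequent Proposition then takes $\log$ of both sides, bounds $-\log(1-x) \le \frac{x}{1-x}$ termwise and sums the resulting geometric series $\sum_j \bar\rho^{2j} = \frac{1}{1-\bar\rho^2}$ to produce the clean bound \eqref{ineq: fin-risk-meas-bound-gaus}, but that final step is beyond the scope of this lemma.
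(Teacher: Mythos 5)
Your plan is essentially the paper's own proof: the paper introduces exactly the weighted surrogate $W_{k+1}=\mV(\bc{z}_{k+1})+\mtk\,\mV(\bc{z}_k)$, chooses $\mtk$ so that the two-term recursion closes into $\mE[e^{\frac{\theta}{2\sigma^2}W_{k+1}}]\leq(\text{prefactor})\,\mE[e^{\frac{\theta}{2\sigma^2}\bar\rho^2 W_k}]$, and obtains the product of prefactors by re-applying Lemma \ref{lem: Gaussian-expectation} at each step with the geometrically shrunk effective parameter $\theta\bar\rho^{2j}$, exactly as you describe. Two small corrections are needed, though. First, your weight is off by a factor of $2$: writing $\mtl=\frac{4\alpha^2\mtv\theta}{2-\theta\alpha^2(2\lambda_P+L)}$, the one-step bound \eqref{ineq: mom-gen-f-bound} contributes $\mtl(\mV(\bc{z}_{k})+\mV(\bc{z}_{k-1}))$ to the exponent (since $\frac{\theta^2\cdot 8\alpha^2\mtv}{4\sigma^2(2-\theta\alpha^2(2\lambda_P+L))}=\frac{\theta}{2\sigma^2}\mtl$), so the closure condition forces $c=\mtl/\bar\rho^2$ and the fixed-point equation $t=(\rho^2+\mtl)+\mtl/t$, i.e. $t^2-(\rho^2+\mtl)t-\mtl=0$, whose positive root is the $\bar\rho^2$ of \eqref{def: bar-rho}; your map with $2\mtl/t$ in the last term would produce a different (larger) rate than the one in the statement. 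Second, your base case $W_0=\mV(\bc{z}_0)+c\,\mV(\bc{z}_{-1})$ is not well defined, since $\bc{z}_{-1}$ would require an iterate $x_{-2}$ that does not exist; the paper instead stops the unrolling at $W_1=\mV(\bc{z}_1)+\mtk\mV(\bc{z}_0)$ and performs one additional direct application of the one-step bound, which is precisely where the constant $2\mV(\bc{z}_0)$ comes from (via $\rho^2+2\mtl\leq 2(\rho^2+\mtl)\leq 2\bar\rho^2$), rather than from any assumption on $x_{-1}$. With those two repairs your argument coincides with the paper's.
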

\begin{proof}
The proof is deferred to Appendix \ref{app: risk-meas-bound-gauss}.
\end{proof}
We are now ready to derive a bound on \bcgreen{the} finite-horizon entropic risk measure using Lemma \ref{lem: risk-meas-bound-gauss}:
$$
\mr_{k,\sigma^2}(\theta) \leq \frac{2\sigma^2}{\theta}\log \mE\left[ e^{\frac{\theta}{2\sigma^2}\mV(\bc{z}_{k})}\right] \leq \bar{\rho}^{2 k}2\mV(\bc{z}_0)-\frac{\sigma^2 d}{\theta}\sum_{j=0}^{k-1}\log\Big(1-\theta \bar{\rho}^{2j}\alpha^2\big(\lambda_P+\frac{L}{2}\big)\Big).
$$
The fact that \bcgreen{$-\log(x)\leq 1-\frac{1}{x}$ for all $x\in[0,1]$ implies}
$$
-\frac{d\sigma^2}{\theta}\log\Big( 1-\theta \bar{\rho}^{2j} \alpha^2 \big( \frac{L}{2}+\lambda_P\big)\Big) \leq \frac{\sigma^2 d \bar{\rho}^{2j}\alpha^2\big(\frac{L}{2}+\lambda_P \big)}{1-\theta \bar{\rho}^{2j}\alpha^2 \big( \frac{L}{2}+\lambda_P\big)}< \frac{\sigma^2 d \bar{\rho}^{2j}\alpha^2 \big(\frac{L}{2}+\lambda_P\big)}{1-\theta \alpha^2 \big( \frac{L}{2}+\lambda_P\big)},
$$
where the last inequality follows from $\bar{\rho}<1$ (see \bcgreen{also} the derivation of \eqref{ineq: mtk-prop-2}). Therefore, we get
$$
\mr_{k,\sigma^2}(\theta) < \left(\frac{\sigma^2 d \alpha^2 \big(\frac{L}{2}+\lambda_P\big)}{1-\theta \alpha^2 \big( \frac{L}{2}+\lambda_P\big)} \right) \frac{1-\bar{\rho}^{2k}}{1-\bar{\rho}^2}+ \bcgreen{2}\bar{\rho}^{2k}\mV(\bc{z}_0).
$$
\bcgreen{Then the} inequality \eqref{ineq: fin-risk-meas-bound-gaus} follows from the fact that $\lambda_P=\frac{\vartheta}{2\alpha}$ and the definitions of the rest of parameters as given in Theorem \ref{thm: TMM-MI-solution}.
}

\subsection{Proof of Theorem \ref{thm: Evar-TMM-str-cnvx-bound}}
\bcred{The proof builds upon the results of Proposition \ref{prop: risk-meas-bound-gaussian}. \bcgreen{During the proof, to simplify the notation, we will drop the subscripts in $\alpha_{\vartheta,\psi}, \rho_{\vartheta,\psi}, \mtv_{\vartheta,\psi},\bar{\rho}_{\vartheta,\psi}$ and $\bbrho_{\vartheta,\psi}$.}
\bcgreen{We will also drop the subscript $P$ in $\mV_P$ for simplicity. R}ecall that $\lambda_P$ is the largest eigenvalue of $d$-by-$d$ principle submatrix of $P$ \bcgreen{given in Theorem \ref{thm: TMM-MI-solution}} and \bcgreen{is} given as $\lambda_P=\frac{\vartheta}{2\alpha}$ . The inequality \eqref{ineq: fin-risk-meas-bound-gaus} implies that the EV@R of $f(x_k)-f(x_*)$ at each iteration $k$ satisfies the inequality,} 
\begin{align}
    EV@R_{1-\zeta}\left[f(x_k)-f(x_*)\right]&=\inf_{0<\theta} \mr_{k,\sigma^2}(\theta)+\frac{2\sigma^2}{\theta}\log(1/\zeta), \nonumber\\
    &\leq \inf_{0<\theta< \theta_u^g} \mr_{k,\sigma^2}(\theta)+ \frac{2\sigma^2}{\theta}\log(1/\zeta),\nonumber\\
    &\bc{\leq } \inf_{0<\theta<\theta_u^g}\frac{\sigma^2 d \alpha \big(\vartheta + \alpha L\big)}{(1-\bar{\rho}^2)(2-\theta \alpha \big( \vartheta+\alpha L\big))}+ \frac{2\sigma^2}{\theta}\log(1/\zeta) + \bar{\rho}^{2k} 2\mV(\bc{z}_0).\label{ineq: app-evar-bound-helper-1}
    \end{align}
Notice that $\bar{\rho}$ also depends on $\theta$, in particular, \bcgreen{if $\theta=\theta_u^g$ then we would have 
\begin{footnotesize}
\begin{align*}
&\theta = \frac{1-\rho^2}{8\alpha^2 \mtv+(1-\rho^2)\alpha^2(L+2\lambda_P)}\\
&\; \iff 8\alpha^2\mtv \theta = 2(1-\rho^2)-\theta(1-\rho^2)\alpha^2(L+2\lambda_P),\\
&\; \iff \frac{\theta 16\alpha^2 \mtv}{2-\theta \alpha^2(L+2\lambda_P)}=4(1-\rho^2)-\frac{\theta 16\alpha^2 \mtv}{2-\theta \alpha^2(L+2\lambda_P)},\\
&\; \iff \left(\rho^2 + \frac{\theta 4\alpha^2 \mtv}{2-\theta \alpha^2(L+2\lambda_P)}\right)^2+ \frac{\theta 16\alpha^2 \mtv}{2-\theta \alpha^2(L+2\lambda_P)}=\left( 2-\rho^2 -\frac{\theta 4\alpha^2 \mtv}{2-\theta \alpha^2(L+2\lambda_P)}\right)^2,\\
&\; \iff \sqrt{\left(\rho^2 +\frac{\theta 4\alpha^2 \mtv}{2-\theta \alpha^2(L+2\lambda_P)}\right)^2+\frac{\theta 16\alpha^2 \mtv}{2-\theta \alpha^2(L+2\lambda_P)}}=2-\rho^2 -\frac{\theta 4\alpha^2 \mtv}{2-\theta \alpha^2(L+2\lambda_P)},
\end{align*}
\end{footnotesize}which implies that $\bar{\rho}^2=1$. Therefore, we require the strict inequality $\theta< \theta_u^g$ so that \bcgreen{right-hand side of } \eqref{ineq: app-evar-bound-helper-1} \bcgreen{stays finite}. On the other hand, $\bar{\rho}$ depends on $\theta$ and this makes solving \bcgreen{the} minimization problem in \eqref{ineq: app-evar-bound-helper-1} \bcgreen{explicitly} non-trivial. \bcgreen{As we shall discuss next,} we can overcome this issue by shrinking the constraint from $(0, \theta_u^g)$ to $(0,\theta_\varphi^g]$ where $\theta_{\varphi}^g=\varphi \theta_u^g$ for a fixed $\varphi\in (0,1)$. \bc{Recall the notations 
\begin{align*}
   \mtl = \frac{ \theta4\alpha^2 \mtv}{2-\theta \alpha^2(2\lambda_P+L)},\text{ and }\mtk=\frac{1}{2}\sqrt{(\rho^2+\mtl)^2+ 4\mtl}-\frac{(\rho^2+\mtl)}{2},
\end{align*}
from the proof of Lemma \ref{lem: risk-meas-bound-gauss} which also gives $\bar{\rho}^2=\rho^2+\mtk+\mtl$. Notice that  we have the following inequality  for any $\theta\leq\theta_\varphi^g$,
$$
\mtl = \frac{\theta 4 \alpha^2 \mtv}{2-\theta \alpha (\vartheta+\alpha L)} \leq \frac{\theta_\varphi^g 4\alpha^2 \mtv}{2-\theta_\varphi^g \alpha(\vartheta+\alpha L)}=l_{\theta_{\varphi}^g},
$$
where we also used the fact that $\lambda_P=\frac{\vartheta}{2\alpha}$.
Since $l_{\theta}>0$, we can also write 
$$
\bar{\rho}^2 = \frac{1}{2}(\rho^2 + \mtl)+\frac{1}{2}\sqrt{(\rho^2+\mtl)^2+4\mtl}\bcgreen{\leq}\frac{1}{2}(\rho^2 + l_{\theta_\varphi^g})+\frac{1}{2}\sqrt{(\rho^2+l_{\theta_\varphi^g})^2+4l_{\theta_\varphi^g}}=\bbrho, 
$$
for all $\theta\in (0,\theta_\varphi^g]$.
}
Therefore, we can convert the inequality \eqref{ineq: app-evar-bound-helper-1} into 
\begin{footnotesize}
$$
EV@R_{1-\zeta}[f(x_k)-f(x_*)] < \min_{0<\theta \leq \theta_{\varphi}^{g}} \frac{\sigma^2 d \alpha (\vartheta +\alpha L)}{(1-\bbrho)(2-\theta \alpha (\vartheta+\alpha L))} + \frac{2\sigma^2}{\theta}\log(1/\zeta)+ (\bbrho)^{k}2\mV(\bc{z}_0).
$$
\end{footnotesize}
\bcgreen{It can be seen that}
$$
\bcgreen{\bar{h}}(\theta)=  \frac{\sigma^2 d\alpha (\vartheta + \alpha L)}{(1-\bbrho)(2-\theta \alpha (\vartheta+\alpha L))} +\frac{2\sigma^2}{\theta}\log(1/\zeta)+ (\bar{\bar \rho}^{\varphi})^{k}2\mV(\bc{z}_0),
$$
is \bc{convex} on $(0, \theta_u^g)$ for any $(\vartheta,\psi)$ as given in Theorem \ref{thm: TMM-MI-solution} (recall that $\alpha$ and $\bbrho$ also depend on $(\vartheta,\psi)$). 
Let us introduce the notations
\begin{align*}
    \mtc_0= \frac{d\alpha (\vartheta+ \alpha L)}{1-\bbrho},\;
    \mtc_1=\alpha (\vartheta+\alpha L),
\end{align*} 
so that the function $\bcgreen{\bar{h}}(\theta)$ simplifies to 
$$
\bcgreen{\bar{h}}(\theta)= \frac{\sigma^2\mtc_0}{2- \theta \mtc_1}+ \frac{2\sigma^2}{\theta}\log(1/\zeta) + (\bbrho)^{k}2\mV(\bc{z}_0),
$$
where its derivative, $\bcgreen{\bar{h}}'(\theta)$, is given as 
\begin{align}\label{eq: h'}
\bcgreen{\bar{h}}'(\theta)&= \frac{\sigma^2\mtc_0\mtc_1}{(2-\theta\mtc_1)^2}-\frac{2\sigma^2}{\theta^2}\log(1/\zeta) = \frac{\sigma^2 c_0c_1 \theta^2 - 2\sigma^2 (2-\theta \mtc_1)^2 \log(1/\zeta)}{\theta^2 (2-\theta\mtc_1)^2}.
\end{align}
Notice that we have
$$
\theta_u^g=\frac{2(1-\rho^2)}{8\alpha^2 \mathtt{v}+ \alpha (1-\rho^2)(\vartheta+ \alpha L)}< \frac{2}{\alpha (\vartheta+\alpha L)}=\frac{2}{\mtc_1},
$$
by the definition of $\theta_u^g$. Therefore, we have $\theta < \frac{2}{\mtc_1}$ under the condition \eqref{cond: cond-on-theta-gaus}. Hence, we can see that polynomial at the numerator in \eqref{eq: h'} has only one root $\theta_*$ on $[0, \frac{2}{\mtc_1}]$ which is given \bcgreen{by} 
$$
\theta_* = \frac{2}{ \mtc_1+\sqrt{\frac{\mtc_0\mtc_1}{2\log(1/\zeta)}}}<\frac{2}{\mtc_1}.
$$
Moreover, we can also observe that
\begin{align*}
\mtc_0\mtc_1\theta^2 -2(2-\theta \mtc_1)^2 \log(1/\zeta)
\leq 0, &\text{ if } 0\leq \theta \leq \theta_*,\\ 
\mtc_0\mtc_1\theta^2 -2 (2-\theta \mtc_1)^2 \log(1/\zeta)>0,  &\text{ if } \theta_* < \theta \leq \frac{2}{\mtc_1}.
\end{align*}
This shows us that the function $h$ is decreasing on $(0, \theta_*]$
and increasing on $(\theta_*,\frac{2}{\mtc_1})$; therefore, it is locally convex on the region $(0,\frac{2}{\mtc_1})$. So far, we obtain the following inequality
\begin{equation*}
EV@R_{1-\zeta} [f(x_k)-f(x_*)] \bc{\leq} \min_{0< \theta \leq \theta^g_\varphi}\underbrace{ \frac{\sigma^2 d\alpha (\vartheta + \alpha L)}{(1-\bbrho)(2-\theta \alpha (\vartheta+\alpha L))} +\frac{2\sigma^2}{\theta}\log(1/\zeta)+(\bbrho)^{k}2\mV(\bc{z}_0)}_{\bcgreen{\bar{h}(\theta)}},
\end{equation*}
where $\bcgreen{\bar{h}}(\theta)$ is \bc{convex} on $(0,\theta_{\varphi}^g]$ and $\varphi$ is a constant between $0$ and $1$. Therefore, the minimization problem on the right-hand side of the inequality above can be solved: 
\begin{equation}
\label{ineq: evar-helper}
EV@R_{1-\zeta}[f(x_k)-f(x_*)] \bc{\leq} \begin{cases} 
\bcgreen{\bar{h}}(\theta_*), & \text{ if } \theta_* \bcgreen{\leq} \theta_\varphi^g, \\
\bcgreen{\bar{h}}(\theta_\varphi^g), & \text{ otherwise }.
\end{cases}
\end{equation}

Notice that for any fixed constant $\varphi \in (0,1)$, the inequality \eqref{cond: conf-lev-gauss} implies
\begin{align*} 
\log(1/\zeta) &\leq \frac{d}{2(1-\bbrho)}\left(\frac{ \theta_\varphi^g\alpha (\vartheta+\alpha L)}{2-\theta_{\varphi}^g \alpha (\vartheta +\alpha L)}\right)^2 = \frac{\mtc_0\mtc_1}{2}\left( \frac{\theta_\varphi^g}{2-\theta_\varphi^g\alpha (\vartheta+\alpha L)}\right)^{2}\\
&\implies \frac{2-\theta_\varphi^g\alpha (\vartheta+\alpha L)}{\theta_\varphi^g} \leq \sqrt{\frac{\mtc_0\mtc_1}{2\log(1/\zeta)}} \\
&\implies \frac{2}{\theta_{\varphi}^g}\leq c_1+\sqrt{\frac{c_0 c_1}{2\log(1/\zeta)}}
\end{align*}
Recall that  $\mtc_1=\alpha(\vartheta+\alpha L)$, hence above inequalities imply \bcgreen{that under condition \eqref{cond: conf-lev-gauss} we have  }
$$
\theta_*=\frac{2}{\mtc_1+\sqrt{\frac{\mtc_0\mtc_1}{2\log(1/\zeta)}}}\leq \theta_{\varphi}^g,
$$
and we have
\begin{small}
\begin{align*}
h(\theta_*)&=\frac{\sigma^2 \mtc_0}{2-\theta_* \mtc_1}+\frac{2\sigma^2}{\theta_*}\log(1/\zeta)+ \big(\bbrho \big)^k 2\mV(\bc{z}_0),\\
&= \frac{\sigma^2 \sqrt{\mtc_0}}{2}\Big( \sqrt{2\mtc_1\log(1/\zeta)}+\sqrt{\mtc_0}\Big)\\
&\quad\quad\quad\quad\quad+\frac{\sigma^2}{2}\sqrt{2\mtc_1\log(1/\zeta)}\Big(\sqrt{2\mtc_1\log(1/\zeta)}+\sqrt{\mtc_0}\Big)+\big(\bbrho\big)^k 2\mV(\bc{z}_0),\\
&= \frac{\sigma^2}{2}\Big(\sqrt{2\mtc_1\log(1/\zeta)}+\sqrt{\mtc_0} \Big)^{2}+(\bbrho)^k2\mV(\bc{z}_0).
\end{align*}
\end{small}

\bcgreen{The} computations \bcgreen{above} show that \eqref{ineq: evar-helper} \bcgreen{is equivalent to} \eqref{ineq: fin-EVAR-bound-gauss-1} by the definitions of $\mtc_1$ and $\mtc_0$ \bcgreen{if} the condition \eqref{cond: conf-lev-gauss} \bcgreen{holds} and, similarly, the inequality \eqref{ineq: fin-EVAR-bound-gauss-2} directly follows from writing $h(\theta_{\varphi}^g)$ explicitly for the case $\theta_{\varphi}^g>\theta_{*}$ in \eqref{ineq: evar-helper}.
Lastly, we can directly derive the inequality \eqref{ineq: EVAR-bound-gauss} using our results so far. \bcgreen{In particular}, taking the limit \bcgreen{superior of} the inequality \eqref{ineq: fin-risk-meas-bound-gaus} gives us
\begin{align*}
    \underset{k\rightarrow\infty}{\lim\sup}\; \mr_{k,\sigma^2}(\theta) &\leq \lim\sup_{k\rightarrow \infty} \frac{\sigma^2 d \alpha \big(\vartheta + \alpha L\big)}{(1-\bar{\rho}^2)(2-\theta \alpha \big( \vartheta+\alpha L\big))}+ \bar{\rho}^{2k}2\mV(\bc{z}_0),\\
    \mr_{\sigma^2}(\theta)&\leq \frac{\sigma^2 d \alpha \big(\vartheta + \alpha L\big)}{(1-\bar{\rho}^2)(2-\theta \alpha \big( \vartheta+\alpha L\big))}.
\end{align*}
Therefore, the EV@R at $\zeta$-confidence level admits the bound
\begin{align*}
    \bcgreen{\underset{k\rightarrow\infty}{\lim\sup}}\; EV@R_{1-\zeta}[f(x_k)-f(x_*)]&= \inf_{0<\theta} \mr_{\sigma^2}(\theta)+\frac{2\sigma^2}{\theta}\log(1/\zeta),\\
    & \leq \min_{0<\theta\leq \theta_{\varphi}^g} \underbrace{\frac{\sigma^2 d\alpha (\vartheta + \alpha L)}{(1-\bbrho)(2-\theta \alpha (\vartheta+\alpha L))} +\frac{2\sigma^2}{\theta}\log(1/\zeta)}_{\tilde{h}(\theta)}.
\end{align*}
Notice that $\tilde{h}(\theta)=h(\theta)-(\bbrho)^{k}2\mV(\bc{z}_0)$ and share the same local minimum, i.e., $\tilde{h}'(\theta_*)=0$ as well; and $\tilde{h}(\theta)$ is also convex on $(0, \theta_\varphi^g]$. It follows from our discussion on $\theta_*$ that $\tilde{h}(\theta_*)=\min_{0<\theta\leq \theta_\varphi^g}\tilde{h}(\theta)$ which gives the desired inequality \eqref{ineq: EVAR-bound-gauss}.
}\bc{
\subsection{The Proof of Corollary \ref{cor: gaussian-tail-bound}}
We are going to show \eqref{ineq: tail-bound-1} using Chernoff bound for $f(x_k)-f(x_*)$ and implementing the inequality \eqref{ineq: fin-risk-meas-bound-gaus} provided at Proposition \ref{prop: risk-meas-bound-gaussian}. Particularly, Chernoff bound gives us
\begin{align*}
\mathbb{P}\{ f(x_k)-f(x_*)\geq t_\zeta \}\leq e^{-\frac{\theta t_\zeta}{2\sigma^2}}\mE[e^{\frac{\theta }{2\sigma^2}(f(x_k)-f(x_*))}].
\end{align*}
Since the function $\log(x)$ is increasing on $x>0$, we can convert above inequality into, 
\begin{align*}
\log(\mathbb{P}\{ f(x_k)-f(x_*)\geq t_\zeta \})\leq \log\mE[e^{\frac{\theta}{2\sigma^2}(f(x_k)-f(x_*))}]-\frac{\theta t_\zeta}{2\sigma^2}= \frac{\theta}{2\sigma^2}\mr_{k,\sigma^2}(\theta)-\frac{\theta t_\zeta}{2\sigma^2}.
\end{align*}
Moreover, we can see that the inequality \eqref{ineq: fin-risk-meas-bound-gaus} holds by the statement of Corollary \ref{cor: gaussian-tail-bound}. This yields the bound, 
$$
\log(\mathbb{P}\{f(x_k)-f(x_*)\geq t_{\zeta}\})\leq \frac{\theta d \alpha (\vartheta+\alpha L)}{2(1-\bar{\rho}^2)(2-\theta\alpha (\vartheta+\alpha L))}+ \frac{\theta \bar{\rho}^{2k}\mV_P(\bcred{z}_0)}{\sigma^2} - \frac{\theta t_{\zeta}}{2\sigma^2},
$$
where we drop $\vartheta,\psi$ dependency on the notations $\alpha_{\vartheta,\psi}$ and $ \bar{\rho}_{\vartheta,\psi}^2$ for simplicity. The result follows from taking the exponential power of both sides. The next inequality \eqref{ineq: tail-bound-2}, on the other hand, follows from \eqref{eq-tail-proba}, \eqref{ineq: fin-EVAR-bound-gauss-1}, and \eqref{ineq: fin-EVAR-bound-gauss-2}. Notice that the inequalities \eqref{ineq: fin-EVAR-bound-gauss-1} and \eqref{ineq: fin-EVAR-bound-gauss-2} imply 
$$
EV@R_{1-\zeta}[f(x_k)-f(x_*)] \leq \bar{E}_{1-\zeta}(\vartheta,\psi)+(\bbrho_{\vartheta,\psi})^k 2\mV_P(\bcred{z}_0),
$$
Let us denote $EV@R_{1-\zeta}[f(x_k)-f(x_*)]$ by $EV@R_{1-\zeta}$ with a slight abuse notation.Therefore, combining these inequalities with \eqref{eq-tail-proba} yields 
\begin{align*}
\zeta 
\geq \mathbb{P}\{ f(x_k)-f(x_*)\geq EV@R_{1-\zeta}\}\geq \mathbb{P}\{ f(x_k)-f(x_*)\geq \bar{E}_{1-\zeta}(\vartheta,\psi)+(\bbrho_{\vartheta,\psi})^k2\mV_P(\bcred{z}_0)\},
\end{align*}
\bcgreen{and we conclude.}
}
\subsection{Proof of Proposition  \ref{prop: gd-evar-bound-gauss}}

\bc{
Notice that the following inequality holds at each iterate $x_k$,
\begin{align*}
    \Vert x_{k+1}-x_*\Vert^2 &= \Vert x_{k}-x_*-\alpha \tn f(x_k) \Vert^2,\\
    &=\Vert x_k-x_*-\alpha \nabla f(x_k)\Vert^2\\
    &\quad\quad\quad-2\alpha (x_k-x_*-\alpha \nabla f(x_k))^\top \bc{\veps_{k+1}} + \alpha^2 \Vert \bc{\veps_{k+1}}\Vert^2.
\end{align*}
where $\bc{\veps_{k+1}}=\tilde{\nabla}f(y_k)-\nabla f(y_k)$. Let $\bcgreen{m_k^{GD}}=-2\alpha (x_k-x_*-\alpha \nabla f(x_k))$, then \bcgreen{we can write}, 
\begin{align}\label{ineq: app-gd-evar-helper-1}
    \Vert x_{k+1}-x_*\Vert^2 \leq \bcgreen{\rho_{GD}(\alpha)^2} \Vert x_k-x_*\Vert^2 +(\bcgreen{m_k^{GD}})^\top \bc{\veps_{k+1}} + \alpha^2  \Vert \bc{\veps_{k+1}}\Vert^2.
\end{align}
\bcgreen{where $\rho_{GD}(\alpha)=\max\{ |1-\alpha \mu|, |1-\alpha L|\}$, see e.g. \cite{lessard2016analysis}.} \bcgreen{For simplicity, we adopt the notation $\rho_{GD}:=\rho_{GD}(\alpha)$ in the rest of the proof.} Next, we will show the inequality \begin{equation}\label{ineq: gd-exp-bound-gauss}
\mE[e^{\frac{\theta L}{4\sigma^2 }\Vert x_{k+1}-x_*\Vert^2}] < \prod_{j=0}^{k}\left(1 -\theta\frac{\alpha^2 L}{2}\bar{\rho}_{GD}^{2j} \right)^{-d/2}e^{\frac{\theta L}{4\sigma^2}\bar{\rho}_{GD}^{2(k+1)}\Vert x_0-x_*\Vert^2},
\end{equation}
\bcred{where $\bar{\rho}_{GD}^2=\frac{2\rho^2_{GD}}{2-\theta \alpha^2 L}<1$} by our assumption on $\theta$. Since the inequality \eqref{ineq: app-gd-evar-helper-1} holds pointwise 
\begin{align*}
    \mE\left[e^{\frac{\theta L}{4\sigma^2}\Vert x_{k+1}-x_*\Vert^2}\right]&\leq \mE\left[e^{\frac{\theta L}{4\sigma^2}\left( \bcgreen{\rho^2_{GD}} \Vert x_{k}-x_*\Vert^2+(m_k^{GD})^\top \bc{\veps_{k+1}} + \alpha^2 \Vert \bc{\veps_{k+1}}\Vert^2\right)}\right],\\
   & =\mE\left[e^{ \frac{\theta L}{4\sigma^2}\bcgreen{\rho^2_{GD}} \Vert x_{k}-x_*\Vert^2} \mE\left[e^{
   \frac{\theta L}{4\sigma^2} \left( (\bcgreen{m_k^{GD}})^\top \bc{\veps_{k+1}} + \alpha^2 \Vert \bc{\veps_{k+1}}\Vert^2 \right)}~\vline~\mathcal{F}_k\right]\right].
\end{align*}
Notice that \bcred{$\theta<\frac{2(1-\rho^2_{GD})}{\alpha^2 L}< \frac{2}{\alpha^2 L}$} and the noise $\bc{\veps_{k+1}}$ is independent from $\mathcal{F}_k$ by Assumption \ref{Assump: Noise}; therefore, we can apply Lemma \ref{lem: Gaussian-expectation} to \bcgreen{the} conditional expectation\footnote{\bcgreen{By setting} $m\rightarrow m_k$, $w\rightarrow \bc{\veps_{k+1}}$, $\lambda_p+L/2\rightarrow 1$ and $\theta \rightarrow \frac{\theta L}{2}$ in Lemma \ref{lem: Gaussian-expectation}.}, 
\begin{equation*}
    \mE\left[e^{\frac{\theta L}{4\sigma^2}\Vert x_{k+1}-x_*\Vert^2 }\right]\leq \left(1-\theta\frac{\alpha^2L}{2}\right)^{-d/2}\mE\left[e^{\frac{\theta L}{4\sigma^2}\bcgreen{\rho_{GD}^2} \Vert x_k-x_*\Vert^2 + \frac{\theta^2 L^2\Vert \bcgreen{m_k^{GD}}\Vert^2}{16\sigma^2 (2-\theta \alpha^2 L)}}\right].
\end{equation*}
\bcgreen{By $L$-smoothness and $\mu$-strong convexity we can write,}
\begin{equation}\label{ineq: MI2}
\Vert \bcgreen{m_k^{GD}}\Vert^2 = 4\alpha^2 \Vert x_k-x_*-\alpha \nabla f(x_k)\Vert^2\leq 4\alpha^2 \bcgreen{\rho_{GD}^2} \Vert x_k-x_*\Vert^2.
\end{equation}
Hence, we obtain 
$$
\mE\left[e^{\frac{\theta L}{4\sigma^2}\Vert x_{k+1}-x_*\Vert^2 }\right]\leq \left(1-\theta  \frac{\alpha^2L}{2}\right)^{-d/2}\mE\left[e^{\frac{\theta L}{4\sigma^2}\bcgreen{\rho_{GD}^2}\left(1+\frac{\theta\alpha^2 L}{2-\theta \alpha^2 L} \right) \Vert x_k-x_*\Vert^2}\right].
$$
On the other hand, we can write
\begin{align*}
    \mE\left[e^{\frac{\theta L}{4\sigma^2} \bar{\rho}_{GD}^{2}\Vert x_k-x_*\Vert^2}\right] \leq \mE\left[e^{\frac{\theta L \bar{\rho}_{GD}^2}{4\sigma^2}\left( \bcgreen{\rho_{GD}^2} \Vert x_{k-1}-x_*\Vert^2 +(\bcgreen{m_{k-1}^{GD}})^\top \bc{w_{k}}+\alpha^2 \Vert \bc{w_{k}}\Vert^2\right)}\right],
\end{align*}
where $\bc{w_{k}}=\tilde{\nabla}f(y_{k-1})-\nabla f(y_{k-1})$. The fact that $\bc{w_{k}}$ is independent from $\mathcal{F}_{k-1}$ and $\theta \bar{\rho}_{GD}^2 < \theta < \frac{2}{\alpha^2 L}$ imply that we can again incorporate Lemma \ref{lem: Gaussian-expectation} using tower property of conditional \bcgreen{expectations} and obtain,\footnote{By setting $m\rightarrow m_k$, $w\rightarrow \bc{\veps_{k+1}}$, $\theta\rightarrow \frac{\theta L \bar{\rho}_{GD}^2}{2}$ and $\lambda_P+L/2\rightarrow 1$ in Lemma \ref{lem: Gaussian-expectation}.}
\begin{align*}
\mE\left[ e^{\frac{\theta L \bar{\rho}_{GD}^2}{4\sigma^2}\left( \bcgreen{\rho_{GD}^2} \Vert x_{k-1}-x_*\Vert^2 +\bcgreen{(m_{k-1}^{GD})}^\top \bc{w_{k}}+\alpha^2 \Vert \bc{w_{k}}\Vert^2\right)}\right]&=\mE\left[e^{\frac{\theta L \bar{\rho}_{GD}^2}{4\sigma^2}\bcgreen{\rho_{GD}^2} \Vert x_{k-1}-x_*\Vert^2} \mE\left[ e^{\frac{\theta L \bar{\rho}_{GD}^2}{4\sigma^2}\left(\bcgreen{(m_{k-1}^{GD})}^\top \bc{w_{k}}+\alpha^2 \Vert \bc{w_{k}}\Vert^2\right)}~\vline~ \mathcal{F}_{k-1}\right]\right],\\
&\bcgreen{=}\left(1-\theta\bar{\rho}_{GD}^2\frac{\alpha^2 L}{2}\right)^{-d/2} \mE\left[ e^{\frac{\theta L \bar{\rho}_{GD}^{2}}{4\sigma^2}\bcgreen{\rho^2_{GD}} \Vert x_{k-1}-x_*\Vert^2+\frac{\theta^2 \bar{\rho}^{4}_{GD}  L^2\Vert \bcgreen{m_{k-1}^{GD}}\Vert^2}{16\sigma^2 (2-\theta \bar{\rho}_{GD}^2 \alpha^2 L)}}\right].
\end{align*}
Since $\bar{\rho}_{GD}<1$, 
$$
\frac{\bar{\rho}_{GD}^4}{2-\theta \bar{\rho}_{GD}^2\alpha^2L}< \frac{1}{2-\theta \bar{\rho}_{GD}^2 \alpha^2 L} < \frac{1}{2-\theta \alpha^2 L}.
$$
\bcgreen{Using the inequality \eqref{ineq: MI2}}, we can see that $\Vert \bcgreen{m_{k-1}^{GD}}\Vert^2 \leq 4\alpha^2 \bcgreen{\rho^2_{GD}}\Vert x_{k-1}-x_*\Vert^2$. \bcgreen{Similarly,} This implies, 
\begin{align*}
\mE\left[ e^{\frac{\theta L \bar{\rho}_{GD}^2}{4\sigma^2}\left( \bcgreen{\rho^2_{GD}} \Vert x_{k-1}-x_*\Vert^2 +(\bcgreen{m_{k-1}^{GD}})^\top \bc{w_{k}}+\alpha^2 \Vert \bc{w_{k}}\Vert^2\right)}\right]
&\leq \left(1-\theta \bar{\rho}_{GD}^2\frac{\alpha^2 L}{2} \right)^{-d/2} \mE\left[ e^{\frac{\theta L \bar{\rho}_{GD}^2}{4\sigma^2}\bcgreen{\rho^2_{GD}} \left( 1+ \frac{\theta\alpha^2 L }{2-\theta \alpha^2 L}\right)\Vert x_{k-1}-x_*\Vert^2} \right],\\
&= \left(1-\theta \bar{\rho}_{GD}^2\frac{\alpha^2 L}{2} \right)^{-d/2} \mE\left[ e^{\frac{\theta L }{4\sigma^2}\bar{\rho}_{GD}^4\Vert x_{k-1}-x_*\Vert^2} \right].
\end{align*}
\bcgreen{Combining everything, we get}
$$
\mE[e^{\frac{\theta L}{4\sigma^2}\Vert x_{k+1}-x_*\Vert^2}]< \left(1-\theta \frac{\alpha^2 L}{2} \right)^{-d/2}\left(1-\theta \bar{\rho}_{GD}^2\frac{\alpha^2 L}{2} \right)^{-d/2}\mE\left[ e^{\frac{\theta L }{4\sigma^2}\bar{\rho}_{GD}^4\Vert x_{k-1}-x_*\Vert^2} \right].
$$
Repeating these arguments for each $x_{i}$ for $i=1,..,k-1$, we can show the desired inequality \eqref{ineq: gd-exp-bound-gauss}: 
$$
\mE[e^{\frac{\theta L}{4\sigma^2 }\Vert x_{k+1}-x_*\Vert^2}] < \prod_{j=0}^{k}\left(1 -\theta\big(\frac{\alpha^2 L}{2}\big)\bar{\rho}_{GD}^{2j} \right)^{-d/2}e^{\frac{\theta L}{4\sigma^2}\bar{\rho}_{GD}^{2(k+1)}\Vert x_0-x_*\Vert^2}.
$$
Recall that the function $f$ is $L$-smooth; therefore, it satisfies $f(x_k)-f(x_*)\leq \frac{L}{2}\Vert x_k-x_*\Vert^2$ and this directly gives us the bound on finite-horizon entropic risk measure:
\begin{align*}
    \mr_{k,\sigma^2}(\theta) & = \frac{2\sigma^2}{\theta}\log \mE[e^{\frac{\theta}{2\sigma^2}(f(x_k)-f(x_*))}]
    \leq \frac{2\sigma^2}{\theta}\log\mE[e^{\frac{\theta L}{4\sigma^2}\Vert x_{k}-x_*\Vert^2}],\\
    &< -\frac{\sigma^2 d}{\theta}\sum_{j=0}^{k-1}\log\left(1 -\theta(\frac{\alpha^2 L}{2})\bar{\rho}_{GD}^{2j} \right)+\frac{L\bar{\rho}_{GD}^{2k}}{2}\Vert x_0-x_*\Vert^2.
\end{align*}
\bcgreen{Using the property $\log(x)\geq 1-\frac{1}{x}$ for $x\in (0,1]$,}
\begin{align*}
\mr_{k,\sigma^2}(\theta) &< \sigma^2d \alpha^2 L \sum_{j=0}^{k-1}\frac{ \bar{\rho}_{GD}^{2j}}{2-\theta \alpha^2 L \bar{\rho}_{GD}^{2j}}+ \frac{L \bar{\rho}^{2k}_{GD}}{2}\Vert x_0-x_*\Vert^2,\\
&< \frac{\sigma^2 d\alpha^2 L}{(2-\theta \alpha^2 L)(1-\bar{\rho}_{GD}^{2})}+ \frac{L\bar{\rho}_{GD}^{2k}}{2}\Vert x_0-x_*\Vert^2,
\end{align*}
where the last inequality follows from $ 2-\theta\alpha^2 L<2-\theta\alpha^2 L \bar{\rho}^{2j}_{GD} $ for $\bar{\rho}_{GD}^2 <1$. Writing $\bar{\rho}^2_{GD}=\frac{2\bcgreen{\rho^2_{GD}}}{2-\theta\alpha^2L}$ back into the bound above gives the inequality \eqref{ineq: gd-risk-meas-bound-gauss}. Lastly, taking the limit superior on the right-hand side of the inequality \eqref{ineq: gd-risk-meas-bound-gauss} yields \eqref{ineq: gd-inf-risk-meas-bound-gauss}
}

\subsection{Proof of Theorem \ref{thm: gd-evar-bound-gaussian}} \bcgreen{Most} of the proof is similar to the proof of Theorem \ref{thm: Evar-TMM-str-cnvx-bound} \bcgreen{and} relies on the bounds provided in Proposition \ref{prop: gd-evar-bound-gauss}. Firstly notice that 
\begin{small}
\begin{align*}
EV@R_{1-\zeta}[f(x_k)-f(x_*)]&= \inf_{0<\theta} \mr_{k,\sigma^2}(\theta)+\frac{2\sigma^2}{\theta}\log(1/\zeta),\\
&\leq\inf_{0<\theta< \frac{2(1-\rho^2)}{\alpha^2L}} \frac{\sigma^2 d\alpha^2 L}{2(1-\rho^2)-\theta \alpha^2 L}+\frac{2\sigma^2}{\theta}\log(1/\zeta)+\left(\frac{\rho^2}{1-\theta \alpha^2 (L/2)}\right)^{k} \frac{L\Vert x_0-x_*\Vert^2}{2}.
\end{align*}
\end{small}
\bcgreen{We} fix $\varphi\in [\frac{1}{1+\sqrt{\frac{d}{2\log(1/\zeta)}}},1)$ and define $\theta_\varphi:=\frac{2\varphi (1-\rho^2)}{\alpha^2 L}$. \bcgreen{Then,} we can write 
$$
\frac{\rho^2}{1-\theta\alpha^2(L/2)}\leq\frac{\rho^2}{1-\theta_\varphi \alpha^2 (L/2)},
$$
for all $\theta \leq \theta_{\varphi}$. This implies 
\begin{equation}\label{ineq: gd-evar-app-helper}
EV@R_{1-\zeta}[f(x_k)-f(x_*)]\\ \leq \min_{0<\theta\leq \theta_{\varphi}} \frac{\sigma^2 d \alpha^2 L}{2(1-\rho^2)-\theta \alpha^2 L}+\frac{2\sigma^2}{\theta}\log(1/\zeta)+ \left( \frac{\rho^2}{1-\theta_\varphi \alpha^2 (L/2)}\right)^{k}\frac{L\Vert x_0-x_*\Vert^2}{2}.
\end{equation}
Let us introduce $\bar{\rho}_{GD}^{(\varphi)} = \frac{\rho^2}{1-\theta_\varphi \alpha^2 (L/2)}$ for simplicity and denote the function that depends on $\theta$ in the inequality above by $\bcgreen{\bar{h}_{GD}}(\theta)$. That is
$$
\bcgreen{\bar{h}_{GD}}(\theta):=\frac{\sigma^2d \alpha^2 L}{2(1-\rho^2)-\theta \alpha^2 L} + \frac{2\sigma^2}{\theta}\log(1/\zeta)+ \big( \bar{\rho}_{GD}^{(\varphi)}\big)^k\frac{L\Vert x_0-x_*\Vert^2}{2}.
$$
Its derivative $\bcgreen{\bar{h}_{GD}'}(\theta)$ is given as 
$$
\bcgreen{\bar{h}_{GD}'}(\theta)=\frac{\sigma^2 d (\alpha^2 L)^2}{(2(1-\rho^2)-\theta \alpha^2 L)^2}- \frac{2\sigma^2}{\theta^2}\log(1/\zeta),
$$
\bcgreen{whose root} $\theta_*$ \bcgreen{is given as}:
$$
\theta_*= \frac{2(1-\rho^2)}{\alpha^2 L (1+\sqrt{\frac{d}{2\log(1/\zeta)}})}\leq\theta_{\varphi},
$$
where the last inequality follows from the choice of $\varphi$ and $\theta_*=\theta_\varphi$ if $\varphi=\frac{1}{1+\sqrt{\frac{d}{2\log(1/\zeta)}}}=\varphi_{GD}$. On the other hand, we can see that $\bcgreen{\bar{h}_{GD}'}(\theta)<0$ on $(0,\theta_*)$ and $\bcgreen{\bar{h}_{GD}'}(\theta)\geq0$ on $[\theta_*,\theta_{\varphi}]$, which shows that the solution of the minimization problem on the right-hand side of the inequality \eqref{ineq: gd-evar-app-helper} is $\bcgreen{\bar{h}_{GD}}(\theta_*)$. The desired result follows directly from setting $\varphi=\varphi_{GD}$ \bcgreen{using $EV@R_{1-\zeta}[f(x_k)-f(x_*)]\leq \bar{h}_{GD}(\theta_*)$.}

\section{Proofs of Section \ref{sec: Subgaussian}}
We start with a helper lemma about the properties of sub-Gaussian distributions before proceeding to the proof of Proposition \ref{prop: SubOpt-Subgaussian}. \bcgreen{The following results in Lemma \ref{lem: SubGauss-prop} are well-known (see \cite{vershynin2018high}). We provide them for the sake of completeness.}
\begin{lemma}\label{lem: SubGauss-prop}
The following inequalities hold,
\begin{enumerate}
\item  \bcgreen{Under Assumption \ref{Assump: Gen-Noise}, we have the moment bounds}
\begin{equation} \label{ineq: SubGaus_Moment}
\mathbb{E}[\Vert \bc{\veps_{k+1}} \Vert^n \bcred{~|~\mF_k}]\leq (2\sigma^2)^{n/2}n\Gamma(\frac{n}{2}),
\end{equation} 
\bcgreen{for any $n\geq 1$, }where $\Gamma$ is the Gamma function. 
\item The moment generating function of $\Vert \bc{\veps_{k+1}} \Vert$ admits the following bound
\begin{equation} \label{ineq: SubGaus_MGF}
\mathbb{E}[e^{\theta \Vert \bc{\veps_{k+1}} \Vert}\bcred{~|~\mF_k}]\leq e^{4\sigma^2 \theta^2}, \;\;\;\text{ \bcgreen{for any }}\theta\geq 0.
\end{equation} 
\item Suppose \bcgreen{$0\leq \theta \leq \frac{1}{8\sigma^2}$}, then the moment generating function of \bcred{$\Vert \bc{\veps_{k+1}} \Vert^2$} has the following property 
\begin{equation}\label{ineq: SubGauss2_MGF} 
\mathbb{E}[e^{\theta \Vert \bc{\veps_{k+1}} \Vert^2} \bcred{~|~\mF_k}]\leq e^{8\sigma^2\theta},
\end{equation} 
\end{enumerate} 
\end{lemma}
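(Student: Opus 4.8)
The plan is to prove the three moment/MGF bounds in the order stated, working conditionally on $\mathcal{F}_k$ throughout (since the sub-Gaussian tail bound in Assumption \ref{Assump: Gen-Noise} is conditional), and suppressing the conditioning in the notation wherever convenient. The starting point is the layer-cake (tail-integration) formula: for a nonnegative random variable $Z$ and any $p>0$, $\mathbb{E}[Z^p] = p\int_0^\infty t^{p-1}\,\mathbb{P}\{Z\ge t\}\,dt$, combined with the hypothesis $\mathbb{P}\{\|w_{k+1}\|\ge t\mid \mathcal{F}_k\}\le 2e^{-t^2/(2\sigma^2)}$.

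For part (1), I would apply the layer-cake formula with $Z = \|w_{k+1}\|$ and $p=n$:
\begin{align*}
\mathbb{E}[\|w_{k+1}\|^n\mid\mathcal{F}_k] &= n\int_0^\infty t^{n-1}\,\mathbb{P}\{\|w_{k+1}\|\ge t\mid\mathcal{F}_k\}\,dt \le 2n\int_0^\infty t^{n-1} e^{-t^2/(2\sigma^2)}\,dt.
\end{align*}
Then substitute $s = t^2/(2\sigma^2)$, so $t = (2\sigma^2 s)^{1/2}$ and $t^{n-1}\,dt = (2\sigma^2)^{n/2} s^{n/2-1}\,ds/2$, giving $2n\cdot\tfrac12 (2\sigma^2)^{n/2}\int_0^\infty s^{n/2-1}e^{-s}\,ds = n(2\sigma^2)^{n/2}\Gamma(n/2)$, which is exactly \eqref{ineq: SubGaus_Moment}.

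For part (2), I would expand the exponential as a power series and interchange sum and expectation (justified by monotone convergence since all terms are nonnegative), then plug in the moment bound from part (1):
\begin{align*}
\mathbb{E}[e^{\theta\|w_{k+1}\|}\mid\mathcal{F}_k] = \sum_{n=0}^\infty \frac{\theta^n}{n!}\mathbb{E}[\|w_{k+1}\|^n\mid\mathcal{F}_k] \le \sum_{n=0}^\infty \frac{\theta^n}{n!}n(2\sigma^2)^{n/2}\Gamma(n/2).
\end{align*}
Using $n\Gamma(n/2) = 2\Gamma(n/2+1)$ and bounding $\Gamma(n/2+1)/n! \le$ a suitable constant times $(1/2)^{?}$ — here one uses the standard estimate $\Gamma(n/2+1) \le (n/2)!\,\cdot(\text{const})$ together with $n! \ge$ products that control the growth — one arranges the series into something dominated by $2\sum_{m\ge 0}(4\sigma^2\theta^2)^m/m! = 2e^{4\sigma^2\theta^2}$; then absorbing the factor $2$ (which is $\le e^{4\sigma^2\theta^2}$ once one checks the clean bound, or simply stating the bound $e^{4\sigma^2\theta^2}$ after a slightly more careful grouping of even and odd terms) yields \eqref{ineq: SubGaus_MGF}. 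For part (3), I would reduce to part (2) via the elementary inequality $\theta t^2 \le \tfrac12(\lambda t)^2 \cdot(\text{something}) $ — more directly, write $e^{\theta\|w\|^2} \le e^{\theta\sigma^2} \cdot e^{\theta\|w\|^2 - \theta\sigma^2}$ is not quite it; rather I would use $\|w\|^2 \le 4\sigma^2 \cdot \tfrac{\|w\|}{2\sigma} \cdot \tfrac{\|w\|}{2\sigma}$ is circular, so the cleanest route is again layer-cake directly on $Z=\|w_{k+1}\|^2$: $\mathbb{E}[e^{\theta\|w\|^2}\mid\mathcal{F}_k] = 1 + \theta\int_0^\infty e^{\theta u}\mathbb{P}\{\|w\|^2\ge u\mid\mathcal{F}_k\}\,du \le 1 + 2\theta\int_0^\infty e^{\theta u}e^{-u/(2\sigma^2)}\,du$, and for $\theta < 1/(2\sigma^2)$ this integral evaluates to $2\sigma^2/(1-2\sigma^2\theta)$, giving $\mathbb{E}[e^{\theta\|w\|^2}\mid\mathcal{F}_k] \le 1 + \tfrac{4\sigma^2\theta}{1-2\sigma^2\theta}$; finally for $\theta \le 1/(8\sigma^2)$ one has $1-2\sigma^2\theta \ge 3/4$, so the bound is $\le 1 + \tfrac{16\sigma^2\theta}{3} \le e^{8\sigma^2\theta}$ after checking $1+\tfrac{16}{3}x \le e^{8x}$ on $[0,1/8]$ (true since $e^{8x}\ge 1+8x \ge 1+\tfrac{16}{3}x$).

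The main obstacle is part (2): getting the series $\sum_n \frac{\theta^n}{n!}n(2\sigma^2)^{n/2}\Gamma(n/2)$ to close up into the clean form $e^{4\sigma^2\theta^2}$ (rather than just \emph{some} function with sub-Gaussian-squared growth) requires a careful split into even and odd indices and the use of the duplication-type estimates $\Gamma(m+1/2)\le \Gamma(m+1) = m!$ and $n! \ge 2^{n-1}\lfloor n/2\rfloor!$; the constant $2$ in the tail bound and the constant $4$ in the exponent have to be tracked so that the absorption $2e^{4\sigma^2\theta^2} \le e^{\text{(larger const)}\sigma^2\theta^2}$ is either avoided or explicitly noted. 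Since Lemma \ref{lem: SubGauss-prop} is invoked only as a standard fact (the excerpt itself says these are well-known, citing \cite{vershynin2018high}), I would present parts (1) and (3) in full via the layer-cake computations above and reference the standard sub-Gaussian norm estimates for the precise constant in part (2).
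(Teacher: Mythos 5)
Your parts (1) and (3) are correct and complete; part (2) is the only place where you stop short. For part (1) you use the layer-cake identity in the form $\mathbb{E}[Z^n]=n\int_0^\infty t^{n-1}\mathbb{P}\{Z\ge t\}\,dt$, while the paper writes $\mathbb{E}[\|w_{k+1}\|^n\mid\mathcal{F}_k]=\int_0^\infty \mathbb{P}\{\|w_{k+1}\|\ge t^{1/n}\mid\mathcal{F}_k\}\,dt$ and substitutes $u=t^{2/n}/(2\sigma^2)$; these are the same computation and both land on $(2\sigma^2)^{n/2}n\Gamma(n/2)$. For part (3) you take a genuinely different and arguably cleaner route: instead of expanding $e^{\theta\|w\|^2}$ into $1+\sum_{n\ge 1}(4\sigma^2\theta)^n$ via the moment bounds and then using $\tfrac{1}{1-x}\le e^{2x}$ for $x\le \tfrac12$ (the paper's argument), you integrate the tail of $\|w\|^2$ directly against $e^{\theta u}$, obtaining $1+\tfrac{4\sigma^2\theta}{1-2\sigma^2\theta}\le 1+\tfrac{16}{3}\sigma^2\theta\le e^{8\sigma^2\theta}$; this avoids the moment machinery entirely and only needs $\theta<1/(2\sigma^2)$ for convergence, which your hypothesis $\theta\le 1/(8\sigma^2)$ guarantees. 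Both routes give the stated constant $8$.

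For part (2) you correctly identify the paper's strategy — Taylor expansion, insertion of the moment bounds from (1), a split into even and odd indices, and the duplication-type estimates — but you do not carry it out, and you explicitly say you would fall back on a citation for the precise constant. The paper does close this step: after splitting, it uses $2(k!)^2\le (2k)!$ and $\Gamma(k+1/2)=\tfrac{(2k)!}{4^k k!}\sqrt{\pi}$ to reach $e^{2\sigma^2\theta^2}+\sqrt{2\pi\sigma^2\theta^2}\,(e^{\sigma^2\theta^2/2}-1)$, and then absorbs the prefactor via $\sqrt{x}\le e^{x/4}$, which keeps everything below $e^{4\sigma^2\theta^2}$ without ever needing to swallow a multiplicative $2$. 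So the concern you raise about tracking the constant is resolvable along exactly the path you sketched; if you want a self-contained proof rather than a citation, that absorption step is the one detail you still need to supply.
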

\bcgreen{
\begin{proof}
We are first going to prove (1). Notice that integral representation of expectation together with Condition \eqref{cond: SubGaussian} yield 
\begin{align*}
    \mathbb{E}[\Vert \bc{\veps_{k+1}}\Vert^n \bcred{~|~\mF_k}]&=\int_{0}^{\infty} \mathbb{P}\{\Vert \bc{\veps_{k+1}} \Vert^n \geq t \bcred{~|~\mF_k}\}dt,\\
    &= \int_0^{\infty}
\mathbb{P}\{\Vert \bc{\veps_{k+1}} \Vert \geq t^{1/n}\bcred{~|~\mF_k}\}dt\leq 2\int_0^{\infty} e^{-\frac{t^{2/n}}{2\sigma^2}}dt.
\end{align*}
Applying change of variable $u=\frac{t^{2/n}}{2\sigma^2}$ yields
\begin{align*}
    \bcred{\mathbb{E}[\Vert \bc{\veps_{k+1}}\Vert^n \bcred{~|~\mF_k}]\leq \int_0^{\infty} n(2\sigma^2)^{n/2}e^{-u}u^{n/2-1}du=(2\sigma^2)^{n/2}n \Gamma(\frac{n}{2}).}
\end{align*}
In order to show (2) we are going to use \eqref{ineq: SubGaus_Moment} and Taylor expansion of exponential function. The dominated convergence theorem implies 
\bc{
\begin{align*}
    \mathbb{E}[e^{\theta \Vert \bc{\veps_{k+1}} \Vert}\bcred{~|~\mF_k}]&=1+\sum_{n=1}^{\infty} \frac{\theta^n}{n!} \mathbb{E}[\Vert \bc{\veps_{k+1}} \Vert^{n}\bcred{~|~\mF_k}]\leq 1+\sum_{n=1}^{\infty} \frac{\theta^n (2\sigma^2)^{n/2}n\Gamma(\frac{n}{2})}{n!},\\
    & \leq 1+\sum_{n=1}^{\infty}\frac{(2\theta^{2}\sigma^2)^{n}2n\Gamma(n)}{(2n)!}+\sum_{n=0}^{\infty}\frac{(2\theta^2\sigma^2)^{n+1/2}(2n+1)\Gamma(n+1/2)}{(2n+1)!},\\
    & \leq 1+ 2\sum_{n=1}^{\infty}\frac{(2\sigma^2\theta^2)^{n}n!}{(2n)!}+\sqrt{2\sigma^2\theta^2}\sum_{n=0}^{\infty}\frac{(2\sigma^2\theta^2)^{n}\Gamma(n+1/2)}{(2n)!}
\end{align*}
}
Recall that $2(k!)^2\leq (2k)!$ and duplication formula, $\Gamma(k+1/2)=\frac{(2k)!}{4^k k!} \sqrt{\pi}$ (see \cite[6.1.18]{abramowitzhandbook}). Hence, we obtain 
\begin{align*}
    \mathbb{E}[e^{\theta\Vert \bc{\veps_{k+1}} \Vert}\bcred{~|~\mF_k}]&\leq  1+ \sum_{n=1}^{\infty}\frac{(2\sigma^2\theta^2)^n}{n!}+ \sqrt{2\sigma^2\theta^2\pi}\sum_{n=1}^{\infty}\frac{1}{n!}(\frac{\sigma^2\theta^2}{2})^{n},\\
    &= e^{2\sigma^2\theta^2}+\sqrt{2\sigma^2\theta^2\pi}(e^{\sigma^2\theta^2/2}-1).
\end{align*}
Notice that $\sqrt{x}\leq e^{\frac{x}{4}}$ for all $x>0$; therefore, $\sqrt{2\sigma^2\theta^2\pi}(e^{\sigma^2\theta^2/2}-1)\leq e^{2\sigma^2\theta^2}(e^{\sigma^2\theta^2/2}-1)$ which yields \eqref{ineq: SubGaus_MGF}. Lastly, we show (3): 
\begin{align*}
    \mathbb{E}[e^{\theta \Vert \bcred{\bc{\veps_{k+1}}} \Vert^2}\bcred{~|~\mF_k}]=1+\sum_{\bcred{n}=1}^{\infty}\frac{\theta^\bcred{n}}{\bcred{n}!}\mathbb{E}[\Vert \bcred{\bc{\veps_{k+1}}} \Vert^{2\bcred{n}}] \leq 1+ \sum_{\bcred{n}=1}^{\infty}\frac{\theta^{\bcred{n}}(2\sigma^2)^{\bcred{n}}(2\bcred{n})\Gamma(\bcred{n})}{\bcred{n}!}=1+\sum_{\bcred{n}=1}^{\infty}(4\sigma^2\theta)^\bcred{n}.
\end{align*}
Note that for $x\leq\frac{1}{2}$ the inequality $\frac{1}{1-x}\leq e^{2x}$ holds and the condition $\theta\leq \frac{1}{8\sigma^2}$ implies $4\sigma^2 \theta \leq \frac{1}{2}$ which gives the desired result. 
\end{proof}
}

\bcred{
\subsection{Proof of Proposition \ref{prop: SubOpt-Subgaussian}}\label{app: SubOpt-Subgaussian}
\bcgreen{For simplicity of the presentation, as before, we drop the dependancy to the parameters $(\vartheta,\psi)$.} Since the parameters are as given in Theorem \ref{thm: TMM-MI-solution}, they satisfy \eqref{MI}. This implies that Lemma \ref{lem: func-contr-prop} holds for \bcgreen{the} Lyapunov function $\mV_P(\bc{z}_k)$. Hence, we can write 
$$
\mE[f(x_k)-f(x_*)]\leq\mE[\mV(\bc{z}_k)]\leq \rho^2 \mE[\mV(\bc{z}_{k-1})]+ \bcgreen{\alpha^2}(\frac{L}{2}+\lambda_P)\mE[\Vert \bc{w_{k}}\Vert^2],
$$
where we used the fact that $\mE[\bc{\veps_{k}} | \bcgreen{\mathcal{F}_{k-1}}]=0$. On the other hand, the inequality \eqref{ineq: SubGaus_Moment} \bcgreen{yields}  
$$
\mE[\Vert \bc{w_{k}}\Vert^2]=\mE[\mE[\Vert \bc{w_{k}}\Vert^2 | \mathcal{F}_{k-1}]]\bcgreen{\leq} 2\sigma^2.
$$
Therefore, we obtain
$$
\mE[f(x_k)-f(x_*)] \leq \rho^2\mE[\mV(\bc{z}_k-1)]+\alpha (\vartheta+\alpha L) \sigma^2 \leq \rho^{2k}\mV(\bc{z}_0)+\sigma^2\bcgreen{\alpha^2 (\frac{L}{2}+\lambda_P)}\frac{1-\rho^{2k}}{1-\rho^2}.
$$
This completes the proof.
}

\subsection{Proof of Proposition \ref{prop: entr-risk-meas-str-cnvx-subgaus}}\label{app: entr-risk-meas-str-cnvx-subgaus}
\bcgreen{Similar to before, }we drop the $(\vartheta,\psi)$ dependency in the notations for simplicity. Denote $\mV_P(\bc{z}_k)$ by $\mV(\bc{z}_k)$, and also denote the largest eigenvalue of $d$-by-$d$ principle submatrix of the matrix $P$ by $\lambda_P$ which can be computed as $\lambda_P=\frac{\vartheta}{2\alpha}$ under the premise of Theorem \ref{thm: TMM-MI-solution}. Similar to the proof of Proposition \ref{prop: risk-meas-bound-gaussian}, we are first going to provide a bound on moment generating function of $f(x_k)-f(x_*)$. \bcgreen{More specifically, we will show that}
\begin{equation}\label{ineq: exp-bound-subgauss}
    \mE[e^{\frac{\theta}{2\sigma^2}(f(x_k)-f(x_*))}] \leq e^{2\theta\alpha^2\big( L+2\lambda_P\big)\sum_{j=0}^{k-1}\hat{\rho}^{2j}+\frac{\theta}{2\sigma^2}\hat{\rho}^{2k} 2\mV(\bc{z}_0)+ 2\theta\hat{\rho}^{2(k-1)}\alpha^2(\frac{L}{2}+\lambda_P)},
\end{equation}
\bcgreen{For this purpose, we} introduce the notation 
\begin{align*}
\hat{k}_\theta=\frac{1}{2}\left[ \sqrt{(\rho^{2}+32\alpha^2\mtv\theta)^2+128\alpha^2\mtv \theta}-(\rho^2 +32\alpha^2\mtv\theta)\right],
\end{align*}
which satisfies the following relation 
\begin{equation}\label{prop: hat-k-cond-1}
(\hat{k}_\theta)^2+\hat{k}_\theta(\rho^2+32\alpha^2\mtv \theta)=32\alpha^2 \mtv\theta \implies \hat{k}_\theta=\frac{32\alpha^2\mtv \theta}{\rho^2+\hat{k}_\theta+32\alpha^2\mtv\theta}.
\end{equation}
The rest of the proof is similar to the proof of Lemma \ref{lem: risk-meas-bound-gauss} where \bcgreen{our aim will be to} show the inequality 
\begin{equation}\label{ineq: exp-ineq-subgaus-helper}
\mE\left[e^{\frac{\theta}{2\sigma^2}\big(\mV(\bc{z}_{k+1})+\hat{k}_\theta\mV(\bc{z}_{k})\big)}\right] < e^{2\theta\alpha^2\big( L+2\lambda_P\big)\sum_{j=0}^{k-1}\hat{\rho}^{2j}} \mE[e^{\frac{\theta}{2\sigma^2}\hat{\rho}^{2k}\big( \mV(\bc{z}_1)+\hat{k}_\theta\mV(\bc{z}_0)\big)}].
\end{equation}
By Assumption \ref{Assump: Gen-Noise}, the noise $\bc{\veps_{k+1}}=\tilde{\nabla}f(y_k)-\nabla f(y_k)$ is independent from $\mathcal{F}_k$. Moreover, since we already adopted the parameters of Theorem \ref{thm: TMM-MI-solution}, they satisfy \eqref{MI} with matrix $P$ where $\lambda_P=\frac{\vartheta}{2\alpha}$. Therefore, we can use Lemma \ref{lem: func-contr-prop} to derive the following inequality using \bcgreen{the tower property of conditional expectations,}
\begin{small}
\begin{align*}
    \mE[e^{\frac{\theta}{2\sigma^2}(\mV(\bc{z}_{k+1})+\hat{k}_\theta\mV(\bc{z}_{k}))}]&\leq \mE\left[e^{\frac{\theta}{2\sigma^2}(\rho^2 + \hat{k}_\theta)\mV(\bc{z}_k)} \mE\left[ e^{\frac{\theta}{2\sigma^2}\big(m_k^\top \bc{\veps_{k+1}} + \alpha^2(\frac{L}{2}+\lambda_P)\Vert \bc{\veps_{k+1}}\Vert^2\big)} ~\vline~ \mathcal{F}_k \right] \right].
\end{align*}
\end{small}
\bcgreen{
The risk-averseness parameter $\theta$ satisfies \eqref{cond: cond-theta-subgauss}, in particular $\theta < \frac{1}{4\alpha (\vartheta+\alpha L)} = \frac{1}{8\alpha^2 (\frac{L}{2}+\lambda_P)}$; therefore, we can see that $\frac{\theta \alpha^2 (\frac{L}{2}+\lambda_P)}{\sigma^2} < \frac{1}{8\sigma^2}$.} 
\bcgreen{
Applying inequality \eqref{ineq: SubGauss2_MGF} with Cauchy-Schwarz and H\"older inequalities, the \bcgreen{following} conditional expectation can be bounded as,
\begin{align*}
    \mE\left[e^{\frac{\theta}{2\sigma^2}\big( m_k^\top \bc{\veps_{k+1}} + \alpha^2 (\frac{L}{2}+\lambda_P)\Vert \bc{\veps_{k+1}}\Vert^2 \big)} ~\vline~ \mathcal{F}_k\right]&\leq \mE\left[ e^{\frac{\theta}{2\sigma^2}\big(  \Vert m_k \Vert \Vert \bc{\veps_{k+1}}\Vert + \alpha^2(\frac{L}{2}+\lambda_P)\Vert \bc{\veps_{k+1}}\Vert^2\big)}~\vline~ \mathcal{F}_k\right],\\
    &\leq \left(\mE[e^{\frac{\theta}{\sigma^2}\Vert m_k \Vert \Vert \bc{\veps_{k+1}}\Vert}~\vline~ \mathcal{F}_k]\right)^{1/2} \left(\mE[e^{\frac{\theta}{\sigma^2}\alpha^2(\frac{L}{2}+\lambda_P)\Vert \bc{\veps_{k+1}}\Vert^2}~\vline~\mathcal{F}_k] \right)^{1/2}.
\end{align*}
\bcgreen{Applying the inequalities} \eqref{ineq: SubGaus_MGF} and \eqref{ineq: SubGauss2_MGF} \bcgreen{yields}
$$
\mE\left[e^{\frac{\theta}{2\sigma^2}\big(m_k^\top \bc{\veps_{k+1}} + \alpha^2(\frac{L}{2}+\lambda_P)\Vert \bc{\veps_{k+1}}\Vert^2\big)}~\vline~ \mathcal{F }_k\right]\leq e^{\frac{2\theta^2 \Vert m_k\Vert^2}{\sigma^2}+2\theta \alpha^2(L+2\lambda_P)}.
$$ 
Recall $\Vert m_k\Vert^2\leq 8\alpha^2\mtv (\mV(\bc{z}_k)+\mV(\bc{z}_{k-1}))$ from Lemma \ref{lem: bound-on-mk}. Therefore, 
\begin{align*}
\mE[e^{\frac{\theta}{2\sigma^2}(\mV(\bc{z}_{k+1})+\hat{k}_\theta\mV(\bc{z}_{k}))}]&\leq \mE\left[e^{\frac{\theta}{2\sigma^2}\big( (\rho^2 + \hat{k}_\theta+ 32\alpha^2\mtv \theta)\mV(\bc{z}_k) +32\alpha^2\mtv\theta \mV(\bc{z}_{k-1})\big)+2\theta \alpha^2(L+2\lambda_P)} \right],\\
&\leq e^{2\theta \alpha^2 (L+2\lambda_P)}\mE\left[ e^{\frac{\theta}{2\sigma^2}\hat{\rho}^2\big(\mV(\bc{z}_k)+\hat{k}_\theta \mV(\bc{z}_{k-1}) \big)}\right],
\end{align*}
where the last inequality follows from \eqref{prop: hat-k-cond-1} and 
\begin{equation*}
(\rho^2 + \hat{k}_\theta+32\alpha^2\mtv\theta)\mV(\bc{z}_k)+32\alpha^2\mtv \theta \mV(\bc{z}_{k-1})\\= (\rho^2+\hat{k}_\theta+32\alpha^2 \mtv\theta)\big( \mV(\bc{z}_{k})+\frac{32\alpha^2\mtv\theta}{\rho^2+\hat{k}_\theta+32\alpha^2\mtv\theta}\mV(\bc{z}_{k-1})\big).
\end{equation*}

Next, we study the expectation on the right-hand side of the above inequality \bcgreen{further}. Again following similar arguments, \bcgreen{it is straighforward to show}: 
\begin{equation*}
\mE\left[ e^{\frac{\theta}{2\sigma^2}\hat{\rho}^2\big(\mV(\bc{z}_k)+\hat{k}_\theta \mV(\bc{z}_{k-1}) \big)}\right]\\\leq \mE\left[ e^{\frac{\theta}{2\sigma^2}\hat{\rho}^2(\rho^2 + \hat{k}_\theta)\mV(\bc{z}_{k-1})}\mE\left[ e^{\frac{\theta \hat{\rho}^2}{2\sigma^2}\big(m_{k-1}^\top \bc{w_{k}}+\alpha^2(\frac{L}{2}+\lambda_P)\Vert \bc{w_{k}}\Vert^2\big)} ~\vline~ \mathcal{F}_{k-1}\right] \right].
\end{equation*}
\bcgreen{Proceeding in a similar fashion,}
\begin{equation*}
    \mE\left[ e^{\frac{\theta \hat{\rho}^2}{2\sigma^2}\big(m_{k-1}^\top \bc{w_{k}}+\alpha^2(\frac{L}{2}+\lambda_P)\Vert \bc{w_{k}}\Vert^2\big)} ~\vline~ \mathcal{F}_{k-1}\right]\leq \left( \mE\left[e^{\frac{\theta \hat{\rho}^2}{\sigma^2}\Vert m_{k-1}\Vert \Vert \bc{w_{k}}\Vert} ~\vline~ \mathcal{F}_{k-1}\right]\right)^{1/2} \left( \mE\left[e^{\frac{\theta \hat{\rho}^2}{\sigma^2}\alpha^2(\frac{L}{2}+\lambda_P)\Vert \bc{w_{k}}\Vert^2}~\vline~ \mathcal{F}_{k-1}\right]\right)^{1/2}.
\end{equation*}
Using the inequalities \eqref{ineq: SubGaus_MGF} and \eqref{ineq: SubGauss2_MGF} together with Lemma \ref{lem: bound-on-mk} one more time \bcgreen{for the} conditional expectations on the right-hand side of the above inequality: 
\begin{align*}
    \mE\left[ e^{\frac{\theta \hat{\rho}^2}{2\sigma^2}\big(m_{k-1}^\top \bc{w_{k}}+\alpha^2(\frac{L}{2}+\lambda_P)\Vert \bc{w_{k}}\Vert^2\big)} ~\vline~ \mathcal{F}_{k-1}\right]\leq e^{\frac{\theta^2}{\sigma^2}16\hat{\rho}^4\alpha^2\mtv (\mV(\bc{z}_{k-1})+\mV(\bc{z}_{k-2})) +2\theta\hat{\rho}^2\alpha^2 (L+2\lambda_P)}.
\end{align*}
Since $\hat{\rho}^2<1$, we obtain 
\begin{align*}
\mE[e^{\frac{\theta}{2\sigma^2}\hat{\rho}^2 ( \mV(\bc{z}_{k})+\hat{k}_\theta \mV(\bc{z}_{k-1}))}]&\leq \mE\left[e^{\frac{\theta}{2\sigma^2}\hat{\rho}^2 \big( (\rho^2 +\hat{k}_\theta+32\alpha^2\mtv \theta )\mV(\bc{z}_{k-1})+32\alpha^2\mtv\theta \mV(\bc{z}_{k-2})\big)+2\theta \hat{\rho}^2\alpha^2(L+2\lambda_P)}\right],\\
& \leq e^{2\theta\hat{\rho}^2 \alpha^2(L+2\lambda_P)}\mE[e^{\frac{\theta}{2\sigma^2}\hat{\rho}^4 \big(\mV(\bc{z}_{k-1})+\hat{k}_\theta\mV(\bc{z}_{k-2})\big)}].
\end{align*}
Notice that this inequality gives us 
$$
\mE\left[ e^{\frac{\theta}{2\sigma^2}\big(\mV(\bc{z}_{k+1})+\hat{k}_\theta\mV(\bc{z}_k)\big)} \right]\leq e^{2\theta \alpha^2(1+\hat{\rho}^2)(L+2\lambda_P)}\mE[e^{\frac{\theta}{2\sigma^2}\hat{\rho}^4 \big(\mV(\bc{z}_{k-1})+\hat{k}_\theta\mV(\bc{z}_{k-2})\big)}],
$$
and continuing this \bcgreen{recursion} for $k$-steps yields the inequality \eqref{ineq: exp-ineq-subgaus-helper}. Next, we provide \bcgreen{a} bound on the moment generating function of $\mV(\bc{z}_1)+\hat{k}_\theta\mV(\bc{z}_0)$. By Lemma \ref{lem: func-contr-prop}, we have 
\begin{align*}
    \mE\left[e^{\frac{\theta \hat{\rho}^{2k}}{2\sigma^2}(\mV(\bc{z}_1)+\hat{k}_\theta\mV(\bc{z}_0))}\right]\leq \mE[e^{\frac{\theta \hat{\rho}^{2k}}{2\sigma^2}\big((\rho^2+\hat{k}_\theta) \mV(\bc{z}_0)+m_0^\top \bc{w_1} + \alpha^2 (\frac{L}{2}+\lambda_P)\Vert \bc{w_1}\Vert^2 \big)}].
\end{align*}
\bcgreen{Here}, $\bc{z}_0$ is the initialization of the algorithm \bcgreen{and we take it to be deterministic}. Also, using the Cauchy-Schwarz inequality together with \bcgreen{the} H\"older inequality and the inequalities \eqref{ineq: SubGaus_MGF} and \eqref{ineq: SubGauss2_MGF} yields,
\begin{align*}
     \mE\left[e^{\frac{\theta \hat{\rho}^{2k}}{2\sigma^2}(\mV(\bc{z}_1)+\hat{k}_\theta\mV(\bc{z}_0))}\right] \leq e^{\frac{\theta}{2\sigma^2}\hat{\rho}^{2k}(\rho^2+\hat{k}_\theta)\mV(\bc{z}_0)}\mE[e^{\frac{\theta \hat{\rho}^{2k}}{2\sigma^2}\big(m_0^\top \bc{w_1} + \alpha^2 (\frac{L}{2}+\lambda_P)\Vert \bc{w_1}\Vert^2 \big)}],\\
     \leq e^{\frac{\theta}{2\sigma^2}\hat{\rho}^{2k}(\rho^2+\hat{k}_\theta)\mV(\bc{z}_0)}e^{\frac{2\theta^2}{\sigma^2}\hat{\rho}^{4k}\Vert m_0\Vert^2 + 2\theta \hat{\rho}^{2k}\alpha^2 (L+2\lambda_P)}.
\end{align*}
Recall from the proof of Lemma \ref{lem: risk-meas-bound-gauss} that $\Vert m_0\Vert^2\leq 16 \alpha^2\mtv \mV(\bc{z}_0)$ and $\hat{\rho}^{4k}<\hat{\rho}^{2k}$. Therefore, 
\begin{align}
\mE\left[e^{\frac{\theta \hat{\rho}^{2k}}{2\sigma^2}(\mV(\bc{z}_1)+\hat{k}_\theta\mV(\bc{z}_0))}\right]&\leq e^{\frac{\theta}{2\sigma^2}\hat{\rho}^{2k}(\rho^2 + \hat{k}_\theta)\mV(\bc{z}_0)+\frac{ \theta^2 \hat{\rho}^{2k}}{\sigma^2}32\alpha^2 \mtv \mV(\bc{z}_0)+2\theta \hat{\rho}^{2k}\alpha^2 (L+2\lambda_P)},\nonumber\\
&\leq e^{\frac{\theta}{2\sigma^2}\hat{\rho}^{2(k+1)} 2\mV(\bc{z}_0)+ 2\theta\hat{\rho}^{2k}\alpha^2(L+2\lambda_P)}.\label{ineq: exp-bound-subgauss-helper-2}
\end{align}
Combining \eqref{ineq: exp-bound-subgauss-helper-2} with \eqref{ineq: exp-ineq-subgaus-helper} yields 
$$
\mE[e^{\frac{\theta}{2\sigma^2}(\mV(\bc{z}_{k+1})+\hat{k}_\theta\mV(\bc{z}_{k}))}]\leq  e^{2\theta\alpha^2\big( L+2\lambda_P\big)\sum_{j=0}^{k}\hat{\rho}^{2j}+\frac{\theta}{2\sigma^2}\hat{\rho}^{2(k+1)} 2\mV(\bc{z}_0)}.
$$
Lastly, we can use the fact that the inequality $$
e^{\frac{\theta}{2\sigma^2}(f(x_{k+1})-f(x_*))}\leq e^{\frac{\theta}{2\sigma^2}\mV(\bc{z}_{k+1})}<e^{\frac{\theta}{2\sigma^2}(\mV(\bc{z}_{k+1})+\hat{k}_\theta\mV(\bc{z}_k))},$$ 
holds pointwise \bcgreen{and by taking expectations}
\begin{align*}
\mE\left[e^{\frac{\theta}{2\sigma^2}(\mV(\bc{z}_{k+1})+\hat{k}_\theta\mV(\bc{z}_k))}\right]
\geq\mE\left[ e^{\frac{\theta}{2\sigma^2}\mV(\bc{z}_{k+1})} \right] \geq \mE\left[e^{\frac{\theta}{2\sigma^2}(f(x_{k+1})-f(x_*))} \right].
\end{align*}
This gives the desired inequality \eqref{ineq: exp-bound-subgauss}. \bcgreen{Our} main result \eqref{ineq: fin-hor-risk-meas-str-cnvs-subgaus} \bcgreen{then} directly follows from \eqref{ineq: exp-bound-subgauss} and the definition of \bcgreen{the} finite-horizon entropic risk:
\begin{align*}
\mr_{k,\sigma^2}(\theta)&=\frac{2\sigma^2}{\theta}\log \mE\left[ e^{\frac{\theta}{2\sigma^2}(f(x_k)-f(x_*))}\right],
\\
&\bcgreen{<} \frac{2\sigma^2}{\theta} \log \exp\left\{2\theta\alpha^2\big( L+2\lambda_P\big)\sum_{j=0}^{k-1}\hat{\rho}^{2j}+\frac{\theta}{2\sigma^2}\hat{\rho}^{2k} 2\mV(\bc{z}_0)\right\}.
\end{align*}
\bcgreen{This completes the proof.}
}
\bcred{
\subsection{Proof of Theorem \ref{thm: evar-bound-str-cnvx-subgaus}}\label{app: evar-bound-str-cnvx-subgaus}
Throughout the proof we are going to drop the $\vartheta$ and $\psi$ in the notations $\alpha_{\vartheta,\psi},\beta_{\vartheta,\psi},\gamma_{\vartheta,\psi}$ and $\rho_{\vartheta,\psi}$, and also drop $(\alpha,\beta,\gamma)$ in \bcgreen{$\hat{\mtv}_{\alpha,\beta,\gamma}, \htv_{\alpha,\beta,\gamma}, \hat{\rho}_{\alpha,\beta,\gamma}$, and $\hat{\hat \rho}^{(\varphi)}_{\alpha,\beta,\gamma}$ for simplicity.} 
The proof of the theorem is similar to the proof of Theorem \ref{thm: Evar-TMM-str-cnvx-bound} \bcgreen{and} incorporates the bound \eqref{ineq: fin-hor-risk-meas-str-cnvs-subgaus} provided in Proposition \ref{prop: entr-risk-meas-str-cnvx-subgaus} to derive an upper bound on EV@R. First notice that under the setting of Theorem \ref{thm: TMM-MI-solution}, if we set $\theta<\theta_u^{sg}$ then the conditions of Proposition \ref{prop: entr-risk-meas-str-cnvx-subgaus} are satisfied and the inequality \eqref{ineq: fin-hor-risk-meas-str-cnvs-subgaus} can be used as
\begin{align}\label{ineq: app-evar-bound-1}
    EV@R_{1-\zeta}[f(x_k)-f(x_*)] &=\inf_{0<\theta}\{ \mr_{k,\sigma^2}(\theta) +\frac{2\sigma^2}{\theta}\log(1/\zeta)\},\nonumber\\
    &\bc{\leq} \inf_{0< \theta < \theta_u^{sg}}\left\{\frac{4\sigma^2 \bcgreen{\alpha^2 (L+2\lambda_P)}}{1-\hat{\rho}^2}+\hat{\rho}^{2k}2\mV(\bc{z}_0)+\frac{2\sigma^2}{\theta}\log(1/\zeta)\right\},
\end{align}
where strict inequality $\theta <\theta_u^{sg}$ is required for $\hat{\rho}^2<1$. Let us fix a constant $\varphi\in (0,1)$ and set $\theta_{\varphi}^{sg}=\varphi \theta_u^{sg}$, then we can see that $\hat{\rho}^2$ is increasing with respect to $\theta$ and the following inequality holds for all $\theta< \theta_{\varphi}^{sg}$, 
\begin{align*}
\hat{\rho}^2 &= \frac{1}{2}\big( \rho^2 + 32\alpha^2\bcgreen{\hat{\mtv}} \theta \big) + \frac{1}{2}\sqrt{(\rho^2+32\alpha^2\bcgreen{\hat{\mtv}}\theta)^2+128\alpha^2\bcgreen{\hat{\mtv}} \theta},\\
&< \frac{1}{2}\big( \rho^2 + 32\alpha^2 \bcgreen{\hat{\mtv}} \theta_{\varphi}^{sg}\big)^2+\frac{1}{2}\sqrt{(\rho^2+32\alpha^2\bcgreen{\hat{\mtv}}\theta_{\varphi}^{sg})^2+128\alpha^2\bcgreen{\hat{\mtv}}\theta_{\varphi}^{sg}}=\hhrho.
\end{align*}
Similarly, we can also write
\begin{small}
\begin{align*}
    \rho^2 &= \frac{1}{2}\big( \rho^2 + 32\alpha^2\bcgreen{\hat{\mtv}} \theta \big) + \frac{1}{2}\sqrt{(\rho^2+32\alpha^2\bcgreen{\hat{\mtv}}\theta)^2+128\alpha^2\bcgreen{\hat{\mtv}}\theta},\\
    &< \frac{1}{2}\big( \rho^2 + 32\alpha^2 \bcgreen{\hat{\mtv}} \theta\big)+\frac{1}{2}\sqrt{(\rho^2+32\alpha^2\bcgreen{\hat{\mtv}}\theta_{\varphi}^{sg})^2+128\alpha^2\bcgreen{\hat{\mtv}}\theta_{\varphi}^{sg}}=\frac{1}{2}(\rho^2+32\alpha^2\bcgreen{\hat{\mtv}}\theta)+\frac{1}{2}\htv,
\end{align*}
\end{small}
on the region $(0,\theta_{\varphi}^{sg}]$. Therefore, the inequality \eqref{ineq: app-evar-bound-1} becomes 
\begin{align*}
    EV@R_{1-\zeta} &\bc{\leq} \inf_{0<\theta<\theta_u^{sg}} \left\{ \frac{4\sigma^2 \bcgreen{\alpha^2 (L+2\lambda_P)}}{1-\hat{\rho}^2}+\frac{2\sigma^2}{\theta}\log(1/\zeta) +\hat{\rho}^{2k}2\mV(\bc{z}_0)\right\},\\
    & \bc{\leq} \min_{0<\theta \leq \theta_{\varphi}^{sg}} \left\{ \frac{8\sigma^2 \bcgreen{\alpha^2 (L+2\lambda_P)}}{2-\rho^2-\htv- 32\alpha^2\bcgreen{\hat{\mtv}} \theta } +\frac{2\sigma^2}{\theta}\log(1/\zeta)+(\hhrho)^{k} 2\mV(\bc{z}_0)\right\}.
\end{align*}
\bcgreen{It can be seen that the function}
$$
\hat{h}(\theta)= \frac{8\sigma^2 \bcgreen{\alpha^2 (L+2\lambda_P)}}{2-\rho^2-\htv- 32\alpha^2\bcgreen{\hat{\mtv}} \theta }+\frac{2\sigma^2}{\theta}\log(1/\zeta) + (\hhrho)^k 2\mV(\bc{z}_0),
$$
is \bcgreen{convex} on the region $(0,\theta_{\varphi}^{sg}]$ \bcgreen{with a unique minimum}. With a slight abuse of notation, let us introduce
\begin{align*}
    \mtc_0=8\bcgreen{\alpha^2 (L+2\lambda_P)}, \;\; \mtc_1=2-\rho^2 - \hat{t}^{(\varphi)}, \;\; \mtc_2=32\alpha^2\bcgreen{\hat{\mtv}}.
\end{align*}
Since $\hat{\hat \rho}$ and $\hat{t}^{(\varphi)}$ do not depend on $\theta$, we can see that the derivative $\hat{h}'(\theta)$ can be written as 
$$
\hat{h}'(\theta)=\frac{\sigma^2 \mtc_0\mtc_2 \theta^2 -2\sigma^2 (\mtc_1-\theta \mtc_2)^2\log(1/\zeta)}{\theta^2 (\mtc_1-\theta\mtc_2)^2}.
$$
The root, $\hat{\theta}_*$, of the polynomial at the nominator satisfying $\hat{\theta}_*\in (0,\frac{\mtc_1}{\mtc_2}]$ is given as 
$$
\hat{\theta}_*=\frac{\mtc_1}{\mtc_2+ \sqrt{\frac{\mtc_0\mtc_2}{2\log(1/\zeta)}}}.
$$
We can see that 
\begin{align*}
    \mtc_0\mtc_2\theta^2 -2(\mtc_1-\theta\mtc_2)^2\log(1/\zeta) \leq 0& \;\; \text{ on } (0, \hat{\theta}_*],\\
    \mtc_0\mtc_2\theta^2 -2(\mtc_1-\theta\mtc_2)^2\log(1/\zeta) > 0& \;\; \text{ on } (\hat{\theta}_*, \frac{\mtc_1}{\mtc_2}].
\end{align*}
Therefore, we can conclude that the function $\hat{h}(\theta)$ is decreasing on $(0,\hat{\theta}_*]$ and increasing on $(\hat{\theta}_*, \frac{\mtc_1}{\mtc_2}]$.
So far, we have shown that
$$
EV@R_{1-\zeta}[f(x_k)-f(x_*)] \leq \min_{0<\theta \leq \theta_\varphi^{sg}} \hat{h}(\theta),
$$
where $h$ is \bcgreen{convex} on $(0,\theta_\varphi^{sg}]$ and the derivative $\hat{h}'$ diminishes at $\hat{\theta}_*$. We obtain
\begin{equation}\label{ineq: appendix-evar-bound-subauss-2}
EV@R_{1-\zeta}[f(x_k)-f(x_*)]\bc{\leq} \min_{0<\theta\leq \theta_{\varphi}^{sg}} \hat{h}(\theta)= \begin{cases} 
\hat{h}(\hat{\theta}_*), & \text{ if } \hat{\theta}_* \leq \theta_{\varphi}^{sg},\\
\hat{h}(\theta_\varphi^{sg}), & \text{otherwise}.
\end{cases}
\end{equation}
\bc{ By the definitions of $\mtc_0, \mtc_1$, and $\mtc_2$, the condition \eqref{cond: conf-lev-subgauss} can be written as
\begin{align*}
\log(1/\zeta) \bc{\leq} \frac{\mtc_0 \mtc_2}{2}\Big(\frac{\theta_{\varphi}^{sg}}{\mtc_1-\theta_{\varphi}^{sg}\mtc_2}\Big)^2 \implies \frac{\mtc_1-\theta_{\varphi}^{sg}\mtc_2}{\theta_{\varphi}^{sg}} \bc{\leq} \sqrt{\frac{\mtc_0\mtc_2}{2\log(1/\zeta)}}. 
\end{align*}
Hence, 
\begin{equation*}
\text{Condition \eqref{cond: conf-lev-subgauss}} \implies \hat{\theta}_*= \frac{\mtc_1}{\mtc_2+\sqrt{\frac{\mtc_0\mtc_2}{2\log(1/\zeta)}}}\leq \theta_{\varphi}^{sg}.
\end{equation*}
Moreover, we can calculate $\hat{h}(\theta_*)$ as
\begin{small}
\begin{align*} 
\hat{h}(\hat{\theta}_*)&=\frac{\sigma^2\mtc_0}{\mtc_1-\hat{\theta}_*\mtc_2}+\frac{2\sigma^2}{\hat{\theta}_*}\log(1/\zeta)+(\hhrho)^k 2\mV(\bc{z}_0),\\
&= \frac{\sigma^2}{\mtc_1}\sqrt{\mtc_0}\Big(\sqrt{2\mtc_2\log(1/\zeta)}+\sqrt{\mtc_0}\Big),\\
&\quad\quad+ \frac{\sigma^2}{\mtc_1}\sqrt{2\mtc_2\log(1/\zeta)}\Big(\sqrt{2\mtc_2\log(1/\zeta)}+\sqrt{\mtc_0}\Big)+(\hhrho)^k2\mV(\bc{z}_0),\\
&=\frac{\sigma^2}{\mtc_1}\Big(\sqrt{2\mtc_2\log(1/\zeta)}+\sqrt{\mtc_0}\Big)^2 + (\hhrho)^k2\mV(\bc{z}_0).
\end{align*}
\end{small}
}
Therefore, if condition \eqref{cond: conf-lev-subgauss} holds then we have $\hat{\theta}_*< \theta_{\varphi}^{sg}$ and the inequality \eqref{ineq: appendix-evar-bound-subauss-2} yields the inequality \eqref{ineq: fin-step-evar-bound-subgauss-1} from the calculation of $\hat{h}(\hat{\theta}_*)$ above. The inequality \eqref{ineq: fin-step-evar-bound-subgauss-2}, also directly follows from \eqref{ineq: appendix-evar-bound-subauss-2} if $\theta_\varphi^{sg}>\hat{\theta}_*$. We can derive the inequality \eqref{ineq: inf-evar-bound-subgauss} by simply taking limit superior of both \eqref{ineq: fin-step-evar-bound-subgauss-1} and \eqref{ineq: fin-step-evar-bound-subgauss-2}.
}
\section{Proofs of Supplementary Results}

\subsection{Proof of Lemma \ref{lem: sol-lyap-eq-quad}}\label{app: sol-lyap-eq-quad}
\bcgreen{We} \bc{denote the covariance matrix $\mathbb{E}[(\bc{z}_k-\mathbb{E}[\bc{z}_k])(\bc{z}_k-\mathbb{E}[\bc{z}_k])^\top]$ by $\Xi_k$. Since the noise $w_k$ is unbiased, we have $
\mathbb{E}[\bc{z}_{k+1}]=A_Q \mathbb{E}[\bc{z}_k]
$. Therefore, we can write the following recursion, 
\begin{align}
    \Xi_{k+1}&=\mathbb{E}[(\bc{z}_{k+1}-\mathbb{E}[\bc{z}_{k+1}])(\bc{z}_{k+1}-\mathbb{E}[\bc{z}_{k+1}])^\top] \nonumber,\\
    &=\mathbb{E}[(A_Q(\bc{z}_k-\mathbb{E}[\bc{z}_k])+Bw_k)(A_Q(\bc{z}_k-\mathbb{E}[\bc{z}_k])+Bw_k)^\top]\nonumber,\\
    &=A_Q \Xi_k A_Q^\top + \sigma^2 BB^\top.\label{eq: recurs-Vk-lemma2}
\end{align}
\bcgreen{By the statement of Proposition \ref{prop: quad_qisk_meas_convergence}, we have $(\alpha,\beta,\gamma)\in \mathcal{F}_\theta$, which also implies $(\alpha,\beta,\gamma)\in\mathcal{S}_q$ by Lemma \ref{lem: feas-set-stab-set}. Therefore, we have $\rho(A_Q)<1$ and }the following discrete time Lyapunov equation holds for the limiting covariance matrix $\Xi_\infty=\lim_{k\rightarrow \infty}\Xi_k$: 
\begin{equation}\label{eq: lyp-eq-helper}
\Xi_\infty= A_Q \Xi_\infty A_Q^\top + \sigma^2 BB^\top.  
\end{equation}
\bcred{Let $Q=\bcgreen{U}_Q\Lambda \bcgreen{U}_Q^\top$ be the eigendecomposition of the matrix $Q$, then we can write the following for $A_Q$,
\begin{align*}
A_Q&=\begin{bmatrix}
(1+\beta)I_d-\alpha(1+\gamma)\bcgreen{U}_Q \Lambda \bcgreen{U}_Q^\top & -(\beta - \alpha\gamma \bcgreen{U}_Q\Lambda\bcgreen{U}_Q^\top)\\ 
I_d & 0_d
\end{bmatrix},\\
&= 
\begin{bmatrix} 
\bcgreen{U}_Q  & 0 \\
0 & \bcgreen{U}_Q
\end{bmatrix}
\begin{bmatrix} 
(1+\beta)-\alpha(1+\gamma)\Lambda & -(\beta -\alpha \gamma \Lambda)\\
I_d & 0_d
\end{bmatrix}\begin{bmatrix} 
\bcgreen{U}_Q^\top & 0_d \\
0_d & \bcgreen{U}_Q^\top
\end{bmatrix}.
\end{align*}
\bcred{Define $\bcgreen{U}_\pi$ as the permutation matrix associated with permutation $\pi$ over $\{1,2,...,2d\}$ such that $\pi(i)=2i-1$ for $i\in [1,d]$ and $\pi(i)=2(i-d)$ for $i \in [d+i, 2d]$. It is well known that the permutation matrices are invertible and their inverses satisfy $\bcgreen{U}_{\pi}^{-1}=\bcgreen{U}_\pi^{\top}=\bcgreen{U}_{\pi^{-1}}$. Hence, we can write $A_Q$ as 
$$
A_Q = \underbrace{\begin{bmatrix} 
\bcgreen{U}_Q & 0_d\\ 
0_d & \bcgreen{U}_Q
\end{bmatrix}\bcgreen{U}_{\pi}}_{\bcgreen{U}} \tilde{A}_Q \underbrace{\bcgreen{U}_{\pi}^{\top}\begin{bmatrix}
\bcgreen{U}_Q^\top & 0_d \\
0_d & \bcgreen{U}_Q^\top
\end{bmatrix}}_{\bcgreen{U}^{\top}},
$$
where $\tilde{A}_Q:=\text{blkdiag}( \{M_i\}_{i=1}^n)$ as given in \eqref{def: AQ-block-diag-form}}
Therefore, the equation \eqref{eq: lyp-eq-helper} can be written as
\begin{equation*}
    \bcgreen{U^\top}\Xi_{\infty}\bcgreen{U}= \bcgreen{U}^\top A_{Q}\bcgreen{U}\left(\bcgreen{U}^\top \Xi_{\infty} \bcgreen{U}\right) \bcgreen{U}^\top A_{Q}^\top\bcgreen{U}+\sigma^2 \bcgreen{U}^\top BB^\top \bcgreen{U}.
\end{equation*}
Introducing \bcgreen{$\tilde{\Xi}_\infty=U^\top \Xi_\infty U$, $\tilde{A}_Q=U^\top A_Q U$, and $\tilde{B}=U^\top B$,}
we get
$$
\tilde{\Xi}_\infty = \tilde{A}_Q \tilde{\Xi}_\infty \tilde{A}_Q^\top +\sigma^2 \tilde{B}\tilde{B}^\top.
$$
\bcgreen{It follows from this equation} that the solution $\tilde{\Xi}_\infty = \underset{i=1,..,d}{\text{Diag}}\begin{bmatrix} \tilde{x}_{\lambda_i}& \tilde{y}_{\lambda_i}\\
\tilde{y}_{\lambda_i} & \tilde{v}_{\lambda_i}\end{bmatrix}$ satisfies the following system of equations
\begin{align*}
    \tilde{x}_{\lambda_i}&= c_{i}^2\tilde{x}_{\lambda_i} + 2c_id_i \tilde{y}_{\lambda_i} + d_i^2 \tilde{v}_{\lambda_i} +\alpha^2\sigma^2,\\
    \tilde{y}_{\lambda_i} &= c_i \tilde{x}_{\lambda_i}+d_i\tilde{y}_{\lambda_i},\\
    \tilde{v}_{\lambda_i}&= \tilde{x}_{\lambda_i},
\end{align*}
where $c_i=1+\beta-\alpha(1+\gamma)\lambda_i $ and $d_i=-\beta+\alpha\gamma\lambda_i$, and its solution is given as 
\begin{align*}
\tilde{x}_{\lambda_i}= \frac{(1-d_i)\alpha^2 \sigma^2}{(1+d_{i})[(1-d_i)^2-c_i^2]},\;\;\tilde{y}_{\lambda_i}=\frac{c_i\alpha^2 \sigma^2}{(1+d_{i})[(1-d_i)^2-c_i^2]}, \;\; \tilde{v}_{\lambda_i}=\tilde{x}_{\lambda_i}.
\end{align*}
}\bcred{Define \bcgreen{$\Pi=[I_d, 0_d]\in \mathbb{R}^{d\times 2d}$} with $d$-by-$d$ zero matrix $0_d$. The matrix $Q^{1/2}\Sigma_\infty Q^{1/2}$ is given by $$Q^{1/2}\Sigma_\infty Q^{1/2}=\bcred{Q^{1/2}} \Pi \Xi_\infty \Pi^\top \bcred{Q^{1/2}}= \bcred{Q^{1/2}}\Pi \bcgreen{U} \tilde{\Xi}_\infty \bcgreen{U}^\top \Pi^\top \bcred{Q^{1/2}}.$$ 
Therefore, its eigenvalues are  
$$
\lambda_i(Q^{1/2}\Sigma_\infty Q^{1/2})=\lambda_i(\bcred{Q^{1/2}}\Pi \bcgreen{U} \tilde{\Xi}_\infty \bcgreen{U}^\top \Pi^\top \bcred{Q^{1/2}})=\bcgreen{\frac{\lambda_i(Q)(1-d_i)\alpha^2 \sigma^2}{(1+d_i)[(1-d_i)^2 -c_i^2]},}
$$
\bcgreen{for $i=1,2,..,d$.}
}

\subsection{Proof of Lemma \ref{lem: X-and-Y-pos-def}}\label{app: X-and-Y-pos-def}
We are first going to show that the covariance matrices of $\bc{z}_\infty$ and $\bc{z}_k$ satisfy $\Xi_\infty\succeq \Xi_k$ for all $k\in \mathbb{N}$. \bcgreen{Since the initialization is deterministic, we have $\Xi_0=0$.} Then the induction on $k$ at the recursion \eqref{eq: recurs-Vk-lemma2} implies
$$
\Xi_k= A_Q \Xi_{k-1}A_Q^\top +\sigma^2 BB^\top = \sigma^2\sum_{j=0}^{k-1}(A_Q^j)BB^\top (A_Q^\top)^j.
$$
\bcgreen{By Lemma \ref{lem: feas-set-stab-set}, the assumption $(\alpha,\beta,\gamma)\in\mathcal{F}_\theta$ implies $(\alpha,\beta,\gamma)\in\mathcal{S}_q$ and $\rho(A_Q)<1$. Therefore,}
the solution $\Xi_\infty=\lim_{k\rightarrow \infty}\Xi_k$ \bcgreen{exists} and has the form 
$$\Xi_\infty=\sigma^2\sum_{j=0}^{\infty}(A_Q^j)BB^\top (A_Q^{\top})^j.$$ 
\bcgreen{The matrix $A_Q^{j}BB^\top (A_Q^\top)^j$ is a positive \bcgreen{semi-definite} matrix for any $j\in \mathbb{N}$,} 
$$
\Xi_\infty = \sigma^2 \sum_{j=0}^{\infty}(A_Q^j)BB^\top (A_Q^\top)^j \succeq \sigma^2 \sum_{j=0}^{k-1}(A_Q^j)BB^\top (A_Q^\top)^j = \Xi_k,
$$
for any $k\in\mathbb{N}$. Therefore, we can conclude that $\Xi_\infty-\Xi_{k}\succeq 0$. Notice that the first $d$-by-$d$ leading principle submatrix of $\Xi_\infty-\Xi_k$ is $\Sigma_\infty-\Sigma_k$ \bcgreen{by definition}; therefore, $\Xi_\infty-\Xi_k$ is positive semidefinite if $\Sigma_\infty-\Sigma_k$ and its Schur complement in $\Xi_\infty-\Xi_k$ are also positive semidefinite. This shows that $\Sigma_\infty \succeq \Sigma_k$ for all $k\in \mathbb{N}$. Next, we will show $X=I-\frac{\theta}{2\sigma^2}\Sigma_k^{1/2}Q\Sigma_{k}^{1/2}$ and  $Y=I-\frac{\theta}{2\sigma^2}\Sigma_\infty^{1/2}Q\Sigma_{\infty}^{1/2}$ are positive definite. Notice that $X\succeq Y$ since $\Sigma_\infty\succeq \Sigma_k$; so, it is sufficient to show that $Y\succ 0$ to complete the proof. The result directly follows from the fact that the eigenvalues of $\Sigma_\infty^{1/2}Q\Sigma_{\infty}^{1/2}$ and $Q^{1/2}\Sigma_\infty Q^{1/2}$ are equal and given as $\frac{\sigma^2}{u_i}$ (from Lemma \ref{lem: sol-lyap-eq-quad}). Therefore, for each $i=1,..,d$  $\lambda_i(\Sigma_\infty^{1/2}Q\Sigma^{1/2}_\infty)\prec \frac{2\sigma^2}{\theta}I_d$ on $\mathcal{F}_\theta$ which implies that $Y\succ 0$. This gives the desired result.


\subsection{Proof of Lemma \ref{lem: func-contr-prop}}\label{app: func-contr-prop}
Recall that $y_k=x_k+\gamma(x_k-x_{k-1})$ and $\bcgreen{w_{k+1}}=\tilde{\nabla} f(y_k)-\nabla f(y_k)$. \bcgreen{T}hen the $L$-smoothness of $f$ implies 
\begin{align*}
    f(y_k)-f(x_{k+1}) &\geq \nabla f(y_k)^\top \big( \delta(x_{k-1}-x_{k})+\alpha (\nabla f(y_k)+\bcgreen{w_{k+1}}) \\
    & \quad\quad\quad -\frac{L}{2}\Vert \delta (x_{k-1}-x_k)+\alpha(\nabla f(y_k)+\bcgreen{w_{k+1}})\Vert^2,\\
    & =\nabla f(y_k)^\top\big( \delta(x_{k-1}-x_k)+\alpha \nabla f(y_k) \big) - \frac{L}{2}\Vert \delta (x_{k-1}-x_k)+\alpha \nabla f(y_k)\Vert^2\\
    &\quad\quad +\alpha(1-\alpha L) \bcgreen{w_{k+1}}^\top \nabla f(y_k) -\alpha \delta L \bcgreen{w_{k+1}^\top}  (x_{k-1}-x_k) - \frac{L\alpha^2}{2}\Vert \bcgreen{w_{k+1}}\Vert^2,\\
    &= \begin{bmatrix} 
    \bc{z}_k-\bc{z}_*\\
    \nabla f(y_k)
    \end{bmatrix}^\top
    (\tilde{X}_1 \otimes I_d) \begin{bmatrix} 
    \bc{z}_k-\bc{z}_*\\
    \nabla f(y_k)
    \end{bmatrix}^\top - \bcgreen{w_{k+1}^\top} n_k -\frac{L\alpha^2}{2}\Vert \bcgreen{w_{k+1}}\Vert^2,
\end{align*}
where \bcgreen{$\delta=\beta-\gamma$} and $n_k=\alpha \delta L(x_{k-1}-x_k)-\alpha (1-\alpha L) \nabla f(y_k)$. Similarly, we can use the $\mu$-strong convexity to obtain the inequalities
\begin{align*}
    f(x_k)-f(y_k)&\geq \nabla f(y_k)^\top(x_k-y_k)+\frac{\mu}{2}\Vert x_k-y_k \Vert^2,\\
    &=\gamma\nabla f(y_k)^\top \big(x_{k-1}-x_k\big) + \frac{\mu \gamma^2}{2}\Vert x_{k-1}-x_k\Vert^2,\\
    &= \begin{bmatrix}
    \bc{z}_k-\bc{z}_*\\
    \nabla f(y_k)
    \end{bmatrix}^\top 
    (\tilde{X}_2\otimes I_d )
    \begin{bmatrix} 
    \bc{z}_k-\bc{z}_*\\
    \nabla f(y_k)
    \end{bmatrix},
\end{align*}
and 
\begin{small}
\begin{align*}
    f(x_*)-f(y_k) & \geq \nabla f(y_k)^\top (x_*-y_k) + \frac{\mu}{2}\Vert x_*-y_k\Vert^2,\\
                  & =-\nabla f(y_k)^\top \big( (1+\gamma)(x_k-x_*)-\gamma (x_{k-1}-x_*) \big) \\
                  & \quad\quad\quad \quad\quad\quad\quad\quad+ \frac{\mu}{2}\Vert (1+\gamma)(x_{k}-x_*)-\eta (x_{k-1}-x_*) \Vert^2,\\
                  &= \begin{bmatrix} 
                  \bc{z}_k-\bc{z}_*\\
                  \nabla f(y_k)
                  \end{bmatrix}^\top 
                  (\tilde{X}_3\otimes I_d )
                  \begin{bmatrix} 
                  \bc{z}_k-\bc{z}_*\\
                  \nabla f(y_k)
                  \end{bmatrix}.
\end{align*}
\end{small}
Thus we get 
\begin{small}
\begin{align*}
    \begin{bmatrix} 
    \bc{z}_k-\bc{z}_*\\
    \nabla f(y_k)
    \end{bmatrix}^\top \big( (\tilde{X}_1+\tilde{X}_2)\otimes I_d \big) \begin{bmatrix} 
    \bc{z}_k-\bc{z}_*\\
    \nabla f(y_k)
    \end{bmatrix} &\leq f(x_k)-f(x_{k+1}) + \bcgreen{w_{k+1}^\top} n_k + \frac{L\alpha^2}{2}\Vert \bcgreen{w_{k+1}}\Vert^2,\\
    \begin{bmatrix} 
    \bc{z}_k-\bc{z}_*\\
    \nabla f(y_k)
    \end{bmatrix}^\top 
    \big( (\tilde{X}_1+\tilde{X}_3)\otimes I_d \big) \begin{bmatrix}
    \bc{z}_k-\bc{z}_*\\
    \nabla f(y_k)
    \end{bmatrix} &\leq f(x_*)  - f(x_{k+1}) + \bcgreen{w_{k+1}^\top} n_k + \frac{L\alpha^2}{2}\Vert \bcgreen{w_{k+1}}\Vert^2.
\end{align*}
\end{small}
Notice that above inequality gives us 
\begin{equation}\label{ineq: lyap-ineq-helper-1}
f(x_{k+1})-f(x_*) \leq \rho^2 \left( f(x_k)-f(x_*) \right)- \begin{bmatrix} \bc{z}_{k}-\bc{z}_*\\ \nabla f(y_k)\end{bmatrix}^\top (\tilde{X}\otimes I_d )
\begin{bmatrix}
\bc{z}_k-\bc{z}_*\\ 
\nabla f(y_k)
\end{bmatrix}+ \bcgreen{w_{k+1}^\top} n_k + \frac{L\alpha^2}{2}\Vert \bcgreen{w_{k+1}}\Vert^2.
\end{equation}
\bcgreen{We notice that}
\begin{align*}
(\bc{z}_{k+1}-z_*)^\top P (\bc{z}_{k+1} - z_*)&=\begin{bmatrix} 
\bc{z}_{k}-\bc{z}_* \\
\nabla f(y_k) + \bcgreen{w_{k+1}}
\end{bmatrix}
\begin{bmatrix} 
A^\top P A & A^\top P B \\
B^\top P A & B^\top P B
\end{bmatrix}
\begin{bmatrix}
\bc{z}_k-\bc{z}_*\\
\nabla f(y_k)+\bcgreen{w_{k+1}}
\end{bmatrix}\\
&\;=\begin{bmatrix} 
\bc{z}_k-\bc{z}_*\\
\nabla f(y_k)
\end{bmatrix}^\top 
\begin{bmatrix} 
A^\top P A & A^\top PB \\
B^\top P A & B^\top P B
\end{bmatrix}
\begin{bmatrix} 
\bc{z}_k-\bc{z}_*\\
\nabla f(y_k)
\end{bmatrix}\\ 
&\quad\quad\quad\quad+ 2\bcgreen{w_{k+1}^\top} B^\top P(A (\bc{z}_k-\bc{z}_*)+ B \nabla f(y_k))+\bcgreen{w_{k+1}^\top} B^\top P B\bcgreen{w_{k+1}}.
\end{align*}
Let us define $n'_k=B^\top P (A(\bc{z}_k-\bc{z}_*)+B\nabla f(y_k))$. Since the inequality \eqref{MI} holds:
\begin{multline}\label{ineq: lyap-ineq-helper-2}
    (\bc{z}_{k+1}^\top -z_*) P (\bc{z}_{k+1} - z_*) \\ \leq \rho^2 (\bc{z}_{k}-\bc{z}_*)^\top P (\bc{z}_k-\bc{z}_*)
    + \begin{bmatrix} 
    \bc{z}_k-\bc{z}_* \\ 
    \nabla f(y_k)
    \end{bmatrix}
    (\tilde{X}\otimes I_d) \begin{bmatrix} 
    \bc{z}_k-\bc{z}_*\\
    \nabla f(y_k)
    \end{bmatrix}
    + 2\bcgreen{w_{k+1}^\top} n'_k + \bcgreen{w_{k+1}^\top} B^\top P B\bcgreen{w_{k+1}}.
\end{multline}
Finally, recall that $B=[-\alpha I_d, 0_d]^\top$; therefore, if we combine inequalities \eqref{ineq: lyap-ineq-helper-1} and \eqref{ineq: lyap-ineq-helper-2}, and the definition of the matrix $B$, we obtain 
\begin{small}
\begin{multline}\label{ineq: lyap-ineq}
f(x_{k+1})-f(x_*) + (\bc{z}_{k+1}-\bc{z}_*)^\top P (\bc{z}_{k+1}-\bc{z}_*)\leq \rho^2 (f(x_k)-f(x_*)+(\bc{z}_k-\bc{z}_*)^\top P (\bc{z}_{k}-\bc{z}_*))\\+\bcgreen{w_{k+1}^\top} (n_k + 2n'_k)+ \alpha^2(\frac{L}{2}+\lambda_P)\Vert \bcgreen{w_{k+1}}\Vert^2,
\end{multline}
\end{small}
where $\lambda_P$ is the largest eigenvalue of first $d$-by-$d$ principle submatrix of $P$. This completes the proof.

\subsection{Proof of Lemma \ref{lem: Gaussian-expectation}}\label{app: Gaussian-expectation}
\bcgreen{We introduce} $\tilde{\theta}:=\frac{\theta}{2\sigma^2}$. If $\ttheta < \frac{1}{\sigma^2\alpha^2\bcred{(2\lambda_P+L)}}$ holds, we can write 
\begin{align*}
&\mathbb{E}\left[ e^{\tilde{\theta}(\bcred{m}^\top \veps+\alpha^2\bcred{(\lambda_P+L/2)}\Vert \veps\Vert^2)}\right]\\
&\quad\quad =\frac{1}{(2\pi)^{d/2}\det(\Sigma_w)^{1/2}}\int e^{\ttheta (\bcred{m}^\top \veps + \alpha^2\bcred{(\lambda_P+L/2)} \Vert \veps \Vert^2) -\frac{1}{2}\veps^\top \Sigma^{-1}_w\veps} d\veps,\\
&\quad \quad= \frac{e^{\frac{\ttheta^2}{2}\bcred{m}^\top (\Sigma^{-1}_w-\ttheta \alpha^2\bcred{(2\lambda_P+L)}I_d)^{-1}\bcred{m}}}{(2\pi)^{d/2}\det(\Sigma_w)^{1/2}}\int e^{-\frac{1}{2}(\veps-\veps_*)^\top (\Sigma^{-1}_w-\ttheta\alpha^2\bcred{(2\lambda_P+L)} I_d)(\veps-\veps_*)}d\veps,
\end{align*}
where $\veps_*=\ttheta(\Sigma^{-1}_w-\ttheta\alpha^2\bcred{(2\lambda_P+L)}I_d)^{-1}\bcred{m}$ \bcgreen{and $\Sigma_w=\sigma^2 I_d$}. Notice that $\Sigma^{-1}= \frac{1}{\sigma^2} I_d$ hence the inverse exists from the fact that $\ttheta<\frac{1}{\sigma^2\alpha^2\bcred{(2\lambda_P+L)}}$. The right hand-side is \bcgreen{a} Gaussian integral and yields: 
\begin{equation}\label{eq: Gauss-exp}
    \mathbb{E}\left[ e^{\tilde{\theta}(\bcred{m}^\top \veps^g+\alpha^2\bcred{(\lambda_P+L/2)}\Vert \veps^g\Vert^2)}\right]= \frac{1}{\det (I-\ttheta \alpha^2\bcred{(2\lambda_P+L)} \Sigma_w)^{1/2}}e^{\frac{\ttheta^2}{2} \bcred{m}^\top (\Sigma^{-1}_w-\ttheta\alpha^2\bcred{(2\lambda_P+L)}I_d)^{-1}\bcred{m}}.
\end{equation}

\bcgreen{Using} $\Sigma_w = \sigma^2 I_d$ and writing $\ttheta= \frac{\theta}{2\sigma^2}$ back yields the desired result.
\\

\subsection{Proof of Lemma \ref{lem: bound-on-mk}}\label{app: bound-on-mk}
We are first going to bound the norms of vectors $ n_k=\alpha\delta L(x_{k-1}-x_k)-\alpha(1-\alpha L)\nabla f(y_k)$ and $n'_k:=B^\top P (A(\bc{z}_k-\bc{z}_*)+B\nabla f(y_k))$ given in the proof of Lemma \ref{lem: func-contr-prop}. Notice that $m_k=n_k+2n'_k$; so, these bounds directly provides an upper bound on $\Vert m_k\Vert^2$. \bcgreen{We will first prove}
\begin{equation}\label{ineq: n_k-bound}
    \Vert n_k \Vert^2 \leq  \frac{4\alpha^2L^2}{\mu} \left(2\delta^2 +(1-\alpha L)^2 (1+2\gamma(1+\gamma))\right) \left(\mV_P(\bc{z}_k)+\mV_P(\bc{z}_{k-1})\right).
\end{equation}
\bcgreen{By Cauchy-Schwarz inequality}, 
\begin{align}\label{ineq: n_k-bound-helper-1}
    \Vert n_k\Vert^2 &= \Vert \alpha \delta L(x_{k-1}-x_k)-\alpha (1-\alpha L) \nabla f(y_k)\Vert^2\nonumber,\\
    &\leq 2(\alpha L\delta)^2 \Vert x_{k-1}-x_k\Vert^2 + 2\alpha^2(1-\alpha L)^2\Vert \nabla f(y_k)\Vert^2.
\end{align}
Recall that $\nabla f(x_*)=0$ and $f$ is $L$-smooth: 
\begin{align*}
\Vert \nabla f(y_k)\Vert^2&=\Vert \nabla f(y_k)-\nabla f(x_*)\Vert^2\leq L^2\Vert y_k-x_*\Vert^2,\\ &= L^2\Vert (1+\gamma)(x_k-x_*)-\gamma (x_{k-1}-x_*)\Vert^2,\\
&=L^2(\bc{z}_k-\bc{z}_*)^\top \left(\begin{bmatrix} 
(1+\gamma)^2 & -\gamma (1+\gamma)\\
-\gamma (1+\gamma) & \gamma^2
\end{bmatrix}\otimes I_d\right)
(\bc{z}_k-\bc{z}_*).
\end{align*}
The eigenvalues of the matrix on the right-hand side of \bcgreen{this} equality are $0$ and $(1+\gamma)^2+\gamma^2$ which implies, 
\begin{equation}\label{ineq: n_k-bound-helper-2}
\Vert \nabla f(y_k) \Vert^2 \leq L^2 (1+2\gamma(1+\gamma)) \Vert \bc{z}_k-\bc{z}_*\Vert^2.
\end{equation}
On the other hand, we have the inequality $\Vert x_{k-1}-x_k\Vert^2 \leq 2\Vert x_{k}-x_*\Vert^2 + 2\Vert x_{k-1}-x_*\Vert^2 = 2\Vert \bc{z}_k-\bc{z}_*\Vert^2$. Combining the bounds \eqref{ineq: n_k-bound-helper-1} and \eqref{ineq: n_k-bound-helper-2} yields,
\begin{equation}\label{ineq: nk-bound-app-1}
\Vert n_k\Vert^2 \leq 2\alpha^2L^2 \left(2\delta^2 +(1-\alpha L)^2 (1+2\gamma(1+\gamma))\right)\Vert \bc{z}_k-\bc{z}_*\Vert^2.
\end{equation}
\bc{
The $\mu$-strong convexity and the \bcgreen{the positive semi-definiteness of $P$} imply   
\begin{align*}
\Vert \bc{z}_k-\bc{z}_*\Vert^2 &= \Vert x_k-x_*\Vert^2 + \Vert x_{k-1}-x_*\Vert^2 \leq \frac{2}{\mu}\big(f(x_k)-f(x_*)\big)+\frac{2}{\mu}\big( f(x_{k-1})-f(x_*)\big),\\
&\leq \frac{2}{\mu}\left( \mV_P(\bc{z}_k) +\mV_P(\bc{z}_{k-1})\right),
\end{align*}
which \bcgreen{together with \eqref{ineq: nk-bound-app-1}} shows \eqref{ineq: n_k-bound}.}
Next, we show $n'_k$ admits \bcgreen{the bound}
\begin{equation}\label{ineq: n'_k-bound}
    \Vert n'_k\Vert^2 \leq \alpha^2 \lambda_p\rho^2 \mV_P(\bc{z}_k).
\end{equation}
Remember that $P$ is positive semidefinite matrix; therefore, there exists a square root matrix $P^{1/2}$ such that $P^{1/2}P^{1/2}=P$. Hence,
\begin{align*}
\Vert n'_{k} \Vert^2&=\Vert (B^\top P^{1/2})(P^{1/2} [A(\bc{z}_{k}-\bc{z}_*)+B \bcgreen{\nabla f(y_k)}])\Vert^2,\\
&\leq \alpha^2\lambda_P\begin{bmatrix} 
\bc{z}_k-\bc{z}_* \\ 
\nabla f(y_k) 
\end{bmatrix}^\top 
\begin{bmatrix} 
A^\top P A & A^\top P B \\
B^\top P A & B^\top P B
\end{bmatrix} 
\begin{bmatrix} 
\bc{z}_k-\bc{z}_*\\
\nabla f(y_k)
\end{bmatrix}.
\end{align*} 
The matrix inequality \eqref{MI} implies
\begin{align*}
    \Vert n'_k\Vert^2 \leq \alpha^2\lambda_P \left[\rho^2 (\bc{z}_k-\bc{z}_*)^\top P (\bc{z}_k-\bc{z}_*)+ 
    \begin{bmatrix} 
    \bc{z}_k-\bc{z}_* \\
    \nabla f(y_k)
    \end{bmatrix}^\top
    (\tilde{X}\otimes I_d) 
    \begin{bmatrix} 
    \bc{z}_k-\bc{z}_* \\ 
    \nabla f(y_k)
    \end{bmatrix}\right].
\end{align*}
Because the function $f$ is $L$-smooth and $\mu$-strongly convex, the matrix $\tilde{X}\otimes I_d$ satisfies the following inequality (see \cite[Lemma 5]{hu2017dissipativity})
$$
\begin{bmatrix} 
    \bc{z}_k-\bc{z}_* \nonumber \\
    \nabla f(y_k)
    \end{bmatrix}
    (\tilde{X}\otimes I_d)
    \begin{bmatrix} 
    \bc{z}_k-\bc{z}_* \nonumber\\ 
    \nabla f(y_k)
    \end{bmatrix}\leq \rho^2 (f(x_k)-f(x_*)),
$$
which yields the desired result:
$$
\Vert n'_k \Vert^2 \leq \alpha^2 \lambda_P\rho^2 \left[(\bc{z}_k-\bc{z}_*)^\top P(\bc{z}_k-\bc{z}_*) + f(x_k)-f(x_*)  \right]=\alpha^2 \lambda_P \rho^2 \mV(\bc{z}_k).
$$
\bcgreen{By applying Cauchy-Schwarz again,}
$$
\Vert m_k \Vert^2 = \Vert n_k+2n'_k\Vert^2 \leq 2 \Vert n_k\Vert^2 + 8 \Vert n'_k\Vert^2.
$$
Therefore, \bcgreen{combining everything}
$$
\Vert m_k \Vert^2 \leq 8\alpha^2 \left( \frac{L^2 \mathtt{v}'_{\alpha,\beta,\gamma}}{\mu}+\lambda_P \rho^2\right) \mV_P(\bc{z}_k)+ \frac{8\alpha^2 L^2 \mathtt{v}'_{\alpha,\beta,\gamma}}{\mu}\mV_P(\bc{z}_{k-1}),
 $$
where $\mathtt{v}'_{\alpha,\beta,\gamma}=2\delta^2+(1-\alpha L)^2(1+2\gamma(1+\gamma))$. Notice that $\mathtt{v}_{\alpha,\beta,\gamma}=\left( \frac{2L^2 \mathtt{v}'_{\alpha,\beta,\gamma}}{\mu}+\lambda_P \rho^2\right)$, which completes the proof.
}
\bc{
\subsection{Proof of Lemma \ref{lem: risk-meas-bound-gauss}}\label{app: risk-meas-bound-gauss}
Let us drop the $\vartheta,\psi$ dependency on the notations $\alpha_{\vartheta,\psi},\beta_{\vartheta,\psi}, \gamma_{\vartheta,\psi}, \mathtt{v}_{\vartheta,\psi}$, $\bar{\rho}_{\vartheta,\psi}$, and $\rho_{\vartheta,\psi}$, and use $\alpha, \beta,\gamma, \mathtt{v}, \bar{\rho}$, and $\rho$ instead for simplicity. Also let us denote $\mV_P(\bc{z})$ by $\mV(\bc{z})$. Recall that $\lambda_{P}$ is the largest eigenvalue of the $d$-by-$d$ principle submatrix $P$ and is given as $\lambda_{P}=\frac{\vartheta}{2\alpha}$ by Theorem \ref{thm: TMM-MI-solution}. Define the variables 
\begin{align*}
\mtl = \frac{ \theta4\alpha^2 \mtv}{2-\theta \alpha^2(2\lambda_P+L)},\text{ and }\mtk=\frac{1}{2}\sqrt{(\rho^2+\mtl)^2+ 4\mtl}-\frac{(\rho^2+\mtl)}{2}.
\end{align*}
Notice that $\bar{\rho}^2=\rho^2 + \mtk+\mtl$ by definition \eqref{def: bar-rho}. Before starting the proof, we point out some properties of $\mtk$ and $\bar{\rho}$ which will be useful to obtain our results. First of all by definition of $\rho^2$ and $\lambda_P$ given in Theorem \ref{thm: TMM-MI-solution}, the variable $\theta_u^g$ can be written as 
\bcred{
$$
\theta_u^g = \frac{2(1-\rho^2)}{8\alpha^2\mtv+\alpha^2 (1-\rho^2)(2\lambda_P+L)},
$$
}
therefore, the condition \eqref{cond: cond-on-theta-gaus} \bcred{yields
\begin{align*}
    \theta < \theta_u^g &\implies \theta 8\alpha^2\mtv+\theta \alpha^2 (1-\rho^2)(2\lambda_P+L) < 2(1-\rho^2),\nonumber\\
    & \implies \theta 8\alpha^2\mtv < 2(1-\rho^2)-\theta \alpha^2 (1-\rho^2)(2\lambda_P+L),\\
    & \implies  \frac{\theta 4\alpha^2\mtv}{2-\theta \alpha^2(2\lambda_P+L)}< \frac{1-\rho^2}{2}.
\end{align*}
}
Hence, this implies
\begin{align}
1-\rho^2>2\mtl &\implies 4(1-\rho^2)-4\mtl > 4\mtl, \nonumber\\
&\implies 4-4(\rho^2+\mtl)+(\rho^2+\mtl)^2 > (\rho^2+\mtl)^2+4\mtl, \nonumber\\
&\implies (2-(\rho^2+\mtl))^2>(\rho^2+\mtl)^2+4\mtl \nonumber,\\ 
&\implies  1-\rho^2 -\mtl > \frac{1}{2}\left(\sqrt{(\rho^2+\mtl)^2+4\mtl}-(\rho^2 + \mtl)\right)\nonumber,\\
&\implies 1> \rho^2 + \mtk + \mtl = \bar{\rho}^2\label{ineq: mtk-prop-2}.
\end{align}
Hence, we can see that $\bar{\rho}<1$. Secondly, the definition of $\mtk$ shows us the following relation
\begin{equation}\label{eq: mtk-prop-1}
    (\mtk)^2 + \mtk(\rho^2+\mtl)-\mtl=0 \implies \mtk = \frac{\mtl}{\rho^2 + \mtk + \mtl}.
\end{equation}
After presenting these properties of $\bar{\rho}$ and $\mtk$, we now focus on our proof. In order to show \eqref{ineq: risk-meas-bound-helper-2},  we are first going to prove the following inequality,
\begin{equation}\label{ineq: induct-principle}
\mE[e^{\frac{\theta}{2\sigma^2}\big(\mV(\bc{z}_{k+1})+\mtk \mV(\bc{z}_k) \big)}]\leq \prod_{j=0}^{k-1}\big(1-\theta \bar{\rho}^{2j}\alpha^2(\lambda_P+ \frac{L}{2})\big)^{-d/2} \mE[e^{\frac{\theta}{2\sigma^2}\bar{\rho}^{2k}(\mV(\bc{z}_1)+\mtk\mV(\bc{z}_0))}],
\end{equation}
where $\theta <\frac{2}{\alpha^2 (2\lambda_P+L)}$. Let us show that \eqref{ineq: induct-principle} holds for $k=1$ to have an insight. Notice that if $\theta$ satisfies condition \eqref{cond: cond-on-theta-gaus} then it also satisfies $\theta < \frac{2}{\alpha^2(2\lambda_P+L)}$ (see the \eqref{ineq: theta-bound-2}) and by the statement of Lemma \ref{lem: risk-meas-bound-gauss} the conditions of Lemma \ref{lem: func-contr-prop} also hold. Therefore, we can write the following using Lemmata \ref{lem: func-contr-prop}-\ref{lem: Gaussian-expectation}-\ref{lem: bound-on-mk} and the fact that the noise $w_1$ is independent from $\mathcal{F}_1$,
\begin{align*}
&\mE[e^{\frac{\theta}{2\sigma^2}\big(\mV(\bc{z}_2)+\mtk\mV(\bc{z}_1)\big)}] \\
&\qquad\leq \mE\left[e^{\frac{\theta}{2\sigma^2}\big((\rho^2+\mtk) \mV(\bc{z}_1)+ m_1^\top w_1 + \alpha^2 (\frac{L}{2}+\lambda_P)\Vert w_1\Vert^2\big)}\right],\\
&\qquad=\mE\left[e^{\frac{\theta}{2\sigma^2}(\rho^2+\mtk) \mV(\bc{z}_1)}\mE\left[e^{\frac{\theta}{2\sigma^2}\big(m_1^\top w_1 + \alpha^2 (\frac{L}{2}+\lambda_P)\Vert w_1\Vert^2\big)}\mid \mathcal{F}_1\right]\right],\\
&\qquad\leq \big(1-\theta \alpha^2 (\frac{L}{2}+\lambda_P)\big)^{-d/2} \mE\left[ e^{\frac{\theta}{2\sigma^2} \big( (\rho^2+\mtk) \mV(\bc{z}_1) + \frac{ \theta4\alpha^2 \mtv}{2-\theta\alpha^2 (2\lambda_P+L)}(\mV(\bc{z}_1)+\mV(\bc{z}_0))} \right],\\
&\qquad= \big(1-\theta \alpha^2 (\frac{L}{2}+\lambda_P)\big)^{-d/2} \mE\left[e^{\frac{\theta}{2\sigma^2}(\rho^2 + \mtk + \mtl)\big(\mV(\bc{z}_1)+ \frac{\mtl}{\rho^2+\mtk+\mtl}\mV(\bc{z}_0)\big)}\right].
\end{align*}
We obtain the inequality \eqref{ineq: induct-principle} for $k=1$ using the equality \eqref{eq: mtk-prop-1} and the definition of $\bar{\rho}^2$:
$$
\mE[e^{\frac{\theta}{2\sigma^2}\big( \mV(\bc{z}_2)+\mtk\mV(\bc{z}_1) \big)}] \leq \Big(1-\theta \alpha^2 \big( \frac{L}{2}+\lambda_P \big) \Big)^{-d/2}\mE[e^{\frac{\theta}{2\sigma^2}\bar{\rho}^2 \big( \mV(\bc{z}_1)+\mtk \mV(\bc{z}_0)\big)}].
$$
\bcred{We have already shown that $\theta<\frac{2}{\alpha^2 (2\lambda_P+L)}$ and assumed the conditions of Lemma \ref{lem: func-contr-prop}; so, we can use Lemmata \ref{lem: func-contr-prop}-\ref{lem: Gaussian-expectation}-\ref{lem: bound-on-mk} and we can write the following inequality for $k+1$ by following similar approach, 
\begin{align*}
\mE[e^{\frac{\theta}{2\sigma^2}\big( \mV(\bc{z}_{k+1})+\mtk \mV(\bc{z}_k)\big)}]\leq \big(1-\theta \alpha^2 ( \frac{L}{2}+\lambda_P) \big)^{-d/2} \mE[e^{\frac{\theta}{2\sigma^2}\bar{\rho}^2 \big( \mV(\bc{z}_k)+\mtk \mV(\bc{z}_{k-1})\big)}].
\end{align*}
Moreover, since $\theta\bar{\rho}^2<\theta < \frac{2}{\alpha^2(2\lambda_P+L)}$, we can also use Lemmas \ref{lem: func-contr-prop}-\ref{lem: Gaussian-expectation}-\ref{lem: bound-on-mk} on the integral on the right-hand side of the inequality above:
\begin{equation}\label{ineq: expect-helper-2}
\mE[e^{\frac{\theta}{2\sigma^2}\bar{\rho}^2(\mV(\bc{z}_k)+\mtk \mV(\bc{z}_{k-1}))}]\leq (1-\theta \bar{\rho}^{2}\alpha^2 (\frac{L}{2}+\lambda_P))^{-d/2} \mE[e^{\frac{\theta}{2\sigma^2}\bar{\rho}^2\big((\rho^2+ k_\theta)\mV(\bc{z}_{k-1})+ \frac{\theta \bar{\rho}^{2}4\alpha^2\mtv}{2-\theta\bar{\rho}^2\alpha^2 (2\lambda_P+L)}(\mV(\bc{z}_{k-1})+\mV(\bc{z}_{k-2}))\big)}].
\end{equation}
Notice that $l_{\theta}$ is increasing with respect to $\theta>0$, that is, 
$$
l_{\theta \bar{\rho}^2} = \frac{\theta\bar{\rho}^2 4\alpha^2\mtv}{2-\theta \bar{\rho}^2 \alpha^2 (2\lambda_P+L)}< \frac{\theta 4\alpha^2 \mtv}{2-\theta \alpha^2 (2\lambda_P+L)}=l_{\theta}.
$$
Since $\mV(\bc{z}_{k-1})$ and $\mV(\bc{z}_{k-2})$ are all non-negative, we can convert \eqref{ineq: expect-helper-2} into following using our observation on $l^{\theta}$, 
\begin{align*}
&\mE[e^{\frac{\theta}{2\sigma^2}\bar{\rho}^2\big( \mV(\bc{z}_{k})+\mtk \mV(\bc{z}_{k-1})\big)}]\\
&\quad\quad\quad\leq (1-\theta \bar{\rho}^{2}\alpha^2 (\frac{L}{2}+\lambda_P))^{-d/2}\mE[e^{\frac{\theta}{2\sigma^2}\bar{\rho}^2\big( (\rho^2 + \mtk+ \mtl)\mV(\bc{z}_{k-1})+ \mtl\mV(\bc{z}_{k-2})\big)}],\\
&\quad\quad\quad \leq \big(1-\theta \bar{\rho}^{2}\alpha^2 (\frac{L}{2}+\lambda_P)\big)^{-d/2}\mE[e^{\frac{\theta}{2\sigma^2}(\bar{\rho}^2)^2\big(\mV(\bc{z}_{k-1})+ \mtk\mV(\bc{z}_{k-2})\big)}].
\end{align*}
So far we obtain, 
\begin{equation*}
\mE[e^{\frac{\theta}{2\sigma^2}\big( \mV(\bc{z}_{k+1})+\mtk \mV(\bc{z}_{k})\big)}]
\leq \big(1-\theta\alpha^2 (\frac{L}{2}+\lambda_P)\big)^{-d/2}\big(1-\theta \bar{\rho}^{2}\alpha^2 (\frac{L}{2}+\lambda_P)\big)^{-d/2}\mE\Big[e^{\frac{\theta}{2\sigma^2}\bar{\rho}^4\big(\mV(\bc{z}_{k-1})+ \mtk\mV(\bc{z}_{k-2})\big)}\Big],
\end{equation*}
and repeating these steps for $k$-times gives the desired inequality \eqref{ineq: induct-principle}.
}
Next, we are going to study $\mE[e^{\frac{\theta}{2\sigma^2}\bar{\rho}^{2k}\mV(\bc{z}_1)}]$. Again, Lemmata \ref{lem: func-contr-prop} and \ref{lem: Gaussian-expectation} yield
\begin{align*}
\mE\left[e^{\frac{\theta}{2\sigma^2}\bar{\rho}^{2k}\mV(\bc{z}_1)}\right]
&\leq\mE\left[e^{\frac{\theta \bar{\rho}^{2k}}{2\sigma^2}\big( \rho^2 \mV(\bc{z}_0)+m_0^\top \bc{w_1} + \alpha^2 (\frac{L}{2}+\lambda_P)\Vert \bc{w_1}\Vert^2\big)}\right],\\
&\leq \big( 1-\theta \bar{\rho}^{2k}\alpha^2 (\lambda_P+\frac{L}{2})\big)^{-d/2}\mE\left[e^{\frac{\theta \bar{\rho}^{2k}}{2\sigma^2}\big( \mV(\bc{z}_0)+ \frac{\theta \bar{\rho}^{2k}\Vert m_0\Vert^2}{2(2-\theta \bar{\rho}^{2k} \alpha^2 (2\lambda_P+L))}\big)}\right].
\end{align*}
On the other hand, we can see from the proof of Lemma \ref{lem: bound-on-mk} that $\Vert m_0\Vert^2 < 16\alpha^2\mtv \mV(\bc{z}_0)$ and 
$$
\frac{\theta \bar{\rho}^{2k}}{2-\theta \bar{\rho}^{2k}\alpha^2 (2\lambda_P+L)} \leq \frac{\theta }{2-\theta \alpha^2(2\lambda_P+L)}.
$$
Hence above inequality becomes 
\begin{align}
\mE\left[e^{\frac{\theta}{2\sigma^2}\bar{\rho}^{2k}\mV(\bc{z}_1)}\right]&\leq \big( 1-\theta \bar{\rho}^{2k}\alpha^2 (\lambda_P+\frac{L}{2})\big)^{-d/2}\mE\left[e^{\frac{\theta \bar{\rho}^{2k}}{2\sigma^2}\big( \mV(\bc{z}_0)+ \frac{\theta 8\alpha^2 \mtv \mV(\bc{z}_0)}{(2-\theta \alpha^2 (2\lambda_P+L))}\big)}\right],\nonumber\\
&\leq \big( 1-\theta \bar{\rho}^{2k}\alpha^2 (\lambda_P+\frac{L}{2})\big)^{-d/2}e^{\frac{\theta }{2\sigma^2}(\rho^2+\mtl)2\mV(\bc{z}_0)}.\label{ineq: exp-V1-helper-1}
\end{align}
Combining the inequality \eqref{ineq: induct-principle} with \eqref{ineq: exp-V1-helper-1} yields
\begin{equation}
    \mE[e^{\frac{\theta}{2\sigma^2}\big(\mV(\bc{z}_{k+1})+\mtk \mV(\bc{z}_k) \big)}]\leq \prod_{j=0}^{k}\big(1-\theta \bar{\rho}^{2j}\alpha^2(\lambda_P+ \frac{L}{2})\big)^{-d/2} e^{\frac{\theta}{2\sigma^2}\bar{\rho}^{2(k+1)}2\mV(\bc{z}_0)},
\end{equation}
Let $k\geq 1$. Notice that $\mV(\bc{z}_k)\geq0$ for any $\bc{z}_{k}\in \mathbb{R}^{2d}$ and $\theta, \mtk>0$; hence, the inequality $e^{\frac{\theta}{2\sigma^2}(\mV(\bc{z}_{k})+\mtk \mV(\bc{z}_{k-1}))}>e^{\frac{\theta}{2\sigma^2}\mV(\bc{z}_k)}$ holds pointwise. Hence,
\begin{align*}
\mE[e^{\frac{\theta}{2\sigma^2}(\mV(\bc{z}_{k})+\mtk \mV(\bc{z}_{k-1}))}]
\geq \mE[e^{\frac{\theta}{2\sigma^2}\mV(\bc{z}_k)}] \geq \mE[e^{\frac{\theta}{2\sigma^2}(f(x_k)-f(x_*))}].
\end{align*}
This gives the desired result.
}

\section{Relationship between the sets $\mathcal{F}_\theta$  and $S_q$}\label{lemma-Ftheta-vs-Sq}
\begin{lemma} \label{lem: feas-set-stab-set}
\bc{
The $\theta$-feasible set satisfies $\mathcal{F}_\theta \subset \mathcal{S}_q$; moreover, we have $\lim_{\theta\rightarrow 0}\mathcal{F}_\theta=\mathcal{S}_q=\{(\alpha,\beta,\gamma)~|~ |c_i|\leq |1-d_i|\;\text{and}\; 0<u_i\}$}.
\end{lemma}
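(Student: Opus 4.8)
\textbf{Proof plan for Lemma~\ref{lem: feas-set-stab-set}.}
The plan is to work entirely with the scalar quantities $c_i,d_i$ and the block-diagonal reduction of $A_Q$ already established in the proof of Lemma~\ref{lem: non_asym_conv_quad_obj}. Recall that $A_Q$ is orthogonally similar to $\mathrm{blkdiag}(\{M_i\}_{i=1}^d)$ with $M_i=\begin{bmatrix}c_i & d_i\\ 1 & 0\end{bmatrix}$, so $\rho(A_Q)=\max_i\rho_i$ where $\rho_i$ is the larger (in modulus) eigenvalue of $M_i$, namely a root of $t^2-c_i t-d_i=0$. Hence $(\alpha,\beta,\gamma)\in\mathcal{S}_q$ if and only if \emph{for every $i$} both roots of $t^2-c_it-d_i$ lie strictly inside the unit disk. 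First I would invoke the Jury/Schur–Cohn stability criterion for a monic quadratic $t^2+a_1t+a_0$: both roots are inside the open unit disk iff $|a_0|<1$ and $|a_1|<1+a_0$. Applying this with $a_1=-c_i$, $a_0=-d_i$ gives that $\rho_i<1$ is equivalent to the pair of conditions $|d_i|<1$ and $|c_i|<1-d_i$. I would then note that on the admissible parameter region $\mathbb{R}_{++}\times\mathbb{R}_+\times\mathbb{R}_+$ one has $d_i=-(\beta-\alpha\gamma\lambda_i)$, and the condition $|c_i|<1-d_i$ together with $|c_i|\ge 0$ already forces $d_i<1$; combined with $|c_i|<1-d_i$ one also gets $1+d_i>1-d_i-|c_i|\cdot 0\ge\ldots$, more carefully $1+d_i = (1-d_i)\cdot(-1)+2 > |c_i|\cdot(\text{nonneg})$, so $1+d_i>0$ as well, i.e.\ $|d_i|<1$ is automatic. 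Thus $\rho_i<1 \iff |c_i|<1-d_i \iff |c_i|<|1-d_i|$ (using $1-d_i>0$). This yields the characterization $\mathcal{S}_q=\{(\alpha,\beta,\gamma): |c_i|<|1-d_i| \text{ for all }i\}$.

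Next I would connect this to $u_i$. By definition
$$u_i=\frac{(1+d_i)[(1-d_i)^2-c_i^2]}{\lambda_i(Q)(1-d_i)\alpha^2},$$
and since $\lambda_i(Q)>0$, $\alpha>0$, the denominator is positive exactly when $1-d_i>0$. When $|c_i|<1-d_i$ we showed $1-d_i>0$ and $1+d_i>0$, and moreover $(1-d_i)^2-c_i^2=(1-d_i-|c_i|)(1-d_i+|c_i|)>0$; hence $u_i>0$. Conversely, if $u_i>0$ and also $|c_i|<|1-d_i|$, nothing new is needed. This establishes the displayed set equality
$$\mathcal{S}_q=\{(\alpha,\beta,\gamma)\in\mathbb{R}_{++}\times\mathbb{R}_{+}\times\mathbb{R}_{+}: |c_i|<|1-d_i|,\ 0<u_i\ \text{for }i=1,\dots,d\},$$
where the $u_i>0$ clause is in fact redundant given $|c_i|<|1-d_i|$, but is kept for comparison with $\mathcal{F}_\theta$.

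For the inclusion $\mathcal{F}_\theta\subset\mathcal{S}_q$: by \eqref{cond: var-on-Sigma-quad}, membership in $\mathcal{F}_\theta$ requires $|c_i|<|1-d_i|$ for all $i$ and $\theta<2u_i$; the first condition alone already places $(\alpha,\beta,\gamma)$ in $\mathcal{S}_q$ by the characterization just derived (note $\theta>0$ forces $u_i>0$, consistent with the above). Finally, for $\lim_{\theta\to 0}\mathcal{F}_\theta=\mathcal{S}_q$: the sets $\mathcal{F}_\theta$ are nested increasing as $\theta\downarrow 0$ (already observed in the main text, since $\mathcal{F}_{\theta_1}\subset\mathcal{F}_{\theta_2}$ for $\theta_1>\theta_2$), so $\lim_{\theta\to 0}\mathcal{F}_\theta=\bigcup_{\theta>0}\mathcal{F}_\theta$. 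Any $(\alpha,\beta,\gamma)\in\mathcal{S}_q$ has $|c_i|<|1-d_i|$ and $u_i>0$ for all $i$, hence $\min_i u_i>0$; choosing any $\theta\in(0,2\min_i u_i)$ puts the point in $\mathcal{F}_\theta$. Therefore $\mathcal{S}_q\subseteq\bigcup_{\theta>0}\mathcal{F}_\theta$, and the reverse inclusion is $\mathcal{F}_\theta\subset\mathcal{S}_q$ already shown; this proves the equality.

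\textbf{Main obstacle.} The only genuinely delicate point is the correct statement and application of the discrete-time stability criterion for the quadratic $t^2-c_it-d_i$, and the bookkeeping that on the nonnegative-parameter cone the condition $|c_i|<1-d_i$ automatically delivers $1-d_i>0$ (so that $|1-d_i|=1-d_i$) and $1+d_i>0$ (so that the sign of $(1-d_i)^2-c_i^2$ and of $u_i$ are controlled). Everything else is routine algebra; I would double-check the boundary/degenerate cases $c_i^2+4d_i=0$ and $c_i=0$ to be sure the strict-inequality characterization of $\rho_i<1$ is unaffected.
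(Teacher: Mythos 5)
Your overall strategy (reduce to the $2\times 2$ blocks, apply the Jury/Schur--Cohn criterion to $t^2-c_it-d_i$, and obtain the limit as a union of nested sets) is viable and is in fact a more elementary route for the inclusion $\mathcal{S}_q\subseteq\bigcup_{\theta>0}\mathcal{F}_\theta$ than the paper's, which instead argues via positive semi-definiteness of the stationary solution of the Lyapunov equation. However, there is a genuine error at the pivot of your argument: you claim that on the cone $\mathbb{R}_{++}\times\mathbb{R}_+\times\mathbb{R}_+$ the condition $|c_i|<1-d_i$ already forces $1+d_i>0$, hence that $|d_i|<1$ is automatic and the clause $u_i>0$ is redundant. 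This is false. Your "more careful" derivation $1+d_i=2-(1-d_i)$ combined with $1-d_i>|c_i|\ge 0$ yields only the \emph{upper} bound $1+d_i<2$, not a lower bound. Concretely, take $d=1$, $\lambda_1=1$, $\alpha=4$, $\beta=3$, $\gamma=0$: then $c_1=(1+\beta)-\alpha=0$ and $d_1=-\beta=-3$, so $|c_1|=0<4=|1-d_1|$, yet the block has eigenvalues $\pm i\sqrt{3}$ and $\rho_1=\sqrt{3}>1$, and indeed $u_1<0$. Hence $\{|c_i|<|1-d_i|\ \forall i\}\supsetneq\mathcal{S}_q$, your claimed characterization of $\mathcal{S}_q$ is wrong as stated, and your proof of $\mathcal{F}_\theta\subset\mathcal{S}_q$ ("the first condition alone already places the point in $\mathcal{S}_q$") does not go through as written.

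The fix is exactly what the paper does and what your parenthetical remark gestures at: you must \emph{use} $u_i>0$ (which follows from $0<\theta<2u_i$) together with $(1-d_i)^2-c_i^2>0$ to deduce $(1+d_i)/(1-d_i)>0$, which rules out the impossible case $d_i<-1$ and $d_i>1$ simultaneously and hence gives $|d_i|<1$ and $1-d_i>0$; only then does the Jury criterion apply and yield $\rho_i<1$. Conversely, stability gives both $|d_i|<1$ and $|c_i|<1-d_i$ via Jury, from which $u_i>0$ follows, so the displayed set equality holds with \emph{both} clauses, neither of which is redundant. With that correction your union argument for $\lim_{\theta\to 0}\mathcal{F}_\theta=\mathcal{S}_q$ is fine.
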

\begin{proof}\bc{ We are first going to show $\mathcal{F}_\theta \subset \mathcal{S}_q$.
Let $(\alpha,\beta,\gamma)\in\mathcal{F}_\theta$ for a given $\theta>0$, then $|c_i|<|1-d_i|$ and $u_i >\frac{\theta}{2}$ for all $u_i$. Since $\theta>0$, this also implies $u_i>0$ and the inequality $|c_i|<|1-d_i|$ gives $(1-d_i)^2-c_i^2>0$. Recall that $Q$ is positive definite; hence, we can write  
\begin{equation}\label{feas-set-cond-1}
u_i=\frac{(1+d_i)[(1-d_i)^2-c_i^2]}{\lambda_i(Q)(1-d_i)\alpha^2}>\theta>0 \implies \frac{1+d_i}{1-d_i}>0 \implies 1>|d_i|.
\end{equation}
The stable is defined as $\mathcal{S}_q:= \{ (\alpha,\beta,\gamma)\in \mathbb{R}_{++}\times \mathbb{R}_+\times \mathbb{R}_+ ~|~ \rho(A_Q)=\max\{\rho_i\}<1\}$, where $\rho_i$ is given as 
$$
\rho_i=\begin{cases}
\frac{1}{2}|c_i| +  \frac{1}{2}\sqrt{c_i^2+4d_i},& \text{ if } c_i^2+4d_i>0, \\
\sqrt{|d_i|}, & \text{otherwise}.
\end{cases}
$$
The inequality \eqref{feas-set-cond-1} already shows that $\rho_i<1$ for $c_i^2+4d_i<0$. Next, we will show $|c_i|+\sqrt{c_i^2+4d_i}<2$ for $c_i^2+4d_i$, which in turn shows $\rho_i<1$. Suppose $(\alpha,\beta,\gamma)\in \mathcal{F}_\theta$ and $c_i^2+4d_i>0$. Since, we have already shown $|d_i|<1$, we can use $1-d_i>|c_i|$ and write
\begin{align}
    1-d_i > |c_i| &\implies 4(1-d_i) > 4|c_i| \implies 4-4|c_i| +c_i^2 > c_i^2 + 4d_i, \nonumber\\
    &\implies (2-|c_i|)^2 > c_i^2+4d_i \implies 2> |c_i|+ \sqrt{c_i^2+4d_i} \label{feas-set-cond-2}.
\end{align}
This shows that $\rho_i<1$ in the case $c_i^2+4d_i>0$ for each $i=1,...,d$.Therefore, \eqref{feas-set-cond-1} and \eqref{feas-set-cond-2} show us that for all $(\alpha,\beta,\gamma)\in \mathcal{F}_{\theta}$ then $(\alpha,\beta,\gamma)\in\mathcal{S}_q$ proving $\mathcal{F}_\theta \subset \mathcal{S}_q$. Notice that these inequalities are derived by using the property $\theta>0$. Matter of fact, we can show all of the results only using $u_i>0$. This also yields  $\underset{\theta>0}{\cup}\mathcal{F}_\theta \subset \mathcal{S}_q$. Lastly, let us show $\mathcal{S}_q\subset \underset{\theta>0}{\cup}\mathcal{F}_\theta$, so that we can prove $\underset{\theta>0}{\cup}\mathcal{F}_\theta=\mathcal{S}_q$. In order to show this, we are going to use properties of Lyapunov stability. Particularly, since $\rho(A_Q)<1$ on $\mathcal{S}_q$ the following Lyapunov equation \eqref{eq: Lyap-tilde-V-sol} holds 
\begin{equation}
\tilde{V}_{\infty} = \tilde{A}_{\Lambda}\tilde{V}_{\infty}\tilde{A}_{\Lambda}^{\top} + \sigma^2 \tilde{B}\tilde{B}^\top,
\end{equation}
for a matrix $\tilde{V}_\infty=\underset{i=1,..,d}{\mbox{Diag}}\begin{bmatrix}\bar{x}_{i} & \bar{y}_{i}\\\bar{y}_{i} & \bar{v}_{i} \end{bmatrix}$ with entries
\begin{align*}
\bar{x}_{\lambda_i}= \frac{(1-d_i)\alpha^2 \sigma^2}{(1+d_{i})[(1-d_i)^2-c_i^2]},\;\;\bar{y}_{\lambda_i}=\frac{c_i\alpha^2 \sigma^2}{(1+d_{i})[(1-d_i)^2-c_i^2]}, \;\; \bar{v}_{\lambda_i}=\bar{x}_{\lambda_i},
\end{align*}
Since the solution exists, it follows from Lyapunov theory that the matrix $\tilde{V}_\infty$ is positive semidefinite. This holds if and only if
$$
\bar{x}_{\lambda_i}=\frac{(1-d_i)\alpha^2 \sigma^2}{(1+d_{i})[(1-d_i)^2-c_i^2]}>0,
$$
and the determinant is non-negative 
$$
\bar{x}_{\lambda_i}^2-\bar{y}_{\lambda_i}^2 = \frac{[(1-d_i)^2 -c_i^2] \alpha^4\sigma^4}{(1+d_i)^2 [(1-d_i)^2-c_i^2]^2}>0.
$$
for all $i=1,..,d$.These yield the conditions
$$
u_i> 0 \text{ and } (1-d_i)^2> c_i^2,
$$
which in turn implies $(\alpha,\beta,\gamma)\in \mathcal{F}_\theta$. This gives the desired result $\lim_{\theta\rightarrow 0}\mathcal{F}_\theta=\mathcal{S}_q$, \bcgreen{where we used the fact that the sets $\mathcal{F}_\theta$ are monotonically increasing as $\theta$ decreases, i.e. $\mathcal{F}_{\theta_1}\subset \mathcal{F}_{\theta_2}$ for the $\theta_1 > \theta_2$. Finally, we can compute $\lim_{\theta \rightarrow 0}\mathcal{F}_\theta$ by inserting $\theta=0$ in \eqref{cond: var-on-Sigma-quad}}.} 
\end{proof}

\end{document}